\documentclass[a4paper]{article}
\usepackage{geometry}
\usepackage{graphicx,url}
\usepackage[margin=10pt, font=small, labelfont=bf]{caption}

\usepackage[english]{babel}   
\usepackage[T1]{fontenc}
\usepackage[utf8]{inputenc}

\usepackage{slashed}
\usepackage{mathtools}

\usepackage{tikz}
\usepackage{verbatim}
\usepackage{listings}
\usepackage{xcolor}
\usepackage{amsmath,amssymb,mathrsfs}
\usepackage{mathtools}
\usepackage{amsthm}
\usepackage{frontespizio}
\usepackage{pictexwd, dcpic}
\usepackage{epigraph}
\usepackage{enumerate}
\usepackage{fancyhdr}
\usepackage{textcomp}
\usepackage{cancel}
\usepackage{bbm}
\usepackage{authblk}
\theoremstyle{plain}
\newtheorem{thm}{Theorem}[subsection]
\newtheorem{prop}[thm]{Proposition}
\newtheorem{cor}[thm]{Corollary}
\newtheorem{lem}[thm]{Lemma}
\theoremstyle{definition}
\newtheorem{defn}[thm]{Definition}
\newtheorem{rem} [thm] {Remark}

\title{Local and Global well-posedness in the $L^2$-setting for the $Q$-tensor model in $\mathbb R^N$ and $\mathbb R^N_+$}
\date{}
\author{Daniele Barbera, Vladimir Georgiev, Miho Murata, Yoshihiro Shibata}

\begin{document}
 \maketitle

\begin{abstract}
The paper studies the $Q$-tensor model for nematic liquid crystals, a system that couples a Navier-Stokes equation with an evolution equation for the order parameter tensor $Q$. The first goal of the paper is to establish the local well-posedness of the system in $\mathbb R^N$ and $\mathbb R^N_+$ for $N=2,3$ in the $L^2$ framework, improving existing results in the literature, where the existence of local strong solutions was obtained only under smallness assumptions on the initial data. Fundamental is the application of the energy method, which shows a cancellation phenomena on the nonlinear terms, allowing the use of a contraction argument to prove existence and uniqueness of solutions. Finally, with the same approach we establish global well-posedness in the three-dimensional case for small initial data.
\end{abstract}

{\textbf{MSC Numbers}: 35A01, 35Q35, 76A15}

{\textbf{Keywords}: Liquid Crystals, $Q$-tensor, Energy Method, Local well-posedness.}

\section{Introduction and mathematical background}

Let us consider the $Q$-tensor model (or Beris-Edwards model) on a domain $\Omega\subseteq\mathbb R^N$ with $N\ge 2$:
\begin{equation}\label{BE.sys.}
    \left\{\begin{array}{ll}
       (\partial_t-\Delta)u+\nabla \pi+\beta {\rm Div}(\Delta-a)Q= F(u,Q)  & (0,T)\times \Omega \\
       (\partial_t-\Delta+a)Q-\beta D(u)=G(u,Q)  & (0,T)\times \Omega \\
       {\rm div} u=0 & (0,T)\times \Omega \\
       u=0,\quad \partial_\nu Q=0 & (0,T)\times\partial\Omega \\
       u(0)=u_0,\quad Q(0)=Q_0 & \Omega,
    \end{array}\right.
\end{equation}
where $\nu(x)$ is the external vector of $\partial\Omega$ in $x\in\partial\Omega$ and where
$$ F(u,Q) =-(u\cdot \nabla) u + {\rm Div}\left[2\xi \mathbb{H}\colon Q\left(Q+\frac{Id}{N}\right)-(\xi+1)\mathbb{H}Q+(1-\xi)Q\mathbb{H}-\nabla Q\odot\nabla Q\right]-\beta {\rm Div}\mathcal{L}[\mathcal{F}(Q)],  $$
$$ G(u,Q)=-(u\cdot\nabla)Q+\xi(D(u)Q+QD(u))+W(u)Q-QW(u)-2\xi\left(Q+\frac{Id}{N}\right)Q\colon \nabla u+\mathcal{L}[\mathcal{F}(Q)],  $$
$$ D(u)=\frac{1}{2}\left(\nabla u + \nabla^Tu\right), \quad  W(u)=\frac{1}{2}\left(\nabla u-\nabla^Tu\right), $$
$$ [\nabla Q\odot\nabla Q]_{jk}=\sum_{\alpha,\beta=1}^N\partial_j Q_{\alpha\beta}\partial_k Q_{\alpha\beta}\quad j,k=1,\ldots, N, $$
$$ \mathbb{H}=\Delta Q-aQ+b\mathcal{L}[Q^2]-c|Q|^2Q, \quad \mathcal{F}(Q)=bQ^2-c|Q|^2Q,  $$
$$ \mathcal{L}[A]=A-{\rm tr}(A)\frac{Id}{N},\quad A\colon B={\rm tr}\left(B^TA\right), \quad {\rm Div}A=\sum_{i=1}^N \partial_i A^i \quad \forall A,B\in \mathbb{R}^{N\times N}, $$
with $a,T>0$, $\xi,b,c\in\mathbb{R}$, $\beta=\frac{2\xi}{N}$ and where $S_0(N,\mathbb R)$ is the set of symmetric traceless matrices, that is
\begin{equation}\label{def.S0}
    S_0(N,\mathbb R)=\left\{A\in\mathbb R^{N\times N}\:\Big|\:{\rm tr}A=0,\quad A=A^T\right\}.
\end{equation}

\vspace{2mm}

The model was introduced by Beris and Edwards in \cite{BE94} to describe the behaviour of nematic liquid crystals. Liquid crystals are an intermediate state of matter, between the solid state and the liquid state. Historically, due to its optical properties, such a material was employed in the production of LCDs and LCFs (see \cite{K02}, \cite{GC15}). However, nowadays the applications for liquid crystals are wider. One of the most interesting is in the pharmaceutical area: it has been observed that some particular nano-structure, called cellulose nano-crystals or CNCs, used as drug delivery material, shows a liquid crystals behaviour. For more details, see \cite{XAS20}, \cite{GK23} and \cite{SO23}. Nematic liquid crystals, in particular, are made of particles with no positional order but with an orientation in the space. From this perspective, looking at the system \eqref{BE.sys.}, the functions $u\colon(0,T)\times \Omega\to \mathbb{R}^N$, $\pi\colon(0,T)\times \Omega\to\mathbb{R}$ and $Q\colon(0,T)\times \Omega\to S_0(N,\mathbb{R})$ represent, respectively, the velocity field, the pressure and the orientation matrix, called $Q$-tensor, introduced by de Gennes in \cite{GP93}.

\vspace{2mm}

From a mathematical point of view, the $Q$-tensor model is formed by a Navier-Stokes equation for $u$ coupled with a parabolic equation for $Q$. Most of the works that study the existence of solutions for the $Q$-tensor model focuses on weak solutions and consider the additional constraint $\xi=0$. In fact, when $\xi=0$, it can be noticed that several nonlinear terms vanish and, most importantly, the corresponding linear system turns diagonal, formed by a Stokes equation for $u$ and a parabolic equation for $Q$. In this setting, we recall the results of \cite{PZ12} and \cite{DA17}, with the existence of a global weak solution in $\mathbb R^N$ with $N=2,3$, while in \cite{ADL16}, \cite{X17}, \cite{GR15} and \cite{LLZ19} it is considered the case of a bounded domain. For what concerns the case $\xi\in\mathbb R$, we cite the works \cite{ADL14} and \cite{LW18} for the bounded case and \cite{LWX19} for the periodic case in two dimensions.

\vspace{2mm}

In this paper, we focus on strong solutions without any additional constraint on $\xi$. The existence of strong solutions with $\Omega$ bounded was already investigated by \cite{HHW24}, while for the periodic case we refer to the results of \cite{CR16}. In our work, we consider $\Omega=\mathbb R^N,\mathbb R^N_+$ with $N=2,3$. The existence of strong solutions in $\mathbb R^N$ was studied in \cite{SS19} and \cite{MS22}. Here, the authors proved the $L^p-L^q$ maximal regularity and the global well-posedness of the $Q$-tensor model in $\mathbb R^N$ with $N\ge 3$. Later, the case of $\Omega=\mathbb R^N_+$ with $N\ge 2$ was studied in \cite{BM24}, with the $L^p-L^q$ maximal regularity, and in \cite{BMS25} with the global well-posedness in homogeneous Sobolev spaces.

\subsection{Notations and functional spaces}

As we mentioned above, we consider the case $\Omega=\mathbb R^N,\mathbb R^N_+$ with $N=2,3$. Here $\partial\Omega=\mathbb R^N_0$, where
$$ \mathbb R^N_0=\left\{(x,0)\in\mathbb R^N\:\Big|\:x\in\mathbb R^{N-1}\right\}. $$
For simplicity, we write
$$     \left\{\begin{array}{ll}
       (\partial_t-\Delta)u+\nabla \pi+\beta {\rm Div}(\Delta-a)Q= F(u,Q) & (0,T)\times \Omega \\
       (\partial_t-\Delta+a)Q-\beta D(u)=G(u,Q)  & (0,T)\times \Omega \\
       {\rm div} u=0 & (0,T)\times \Omega \\
       u=0,\quad \partial_\nu Q=0 & (0,T)\times\partial\Omega \\
       u(0)=u_0,\quad Q(0)=Q_0 & \Omega,
    \end{array}\right. $$
to consider both systems
$$ \left\{\begin{array}{ll}
       (\partial_t-\Delta)u+\nabla \pi+\beta {\rm Div}(\Delta-a)Q= F(u,Q)  & (0,T)\times \mathbb R^N \\
       (\partial_t-\Delta+a)Q-\beta D(u)=G(u,Q)  & (0,T)\times \mathbb R^N \\
       {\rm div} u=0 & (0,T)\times \mathbb R^N \\
       u(0)=u_0,\quad Q(0)=Q_0 & \mathbb R^N
    \end{array}\right. $$
    and 
$$ \left\{\begin{array}{ll}
       (\partial_t-\Delta)u+\nabla \pi+\beta {\rm Div}(\Delta-a)Q= F(u,Q)  & (0,T)\times \mathbb R^N_+ \\
       (\partial_t-\Delta+a)Q-\beta D(u)=G(u,Q)  & (0,T)\times \mathbb R^N_+ \\
       {\rm div} u=0 & (0,T)\times \mathbb R^N_+ \\
       u=0,\quad \partial_{N} Q=0 & (0,T)\times\mathbb R^N_0 \\
       u(0)=u_0,\quad Q(0)=Q_0 & \mathbb R^N_+.
    \end{array}\right. $$
We use the same notation for the corresponding linear systems. 

For what concerns the functional spaces, let $p,q\in(1,\infty)$, $m\in\mathbb{N}_0=\mathbb N\cup\{0\}$ and $s\in\mathbb R$, then we denote $L^q(\Omega;\mathbb R^N)$, $W^{m,q}(\Omega;\mathbb R^N)$ and $B^s_{q,p}(\Omega;\mathbb R^N)$ respectively the Lebesgue, the Sobolev and the Besov spaces with values in $\mathbb R^N$ and we denote $\|\cdot\|_{L^q(\Omega)}$, $\|\cdot\|_{W^{m,q}(\Omega)}$ and $\|\cdot\|_{B^s_{q,p}(\Omega)}$ their norms. When the function takes place in $\mathbb R$, we write for simplicity $L^q(\Omega)$, $W^{m,q}(\Omega)$ and $B^s_{p,q}(\Omega)$. We  denote $H^m(\Omega)\coloneqq W^{m,2}(\Omega)$ and 
$$ H^1_0(\Omega)=\left\{f\in H^1(\Omega)\:\Big|\:f_{|\partial\Omega}=0\right\}. $$
Let $s\in(0,1)$ and $p\in(1,\infty)$, then we recall the definition of
$$ H^s_p(\mathbb{R})\coloneqq \left\{v\in L^p(\mathbb{R})\:\Big|\: \mathcal{F}^{-1}[(1+|\tau|^2)^{s/2}\mathcal{F}[v]]\in L^p(\mathbb{R})\right\} $$
endowed with the norm
$$ \|v\|_{H^s_p(\mathbb{R})}\coloneqq \left\|\mathcal{F}^{-1}\left[(1+|\tau|^2)^{s/2}\mathcal
F [u]\right]\right\|_{L^p(\mathbb{R})}, $$
where $\mathcal{F}$ and $\mathcal{F}^{-1}$ are respectively the Fourier and the Inverse Fourier transforms, that is 
$$ \mathcal{F}[f](\tau)=\int_{\mathbb R}e^{-it\tau}f(t)dt, \quad \mathcal{F}^{-1}[f](t)=\frac{1}{2\pi}\int_{\mathbb R}e^{it\tau}f(\tau)d\tau. $$
Let now $I\subseteq\mathbb{R}$ open, then we define
$$ H^s_p(I)\coloneqq\left\{v\in L^p(I)\mid \exists \:\widetilde{v}\in H^s_p(\mathbb{R})\:\:\text{such that}\:\: \widetilde{v}_{|I}=v\right\}, $$
with the norm
$$ \|v\|_{H^s_p(I)}\coloneqq \inf_{\widetilde{v}_{|I}=v}\|\widetilde{v}\|_{H^s_p(\mathbb{R})}. $$
Let $q\in(1,\infty)$, then we denote
$$ J_q\left(\Omega\right)\coloneqq \left\{f\in L^q\left(\Omega;\mathbb{R}^N\right)\:\Big|\:\left<f,\nabla \varphi\right>=0,\quad \forall \varphi\in\widehat{H}^1_{q^\prime}\left(\Omega\right)\right\}, $$
where
$$ \widehat{H}^1_{q}\left(\Omega\right)\coloneqq \left\{\varphi\in L^{q}_{loc}\left(\Omega\right)\:\Big|\: \nabla \varphi\in L^{q}\left(\Omega;\mathbb{R}^N\right)\right\}. $$
In particular, for some $\Omega\subseteq\mathbb R^N$ (see Theorem 3 at p. 129 of \cite{GN18}), $L^q(\Omega;\mathbb R^N)$ can be decomposed as
$$ L^q(\Omega;\mathbb R^N)=J_q(\Omega)\oplus G_q(\Omega), $$
where
$$ G_q(\Omega)=\left\{\nabla \varphi\:\Big|\:  \varphi \in \widehat H^1_q(\Omega)\right\}, $$
with the projection $\mathbb P_q\colon L^q(\Omega;\mathbb R^N)\to J_q(\Omega)$ called Helmholtz Projection. For $q=2$, $\mathbb P=\mathbb P_2$, the decomposition always exists and it is orthogonal (see Remark 1 at p. 128 of \cite{GN18}). Since, in our case, $\mathbb P_pf=\mathbb P_qf$ for any $f\in L^p(\Omega)\cap L^q(\Omega)$ (Proposition 3 at p. 130 of \cite{GN18}), we simply write $\mathbb P$ without specifying the parameter $q$. For the initial conditions of \eqref{BE.sys.}, we use the following spaces:
\begin{defn}\label{def.HA}\hfill\\
Let $A\colon D(A)\subseteq L^2(\Omega)\to L^2(\Omega)$ be a negative, self-adjoint operator, then we denote with $H^s_A(\Omega)$ the space $D((1-A)^{s/2})$ endowed with the norm
$$ \|f\|_{H^s_A(\Omega)}\coloneqq \left\|(1-A)^{s/2}f\right\|_{L^2(\Omega)}. $$
\end{defn}
In the following, we use the spaces $H^s_A(\Omega)$ with $A=\mathbb P\Delta_D,\Delta_D,\Delta_N$, that are respectively the Stokes operator with Dirichlet boundary conditions and the Laplacian operator with Dirichlet and Neumann boundary conditions. In particular
$$ H^1_{\mathbb P\Delta_D}\left(\Omega;\mathbb R^N\right)=H^1_0\left(\Omega;\mathbb R^N\right)\cap J_2(\Omega), $$
$$ H^1_{\Delta_D}\left(\Omega;\mathbb R^N\right)=H^1_0\left(\Omega;\mathbb R^N\right),\quad H^2_{\Delta_D}(\Omega;\mathbb R^N)=H^2(\Omega;\mathbb R^N)\cap H^1_0(\Omega;\mathbb R^N). $$
$$ H^2_{\Delta_N}\left(\Omega;\mathbb R^N\right)=\left\{f\in H^2\left(\Omega;\mathbb R^N\right)\mid \partial_\nu f_{|\partial\Omega}=0\right\}. $$
Moreover, let $X$ be a Banach space, then we denote $L^p((a,b);X)$, $W^{m,p}((a,b);X)$ and $H^s((a,b);X)$ the previous function spaces for $X$-valued functions for any $(a,b)\subseteq\mathbb{R}$. In particular, for simplicity we often write
$$ L^p_TX\coloneqq L^p((0,T);X), $$
for $p,q\in[1,\infty]$ and $T>0$. When $T=\infty$, we use the notation $L^pX$. 

For the derivatives, we use the following notations: for any multi-index $\alpha\in\mathbb{N}^N_0$ we write
$$ |\alpha|=\alpha_1+\cdots+\alpha_N, $$
$$ D^\alpha= \partial^{\alpha_1}_{x_1}\cdots \partial^{\alpha_N}_{x_N}. $$
For any $k\in\mathbb{N}_0$, for any $\Omega\subseteq\mathbb{R}^N$ open set and for any function $f\colon \Omega\to\mathbb{R}^N$ we denote
$$ \nabla^kf=(D^\alpha f\mid |\alpha|=k). $$
We also denote $\nabla^\prime=(\partial_1,\ldots,\partial_{N-1})$ and $\Delta^\prime=\nabla^\prime\cdot\nabla^\prime$. Finally, in the paper we  use $C$ to indicate a constant which depends on the parameters of the problem. In the statements, we use $C(a,b,\ldots)$ to underline the dependence from $a,b,\ldots$, otherwise we use the symbols 
$$ f(x)\lesssim g(x)\:\Leftrightarrow\: \exists C\:\: \text{s.t.}\:\:f(x)\le Cg(x) $$
$$ f(x)\gtrsim g(x)\:\Leftrightarrow\: \exists C\:\: \text{s.t.}\:\:f(x)\ge Cg(x) $$
$$ f(x)\sim g(x)\:\Leftrightarrow\: \exists C\:\: \text{s.t.}\:\:f(x)= Cg(x). $$

\subsection{Main results}

The aim of the paper is to prove the local and global well-posedness for the $Q$-tensor model \eqref{BE.sys.} in $\Omega=\mathbb R^N,\mathbb R^N_+$ with $N=2,3$. In this setting, the works we cited before (\cite{SS19},\cite{MS22},\cite{BM24},\cite{BMS25}) do not consider the local well-posedness (in \cite{SS19} and \cite{BM24} the existence of a local solution has been proved, but requiring the smallness of the initial conditions). Generally, once the $L^p-L^q$ maximal regularity is achieved, the local well-posedness in $L^p-L^q$ can be obtained with a standard contraction argument. However, the complexity of the nonlinear terms in \eqref{BE.sys.} makes this application difficult to employ. More precisely, some of the nonlinear terms involve high derivatives on the functions (we explain this point more in details in Remark \ref{rem.local-probl.}). To overcome this issue, we adopt the energy method: as we will see in the following, using the energy method in our problem causes some cancellations in the highest order terms and allows us to prove the contraction argument in the $L^2$-setting. For more details on the energy method technique, see Section 3, in particular the proof of Proposition \ref{p.res-unif.es.}. We introduce the spaces
\begin{equation}\label{def.X}
    X_T^s(\Omega)=L^\infty\left((0,T);H^s\left(\Omega; \mathbb R^N\right)\right)\cap L^2\left((0,T);H^{s+1}\left(\Omega;\mathbb R^N\right)\right)
\end{equation}
for $s\ge 0$ and
\begin{equation}\label{def.Y}
    Y_T(\Omega)= \left(X^1_T(\Omega)\times X^2_T(\Omega)\right)\cap \left(H^1\left((0,T);L^2\left(\Omega; \mathbb R^N\right)\right)\times H^1\left((0,T);H^1\left(\Omega; \mathbb R^N\right)\right)\right).
\end{equation}
We are finally ready to state the local well-posedness result:
\begin{thm}\label{t.loc.ex.}\hfill\\
Let $\Omega=\mathbb R^N,\mathbb R^N_+$ with $N=2,3$, let $a>0$ and $\beta\in\mathbb R$, let $u_0\in H^{1}_{\mathbb P\Delta_D}(\Omega;\mathbb R^N)$ and $Q_0\in H^2_{\Delta_N}(\Omega;S_0(N,\mathbb R))$, where $S_0(N,\mathbb R)$ and $H^s_A(\Omega)$ for $s\ge0$ are defined respectively in \eqref{def.S0} and in Definition \ref{def.HA}, then there is $T>0$ sufficiently small such that the $Q$-tensor system \eqref{BE.sys.} admits a solution $(u,\pi,Q)$, unique up to additive functions $c(t)$ on the pressure term, with 
$$ (u,Q)\in Y_T(\Omega),\quad \nabla \pi\in L^2\left((0,T);L^2\left(\Omega;\mathbb R^N\right)\right), $$
such that $u(t)\in H^2_{\mathbb P\Delta_D}(\Omega;\mathbb R^N)$, $Q(t)\in H^3_{\Delta_N}(\Omega;S_0(N,\mathbb R))$ and $\pi(t)\in L^2_{loc}(\Omega)$ for a.e. $t\in(0,T)$, where $Y_T(\Omega)$ is defined in \eqref{def.Y}.      
\end{thm}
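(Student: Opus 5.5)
We plan a contraction argument in which the linear estimates come from the energy method rather than from maximal regularity. The reason is the following. The nonlinear terms ${\rm Div}[\mathbb H Q]$ in $F$ contain third derivatives of $Q$ multiplied by $Q$. The terms $D(u)Q$, $W(u)Q$ in $G$ contain $\nabla u$ multiplied by $Q$. A naive perturbative treatment therefore loses one derivative. Our strategy is to linearize around a given state and keep these top-order terms on the left-hand side, so that the energy identity produces cancellations.

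\textbf{Step 1: the linearized problem.} Fix $(v,R)$ in a closed ball of $Y_T(\Omega)$ around the solution of the linear problem with data $(u_0,Q_0)$. Consider the linear system for $(u,Q)$ in which every highest-order coupling term is written with coefficients frozen at $R$:
\begin{itemize}
\item in the $u$-equation, ${\rm Div}[2\xi(\Delta Q\colon R)(R+\tfrac{Id}{N})-(\xi+1)\Delta Q\,R+(1-\xi)R\,\Delta Q]$;
\item in the $Q$-equation, $\xi(D(u)R+RD(u))+W(u)R-RW(u)-2\xi(R+\tfrac{Id}{N})R\colon\nabla u$.
\end{itemize}
All remaining terms (convection, $\nabla R\odot\nabla R$, the polynomial terms $\mathcal F$, and the lower-order parts of $\mathbb H$) are placed on the right-hand side as forcing depending on $(v,R)$. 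We prove existence for this linear problem by a Galerkin or continuity method, using the Stokes and Neumann-Laplace resolvents on $\Omega$. The key input is a uniform a priori estimate, which we expect to be the content of Proposition \ref{p.res-unif.es.}.

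\textbf{Step 2: energy estimate (main obstacle).} Test the $u$-equation with $-\mathbb P\Delta u$ and with $u$. Test the $Q$-equation with $\Delta^2Q$-type multipliers, namely $-\Delta(\Delta-a)Q$ together with $(\Delta-a)Q$; in the half space, $\partial_NQ=0$ ensures that the boundary terms vanish. The coupling $\beta{\rm Div}(\Delta-a)Q$ against $-\Delta u$ then cancels with $\beta D(u)$ against $\Delta(\Delta-a)Q$, exactly as in the linear energy identity. The frozen-coefficient terms also cancel at top order, by the same algebraic symmetry that underlies the Beris--Edwards energy law. Concretely, $\langle {\rm Div}[\Delta Q R-R\Delta Q],\Delta u\rangle$ and $\langle W(u)R-RW(u),\Delta^2 Q\rangle$ cancel after integration by parts, up to commutators in which a derivative falls on $R$. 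Those commutators are bounded by $\|\nabla R\|_{L^\infty_TH^1}$ times top norms, and in $N\le3$ they are controlled using Gagliardo--Nirenberg together with Young's inequality.

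Verifying that these cancellations survive on $\mathbb R^N_+$, where $u=0$ but $\Delta u\ne0$ on the boundary, is the delicate point. There we work with $-\mathbb P\Delta u$ and use $\nabla\pi$ only afterwards. Gronwall then gives
\[ \|(u,Q)\|_{Y_T}\le C(\|u_0\|_{H^1}+\|Q_0\|_{H^2})+CT^{\theta}P(\|(v,R)\|_{Y_T}) \]
for some $\theta>0$ and a polynomial $P$. The pressure is recovered from the Helmholtz decomposition, with $\nabla\pi\in L^2_TL^2$. Time derivatives are read off from the equations, and the regularity $u(t)\in H^2$, $Q(t)\in H^3$ comes from the $L^2_TH^{s+1}$ bounds.

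\textbf{Step 3: contraction.} For two pairs $(v_i,R_i)$, subtract the corresponding linear systems. The difference solves the same frozen system, with the extra forcing $(R_1-R_2)\Delta Q_2$ and the like. We estimate the difference in the weaker norm $X^0_T\times X^1_T$ by the same energy argument. This is necessary because one derivative is lost on the differences. For $T$ small we obtain a contraction in the weak norm on a ball that is bounded in $Y_T$. Weak-$*$ lower semicontinuity shows that the fixed point lies in $Y_T$, and the contraction yields uniqueness, with $\pi$ unique up to $c(t)$. Finally, $Q(t)\in S_0$ is preserved, since the equation for $Q-Q^T$ and ${\rm tr}\,Q$ has zero data; this uses the $\mathcal L$ projection in $G$.
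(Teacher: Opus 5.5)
\textbf{Gap: the top-order energy estimate on the half space.}

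Your overall scheme is a reasonable alternative to the paper's. You freeze the top-order coefficients at the iterate $R$, bound the solution in $Y_T$ by energy estimates, and contract in $X^0_T\times X^1_T$ on a $Y_T$-bounded ball. The paper instead freezes at the time-independent $Q_0$, builds linear solutions through the Yosida regularization $R^S_\varepsilon,R^H_\varepsilon$, and contracts in the full $Y_T$ norm using smallness of $\|V-Q_0\|_{L^\infty_TH^{s+1}}$. In $\mathbb R^N$ your multipliers reduce to sums over $i$ of the paper's, and the cancellation you describe holds. Your weak-norm contraction would also survive on $\mathbb R^N_+$, because at the $L^2\times H^1$ level \eqref{en-can.1-0} and \eqref{en-can.fg-0} are exact.

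The genuine gap is Step 2 on $\mathbb R^N_+$, which is exactly what keeps your $Y_T$-ball invariant. Your claim that $\partial_NQ=0$ removes the boundary terms is false for $\Delta^2Q$-type multipliers. The term $\int_{\mathbb R^N_+}\partial_tQ\colon\Delta^2Q\,dx$ produces $-\int_{\mathbb R^N_0}\partial_tQ\colon\partial_N^3Q\,dx'$, since $\partial_N\Delta Q=\partial_N^3Q$ on $\mathbb R^N_0$. This is not zero in general and is not even defined for $Q\in L^2_TH^3$.

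Worse, the linear coupling itself does not cancel. Write $\mathbb P\Delta u=\Delta u-\nabla\phi$. On $\mathbb R^N_0$ one has $\partial_N\phi=\partial_N^2u_N=-\sum_{j<N}\partial_j\partial_Nu_j$, which is generically nonzero, so $({\rm Div}(\Delta-a)Q,\nabla\phi)$ has no partner from the $Q$-equation. Moreover, with $M=(\Delta-a)Q$, integrating $({\rm Div}M,\Delta u)$ and $(D(u),\Delta M)$ by parts leaves $\int_{\mathbb R^N_0}M_{jN}\Delta u_j\,dx'$ and $\int_{\mathbb R^N_0}\partial_Nu_j\,\partial_NM_{jN}\,dx'$. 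These are traces of $\nabla^2u$ and $\nabla^3Q$: they are of top order and carry no small factor. So neither ``working with $-\mathbb P\Delta u$'' nor Gronwall can close the estimate.

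\textbf{The missing idea is the tangential/normal splitting.}

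For $i<N$, test with $-\partial_i^2u$ and pair $\partial_i$ of the $Q$-equation with $\partial_i(a-\Delta)Q$. Tangential derivatives preserve both $u=0$ and $\partial_NQ=0$, so the zero-order cancellation is inherited exactly. The only leftovers are commutators where the derivative falls on the frozen coefficient, and these carry a factor $t^\gamma$.

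For $i=N$, test with $-\partial_N^2u$ and pair $\partial_N$ of the $Q$-equation with $\partial_N(a-\Delta)Q$. Now $\partial_NQ$ satisfies a Dirichlet condition, so the $Q$-energy produces no boundary term. In the coupling, however, only the components with $k=N$ cancel pointwise. Each remaining term contains a tangential derivative and is bounded as in \eqref{en-can.1-N} by $\|\nabla^\prime\nabla u\|_{L^2_tL^2}\|Q\|_{X^2_t(\Omega)}+\|\nabla^\prime Q\|_{L^2_tH^2}\|\nabla u\|_{X^0_t(\Omega)}$.

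These terms are not small in $T$. They close only by coupling the tangential and normal estimates through Lemma~\ref{l.unif.bound.}, i.e.\ $x^2\le\alpha+C_1T^\gamma y^2$ and $y^2\le\alpha+C_2(T^\gamma y^2+xy)$, with $x$ the tangential norm and $y$ the full norm.

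These multipliers also lie in $L^2_TL^2$ for $(u,Q)\in Y_T$, whereas $\Delta(\Delta-a)Q$ does not. So your a priori estimate could not even be written for general elements of $Y_T$, as a continuity argument for the frozen linear problem would require. As it stands, your proof covers only $\Omega=\mathbb R^N$.
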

As a corollary, we can recover an $L^p-L^q$ local existence result:
\begin{cor}\label{c.loc.ex.pq.}
Let $\Omega=\mathbb R^N,\mathbb R^N_+$ with $N=2,3$, let $a>0$ and $\beta\in\mathbb R$, let $p,q\in (1,2]$, with $\frac{2}{p}+\frac{1}{q}<2$ when $\Omega=\mathbb R^N_+$, let $u_0$ and $Q_0$ as in Theorem \ref{t.loc.ex.} with the additional condition
$$ u_0\in B^{2(1-1/p)}_{q,p}\left(\Omega;\mathbb R^N\right)\cap J_q(\Omega), \quad Q_0\in B^{3-2/p}_{q,p}\left(\Omega;S_0(N,\mathbb R)\right), $$
let $(u,\pi,Q)$ be the solution from Theorem \ref{t.loc.ex.}, then
$$ u\in \bigcap_{\ell=0}^2 H^{\ell/2}_p\left((0,T);W^{2-\ell,q}\left(\Omega;\mathbb R^N\right)\right), \quad Q\in \bigcap_{\ell=0}^2 H^{\ell/2}_p\left((0,T);W^{3-\ell,q}\left(\Omega;\mathbb R^N\right)\right), $$
$$ \nabla \pi\in L^p\left((0,T);L^q\left(\Omega;\mathbb R^N\right)\right). $$
\end{cor}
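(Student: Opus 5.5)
The plan is to regard the solution $(u,\pi,Q)$ from Theorem \ref{t.loc.ex.} as the solution of the \emph{linear} system with right-hand sides $F(u,Q)$ and $G(u,Q)$, and to apply the $L^p$-$L^q$ maximal regularity for the linearized $Q$-tensor system. In $\mathbb R^N$ this regularity is available from \cite{SS19,MS22}; in $\mathbb R^N_+$ it is available from \cite{BM24}. The condition $\frac2p+\frac1q<2$ in the half space is the compatibility condition guaranteeing that the trace $\partial_N Q_0=0$ (and $u_0=0$) is meaningful in $B^{3-2/p}_{q,p}$, resp. $B^{2(1-1/p)}_{q,p}$. It also guarantees that the Neumann condition satisfied by $Q_0\in H^2_{\Delta_N}$ is the correct one. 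Uniqueness of the linear $L^p$-$L^q$ solution, together with the fact that the $L^2$ solution also solves the linear problem, gives the identification once we know $F(u,Q)\in L^p_TL^q$ and $G(u,Q)\in L^p_TW^{1,q}$ (the regularity class of the data for which maximal regularity yields $u\in W^{2,q}$, $Q\in W^{3,q}$ at the top level). Since $T<\infty$, we may shrink to finite time intervals and use inhomogeneous estimates.

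The key step is therefore to verify, using only the regularity $(u,Q)\in Y_T(\Omega)$, that $F\in L^p_TL^q$ and $G\in L^p_TW^{1,q}$ for $p,q\in(1,2]$. From $Y_T$ we have $u\in L^\infty_TH^1\cap L^2_TH^2$ and $Q\in L^\infty_TH^2\cap L^2_TH^3$, and in $N\le3$ these give $u\in L^\infty_TL^6$ and $\nabla Q\in L^\infty_TL^6$, with $Q\in L^\infty_TL^\infty$. Each nonlinear term is at least quadratic. By H\"older in space, a product of two factors lies in $L^r$ for some range of $r$ between $1$ and $2$. For instance, $(u\cdot\nabla)u\in L^\infty_T L^{3/2}\cap L^2_TL^2$ (from $u\in L^6$ and $\nabla u\in L^2$, resp. $\nabla u\in L^3$ interpolated). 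The worst terms are ${\rm Div}(\Delta Q\, Q)$ and ${\rm Div}(\nabla Q\odot\nabla Q)$. These are controlled by $\nabla^3Q\,Q$ and $\nabla^2Q\,\nabla Q$, which lie in $L^2_T(L^2\cap L^{3/2})$ via $Q\in L^\infty\cap L^2$ and $\nabla Q\in L^6$. The cubic terms from $\mathcal F(Q)$ are harmless. Combining these bounds with interpolation between $L^1$-type and $L^2$ spaces in space, and with the finiteness of $T$ in time, we get all terms in $L^p_TL^q$ for $q$ in $[q_0,2]$. For $q$ close to $1$ we need each product to be in $L^q$ for every $q>1$, which is where lower integrability is needed: one factor is in $L^2$ and the other can be taken in $L^{2q/(2-q)}$ or $L^2$, so products land in $L^1\cap L^2$ once each factor is in $L^2$ with the other bounded or in $L^2$. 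I expect this to be achievable for all terms except those where both factors carry top derivatives; there I would use $\nabla^2 Q\in L^2_TH^1\subset L^2_TL^6$ to pair with $\nabla Q\in L^\infty_TL^2$.

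The main obstacle is exactly the same high-order terms flagged in Remark \ref{rem.local-probl.}, namely the $Q$-component forcing $G$ requiring one derivative in $L^q$. This involves $\nabla(\nabla u\, Q)$, i.e. $\nabla^2u\,Q+\nabla u\,\nabla Q$, which is fine with $\nabla^2u\in L^2_TL^2$ and $Q\in L^\infty\cap L^2$. The more delicate point is the third-order quantity $\nabla^3Q\,Q$ inside $F$, which only sits in $L^2_TL^q$ through $Q\in L^\infty_T(L^2\cap L^\infty)$. This still suffices because $p\le2$ and $T<\infty$. Finally, the pressure estimate $\nabla\pi\in L^p_TL^q$ follows from maximal regularity as part of the linear theory, and it coincides with the $L^2$ pressure up to $c(t)$.
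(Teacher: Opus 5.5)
Your proposal is correct and follows the paper's proof. You treat $(u,\pi,Q)$ as the solution of the linear system with forcing $(F(u,Q),G(u,Q))$, invoke the $L^p$-$L^q$ maximal regularity of Theorem \ref{t.max.reg.}, and verify $F\in L^p_TL^q$, $G\in L^p_TW^{1,q}$ by H\"older, pairing an $L^2$ factor with an $L^{2q/(2-q)}$ factor controlled by $H^1$ or $H^2$ in $N\le 3$ and using $L^2_T\subset L^p_T$ for $p\le 2$, $T<\infty$. You are more explicit than the paper about identifying the $L^2$ solution with the $L^p$-$L^q$ solution, but strictly this needs uniqueness in a class containing both (or a consistency/density argument), not uniqueness in the $L^p$-$L^q$ class alone.
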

The estimate we are going to achieve by the energy method can be adopted to obtain the global well-posedness of the problem in three dimensions:
\begin{thm}\label{t.gl.ex.2}
Let $\Omega=\mathbb R^3,\mathbb R^3_+$, let $a>0$ and $\beta\in\mathbb R$, let $q\in\left(1,\frac{6}{5}\right)$
$$ u_0\in H^1_{\mathbb P\Delta_D}\left(\Omega;\mathbb R^3\right)\cap L^q\left(\Omega;\mathbb R^3\right), \quad Q_0\in H^2_{\Delta_N}\left(\Omega;S_0(3,\mathbb R)\right)\cap W^{1,q}\left(\Omega;S_0(3,\mathbb R)\right), $$
where $S_0(3,\mathbb R)$ and $H^s_A(\Omega)$ for $s\ge 0$ are defined respectively in \eqref{def.S0} and in Definition \ref{def.HA}, then there is $\varepsilon>0$ sufficiently small such that, when
$$ \|u_0\|_{H^1(\Omega)\cap L^q(\Omega)} + \|Q_0\|_{H^2(\Omega)\cap W^{1,q}(\Omega)}\le \varepsilon, $$
there is a solution $(u,\pi,Q)$ for the system \eqref{BE.sys.}, unique up to additive functions $c(t)$ on the pressure term, such that 
$$ (u,Q)\in Y(\Omega), \quad \nabla \pi\in L^2\left(\mathbb R_+;L^2\left(\Omega;\mathbb R^3\right)\right), $$
with $u(t)\in H^2_{\mathbb P\Delta_D}(\Omega;\mathbb R^N)$, $Q(t)\in H^3_{\Delta_N}(\Omega;S_0(N,\mathbb R))$ and $\pi(t)\in L^2_{loc}(\Omega)$ for a.e. $t>0$, where we recall the definition of $Y(\Omega)$ from \eqref{def.Y}.
\end{thm}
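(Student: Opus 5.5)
We prove Theorem \ref{t.gl.ex.2} by combining the local solution of Theorem \ref{t.loc.ex.} with a continuation argument, closed by an a priori estimate carrying time decay. Set $Y(\Omega)$ as in \eqref{def.Y} with $T=\infty$ and introduce the weighted energy
$$ E(t)=\sup_{0<s<t}(1+s)^{\gamma}\left(\|u(s)\|_{H^1}^2+\|Q(s)\|_{H^2}^2\right)+\int_0^t\left(\|\nabla u\|_{H^1}^2+\|Q\|_{H^3}^2+\|\partial_tu\|_{L^2}^2+\|\partial_t Q\|_{H^1}^2\right)ds, $$
with a decay exponent $\gamma>0$ to be fixed (heuristically $\gamma=\frac{3}{2}(\frac1q-\frac12)>1$, which is exactly where $q<6/5$ enters). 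The aim is a bound $E(t)\le C\varepsilon^2+C(E(t)^{3/2}+E(t)^2+\cdots)$ uniformly in $t$. Since $E$ is continuous in $t$ and small at $t=0$, a bootstrap then gives $E(t)\le 2C\varepsilon^2$ for all $t$, and the local theorem lets us extend the solution indefinitely. Uniqueness follows from the local uniqueness in Theorem \ref{t.loc.ex.}, and the pressure is recovered from the Helmholtz decomposition on each time interval.

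\emph{Energy estimates.} First derive the basic energy identity. Test the $u$-equation with $u$ and the $Q$-equation with $-(\Delta-a)Q$. The coupling terms $\beta\,{\rm Div}(\Delta-a)Q\cdot u$ and $-\beta D(u):(\Delta-a)Q$ cancel after integrating by parts. This uses $u=0$ and $\partial_\nu Q=0$ on $\mathbb R^N_0$ and the symmetry of $Q$.
\begin{itemize}
\item The linear part then gives $\frac{d}{dt}\big(\|u\|^2+\|\nabla Q\|^2+a\|Q\|^2\big)+c\big(\|\nabla u\|^2+\|(\Delta-a)Q\|^2\big)$.
\item Repeat at one higher order: test with $-\mathbb P\Delta u$ (or $\partial_t u$) and $(\Delta-a)^2Q$.
\item In the half space, use tangential derivatives $\nabla'$ plus the equations to recover normal derivatives, as in the proof of Proposition \ref{p.res-unif.es.}.
\end{itemize}
The highest-order nonlinear contributions must cancel in the same way as in that proposition. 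The pairing of ${\rm Div}[(\xi+1)\mathbb H Q-(1-\xi)Q\mathbb H]$ against $u$ with $W(u)Q-QW(u)+\xi(D(u)Q+QD(u))$ against $\mathbb H$ cancels, and similarly for the $\mathbb H:Q$ term. The remaining terms are at least quadratic with derivatives distributed admissibly. We bound them by Gagliardo--Nirenberg and Sobolev in three dimensions, for instance $\|f\|_{L^\infty}\lesssim\|f\|_{H^1}^{1/2}\|f\|_{H^2}^{1/2}$ and $\|f\|_{L^6}\lesssim\|\nabla f\|_{L^2}$. This yields
$$ \frac{d}{dt}\mathcal E+c\,\mathcal D\lesssim (\mathcal E^{1/2}+\mathcal E)\mathcal D, $$
where $\mathcal E\sim\|u\|_{H^1}^2+\|Q\|_{H^2}^2$ and $\mathcal D$ is the dissipation.

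\emph{Decay and closing.} The delicate point is that $\mathcal D$ does not control $\|u\|_{L^2}^2$; only $Q$ is damped, since $a>0$. This has two consequences. First, the cubic and quartic terms $\mathcal F(Q)$ are harmless because $Q$ decays exponentially at the linear level. Second, the $u$-component needs the $L^q$ assumption. Write the solution by Duhamel's formula using the linear semigroup of the coupled system. We assume its $L^q$--$L^2$ decay estimates are available from the resolvent analysis of the earlier sections (as in \cite{MS22}, \cite{BMS25}): the $u$-part decays like $t^{-\frac32(\frac1q-\frac12)}$ and its gradient one half-power faster, while the $Q$-part decays exponentially. Estimate the nonlinearities in $L^q\cap L^2$ (products like $u\cdot\nabla u\in L^1$ via Hölder) to obtain
$$ \|u(t)\|_{L^2}\lesssim (1+t)^{-\gamma/2}\big(\varepsilon+E(t)\big). $$
Feeding this into the energy inequality, and handling $\|u\|_{L^2}$-dependent terms time-weighted by $(1+t)^\gamma$, closes the bootstrap. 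The $L^1$-in-time integrability of $(1+t)^{-\gamma}$ requires $\gamma>1$, that is $q<6/5$. The $W^{1,q}$ assumption on $Q_0$ is used because $F$ contains ${\rm Div}(\nabla Q\odot\nabla Q)$. I expect the main obstacle to be this step: proving the linear decay estimates for the coupled, non-diagonal system in $\mathbb R^3_+$, and controlling the nonlinearity in $L^q$ with only $H^1\times H^2$ regularity without losing derivatives, where the cancellation is not available.
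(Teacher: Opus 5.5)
Your outline can be made to work, but it takes a different and heavier route than the paper, and several of its supporting claims are wrong.

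\textbf{How the paper argues.} The paper uses no continuation from Theorem \ref{t.loc.ex.} and no pointwise decay. It runs a single contraction for the full nonlinearity $(F(z,V),G(z,V))$ in the global space $Y(\Omega)$. The linear estimate has two pieces:
\begin{enumerate}
\item the $T$-independent energy bound of Proposition \ref{p.lin.es.gl.} in $\widetilde Y(\Omega)$, which uses only the \emph{linear} cancellation of Lemma \ref{l.en-can.1} and needs $F\in L^1L^2\cap L^2L^2$, $G\in L^1L^2\cap L^2H^1$;
\item the bound on $\|u\|_{L^2(\mathbb R_+;L^2)}$ in Proposition \ref{p.lin.es.gl.2}, obtained from Duhamel by splitting $\int_0^{t-1}+\int_{t-1}^t$ and applying Young's inequality in time with kernel $t^{-\frac32(\frac1q-\frac12)}\mathbf 1_{(1,\infty)}(t)$. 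This kernel is in $L^2$ exactly when $q<\frac65$.
\end{enumerate}
The nonlinear terms are then bounded in $L^2L^2\cap L^1(L^2\cap L^q)$ and $L^2H^1\cap L^1(H^1\cap W^{1,q})$ by Lemmas \ref{l.nonlin.es.gl.}--\ref{l.nonlin.gl.es.2}, e.g.\ $\|\nabla^3Q\,Q\|_{L^1L^q}\le\|\nabla^3Q\|_{L^2L^2}\|Q\|_{L^2L^{2q/(2-q)}}$. Top derivatives are only needed in $L^2_tL^2_x$, so nothing is lost. No nonlinear cancellation is used either: small $\varepsilon$ replaces the missing factor $C(T)\to0$ of Remark \ref{rem.local-probl.}, because $\|Q\|_{L^\infty}\|\nabla^3Q\|_{L^2}$ is already quadratic. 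So the ``main obstacle'' you anticipate does not arise.

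Your bootstrap would also yield decay rates. The price is a time-weighted energy hierarchy, plus checking that the local existence time depends only on $\|u_0\|_{H^1}+\|Q_0\|_{H^2}$. The paper instead gets global existence from one fixed-point step in $Y(\Omega)$.

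\textbf{What to correct.}

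(i) If $\|u(t)\|_{L^2}\lesssim t^{-\frac32(\frac1q-\frac12)}$, the weight on $\|u\|_{L^2}^2$ is $\gamma=3(\frac1q-\frac12)$. Your $\gamma=\frac32(\frac1q-\frac12)$ lies in $(\frac12,\frac34)$ for $q\in(1,\frac65)$, so it is never $>1$ and your integrability claim fails as written.

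(ii) For $\beta\neq0$ the $Q$-component of $e^{Bt}$ does not decay exponentially. At low frequencies $Q\approx\frac{\beta}{a}D(u)$, so it decays only like $\nabla u$.
\begin{itemize}
\item What makes $\mathcal F(Q)$ harmless is $Q\in L^2(\mathbb R_+;H^3)$, which comes from the damped dissipation.
\item The hypothesis $Q_0\in W^{1,q}$ is needed because $Q_0$ drives the slowly decaying $u$-mode through $\beta{\rm Div}(\Delta-a)Q$. The decay Lemmas \ref{l.sem.dec.es.RN}--\ref{l.sem.dec.es.hs} therefore act on $L^q\times W^{1,q}$ data. It has nothing to do with $\nabla Q\odot\nabla Q$.
\end{itemize}

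(iii) The weight $(1+t)^\gamma$ on $\|u\|_{L^2}^2$ cannot come through the energy identity. Multiplying it by $(1+t)^\gamma$ produces $\gamma(1+t)^{\gamma-1}\|u\|_{L^2}^2$, which with the available decay integrates only to $\log(1+t)$. That component must come from Duhamel alone. The weighted estimates for the higher-order quantities then need a $(1+t)^{\gamma-1}$ weight at the lower level. None of this is required for the theorem, which only asks for $u\in L^2(\mathbb R_+;L^2)$.
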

The difficulty behind the case $N=2$ comes from the $L^2(\mathbb R_+;L^2(\Omega;\mathbb R^N))$-norm of the function $u$. This phenomenon also appears in the heat case: let
$$ v(t,x)=e^{\Delta t}u_0(x)\quad (t,x)\in \mathbb R_+\times \mathbb R^N. $$
It is well-known the semigroup decay
$$ \|v(t)\|_{L^2(\mathbb R^N)}\le Ct^{-\frac{N}{2}\left(\frac{1}{q}-\frac{1}{2}\right)}\|u_0\|_{L^q(\Omega)} \quad q\in[1,2]. $$
In particular, when $N=2$, the decay rate is polynomial of order $\frac{1}{q}-\frac{1}{2}$. So, even assuming $u_0\in L^1(\Omega;\mathbb R^2)$, which gives the best decay rate, the function does not belong to $L^2(\mathbb R_+;L^2(\mathbb R^2;\mathbb R^2))$. To conclude, we observe that Theorem \ref{t.gl.ex.2} covers the $L^2$ setting case, which was not included in the results of \cite{BMS25}.

\vspace{2mm}

The paper is divided as follows: in Section \ref{sec.prel.} we prove some preliminary results that will be applied in sections \ref{sec.local} and  \ref{sec.global}, respectively devoted to the proof of the local and global well-posedness results, that is Theorems \ref{t.loc.ex.} and \ref{t.gl.ex.2}.

\bigskip

\thanks{
\textbf{Acknowledgements.} D.B. and V.G. are partially supported by INDAM, GNAMPA group, and by the "INdAM - GNAMPA Project", cod. CUP E53C25002010001. Y.S. is partially supported by JSPS KAKENHI, Grant Number JP23K22405. V.G. is partially supported  by Institute of Mathematics and Informatics, Bulgarian Academy of Sciences, by Top Global University Project, Waseda University. M.M. is partially supported by JSPS KAKENHI, Grant Numbers JP26K06877, JP23K22405}

\section{Preliminary results}\label{sec.prel.}
\subsection{The $Q$-tensor semigroup}

Let us consider the $Q$-tensor linear system:
\begin{equation}\label{BE.lin.sys.}
     \left\{\begin{array}{ll}
       (\partial_t-\Delta)u+\nabla \pi+\beta {\rm Div}(\Delta-a)Q= F & (0,T)\times \Omega \\
       (\partial_t-\Delta+a)Q-\beta D(u)=G  & (0,T)\times \Omega \\
       {\rm div} u=0 & (0,T)\times \Omega \\
       u=0,\quad \partial_\nu Q=0 & (0,T)\times\partial\Omega \\
       u(0)=u_0,\quad Q(0)=Q_0 & \Omega,
    \end{array}\right.
\end{equation}
The $L^p-L^q$ maximal regularity result for \eqref{BE.lin.sys.} is proved in \cite{MS22} and \cite{BM24} :
\begin{thm}\label{t.max.reg.}[Theorem 2.1 of \cite{MS22} and Theorem 1.2.2 of \cite{BM24}]\hfill\\
Let $N\ge 2$ and $\Omega=\mathbb R^N,\mathbb R^N_+$, let $a>0$, $\beta\in\mathbb{R}$ and $p,q\in(1,+\infty)$, with $\frac{2}{p}+\frac{1}{q}<2$ when $\Omega=\mathbb R^N_+$, let $T_0>0$ and $T\in(0,T_0)$, let 
$$ F\in L^p\left((0,T);L^q\left(\Omega;\mathbb{R}^N\right)\right),\quad  G\in L^p\left((0,T);W^{1,q}\left(\Omega;S_0(N,\mathbb R)\right)\right), $$
$$ u_0\in B^{2(1-1/p)}_{q,p}\left(\Omega;\mathbb{R}^N\right)\cap J_q(\Omega), \quad Q_0\in B^{3-2/p}_{q,p}\left(\Omega;S_0(N,\mathbb R)\right), $$
where $S_0(N,\mathbb R)$ is defined in \eqref{def.S0}, then we can find a solution $(u,\pi,Q)$ for the linear system \eqref{BE.lin.sys.} in $(0,T)$, unique up to additive functions $c(t)$ on the pressure term, with $\pi(t)\in L^q_{loc}(\Omega)$ for a.e. $t\in(0,T)$ and
$$ u\in \bigcap_{\ell=0}^2H^{\ell/2}_p\left((0,T);W^{2-\ell,q}\left(\Omega;\mathbb{R}^{N}\right)\right),\:\: \nabla \pi\in L^p\left((0,T);L^q\left(\Omega;\mathbb{R}^N\right)\right), $$
$$ Q\in \bigcap_{\ell=0}^2H^{\ell/2}_p\left(\mathbb{R}_+;W^{3-\ell,q}\left(\Omega;S_0(N,\mathbb{R})\right)\right), $$
such that
$$ \sum_{\ell=0}^2\|(u,Q)\|_{H^{\ell/2}_p((0,T);W^{2-\ell,q}(\Omega)\times W^{3-\ell,q}(\Omega))} + \|\nabla \pi\|_{L^p((0,T);L^q(\Omega))} $$
$$ \le C(a,\beta,p,q,T_0)\left[\|f\|_{L^p((0,T);L^q(\Omega))}+\|g\|_{L^p((0,T);W^{1,q}(\Omega))}+\|u_0\|_{B^{2(1-1/p)}_{q,p}(\Omega)}+\|Q_0\|_{B^{3-2/p}_{q,p}(\Omega)}\right]. $$
\end{thm}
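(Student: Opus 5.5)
Since this statement is quoted from \cite{MS22} (whole space) and \cite{BM24} (half space), the plan is to reconstruct the standard $\mathcal R$-boundedness route to maximal regularity. First split the problem. One part has zero right-hand side and initial data $(u_0,Q_0)$; the other has data $(F,G)$ and zero initial values. Extend $F,G$ by zero outside $(0,T)$. Apply the Laplace transform in $t$ to the inhomogeneous part, which gives the generalized resolvent problem
$$ \lambda u-\Delta u+\nabla\pi+\beta{\rm Div}(\Delta-a)Q=f,\quad \lambda Q-\Delta Q+aQ-\beta D(u)=g,\quad {\rm div}\,u=0, $$
with the boundary conditions of \eqref{BE.lin.sys.}, for $\lambda$ in a sector $\Sigma_{\varepsilon,\lambda_0}=\{|\arg\lambda|<\pi-\varepsilon,\ |\lambda|\ge\lambda_0\}$. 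The core claim is the existence of solution operators $\mathcal A(\lambda),\mathcal B(\lambda),\mathcal P(\lambda)$, holomorphic in $\lambda$. They should act from $L^q\times W^{1,q}$, suitably graded as $(f,g,\lambda^{1/2}g,\nabla g)$. We need the families $\{(\lambda,\lambda^{1/2}\nabla,\nabla^2)\mathcal A(\lambda)\}$ and $\{(\lambda,\lambda^{1/2}\nabla,\nabla^2)\nabla\mathcal B(\lambda)\}$, together with $\{\nabla\mathcal P(\lambda)\}$, to be $\mathcal R$-bounded on $L^q$. The same must hold for $\tau\partial_\tau$ applied to them, where $\lambda=\gamma+i\tau$.

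In $\mathbb R^N$, use the Fourier transform in $x$ and eliminate the pressure by applying $\mathbb P$ (equivalently, solve $\Delta\pi={\rm div}(f-\beta{\rm Div}(\Delta-a)Q)$). This gives a $2\times2$ block symbol in $(u,Q)$. The key algebraic step is to compute its determinant: after projecting onto divergence-free fields, the coupling $\beta{\rm Div}\Delta Q$ against $\beta D(u)$ produces a quantity like
$$ (\lambda+|\xi|^2)(\lambda+|\xi|^2+a)+\tfrac{\beta^2}{2}|\xi|^2(|\xi|^2+a). $$
Show that for $\lambda\in\Sigma_{\varepsilon,\lambda_0}$ it is bounded below by $c(|\lambda|+|\xi|^2)^2$. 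Then the entries of the inverse symbol, multiplied by the weights, are symbols of order zero satisfying Mikhlin bounds uniformly in $\lambda$. $\mathcal R$-boundedness then follows from the standard lemma on families of Fourier multipliers with uniformly Mikhlin symbols (Denk--Hieber--Prüss).

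In $\mathbb R^N_+$, first subtract a whole-space solution of the extended data. This reduces matters to a problem with zero interior data and inhomogeneous boundary traces. Apply the partial Fourier transform in $x'$ and solve the resulting ODE system in $x_N$. Its characteristic roots are $A=|\xi'|$, $B=\sqrt{\lambda+|\xi'|^2}$, and the roots coming from the coupled factor above. Write the solution as combinations of $e^{-Ax_N},e^{-Bx_N}$, the mixed roots, and their divided differences. Impose $u=0$ and $\partial_NQ=0$, and compute the Lopatinskii determinant. Proving that it does not vanish, with the right lower bound in $\Sigma_{\varepsilon,\lambda_0}\times\mathbb R^{N-1}$, is the step I expect to be the main obstacle, since the coupling parameter $\beta$ is arbitrary. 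I would first try to factor it as a product of the Stokes determinant and a heat-type factor, with a perturbation controlled by $\beta^2$. Once that is done, the solution operators are expressed through kernels of the form $\int e^{ix'\xi'}m(\lambda,\xi')e^{-Bx_N}\,d\xi'$, and their $\mathcal R$-bounds follow from Shibata's technical lemmas on such integral operators.

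Given the $\mathcal R$-bounds, Weis' operator-valued Fourier multiplier theorem gives the $L^p_\gamma$ estimates with an exponential weight $e^{-\gamma t}$ for the inhomogeneous part, including the $H^{1/2}_p$ time regularity; restricting to $(0,T)$ costs a factor $e^{\gamma T_0}$. The initial data part is handled separately. The resolvent bounds show that the linear operator generates an analytic semigroup on $J_q\times W^{1,q}$, and the real interpolation characterization of the trace space yields exactly $B^{2(1-1/p)}_{q,p}\times B^{3-2/p}_{q,p}$. In the half space, the condition $\frac2p+\frac1q<2$ is what makes this characterization compatible with the Neumann condition on $Q$: it avoids an extra compatibility condition for $\partial_NQ_0$. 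Uniqueness comes from the same resolvent estimates applied to the problem with zero data. Finally, the trace-free symmetric structure of $Q$ is preserved because $D(u)$ and $G$ lie in $S_0(N,\mathbb R)$ and the operator $\Delta-a$ commutes with transpose and trace.
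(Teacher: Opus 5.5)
The paper only quotes this result from \cite{MS22} and \cite{BM24}, and your route ($\mathcal R$-bounded solution operators, Weis' theorem, restriction to $(0,T)$) is the one used there. Your reconstruction, however, leaves the decisive half-space step open. You do not show that the Lopatinskii determinant is nonzero with a lower bound. A factorization ``Stokes $\times$ heat plus a perturbation controlled by $\beta^2$'' can only work for small $|\beta|$, while the statement covers every $\beta\in\mathbb R$.

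What handles arbitrary $\beta$ is the skew structure of the coupling, i.e.\ the cancellation \eqref{en-can.1-0}. Fix $\xi'$ and let $(v,\pi,R)$ be a decaying solution of the homogeneous ODE system with $v(0)=0$ and $\partial_NR(0)=0$. Test the first equation with $\bar v$ and the second with $(a+|\xi'|^2-\partial_N^2)\bar R$, using norms in $L^2(0,\infty)$ and $\nabla=(i\xi',\partial_N)$. The pressure drops out, and the coupling terms add up to $2i\beta\,{\rm Im}\int_0^\infty(a-\Delta)R:\overline{D(v)}\,dx_N$. With $E=\|v\|^2+a\|R\|^2+\|\nabla R\|^2$ you get $-{\rm Re}\lambda\,E=\|\nabla v\|^2+\|(a-\Delta)R\|^2$ and $|{\rm Im}\lambda|\,E\le|\beta|\,(\|\nabla v\|^2+\|(a-\Delta)R\|^2)$. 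Hence $E=0$ for $\lambda\neq0$ with $|\arg\lambda|<\pi-\arctan|\beta|$. This is non-vanishing in a $\beta$-dependent sector, and homogeneity plus compactness (with $a$ lower order for $|\lambda|\ge\lambda_0$) then give the lower bound.

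The same $\beta$-dependence is already needed in $\mathbb R^N$. As $|\xi|\to\infty$, the zeros in $\lambda$ of your factor $(\lambda+|\xi|^2)(\lambda+|\xi|^2+a)+\frac{\beta^2}{2}|\xi|^2(|\xi|^2+a)$ behave like $|\xi|^2(-1\pm i|\beta|/\sqrt2)$. So the bound $c(|\lambda|+|\xi|^2)^2$ on $\Sigma_{\varepsilon,\lambda_0}$ holds only if $\varepsilon>\arctan(|\beta|/\sqrt2)$, which is compatible with $\varepsilon<\pi/2$. Also, for $\beta\neq0$, $\sqrt{\lambda+|\xi'|^2}$ is not a characteristic root. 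The roots are $|\xi'|$, $\sqrt{\lambda+a+|\xi'|^2}$ (from ${\rm div\,Div}\,Q$ and the uncoupled components of $Q$), and the two roots of the coupled factor.

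Your initial-data step contains a false claim. We have $H^1_p((0,T);L^q)\cap L^p((0,T);W^{2,q})\hookrightarrow C([0,T];B^{2(1-1/p)}_{q,p})$, and the trace on $\mathbb R^N_0$ is bounded on $B^{2(1-1/p)}_{q,p}$ exactly when $2-\frac2p>\frac1q$, i.e.\ $\frac2p+\frac1q<2$. The same holds for $\partial_NQ$ when $Q_0\in B^{3-2/p}_{q,p}$. So $u_0=0$ and $\partial_NQ_0=0$ on $\mathbb R^N_0$ are \emph{necessary} precisely in the half-space regime assumed here. Correspondingly, $(J_q\times W^{1,q},D(B))_{1-1/p,p}$ is the subspace of $B^{2(1-1/p)}_{q,p}\times B^{3-2/p}_{q,p}$ cut out by these two conditions, not the full product. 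The hypothesis $\frac2p+\frac1q<2$ makes these compatibility conditions meaningful; it does not remove them. They must be added to the hypotheses. The quoted statement omits them, but they hold wherever the paper applies the result, since there $u_0\in H^1_{\mathbb P\Delta_D}$ and $Q_0\in H^2_{\Delta_N}$.

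Finally, remove $\lambda^{1/2}g$ from the input of your solution operators. Through Weis' theorem it would put an $H^{1/2}_p((0,T);L^q)$-norm of $G$ on the right-hand side, which is not assumed. It is also unnecessary: $g$ enters $u$ only through $\xi\hat g$ times order-zero symbols, so the data $(f,g,\nabla g)$ suffice.
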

To be underlined that the authors in \cite{MS22} state the result for $\Omega=\mathbb R^N$ only for $N\ge 3$. However, it can be seen that the linear estimate of the cited paper holds also for $N=2$.
\begin{rem}\label{rem.local-probl.}
From the a priori estimate of Theorem \ref{t.max.reg.}, we can understand the complexity of applying a direct contraction argument for the local existence: looking at the definition of the nonlinear terms $F(u,Q)$ and $G(u,Q)$, to apply Theorem \ref{t.max.reg.} we need to control the quantity
$$ \|\nabla^3 QQ\|_{L^p_TL^q} + \|\nabla^2 u Q\|_{L^p_TL^q}. $$
Asking $p>2$ and $q>N$, it can be seen that $\|Q\|_{L^\infty_TL^\infty}<+\infty$. Therefore
\begin{equation}\label{aux.local-probl.}
    \|\nabla^3 QQ\|_{L^p_TL^q} + \|\nabla^2 u Q\|_{L^p_TL^q}\le \|Q\|_{L^\infty_TL^\infty}\left[\|\nabla^3 Q\|_{L^p_TL^q} + \|\nabla^2 u\|_{L^p_TL^q}\right]<+\infty.
\end{equation}
However, to apply the contraction argument it is necessary an estimate of the type 
$$ \|\nabla^3 QQ\|_{L^p_TL^q} + \|\nabla^2 u Q\|_{L^p_TL^q}\le C(T)\|(u,Q)\|_{Y_T(\Omega)}^2 $$
with $C(T)\to 0$ as $T\to0^+$ and the inequality \eqref{aux.local-probl.} seems difficult to improve in our setting.
\end{rem}
It is possible to prove the existence of the semigroup corresponding to the linear system \eqref{BE.lin.sys.}. We explain briefly the argument, for more details see Section 4.1 of \cite{BM24}: we need to express $\pi$ in terms of $u$ and $Q$. Using Lemma 2.2.1 at p.73 and Lemma 1.4.2 at p.202 of \cite{So01}, it can be seen that there are $K_O(u,Q),K_e(F)\in L^p((0,T);L^q_{loc}(\Omega))$ such that
$$ \pi= K_O(u,Q) + K_e(F). $$
Let us define then the operator $B=(B_1,B_2)$, with
\begin{equation}\label{def.op.B}
    B_j(u,Q)=\left\{\begin{array}{ll}
        \Delta u -\nabla K_O(u,Q) - \beta {\rm Div}(\Delta-a)Q & j=1 \\
        (\Delta-a)Q + \beta D(u) & j=2.
    \end{array}\right.
\end{equation}
Finally, in \cite{MS22} and \cite{BM24} it is proved the existence of a semigroup corresponding to the operator $B$ in a proper domain $D(B)$:
\begin{thm}\label{t.sem.es.}\hfill\\
Let $\Omega=\mathbb R^N,\mathbb R^N_+$ with $N\ge 2$, let $a>0$ and $\beta\in\mathbb{R}$, let 
$$ B\colon D(B)\to L^2\left(\Omega;\mathbb R^N\right)\times H^1(\Omega;S_0(N,\mathbb R)) $$
be the operator defined in \eqref{def.op.B}, where $S_0(N,\mathbb R)$ comes from \eqref{def.S0} and
$$ D(B)= D_1(B)\times D_2(B), $$
where
$$ D_1(B)=\left\{u\in H^2\left(\Omega;\mathbb R^N\right)\cap J_2(\Omega)\:\Big|\: u=0\quad \text{on}\:\:\mathbb \partial\Omega\right\} $$
$$ D_2(B)= \left\{Q\in H^3(\Omega;S_0(N,\mathbb R))\: \Big|\: \partial_\nu Q=0 \quad \text{on}\:\:\partial\Omega\right\}, $$
then $B$ generates a $C_0$-analytic semigroup $\{e^{Bt}\}_{t>0}$ with base space
$$ J_2(\Omega)\times H^1(\Omega;S_0(N,\mathbb R)). $$
Moreover,
$$ \left\|e^{Bt}(u_0,Q_0)\right\|_{L^2(\Omega)\times H^1(\Omega)}\le C(a,\beta,N)\left[ \|u_0\|_{L^2(\Omega)} + \|Q_0\|_{H^1(\Omega)}\right] \quad t>0 $$
and there is $\gamma_0>0$ such that, for any $\gamma>\gamma_0$
$$ \left\|e^{-\gamma t}e^{Bt}(u_0,Q_0)\right\|_{H^1(\mathbb R_+;L^2(\Omega)\times H^1(\Omega))} + \left\|e^{-\gamma t}e^{Bt}(u_0,Q_0)\right\|_{L^2(\mathbb R_+,H^2(\Omega)\times H^3(\Omega))} $$
$$ \le C(a,\beta,N)\left[\|u_0\|_{H^1(\Omega)} + \|Q_0\|_{H^2(\Omega)}\right]. $$
\end{thm}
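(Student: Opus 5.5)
The plan is to derive everything from a resolvent analysis of $B$ in the $L^2$-setting, and to supplement it with an energy identity that gives the uniform-in-time bound.

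\textbf{Step 1: resolvent estimate.} For $\lambda$ in a sector $\Sigma_{\varepsilon,\lambda_0}=\{\lambda\in\mathbb C\setminus\{0\}\mid |\arg\lambda|\le\pi-\varepsilon,\ |\lambda|\ge\lambda_0\}$, I would consider the resolvent problem
$$ \lambda u-\Delta u+\nabla\pi+\beta{\rm Div}(\Delta-a)Q=f,\quad (\lambda-\Delta+a)Q-\beta D(u)=g,\quad {\rm div}\,u=0, $$
with $u=0$ and $\partial_\nu Q=0$ on $\partial\Omega$. I would take from \cite{MS22} and \cite{BM24} (the $\mathcal R$-bounded solution operators, specialized to $p=q=2$) the estimate
$$ |\lambda|\|u\|_{L^2}+\|u\|_{H^2}+|\lambda|\|Q\|_{H^1}+\|Q\|_{H^3}\le C\big(\|f\|_{L^2}+\|g\|_{H^1}\big). $$
After eliminating the pressure through $K_O$, this is exactly the statement that $\lambda-B$ is invertible from $D(B)$ onto $J_2(\Omega)\times H^1(\Omega)$, with $\|(\lambda-B)^{-1}\|\lesssim|\lambda|^{-1}$. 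Since $D(B)$ is dense, the standard theory then gives a $C_0$-analytic semigroup, so far only with a bound of the form $Ce^{\gamma_0 t}$.

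\textbf{Step 2: uniform bound for all $t>0$.} Take $(u_0,Q_0)\in D(B)$ (smooth data; the general case follows by density) and set $(u,Q)=e^{Bt}(u_0,Q_0)$. Test the $u$-equation with $u$ and the $Q$-equation with $-(\Delta-a)Q$.
\begin{itemize}
\item The pressure term vanishes because ${\rm div}\,u=0$ and $u|_{\partial\Omega}=0$.
\item Since $\partial_\nu Q=0$, we have $\int\partial_tQ:(-(\Delta-a)Q)=\frac{d}{dt}\big(\tfrac12\|\nabla Q\|^2+\tfrac a2\|Q\|^2\big)$.
\item The coupling terms cancel. Integrating by parts, $\beta\int{\rm Div}(\Delta-a)Q\cdot u=-\beta\int(\Delta-a)Q:\nabla u=-\beta\int(\Delta-a)Q:D(u)$, using the symmetry of $Q$. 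The $Q$-equation contributes $+\beta\int D(u):(\Delta-a)Q$, so the two cancel.
\end{itemize}
This yields
$$ \frac{d}{dt}\Big(\tfrac12\|u\|^2+\tfrac12\|\nabla Q\|^2+\tfrac a2\|Q\|^2\Big)+\|\nabla u\|^2+\|(\Delta-a)Q\|^2=0. $$
Since $a>0$, the energy is equivalent to $\|u\|_{L^2}^2+\|Q\|_{H^1}^2$. This gives the uniform bound with a constant $C(a,\beta,N)$.

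\textbf{Step 3: weighted maximal regularity.} For $\gamma>\gamma_0$, the pair $e^{-\gamma t}(u,Q)$ solves the linear problem with operator $B-\gamma$ and zero forcing. I would apply Theorem \ref{t.max.reg.} with $p=q=2$. Alternatively, one can use the Laplace transform together with Plancherel along ${\rm Re}\,\lambda=\gamma$, exploiting the resolvent bound from Step 1 uniformly in $\gamma$. For $p=q=2$ the trace space is $B^1_{2,2}\times B^2_{2,2}=H^1\times H^2$. This gives the $H^1(\mathbb R_+;L^2\times H^1)\cap L^2(\mathbb R_+;H^2\times H^3)$ bound in terms of $\|u_0\|_{H^1}+\|Q_0\|_{H^2}$. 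The remaining point is that the constant does not depend on $T_0$: that holds because after the shift by $\gamma$ the resolvent set contains the closed right half-plane.

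\textbf{Main obstacle.} The hard part is Step 1 in the half-space, where the Stokes part and the $Q$-part are coupled through $\beta$ and the boundary conditions are mixed. I would rely on the Lopatinskii-type analysis of \cite{BM24} rather than redo it. The other delicate point is the low-frequency part needed for the global bound; the energy identity of Step 2 handles it without any spectral information.
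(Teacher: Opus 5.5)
Your proposal is correct. For generation and for the weighted estimate it follows the paper: the paper also takes the analytic semigroup from the resolvent analysis of \cite{MS22} and \cite{BM24}, and obtains the last inequality from Theorem \ref{t.max.reg.} applied to \eqref{BE.lin.sys.} with $(F,G)=(0,0)$.

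The genuine difference is the bound uniform in $t>0$. The paper simply attributes it to \cite{MS22} and \cite{BM24}. You instead prove it from the energy identity, i.e. from the cancellation \eqref{en-can.1-0} of Lemma \ref{l.en-can.1}. That identity shows $B$ is dissipative for the equivalent inner product $(u,v)_{L^2}+(\nabla Q,\nabla P)_{L^2}+a(Q,P)_{L^2}$ on $J_2(\Omega)\times H^1(\Omega)$. Hence $e^{Bt}$ is a contraction in that norm, and the constant depends only on $a$.

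What each route buys:
\begin{itemize}
\item Your route is self-contained. It supplies exactly what the sectorial estimate for $|\lambda|\ge\lambda_0$ in your Step 1 cannot give, since that estimate alone only yields growth like $e^{\gamma_0 t}$. It needs no low-frequency resolvent or decay analysis, and it is the same mechanism the paper itself uses later for \eqref{proof.lin.gl.es.1}.
\item The paper's citation route bundles the bound with resolvent machinery the paper needs anyway, for Lemma \ref{l.res.eq.B} and the decay Lemmas \ref{l.sem.dec.es.RN} and \ref{l.sem.dec.es.hs}.
\end{itemize}
You are also more explicit than the paper about why the weighted constant does not depend on $T_0$: a uniform resolvent bound on ${\rm Re}\,\lambda\ge\gamma$ plus Plancherel in the Hilbert setting. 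The paper leaves this implicit, even though Theorem \ref{t.max.reg.} is stated only on finite intervals $(0,T)$.

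One point to tighten, which you share with the paper's formulation. In the half-space the trace space for $p=q=2$ is not all of $H^1\times H^2$. Since $e^{-\gamma t}e^{Bt}(u_0,Q_0)$ lies in $H^1(\mathbb R_+;J_2\times H^1)\cap L^2(\mathbb R_+;D(B))$, its initial value must belong to $(J_2\times H^1,D(B))_{1/2,2}=H^1_{\mathbb P\Delta_D}(\Omega)\times H^2_{\Delta_N}(\Omega)$. So the last estimate requires $u_0=0$ and $\partial_\nu Q_0=0$ on $\partial\Omega$; otherwise it fails already for $\beta=0$. This is harmless, because the paper only applies the estimate to such data.
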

The last inequality comes from the maximal regularity estimate of Theorem \ref{t.max.reg.} applied to \eqref{BE.lin.sys.} with $(F,G)=(0,0)$.

\subsection{Yosida operators properties}

Let $\varepsilon>0$. We recall the definition of the Yosida operators
\begin{equation}\label{Yoshida-res.}
     R_\varepsilon^Su=\left(1-\varepsilon\mathbb P\Delta_D\right)^{-1}u, \quad R_\varepsilon^HQ=\left(1-\varepsilon\Delta_N\right)^{-1}Q,
\end{equation}
that correspond respectively to the Stokes operator with Dirichlet boundary conditions and the Laplacian operator with Neumann boundary conditions. Before stating the properties of $R_\varepsilon^S$ and $R_\varepsilon^H$, we recall the following result, which is a consequence of Theorems IV.2.1 and IV.3.2 of \cite{G11}:
\begin{thm}\label{t.res.reg.}
Let $\Omega=\mathbb R^N,\mathbb R^N_+$ with $N\ge 2$, let $f\in H^m(\Omega;\mathbb R^N)$ with $m\in\mathbb N$, then we can find $v\in H^{m+2}_{loc}(\Omega;\mathbb R^N)$ and $\pi\in H^{m+1}_{loc}(\Omega)$ that solve the system 
$$ \left\{\begin{array}{ll}
    \Delta v=\nabla \pi + f & \Omega \\
    {\rm div}v=0 & \Omega \\
    v=0 & \partial\Omega,
\end{array}\right. $$
and for any $\ell\in\{0,\ldots, m\}$ it holds
$$ \left\|\nabla^{\ell+2}v\right\|_{L^2(\Omega)} + \left\|\nabla^{\ell+1}\pi\right\|_{L^2(\Omega)}\le C(N,m) \left\|\nabla^\ell f\right\|_{L^2(\Omega)}. $$
Moreover, if $(v_1,\pi_1)$ is another solution of the system with $\|\nabla^{\ell+2}v_1\|_{L^2(\Omega)}<+\infty$ for some $\ell\in\{0,\ldots, m\}$, then
$$ \left\|\nabla^{\ell+2}(v_1-v)\right\|_{L^2(\Omega)}=\left\|\nabla^{\ell+1}(\pi_1-\pi)\right\|_{L^2(\Omega)}=0. $$
\end{thm}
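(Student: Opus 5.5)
The plan is to reduce everything to the classical $\ell=0$ theory of the stationary Stokes system in $\mathbb R^N$ and $\mathbb R^N_+$, taken from Theorems IV.2.1 and IV.3.2 of \cite{G11}. The higher-order estimates then come from differentiating the equation in the directions that preserve the boundary conditions, and recovering the remaining derivatives from the equations themselves. Uniqueness will follow from a Liouville-type argument for the homogeneous problem.

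\emph{Whole space.} For $\Omega=\mathbb R^N$ I would work on the Fourier side. Taking the divergence of $\Delta v=\nabla\pi+f$ and using ${\rm div}v=0$ gives $\Delta\pi=-{\rm div}f$. This suggests defining
$$\widehat{\nabla\pi}(\xi)=-\frac{\xi\,\xi\cdot\hat f(\xi)}{|\xi|^2},\qquad \widehat{\nabla^2 v}(\xi)=-\frac{\xi\otimes\xi}{|\xi|^2}\,\widehat{\mathbb P f}(\xi),$$
with $\mathbb P$ the Helmholtz projection. All multipliers involved are homogeneous of degree $0$ and bounded, so Plancherel gives $\|\nabla^{\ell+2}v\|_{L^2}+\|\nabla^{\ell+1}\pi\|_{L^2}\le C\|\nabla^\ell f\|_{L^2}$ for every $\ell\le m$, since $\nabla^\ell$ commutes with the multipliers. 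The local regularity $v\in H^{m+2}_{loc}$ and $\pi\in H^{m+1}_{loc}$ follows because $v$ and $\pi$ are determined up to polynomials, and can be normalized to lie in $L^2_{loc}$ by Poincaré-type inequalities on balls.

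For uniqueness, let $w=v_1-v$ and $p=\pi_1-\pi$. Then $\Delta w=\nabla p$ and ${\rm div}\,w=0$, so $\Delta p=0$ and $\Delta^2 w=0$ in the sense of distributions. Since $\nabla^{\ell+2}w\in L^2$, the tempered distribution $\widehat{\nabla^{\ell+2}w}$ is supported at the origin and is an $L^2$ function, hence it vanishes. The same argument applies to $\nabla^{\ell+1}p=\Delta\nabla^\ell w$.

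\emph{Half space.} For $\Omega=\mathbb R^N_+$, Theorem IV.3.2 of \cite{G11} with $q=2$ supplies the solution and the estimate for $\ell=0$, together with uniqueness in the class $\nabla^2 v\in L^2$ (modulo the trivial pressure constant). For $\ell\ge1$ I proceed by induction.

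First, tangential derivatives $\nabla'$ commute with the equation and with the Dirichlet condition on $\mathbb R^N_0$. Applying the $\ell-1$ estimate to $(\partial_j v,\partial_j\pi)$, $j<N$, which solves the same system with datum $\partial_j f$, therefore controls every $\nabla^{\ell+2}v$ and $\nabla^{\ell+1}\pi$ containing at least one tangential derivative. To make this rigorous one can use difference quotients in $x'$, or Fourier transform in $x'$.

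The purely normal derivatives are then recovered algebraically from the equations:
\begin{itemize}
\item ${\rm div}\,v=0$ gives $\partial_N v_N=-\nabla'\cdot v'$;
\item the $N$-th component of the momentum equation gives $\partial_N\pi=\partial_N^2v_N+\Delta' v_N-f_N$, so $\partial_N\pi=-\partial_N\nabla'\cdot v'+\Delta'v_N-f_N$;
\item the tangential components give $\partial_N^2 v'=\nabla'\pi+f'-\Delta'v'$.
\end{itemize}
Differentiating these identities $\ell$ times in $x_N$ expresses $\partial_N^{\ell+2}v$ and $\partial_N^{\ell+1}\pi$ through terms with fewer normal derivatives, plus $\nabla^\ell f$. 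The induction closes once it is carried out on the number of normal derivatives.

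Uniqueness at level $\ell$ reduces to the $\ell=0$ case. Tangential derivatives of $(w,p)$ of order $\ell$ solve the homogeneous problem with $\nabla^2$ in $L^2$, so they vanish. Then $w$ and $p$ are polynomial in $x'$, and the recovery identities together with the $L^2$ integrability force $\nabla^{\ell+2}w=0$ and $\nabla^{\ell+1}p=0$.

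The main obstacle is this half-space uniqueness, i.e. a Liouville theorem for Stokes with Dirichlet data when only high-order derivatives are known to be in $L^2$. I would handle it first by partial Fourier transform in $x'$, which gives an ODE system in $x_N$ with explicit decaying/growing modes, and fall back on Galdi's IV.3 uniqueness statement if needed.
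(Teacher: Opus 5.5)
Your proposal is correct, but it takes a different route from the paper. The paper gives no argument of its own. It reads the statement off Theorems IV.2.1 and IV.3.2 of \cite{G11}, which hold for every $q\in(1,\infty)$ (in the half-space even with inhomogeneous Dirichlet data). Those theorems already give the solution, the bounds $\|\nabla^{k+2}v\|_{L^q}+\|\nabla^{k+1}\pi\|_{L^q}\lesssim\|\nabla^k f\|_{L^q}$ for all $k\in\{0,\ldots,m\}$, and the uniqueness clause quoted here; the paper just sets $q=2$ and zero boundary datum.

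You instead redo the whole space by Fourier multipliers and use \cite{G11} in $\mathbb R^N_+$ only at order $\ell=0$. Higher orders come from tangential difference quotients $D_h^j$, $j<N$, plus recovery of the purely normal derivatives from the equations. The difference-quotient step is legitimate: $D_h^j$ preserves $\mathbb R^N_+$ and the Dirichlet condition, and level-$(\ell-1)$ uniqueness applies to $D_h^jv$ because $\nabla^{\ell+1}v\in L^2$. The citation buys brevity and all $q$. Your route gives a transparent, essentially self-contained $L^2$ explanation of the higher-order bounds, needing only the lowest-order half-space theory.

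Two steps can be simplified.
\begin{itemize}
\item No induction on the number of normal derivatives is needed. The identities $\partial_N^{\ell+2}v_N=-\partial_N^{\ell+1}\nabla'\cdot v'$, $\partial_N^{\ell+1}\pi=\partial_N^{\ell}(-\partial_N\nabla'\cdot v'+\Delta'v_N-f_N)$ and $\partial_N^{\ell+2}v'=\partial_N^{\ell}(\nabla'\pi+f'-\Delta'v')$ express the purely normal derivatives at once through derivatives with a tangential direction, plus $\nabla^\ell f$.
\item The half-space uniqueness you call the main obstacle closes without any ODE analysis. Once $\nabla^2\partial_{x'}^{\alpha}w=0$ for all tangential $|\alpha|=\ell$, entries of $\nabla^{\ell+2}w$ with at least $\ell$ tangential derivatives vanish outright. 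An entry $h$ with $k<\ell$ tangential derivatives satisfies $\partial_{x'}^{\delta}h=0$ for all tangential $|\delta|=\ell-k$, since $\partial_{x'}^{\delta}h$ is a derivative of order $\ell+2-k\ge 2$ in $x_N$ of an affine function. Hence the partial Fourier transform of $h$ in $x'$ is an $L^2$ function supported in the null set $\{\xi'=0\}$, so $h=0$. Then $\nabla^{\ell+1}p=\nabla^{\ell}\Delta w=0$ as well.
\end{itemize}
A minor correction: with $\widehat{\partial_jg}=i\xi_j\hat g$, the Hessian multiplier is $+\xi\otimes\xi\,|\xi|^{-2}\widehat{\mathbb Pf}$. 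The sign does not affect the bound.
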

Thanks to this result, we can verify some important properties for the Yosida approximation operators:
\begin{lem}\label{l.res.prop.}
Let $u\in H^2_{\mathbb P\Delta_D}(\Omega)$ and $Q\in H^3_{\Delta_N}(\Omega)$, let $\varepsilon>0$ and let $R_\varepsilon^S$ and $R_\varepsilon^H$ from \eqref{Yoshida-res.}. Then, the following identities hold:
\begin{equation}\label{R_eps_ident}
R_\varepsilon^S u \;=\; u + \varepsilon\,\Delta R_\varepsilon^S u, 
\qquad 
R_\varepsilon^H Q \;=\; Q + \varepsilon\,\Delta R_\varepsilon^H Q.
\end{equation}
Moreover
$$ \Delta R_\varepsilon^S u \;\in\; H^2_{\mathbb{P}\Delta_D}(\Omega),
\qquad
\Delta R_\varepsilon^H Q \;\in\; H^3_{\Delta_N}(\Omega). $$
\end{lem}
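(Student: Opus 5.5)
Set $v=R_\varepsilon^S u$ and $P=R_\varepsilon^H Q$. Since $1-\varepsilon\mathbb P\Delta_D$ and $1-\varepsilon\Delta_N$ are positive self-adjoint with bounded inverses on $J_2(\Omega)$ and $L^2(\Omega)$, we have $v\in D(\mathbb P\Delta_D)=H^2_{\mathbb P\Delta_D}(\Omega)$ and $P\in D(\Delta_N)=H^2_{\Delta_N}(\Omega)$. They satisfy
$$ v-\varepsilon\mathbb P\Delta v=u, \qquad P-\varepsilon\Delta P=Q. $$
For $Q$ the identity \eqref{R_eps_ident} is immediate. For $u$, the issue is that the resolvent equation only gives $\mathbb P\Delta v$ and not $\Delta v$. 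I will therefore read the identity with $\Delta R^S_\varepsilon u$ understood as $\mathbb P\Delta R^S_\varepsilon u$; equivalently, $\Delta v-\nabla\pi_v=\varepsilon^{-1}(v-u)$ for a pressure $\pi_v$. The main point of the lemma is then the regularity and boundary behaviour of $\mathbb P\Delta v=\varepsilon^{-1}(v-u)$ and of $\Delta P=\varepsilon^{-1}(P-Q)$.

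\emph{Heat part.} Because $Q\in H^3_{\Delta_N}\subset D(\Delta_N)$ and the resolvent commutes with $\Delta_N$ on its domain, we get
$$ \Delta P=\varepsilon^{-1}(P-Q)=R_\varepsilon^H\Delta_N Q. $$
Here $\Delta_N Q\in H^1(\Omega)$, so $\Delta_N Q\in D((1-\Delta_N)^{1/2})$; the Neumann form domain is all of $H^1$. Applying $R_\varepsilon^H$ gains two derivatives, hence $\Delta P\in D((1-\Delta_N)^{3/2})=H^3_{\Delta_N}(\Omega)$. When $\Omega=\mathbb R^N$ this is a Fourier multiplier computation. For $\mathbb R^N_+$ I would use even reflection in $x_N$, which conjugates $\Delta_N$ to the whole-space Laplacian and preserves $H^3$ together with the condition $\partial_N=0$. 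Either way $\Delta P\in H^3$ and $\partial_N\Delta P=0$ on $\mathbb R^N_0$.

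\emph{Stokes part (main obstacle).} Similarly,
$$ \mathbb P\Delta v=\varepsilon^{-1}(v-u)=R_\varepsilon^S\,\mathbb P\Delta u. $$
Here $\mathbb P\Delta u\in J_2(\Omega)$ but is only in $L^2$, so we only get $\mathbb P\Delta v\in D(\mathbb P\Delta_D)=H^2_{\mathbb P\Delta_D}(\Omega)$ once the domain is known to equal $H^2\cap H^1_0\cap J_2$. That identification is exactly Theorem \ref{t.res.reg.}: write $w=\mathbb P\Delta v$ and $\mathbb P\Delta_D w=\varepsilon^{-1}(w-\mathbb P\Delta u)$, regard this as a Stokes system with right-hand side in $L^2$, and the theorem gives $\nabla^2 w\in L^2$. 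Combined with $w\in H^1_0\cap J_2$, which comes from the form domain via Definition \ref{def.HA}, this yields $w\in H^2(\Omega)$ with zero trace and zero divergence.

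The delicate points are two. First, justifying the commutation $R^S_\varepsilon\mathbb P\Delta_D=\mathbb P\Delta_D R^S_\varepsilon$ on $D(\mathbb P\Delta_D)$, which is standard for resolvents. Second, checking that $H^2_{\mathbb P\Delta_D}$ defined through $(1-\mathbb P\Delta_D)$ agrees with $H^2\cap H^1_0\cap J_2$. For the second, I would use Theorem \ref{t.res.reg.} with $m=0$ together with its uniqueness clause to identify the Stokes solution with the resolvent output. In the half-space, the uniqueness modulo pressure in Theorem \ref{t.res.reg.} is precisely what rules out spurious harmonic solutions.
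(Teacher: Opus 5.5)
Your treatment of $R_\varepsilon^H$ is correct and matches the paper. The paper notes $R_\varepsilon^HQ\in H^3_{\Delta_N}(\Omega)$ and reads off $\Delta R_\varepsilon^HQ=\varepsilon^{-1}(R_\varepsilon^HQ-Q)\in H^3_{\Delta_N}(\Omega)$; your route through $R_\varepsilon^H\Delta_NQ$ is equivalent.

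For the Stokes part, however, you do not prove the stated identity. Once $\Delta R_\varepsilon^Su$ is read as $\mathbb P\Delta R_\varepsilon^Su$, the first identity in \eqref{R_eps_ident} is just the resolvent equation. The membership $\mathbb P\Delta R_\varepsilon^Su=\varepsilon^{-1}(R_\varepsilon^Su-u)\in H^2_{\mathbb P\Delta_D}(\Omega)$ then follows at once, because both terms lie in $D(\mathbb P\Delta_D)$. So your commutation and Stokes-regularity detour is unnecessary. What is missing is exactly $\mathbb P\Delta R_\varepsilon^Su=\Delta R_\varepsilon^Su$, i.e.\ that the pressure $\pi$ in $\Delta v-\nabla\pi=\varepsilon^{-1}(v-u)$, $v=R_\varepsilon^Su$, vanishes. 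The paper obtains this in three steps:
\begin{enumerate}
\item It applies Theorem \ref{t.res.reg.} with $m=2$, using that the datum $\varepsilon^{-1}(v-u)$ is in $H^2$.
\item This gives $v\in H^4(\Omega)$, hence $\operatorname{div}\Delta v=\Delta\operatorname{div}v=0$.
\item From this it concludes $\mathbb P\Delta v=\Delta v$.
\end{enumerate}

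Your caution is nonetheless justified, because step 3 only works for $\Omega=\mathbb R^N$. There $\mathbb P$ commutes with $\Delta$, and you should simply prove the literal identity that way. In $\mathbb R^N_+$, membership in $J_2$ also requires a vanishing normal trace. On $\mathbb R^N_0$ one has $\Delta v=\nabla\pi$, so $(\Delta v)_N=\partial_N\pi$, which is generally nonzero.

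In fact the literal identity is false in the half space. Suppose $\nabla\pi=0$. Then $v=(1-\varepsilon\Delta_D)^{-1}u$ componentwise, and $\operatorname{div}v=0$ together with $v=0$ on $\mathbb R^N_0$ forces $\partial_Nv_N=0$ on $\mathbb R^N_0$. But in tangential Fourier variables $\partial_N\widehat{v_N}(\xi,0)=\varepsilon^{-1}\int_0^\infty e^{-Ay}\widehat{u_N}(\xi,y)\,dy$ with $A=\sqrt{\varepsilon^{-1}+|\xi|^2}$. This is nonzero for some $\xi$ when, for instance, $N=2$, $u=(\partial_2\psi,-\partial_1\psi)$, $\psi=\phi(x_1)\chi(x_2)$, with $0\neq\phi\in C^\infty_c(\mathbb R)$ and $0\le\chi\in C^\infty_c((0,\infty))$, $\chi\not\equiv0$. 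Similarly, $\Delta R_\varepsilon^Su\notin H^1_0(\mathbb R^N_+)$ in general.

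So your $\mathbb P\Delta$ version is the correct statement in the half space. You should present it as a correction of the lemma rather than as a reading of it. You should also note that it does not support later uses of the literal form, e.g.\ Remark \ref{rem.trace-R_eps.}, which derives $\partial_N^2R_\varepsilon^Su\in H^2_{\mathbb P\Delta_D}(\Omega)$ from it.
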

\begin{proof}\hfill\\
\textbf{Step 1: Identities \eqref{R_eps_ident}.}
By definition of the Yosida operators,
$$ (1-\varepsilon \mathbb{P}\Delta_D)\, R_\varepsilon^S u = u,
\qquad
(1-\varepsilon \Delta_N)\, R_\varepsilon^H Q = Q. $$
Hence
$$ u = R_\varepsilon^S u - \varepsilon\,\mathbb{P}\Delta_D R_\varepsilon^S u,
\qquad
Q = R_\varepsilon^H Q - \varepsilon\,\Delta R_\varepsilon^H Q. $$
Since $R_\varepsilon^S u \in H^2_{\mathbb{P}\Delta_D}(\Omega)$, Theorem~\ref{t.res.reg.},
applied with $m=2$, ensures $R_\varepsilon^S u\in H^4(\Omega)$ and therefore
$\operatorname{div} \Delta R_\varepsilon^S u=0$. In particular, 
$$ \mathbb{P}\Delta R_\varepsilon^S u = \Delta R_\varepsilon^S u. $$

\bigskip
\noindent \textbf{Step 2: Regularity of $\Delta R_\varepsilon^S u$ and $\Delta R_\varepsilon^H Q$.} Firstly, since $Q\in H^3_{\Delta_N}(\Omega)$ it is clear that $R_\varepsilon^HQ\in H^3_{\Delta_N}(\Omega)$. So, the second identity of \eqref{R_eps_ident} yields 
$$ \Delta R_\varepsilon^HQ\in H^3_{\Delta_N}(\Omega). $$
On the other hand, in Step 1 we have already shown that $R_\varepsilon^S u\in H^4(\Omega)$ and $\mathbb P\Delta R_\varepsilon^Su=\Delta R_\varepsilon^Su$. Finally, from the first identity of \eqref{R_eps_ident} we get
$$ \Delta R_\varepsilon^S u\in H^2(\Omega)\cap H^1_0(\Omega)\cap J_2(\Omega)
=H^2_{\mathbb{P}\Delta_D}(\Omega). $$
\end{proof}
\begin{rem}\label{rem.trace-R_eps.}
It can be noticed that $\partial_j^2R_\varepsilon^Su\in H^2_{\mathbb P\Delta_D}(\Omega)$ for $j=1,\ldots, N$. In fact, when $\partial_j\in\nabla^\prime$, since $R_\varepsilon^S u\in H^4(\Omega)$ it holds 
$$ {\rm div}\partial_j^2R_\varepsilon^S u=\partial_j^2 {\rm div}R_\varepsilon^S u=0. $$
Moreover, it is clear that $\partial_j^2R_\varepsilon^Su=0$ on $\mathbb R^N_0$ when $\partial_j\in\nabla^\prime$. Finally, 
$$ \partial_N^2R_\varepsilon^Su=(\Delta-\Delta^\prime)R_\varepsilon^Su\in H^2_{\mathbb P\Delta_D}(\Omega), $$
where we recall that $\Delta^\prime=\nabla^\prime\cdot\nabla^\prime$. 
\end{rem}
We state the following result:
\begin{lem}\label{l.res.eq.}
Let $\Omega=\mathbb R^N,\mathbb R^N_+$ with $N=2,3$, let $A=\mathbb P\Delta_D,\Delta_D,\Delta_N$, let $s\in[0,2]$, then for any $f\in H^{s}_A(\Omega)$ it holds
  $$ \|(1-A)^{s/2}f\|_{L^2(\Omega)} \sim \|f\|_{H^s(\Omega)}. $$
\end{lem}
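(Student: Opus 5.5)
The plan is to prove the equivalence at the integer levels $s=0,1,2$ and then obtain the intermediate values by complex interpolation. Since $-A$ is nonnegative and self-adjoint, $H^s_A(\Omega)=D((1-A)^{s/2})$ is exactly the complex interpolation space $[L^2(\Omega),D(A)]_{s/2}$, with equivalent norms. The same scale interpolates the Sobolev spaces with the corresponding boundary conditions. So once the two endpoint equivalences ($s=0$ and $s=2$) are established, the case $s\in(0,2)$ follows. The case $s=0$ is trivial.

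For $s=1$ we use the quadratic form directly:
$$ \|(1-A)^{1/2}f\|_{L^2}^2=\|f\|_{L^2}^2+\langle -Af,f\rangle=\|f\|_{L^2}^2+\|\nabla f\|_{L^2}^2 , $$
first for $f\in D(A)$ and then by density for $f$ in the form domain. Integrating by parts, the boundary term vanishes because either $f=0$ on $\partial\Omega$ (Dirichlet case) or $\partial_\nu f=0$ (Neumann case). For $A=\mathbb P\Delta_D$ we also use $\langle -\mathbb P\Delta f,f\rangle=\langle-\Delta f,f\rangle$, which holds because $f\in J_2(\Omega)$ and $\mathbb P$ is an orthogonal projection. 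In this case the equality is exact, so $s=1$ needs no interpolation.

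For $s=2$ we have
$$ \|(1-A)f\|_{L^2}^2=\|f\|_{L^2}^2+2\|\nabla f\|_{L^2}^2+\|Af\|_{L^2}^2 , $$
so it suffices to show $\|\nabla^2 f\|_{L^2}\lesssim \|Af\|_{L^2}+\|f\|_{L^2}$; the reverse inequality is immediate. We treat the operators as follows.
\begin{itemize}
\item For $\Omega=\mathbb R^N$, this is Plancherel.
\item For $\Delta_D$ and $\Delta_N$ on $\mathbb R^N_+$, extend $f$ by odd, respectively even, reflection in $x_N$. The boundary conditions make the extension belong to $H^2(\mathbb R^N)$ with $\Delta\tilde f=\widetilde{\Delta f}$, and Plancherel then gives the bound.
\item For $A=\mathbb P\Delta_D$, set $g=\mathbb P\Delta f$. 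Then $(f,\pi)$ solves the Stokes system of Theorem \ref{t.res.reg.} with right-hand side $g$, for a suitable pressure, since $\Delta f-g\in G_2(\Omega)$. Theorem \ref{t.res.reg.} with $\ell=0$, together with its uniqueness statement, gives $\|\nabla^2 f\|_{L^2}\lesssim\|g\|_{L^2}$.
\end{itemize}

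The main obstacle is this Stokes step. We must justify that $f\in D(\mathbb P\Delta_D)$ really has $\|\nabla^2 f\|_{L^2}<\infty$, so that the uniqueness clause of Theorem \ref{t.res.reg.} identifies $f$ with the solution $v$ constructed there, and not merely with $v$ up to a polynomial. For this we would use the characterization $D(\mathbb P\Delta_D)=H^2\cap H^1_0\cap J_2$ recalled after Definition \ref{def.HA}. We also need to check that the interpolation identification holds for the solenoidal scale, which works because $\mathbb P$ commutes with the resolvent, so $[J_2(\Omega),D(\mathbb P\Delta_D)]_{\theta}$ is a closed subspace of the corresponding Sobolev interpolation space.
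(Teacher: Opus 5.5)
Your overall route (integer endpoints plus interpolation) is the one the paper points to, which only cites an external lemma here. Your integer cases are sound. At $s=1$ the identity $\|(1-A)^{1/2}f\|_{L^2}^2=\|f\|_{H^1}^2$ is exact for all three operators. At $s=2$ the reflection argument and Theorem~\ref{t.res.reg.} do the job; the uniqueness clause applies as soon as $f\in H^2$, so the point you flag as the main obstacle is not one.

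\textbf{The gap: non-integer $s$.} Interpolating the endpoint embeddings only gives $\|f\|_{H^s}\lesssim\|(1-A)^{s/2}f\|_{L^2}$. The reverse bound needs $[L^2,D(A)]_{s/2}$ to contain $H^s\cap\{\text{boundary condition}\}$ with a norm dominated by the $H^s$ norm. That is an identification of the interpolation space of a \emph{constrained} couple. Your sentence ``the same scale interpolates the Sobolev spaces with the corresponding boundary conditions'' is exactly this claim, and it does not follow from the endpoint equivalences. On $\Omega=\mathbb R^N_+$ it is in fact false at the Lions--Magenes exceptional values. For $A=\Delta_D$, odd reflection gives
\[
\|(1-\Delta_D)^{1/4}f\|_{L^2(\mathbb R^N_+)}^2\sim\|f\|_{H^{1/2}(\mathbb R^N_+)}^2+\int_{\mathbb R^N_+}\frac{|f(x)|^2}{x_N}\,dx .
\]
Now $C_c^\infty(\mathbb R^N_+)\subset D(\Delta_D)$ is dense in $H^{1/2}(\mathbb R^N_+)$. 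On the other hand, the restriction of some $\phi\in C_c^\infty(\mathbb R^N)$ with $\phi(x',0)\neq0$ has infinite Hardy term. A uniform bound $\|(1-\Delta_D)^{1/4}f\|_{L^2}\lesssim\|f\|_{H^{1/2}}$ would, by closedness of $(1-\Delta_D)^{1/4}$, put $\phi$ into the domain, a contradiction. The same happens for $\mathbb P\Delta_D$ at $s=1/2$ (take a solenoidal field with nonzero tangential trace) and for $\Delta_N$ at $s=3/2$ (the Hardy term falls on $\partial_N f$). So the statement itself must exclude these values. This is harmless for the paper, which uses $s\in\{0,1,2\}$ and, in the proof of Theorem~\ref{t.loc.ex.}, $s\in(1/2,1)$ for $u$ and $s+1\in(3/2,2)$ for $Q$.

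\textbf{What the admissible $s$ still require.} For $\Delta_D$ and $\Delta_N$ the cleanest route skips abstract interpolation. Odd/even extension $E$ intertwines $(1-A)^{s/2}$ with $(1-\Delta)^{s/2}$ on $\mathbb R^N$, so $\|(1-A)^{s/2}f\|_{L^2(\mathbb R^N_+)}=2^{-1/2}\|Ef\|_{H^s(\mathbb R^N)}$. Everything then reduces to boundedness of $E$ on $H^s(\mathbb R^N_+)$ under the boundary condition, which holds for all $s\in[0,2]$ except $s=1/2$ (odd) and $s=3/2$ (even).

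For the Stokes operator your justification fails, for two reasons:
\begin{itemize}
\item On $\mathbb R^N_+$, $\mathbb P$ does not commute with the componentwise Dirichlet-Laplacian resolvent. If $u=(\lambda-\Delta_D)^{-1}f$ with $f$ solenoidal, then $w={\rm div}\,u$ solves $(\lambda-\Delta)w=0$ with $w=\partial_Nu_N$ on $\mathbb R^N_0$, which is nonzero in general.
\item $\mathbb P$ does not map $H^2\cap H^1_0$ into $D(\mathbb P\Delta_D)$, because it destroys the tangential trace.
\end{itemize}
Hence $\mathbb P$ is not a common retraction of the couples $(L^2,H^2\cap H^1_0)$ and $(J_2,D(\mathbb P\Delta_D))$. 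The inclusion $[J_2,D(\mathbb P\Delta_D)]_\theta\hookrightarrow[L^2,H^2\cap H^1_0]_\theta$ is automatic. The reverse norm bound on the subspace is exactly what such a retraction would provide, and ``closed subspace'' alone gives nothing. You need a genuinely Stokes-specific input here. Examples are a Fujita--Morimoto/Giga-type characterization of $D((1-\mathbb P\Delta_D)^{\alpha})$, or the half-space Stokes resolvent estimates on which the cited reference relies.
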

The case $A=\mathbb P\Delta_D,\Delta_D,\Delta_N$ is proved in Lemma 2.2 of \cite{BG25} for $N\ge 3$. However, the proof is based on the resolvent estimates for the Stokes and Laplacian operators, so it can be extended to the case $N=2$. 

We need a similar result for the operator $B$, defined in \eqref{def.op.B}, but it can be seen that $B$ is not self-adjoint. Nevertheless, we know from Theorem \ref{t.sem.es.} that $B$ generates an analytic semigroup and this is enough to define the fractional operator $(\lambda-B)^\alpha$ for $\alpha,\lambda>0$ (see Section 2.6 of \cite{P83}). So, we can define as in Definition \ref{def.HA} the space $H^s_B(\Omega)$ for $s\ge 0$. Moreover, the proof of Lemma \ref{l.res.eq.} is based on the interpolation and the resolvent estimates, which hold true for $B$ as a consequence of Theorem 2.3 of \cite{MS22} and Theorem 1.2.3 of \cite{BM24}. Therefore, with the same strategy of Lemma 2.2 of \cite{BG25}, it can be proved the result also for the operator $B$:
\begin{lem}\label{l.res.eq.B}
    Let $\Omega=\mathbb R^N,\mathbb R^N_+$ with $N=2,3$, let $s\in[0,2]$, then for any $(f,g)\in H^{s}_B(\Omega)$ it holds
    $$ \|(1-B)^{s/2}(f,g)\|_{L^2(\Omega)\times H^1(\Omega)}\sim \|(f,g)\|_{H^s(\Omega)\times H^{s+1}(\Omega)}, $$
where the operator $B$ is defined in \eqref{def.op.B}.
\end{lem}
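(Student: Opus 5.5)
The plan is to follow the scheme of Lemma 2.2 of \cite{BG25}: prove the equivalence at the two endpoints $s=0$ and $s=2$, and then obtain the intermediate values $s\in(0,2)$ by interpolation. Write $\mathcal H=J_2(\Omega)\times H^1(\Omega;S_0(N,\mathbb R))$ for the base space. By Theorem \ref{t.sem.es.}, $B$ generates an analytic semigroup on $\mathcal H$ with uniformly bounded norm. Hence $1-B$ is sectorial with $0$ in its resolvent set, and the fractional powers $(1-B)^{\alpha}$ are well defined (Section 2.6 of \cite{P83}).

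The case $s=0$ is trivial, since both norms coincide with $\|f\|_{L^2}+\|g\|_{H^1}$.

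For $s=2$ we need $\|(1-B)(f,g)\|_{L^2\times H^1}\sim\|(f,g)\|_{H^2\times H^3}$ for $(f,g)\in D(B)$.

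\begin{itemize}
\item The bound $\lesssim$ follows from the definition \eqref{def.op.B}. We use $\|\nabla K_O(f,g)\|_{L^2}\lesssim\|f\|_{H^2}+\|g\|_{H^3}$, which comes from the weak Neumann problem for the pressure (the lemmas of \cite{So01} quoted before \eqref{def.op.B}). The remaining terms $\Delta f$, ${\rm Div}(\Delta-a)g$ and $(\Delta-a)g+\beta D(f)$ are bounded directly.
\item The bound $\gtrsim$ is the resolvent estimate. Set $(F,G)=(1-B)(f,g)$. Then $(f,g)$ solves the resolvent problem $(\lambda-B)(f,g)=(F,G)$ with $\lambda=1$, together with the boundary conditions of $D(B)$. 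The generalized resolvent estimates (Theorem 2.3 of \cite{MS22}, Theorem 1.2.3 of \cite{BM24}) give
$$|\lambda|\,\|(f,g)\|_{L^2\times H^1}+\|(f,g)\|_{H^2\times H^3}\lesssim \|F\|_{L^2}+\|G\|_{H^1}$$
for $\lambda$ in a sector shifted by some $\lambda_0$. If $1$ does not lie in that region, we first apply the estimate at a large real $\lambda_1$. We then write $(1-B)=(\lambda_1-B)-(\lambda_1-1)$ and absorb the lower-order term. For this we use that $\|(f,g)\|_{L^2\times H^1}\le\|(1-B)^{-1}\|\,\|(F,G)\|$ by the boundedness of the resolvent of $B$ at $1$, which follows from the uniform semigroup bound of Theorem \ref{t.sem.es.} via the Laplace transform.
\end{itemize}
Together these show that $D(B)=H^2_B(\Omega)$ with graph norm equivalent to the $H^2\times H^3$ norm on the closed subspace $D_1(B)\times D_2(B)$.

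For $s\in(0,2)$ we interpolate. Since $B$ is not self-adjoint, we cannot use the spectral theorem as in the self-adjoint case. Instead we need $1-B$ to have bounded imaginary powers on $\mathcal H$, so that $D((1-B)^{\theta})=[\mathcal H,D(B)]_{\theta}$ by complex interpolation (Triebel/Seeley type result). Bounded imaginary powers, or more precisely a bounded $H^\infty$-calculus, hold for $B$ on the Hilbert space $\mathcal H$. In a Hilbert space, maximal $L^2$-regularity, which Theorem \ref{t.max.reg.} gives with $p=q=2$, is equivalent to the analyticity of the semigroup. We obtain the $H^\infty$-calculus via the square-function estimates $\int_0^\infty\|\,(1-B)^{1/2}e^{(B-1)t}x\|^2dt\lesssim\|x\|^2$ and the same estimate for the adjoint. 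These square-function estimates follow from the maximal regularity bound in Theorem \ref{t.sem.es.}, and by the same argument from the adjoint system, which has the same structure with $\beta\to-\beta$.

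Alternatively, one can use real interpolation. This requires only sectoriality: $D((1-B)^{\theta})$ lies between $(\mathcal H,D(B))_{\theta,1}$ and $(\mathcal H,D(B))_{\theta,\infty}$. In Hilbert spaces, together with the equivalence of $(\cdot,\cdot)_{\theta,2}$ and complex interpolation, this still needs the square-function bounds. Once interpolation is available, we identify $[\mathcal H,D(B)]_{s/2}$ with $(H^s\times H^{s+1})\cap H^s_B$ with equivalent norms. Here we use that interpolation of the Sobolev scales, restricted to the subspaces with the divergence-free, Dirichlet and Neumann constraints, is compatible. This is exactly as in Lemma \ref{l.res.eq.}, using the Helmholtz projection and an even or odd reflection in $x_N$ as retractions, and the result is $H^s\times H^{s+1}$.

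I expect the main obstacle to be this interpolation step for the non-self-adjoint $B$: establishing bounded imaginary powers, or the square-function estimates for both $B$ and $B^*$. I would first try to derive them from the maximal $L^2$-regularity and the uniform semigroup bounds of Theorem \ref{t.sem.es.}, applied to $B$ and to its adjoint.
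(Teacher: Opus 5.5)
Your overall route is the one the paper indicates: the endpoints $s=0$ and $s=2$ come from the resolvent estimates of \cite{MS22}, \cite{BM24}, followed by interpolation. The paper simply asserts that the scheme of Lemma~2.2 of \cite{BG25} carries over to $B$. Your $s=2$ argument, including the shift from a large $\lambda_1$ back to $\lambda=1$, is fine. You are also right that non-self-adjointness is the real issue at intermediate $s$.

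However, the way you propose to get bounded imaginary powers is circular. Theorem~\ref{t.sem.es.} bounds $\int_0^\infty\|e^{-\gamma t}e^{Bt}y\|^2_{H^2\times H^3}dt$ by $\|y\|^2_{H^1\times H^2}$. To turn this into $\int_0^\infty\|(1-B)^{1/2}e^{(B-1)t}x\|^2dt\lesssim\|x\|^2_{L^2\times H^1}$, you must set $y=(1-B)^{-1/2}x$. You then need $\|y\|_{H^1\times H^2}\lesssim\|(1-B)^{1/2}y\|_{L^2\times H^1}$, which is the lemma itself at $s=1$.

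In abstract terms, that maximal regularity bound only says that $H^1\times H^2$ (with the boundary conditions) embeds into the trace space $(\mathcal H,D(B))_{1/2,2}$. This is automatic for any sectorial operator once $D(B)$ is identified. The square-function estimate is the different statement $D((1-B)^{1/2})\hookrightarrow(\mathcal H,D(B))_{1/2,2}$. Neither analyticity nor maximal $L^2$-regularity implies it: there are generators of analytic semigroups on Hilbert spaces without bounded imaginary powers. The same objection applies to $B^*$.

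What is missing is the energy structure used throughout Section~\ref{sec.local}. Give $\mathcal H$ the equivalent inner product $\langle(u,Q),(v,P)\rangle_*=(u,v)_{L^2}+a(Q,P)_{L^2}+(\nabla Q,\nabla P)_{L^2}$. Take $x=(u,Q)\in D(B)$ and integrate by parts, using $u\in J_2(\Omega)$ (so $\nabla K_O$ drops out), $u=0$ and $\partial_\nu Q=0$ on $\partial\Omega$, and $Q=Q^T$. This gives
\[
\langle Bx,x\rangle_*=-\|\nabla u\|_{L^2}^2-\|(a-\Delta)Q\|_{L^2}^2+2i\beta\,{\rm Im}\int_\Omega D(u)\colon\overline{(a-\Delta)Q}\,dx ,
\]
which is the complex form of the cancellation \eqref{en-can.1-0}. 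Hence $|{\rm Im}\langle Bx,x\rangle_*|\le|\beta|\,|{\rm Re}\langle Bx,x\rangle_*|$. Since $\lambda-B$ is onto for large $\lambda$ by Theorem~\ref{t.sem.es.}, $1-B$ is an invertible m-sectorial, in particular m-accretive, operator on a Hilbert space.

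Such operators have a bounded $H^\infty$-calculus, by von Neumann's inequality for the Cayley transform or Kato's theorem on fractional powers of m-accretive operators. Therefore $D((1-B)^\theta)=[\mathcal H,D(B)]_\theta$ for $\theta\in[0,1]$, and this is unaffected by returning to the original equivalent norm. With this in place, your identification of $[\mathcal H,D(B)]_{s/2}$ through the Stokes and Neumann-Laplacian scales finishes the proof.

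Incidentally, your remark that $B^*$ is the same system with $\beta\to-\beta$ holds for $\langle\cdot,\cdot\rangle_*$. It fails for the standard $L^2\times H^1$ inner product unless $a=1$.
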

Next, we want to emphasize the role of $\varepsilon$ in the estimates for the Yosida operators: 
\begin{lem}\label{l.res.es.}
Let $\Omega=\mathbb R^N,\mathbb R^N_+$ with $N=2,3$, let $\varepsilon\in(0,1)$, $j,k=0,1,2$ with $j\le k$, let $A=\mathbb P\Delta_D,\Delta_D,\Delta_N$ and $w\in H^j_A(\Omega)$, then it holds
$$ \left\|(1-\varepsilon A)^{-1}w\right\|_{H^k(\Omega)}\lesssim \varepsilon^{-\frac{k-j}{2}}\|w\|_{H^{j}(\Omega)}. $$
Moreover, if $w\in H^{j+1}_{\Delta_N}(\Omega)$, it holds
$$ \left\|\nabla (1-\varepsilon\Delta_N)^{-1}w\right\|_{H^k(\Omega)}\lesssim \varepsilon^{-\frac{k-j}{2}}\|\nabla w\|_{H^{j}(\Omega)}. $$
\end{lem}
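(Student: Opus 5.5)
The plan is to reduce both estimates to the norm equivalence of Lemma \ref{l.res.eq.} combined with spectral calculus for the nonnegative self-adjoint operator $-A$. Set $R_\varepsilon=(1-\varepsilon A)^{-1}$. For $w\in H^j_A(\Omega)$ we have $R_\varepsilon w\in H^{j+2}_A(\Omega)\subseteq H^k_A(\Omega)$, so Lemma \ref{l.res.eq.} gives
$$ \|R_\varepsilon w\|_{H^k(\Omega)}\sim \|(1-A)^{k/2}R_\varepsilon w\|_{L^2(\Omega)} = \|(1-A)^{\frac{k-j}{2}}R_\varepsilon (1-A)^{j/2}w\|_{L^2(\Omega)}. $$
Here we use that all these functions of $A$ commute. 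Applying Lemma \ref{l.res.eq.} once more, $\|(1-A)^{j/2}w\|_{L^2}\sim\|w\|_{H^j}$. It therefore suffices to bound the operator norm of $(1-A)^{\frac{k-j}{2}}(1-\varepsilon A)^{-1}$ on $L^2(\Omega)$.

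By the spectral theorem, this norm equals $\sup_{\lambda\ge0}(1+\lambda)^{\theta}(1+\varepsilon\lambda)^{-1}$ with $\theta=\frac{k-j}{2}\in\{0,\frac12,1\}$. We split into two regimes.
\begin{itemize}
\item If $\lambda\le 1/\varepsilon$, then $(1+\lambda)^\theta\le (2/\varepsilon)^\theta$, so the quantity is at most $2^\theta\varepsilon^{-\theta}$.
\item If $\lambda\ge 1/\varepsilon$, then $(1+\lambda)^\theta/(1+\varepsilon\lambda)\le (2\lambda)^\theta/(\varepsilon\lambda)=2^\theta\varepsilon^{-1}\lambda^{\theta-1}\le 2^\theta\varepsilon^{-\theta}$, using $\theta\le1$ and $\lambda\ge\varepsilon^{-1}$.
\end{itemize}
Together these give the bound $\lesssim\varepsilon^{-\frac{k-j}{2}}$, which is the first inequality. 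The constants are uniform in $\varepsilon\in(0,1)$, since the only $\varepsilon$-dependence is this explicit spectral factor.

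For the second estimate with $A=\Delta_N$, the key step is to commute the gradient with the resolvent. On $\mathbb R^N$ this is immediate, since $\nabla$ commutes with $\Delta$. On $\mathbb R^N_+$, tangential derivatives $\partial_i$ ($i<N$) preserve the Neumann condition, so $\partial_i R^H_\varepsilon w=R^H_\varepsilon\partial_i w$ with $\partial_iw\in H^j_{\Delta_N}$. The normal derivative $\partial_N$ instead converts Neumann into Dirichlet: $v=R_\varepsilon^Hw$ solves $v-\varepsilon\Delta v=w$ with $\partial_Nv=0$, so $\partial_Nv$ solves $\partial_Nv-\varepsilon\Delta\partial_Nv=\partial_Nw$ with $\partial_Nv=0$ on $\mathbb R^N_0$. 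This gives $\partial_N R_\varepsilon^H w=(1-\varepsilon\Delta_D)^{-1}\partial_N w$, and $\partial_N w\in H^j_{\Delta_D}$ because $w\in H^{j+1}_{\Delta_N}$ has $\partial_Nw|_{\mathbb R^N_0}=0$. The first part, applied with $A=\Delta_D$ to $\partial_Nw$ and with $A=\Delta_N$ to $\partial_iw$, then yields $\|\nabla R^H_\varepsilon w\|_{H^k}\lesssim\varepsilon^{-\frac{k-j}{2}}\|\nabla w\|_{H^j}$.

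The main obstacle is making these commutation identities rigorous, in particular justifying that $\partial_Nw$ lies in $H^j_{\Delta_D}$ for $j=2$. This requires $\partial_Nw\in H^2\cap H^1_0$, which holds since $w\in H^3$ with the Neumann trace. One also needs uniqueness of $L^2$ solutions of $(1-\varepsilon\Delta_D)z=\partial_Nw$. This follows from the invertibility of $1-\varepsilon\Delta_D$ on $L^2$ and from $\partial_Nv\in H^1_0\cap H^2$, which is regularity we already have.
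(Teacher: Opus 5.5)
Your proposal is correct and follows essentially the paper's route. Both arguments use Lemma \ref{l.res.eq.} to reduce to $\|(1-A)^{\frac{k-j}{2}}(1-\varepsilon A)^{-1}(1-A)^{j/2}w\|_{L^2(\Omega)}$, and both handle the gradient estimate through $\partial_i R^H_\varepsilon w=R^H_\varepsilon\partial_i w$ for $i<N$ and $\partial_N R^H_\varepsilon w=(1-\varepsilon\Delta_D)^{-1}\partial_N w$. The only difference is that you bound $(1-A)^{\frac{k-j}{2}}(1-\varepsilon A)^{-1}$ directly by the spectral theorem. The paper instead re-applies Lemma \ref{l.res.eq.} and quotes the resolvent estimates $\|\nabla^m(1-\varepsilon A)^{-1}v\|_{L^2}\lesssim\varepsilon^{-m/2}\|v\|_{L^2}$, so your version is slightly more self-contained.
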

\begin{proof}\hfill\\
Using the resolvent estimates for the related operators, we know that
\begin{equation}\label{proof.res.es.}
    \|\nabla^k(1-\varepsilon A)^{-1}w\|_{L^2(\Omega)}\lesssim \varepsilon^{-k/2}\|w\|_{L^2(\Omega)},
\end{equation}
for $k=0,1,2$. By Lemma \ref{l.res.eq.}
$$ \|(1-\varepsilon A)^{-1}w\|_{H^k(\Omega)} \lesssim \|(1-A)^{k/2}(1-\varepsilon A)^{-1}w\|_{L^2(\Omega)} $$
$$ = \|(1-A)^{\frac{k-j}{2}}(1-\varepsilon A)^{-1}(1-A)^{j/2}w\|_{L^2(\Omega)}\lesssim \|(1-\varepsilon A)^{-1}(1-A)^{j/2}w\|_{H^{k-j}(\Omega)}, $$
so we conclude applying \eqref{proof.res.es.} and Lemma \ref{l.res.eq.}. For what concerns the second estimate with $A=\Delta_N$ and $w\in H^{j+1}_{\Delta_N}(\Omega)$, the proof is clear when $\Omega=\mathbb R^N$, so we focus on the half-space case. It can be seen that
$$ \partial_i(1-\varepsilon\Delta_N)^{-1}w=(1-\varepsilon\Delta_N)^{-1}\partial_iw \quad i<N, $$ 
$$ \partial_N(1-\varepsilon \Delta_N)^{-1}w=(1-\varepsilon\Delta_D)^{-1}\partial_Nw. $$
So we conclude applying the inequality for $A=\Delta_D,\Delta_N$ and $j=0,1,2$.
\end{proof}
\begin{lem}\label{l.res.limit}
Let $\Omega=\mathbb R^N,\mathbb R^N_+$ with $N=2,3$ and $A=\mathbb P\Delta_D,\Delta_D,\Delta_N$, let $w\in H^2_A(\Omega)$, 
$$ (1-\varepsilon A)^{-1}w \;\longrightarrow\; w
\qquad\text{in } H^2(\Omega)
\quad\text{as }\varepsilon\to0^+. $$
Moreover, if $w\in H^3_{\Delta_N}(\Omega)$, then
$$ (1-\varepsilon\Delta_N)^{-1}w \;\longrightarrow\; w
\qquad\text{in }H^3(\Omega)
\quad\text{as }\varepsilon\to0^+. $$
\end{lem}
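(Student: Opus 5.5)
The plan is the standard argument for resolvent approximations of the identity: uniform boundedness of $(1-\varepsilon A)^{-1}$ on the relevant space, combined with convergence on a dense subspace where we have a rate. The key structural fact is that $(1-\varepsilon A)^{-1}$ commutes with $(1-A)^{s/2}$, since both are functions of the same self-adjoint operator $A$. Consequently, by Lemma \ref{l.res.eq.}, convergence in $H^2(\Omega)$ for $w\in H^2_A(\Omega)$ is equivalent to
$$ \left\|(1-\varepsilon A)^{-1}g - g\right\|_{L^2(\Omega)}\to 0,\qquad g=(1-A)w\in L^2(\Omega). $$
(For $A=\mathbb P\Delta_D$ one works in $J_2(\Omega)$ instead of $L^2(\Omega)$.)

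For the $L^2$ statement we use two ingredients. First, $\|(1-\varepsilon A)^{-1}\|_{L^2\to L^2}\le 1$, since $A$ is negative self-adjoint. Second, for $g\in D(A)$ we have the identity
$$ (1-\varepsilon A)^{-1}g-g=\varepsilon (1-\varepsilon A)^{-1}Ag, $$
which gives $\|(1-\varepsilon A)^{-1}g-g\|_{L^2}\le \varepsilon\|Ag\|_{L^2}\to0$. Since $D(A)$ is dense in $L^2$ (respectively in $J_2$), an $\varepsilon/3$ argument yields convergence for every $g$. Alternatively, the spectral theorem and dominated convergence apply directly to the multiplier $\frac{\varepsilon\lambda}{1+\varepsilon\lambda}\to0$. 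Either route proves the first claim for all three choices of $A$.

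The $H^3$ statement for $A=\Delta_N$ is the more delicate step, because Lemma \ref{l.res.eq.} only covers $s\le 2$. Since $H^3$ convergence amounts to $H^2$ convergence of $\nabla\big((1-\varepsilon\Delta_N)^{-1}w-w\big)$ together with $L^2$ convergence, I would use the commutation relations from the proof of Lemma \ref{l.res.es.}:
$$ \partial_i(1-\varepsilon\Delta_N)^{-1}w=(1-\varepsilon\Delta_N)^{-1}\partial_i w \quad (i<N),\qquad \partial_N(1-\varepsilon\Delta_N)^{-1}w=(1-\varepsilon\Delta_D)^{-1}\partial_N w. $$
These are justified by uniqueness of the resolvent problem: the candidate functions satisfy the correct equation and boundary conditions. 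For $w\in H^3_{\Delta_N}(\Omega)$ we then check that $\partial_i w\in H^2_{\Delta_N}(\Omega)$ for $i<N$, because tangential derivatives preserve the Neumann condition, and that $\partial_N w\in H^2_{\Delta_D}(\Omega)$, because $\partial_N w=0$ on $\mathbb R^N_0$. Applying the first part with $A=\Delta_N$ and $A=\Delta_D$ respectively gives
$$ \nabla(1-\varepsilon\Delta_N)^{-1}w\to\nabla w \quad\text{in } H^2(\Omega). $$
Combined with the $L^2$ convergence already established, this yields convergence in $H^3(\Omega)$. For $\Omega=\mathbb R^N$ all derivatives commute with the resolvent and the argument is immediate, for instance via Fourier multipliers.

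The main obstacle is verifying the commutation identities rigorously in the half-space. One must check that $\partial_N(1-\varepsilon\Delta_N)^{-1}w$ vanishes on $\mathbb R^N_0$ and lies in $H^2_{\Delta_D}(\Omega)$, which requires $H^3$-regularity of the Neumann resolvent. I would obtain this regularity by even reflection across $\mathbb R^N_0$, reducing the Neumann problem to the whole-space problem where the regularity is immediate.
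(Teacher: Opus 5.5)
Your proposal is correct and follows the paper's route. You reduce $H^2$ convergence, via Lemma \ref{l.res.eq.} and commutation with $(1-A)$, to strong convergence $(1-\varepsilon A)^{-1}\to Id$ in $L^2$. You then obtain the $H^3$ statement from the identities $\partial_i(1-\varepsilon\Delta_N)^{-1}w=(1-\varepsilon\Delta_N)^{-1}\partial_i w$ for $i<N$ and $\partial_N(1-\varepsilon\Delta_N)^{-1}w=(1-\varepsilon\Delta_D)^{-1}\partial_N w$. Your density/spectral argument for the strong convergence and your uniqueness-plus-even-reflection justification of the normal-derivative identity supply details the paper only asserts.
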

\begin{proof}\hfill\\
\textbf{Step 1: Convergence in $H^2$.} Applying Lemma \ref{l.res.eq.}, we obtain
$$ \|(1-\varepsilon A)^{-1}w-w\|_{H^2(\Omega)}
\;\lesssim\;
\|(1-A)((1-\varepsilon A)^{-1}w-w)\|_{L^2(\Omega)}. $$
Since $(1-\varepsilon A)^{-1}$ commutes with $(1-A)$ on $D(A)$, it holds
$$ (1-A)(1-\varepsilon A)^{-1}w-(1-A)w
=  (1-\varepsilon A)^{-1}(1-A)w-(1-A)w. $$
Moreover, $(1-\varepsilon A)^{-1}\to Id$ strongly in $L^2(\Omega)$ as $\varepsilon\to0^+$
and $(1-A)w\in L^2(\Omega)$, thus
$$ (1-\varepsilon A)^{-1}w \to w \quad\text{in }H^2(\Omega). $$

\bigskip
\noindent \textbf{Step 2: Convergence in $H^3$ for the Neumann Laplacian.}
Let $w\in H^3_{\Delta_N}(\Omega)$.
We write
$$ \|(1-\varepsilon\Delta_N)^{-1}w-w\|_{H^3(\Omega)}
\le 
\|(1-\varepsilon\Delta_N)^{-1}w-w\|_{L^2(\Omega)}
+
\|\nabla (1-\varepsilon\Delta_N)^{-1}w - \nabla w\|_{H^2(\Omega)}. $$
The first term converges to $0$ by Step~1.
Thus, it suffices to treat the gradient term.

\smallskip
\emph{Tangential derivatives.}
For $i<N$,
tangential derivatives commute with $\Delta_N$ and preserve Neumann
boundary conditions:
$$ \partial_i \Delta_N = \Delta_N \partial_i,
\qquad
\partial_i w \in H^2_{\Delta_N}(\Omega). $$
Hence, by Step 1, it holds
$$ \partial_i (1-\varepsilon\Delta_N)^{-1}w
=
(1-\varepsilon\Delta_N)^{-1} \partial_i w
\to \partial_i w 
\quad\text{in }H^2(\Omega). $$

\smallskip
\emph{Normal derivative.}
Since $w\in H^3_{\Delta_N}(\Omega)$, then $\partial_N w\in H^2_{\Delta_D}(\Omega)$. As before
$$ \partial_N(1-\varepsilon\Delta_N)^{-1}w
=
(1-\varepsilon\Delta_D)^{-1}\partial_N w, $$
therefore
$$ (1-\varepsilon\Delta_D)^{-1}\partial_N w
\to \partial_N w\quad\text{in }H^2(\Omega). $$
\end{proof}

\section{Local well-posedness}\label{sec.local}
\subsection{Strategy of the proof}

As we mentioned in Remark \ref{rem.local-probl.}, the difficulties of the contraction argument in the local well-posedness come from the nonlinear terms that involve high order derivatives on the solutions. For this reason, we decompose the nonlinear terms as it follows: let
$$ f(V_1,V_2,V_3)={\rm Div}\left[2\xi (\Delta-a)V_1\colon V_2\left(V_3+\frac{Id}{N}\right)-(\xi+1)(\Delta-a)V_1V_2+(1-\xi)V_2(\Delta-a)V_1\right], $$
$$ g(z,V_1,V_2)=\xi(D(z)V_1+V_1D(z))+W(z)V_1-V_1W(z)-2\xi V_1\colon \nabla z \left(V_2+\frac{Id}{N}\right), $$
then 
$$ F(u,Q)=f(Q,Q,Q) + \widetilde F(u,Q),\quad G(u,Q)=g(u,Q,Q) + \widetilde G(u,Q), $$
where
$$ \widetilde F(u,Q)=F(u,Q)-f(Q,Q,Q),\quad \widetilde G(u,Q)=G(u,Q)-g(u,Q,Q). $$
We focus firstly on the linear system:
\begin{equation}\label{BE.i-lin.sys.0}
    \left\{\begin{array}{ll}
       (\partial_t-\Delta)u + \nabla \pi + \beta {\rm Div}(\Delta-a)Q=f(Q,W,W) + \widetilde F & (0,T)\times \Omega \\
       {\rm div}u=0 & (0,T)\times\Omega \\
       (\partial_t-\Delta+a)Q-\beta D(u)=g(u,W,W) + \widetilde G & (0,T)\times \Omega \\
       u=0,\quad \partial_\nu Q=0 & (0,T)\times\partial\Omega \\
       u(0)=u_0,\quad Q(0)=Q_0 & \Omega,
    \end{array}\right.
\end{equation}
where $W\in H^2(\Omega; S_0(N,\mathbb R))$ is independent of the time variable $t$. In the application, we will use $W=Q_0$. In fact, as we have already mentioned, the first entry of the functions $f$ and $g$ concentrates the highest order nonlinear terms. However, for the same reason we highlighted in Remark \ref{rem.local-probl.}, it is difficult to prove directly the existence of a solution for \eqref{BE.i-lin.sys.0}. Therefore, we pass through the approximated linear system
\begin{equation}\label{approx.lin.sys.0}
    \left\{\begin{array}{ll}
       (\partial_t-\Delta)u + \nabla \pi + \beta {\rm Div}(\Delta-a)R_\varepsilon^H Q=f(R_\varepsilon^HQ,W,W) + \widetilde F & (0,T)\times \Omega \\
       {\rm div}u=0 & (0,T)\times\Omega \\
       (\partial_t-\Delta+a)Q-\beta D(R_\varepsilon^Su)=g(R_\varepsilon^Su,W,W) + \widetilde G & (0,T)\times \Omega \\
       u=0,\quad \partial_\nu Q=0 & (0,T)\times\partial\Omega \\
       u(0)=u_0,\quad Q(0)=Q_0 & \Omega
    \end{array}\right.
\end{equation}
for $\varepsilon>0$, where $R_\varepsilon^S$ and $R_\varepsilon^H$ are the Yosida operators we introduced in the previous section. In \eqref{approx.lin.sys.0} we replaced $u$ and $Q$ respectively with $R_\varepsilon^S u$ and $R_\varepsilon^HQ$ in those terms of \eqref{BE.i-lin.sys.0} that involve high order derivatives. 

\vspace{2mm}

\noindent We resume the strategy of the proof for Theorem \ref{t.loc.ex.}:
\begin{enumerate}
    \item Prove the existence of a solution $(u_\varepsilon,Q_\varepsilon)$ for \eqref{approx.lin.sys.0} for some $T>0$;
    \item Prove a uniform bound for $(u_\varepsilon,Q_\varepsilon)$ in $Y_T(\Omega)$;
    \item Find a solution $(u,Q)\in Y_T(\Omega)$ for \eqref{BE.i-lin.sys.0} as a limit function for $(u_\varepsilon,Q_\varepsilon)$;
    \item Prove the existence of a solution for the $Q$-tensor model \eqref{BE.sys.} using the a priori estimate we found.
\end{enumerate}

\subsection{Solution for the approximated linear system}

We focus on the approximated linear system \eqref{approx.lin.sys.0}. We first consider the simpler system
\begin{equation}\label{EL.sys}
    \left\{\begin{array}{ll}
        (\partial_t-\mathbb P\Delta)u=\mathbb PF & (0,T)\times\Omega \\
        {\rm div}u=0 & (0,T)\times\Omega \\
        (\partial_t+a-\Delta)Q= G & (0,T)\times\Omega \\
        u=0,\quad \partial_\nu Q=0 & (0,T)\times\partial\Omega \\
        u(0)=u_0,\quad Q(0)=Q_0 & \Omega.
    \end{array}\right.
\end{equation}
Following the results of Section 3 of \cite{BG25}, we obtain the linear estimate:
\begin{thm}\label{t.EL-lin.es.}
Let $\Omega=\mathbb R^N,\mathbb R^N_+$ with $N\ge 2$, let $a,T>0$, let
$$ u_0\in H^1_{\mathbb P\Delta_D}\left(\Omega;\mathbb R^N\right), \quad Q_0\in H^2_{\Delta_N}\left(\Omega;S_0(N,\mathbb R)\right)$$
$$ F\in L^2\left((0,T);L^2\left(\Omega;\mathbb R^N\right)\right), \quad G\in L^2\left((0,T);H^1\left(\Omega;S_0(N,\mathbb R)\right)\right), $$
where $S_0(N,\mathbb R)$ and $H^s_A(\Omega)$ for $s\ge0$ are defined respectively in \eqref{def.S0} and in Definition \ref{def.HA} then, when $T\in(0,1)$, there is $(u,Q)\in Y_T(\Omega)$ solution for the system \eqref{EL.sys} with $u(t)\in H^2_{\mathbb P\Delta_D}(\Omega;\mathbb R^N)$ and $Q(t)\in H^3_{\Delta_N}(\Omega;S_0(N,\mathbb R))$ for a.e. $t\in(0,T)$ and 
$$ \|(u,Q)\|_{Y_T(\Omega)} \le C\left[ \|u_0\|_{H^1(\Omega)} + \|Q_0\|_{H^2(\Omega)} + \|F\|_{L^2((0,T;L^2(\Omega))} + \|G\|_{L^2((0,T);H^1(\Omega))}\right], $$
where $C=C(a,\beta)>0$.
\end{thm}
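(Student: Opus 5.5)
The system \eqref{EL.sys} is decoupled: a Stokes problem for $u$ and a damped heat equation with Neumann condition for $Q$. I would treat the two parts separately and add the estimates.

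\textbf{Velocity.} For $u$ I would use the analytic semigroup $e^{t\mathbb P\Delta_D}$ on $J_2(\Omega)$ and write the mild solution
$$ u(t)=e^{t\mathbb P\Delta_D}u_0+\int_0^t e^{(t-s)\mathbb P\Delta_D}\mathbb PF(s)\,ds. $$
Since $\mathbb P\Delta_D$ is negative and self-adjoint, the estimate can be obtained by the energy method, justified on a regularized problem (data in $D(\mathbb P\Delta_D)$ and $F$ smooth in time, followed by density).
\begin{itemize}
\item Testing with $u$ gives $\|u\|_{L^\infty_TL^2}^2+\|\nabla u\|_{L^2_TL^2}^2\lesssim \|u_0\|_{L^2}^2+\|F\|_{L^2_TL^2}^2$. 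A factor $T$ appears here, which is harmless since $T<1$.
\item Testing with $-\mathbb P\Delta u$ (equivalently with $\partial_t u$) gives
$$ \|\nabla u\|_{L^\infty_TL^2}^2+\|\mathbb P\Delta u\|_{L^2_TL^2}^2+\|\partial_t u\|_{L^2_TL^2}^2\lesssim \|\nabla u_0\|_{L^2}^2+\|F\|_{L^2_TL^2}^2. $$
This uses the self-adjointness identity $(\partial_t u,-\mathbb P\Delta u)=\frac12\frac{d}{dt}\|\nabla u\|^2$, valid because $u=0$ on $\partial\Omega$ and ${\rm div}\,u=0$.
\item Lemma \ref{l.res.eq.} with $A=\mathbb P\Delta_D$ and $s=1,2$ converts $\|(1-\mathbb P\Delta_D)^{1/2}u\|$ and $\|(1-\mathbb P\Delta_D)u\|$ into the $H^1$ and $H^2$ norms. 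This gives $u\in X^1_T\cap H^1_TL^2$ and $u(t)\in H^2_{\mathbb P\Delta_D}$ for a.e. $t$.
\end{itemize}

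\textbf{Tensor field.} For $Q$ I would apply the same procedure one derivative higher, using $e^{t(\Delta_N-a)}$. Testing with $Q$, $-\Delta Q$ and $\Delta^2 Q$ formally gives
$$ \|Q\|_{L^\infty_TH^2}+\|Q\|_{L^2_TH^3}+\|\partial_tQ\|_{L^2_TH^1}\lesssim \|Q_0\|_{H^2}+\|G\|_{L^2_TH^1}. $$
In the top-order step I would not integrate by parts against $\Delta^2Q$ directly, because $G$ only lies in $L^2_TH^1$ and need not satisfy the Neumann condition. Instead I would test the equation with $-\Delta_N\partial_tQ$ and $(-\Delta_N)^2Q$ in the form $\big((1-\Delta_N)^{1/2}\cdot,(1-\Delta_N)^{1/2}\cdot\big)$, i.e. work in the Hilbert space $H^1_{\Delta_N}$. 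The $G$ term then pairs as $\|(1-\Delta_N)^{1/2}G\|\,\|(1-\Delta_N)Q\|\lesssim\|G\|_{H^1}\|Q\|_{H^2}$ or with $\nabla\Delta Q$, and is absorbed by Young's inequality. Equivalently, this is maximal $L^2$-regularity of the self-adjoint operator $\Delta_N-a$ on the base space $H^1_{\Delta_N}$, with the trace space $(H^1_{\Delta_N},D)_{1/2,2}=H^2_{\Delta_N}$.

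Lemma \ref{l.res.eq.} with $A=\Delta_N$ and $s\le2$ identifies these spaces with $H^1$ and $H^2$. For the $H^3$ norm of $Q$ I would use $\nabla Q$: tangential derivatives stay in $H^2_{\Delta_N}$ and $\partial_N Q\in H^2_{\Delta_D}$, as in the proof of Lemma \ref{l.res.limit}. The symmetric traceless structure is preserved, because the equation acts componentwise and $G$ is $S_0$-valued.

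\textbf{Main obstacle.} I expect the main difficulty to be this last point: making rigorous that $(1-\Delta_N)^{1/2}$ maps $H^1$ data correctly and that $Q(t)\in H^3_{\Delta_N}$. This is exactly where Lemma \ref{l.res.eq.}, in its $H^s_A\sim H^s$ form for $s\in[0,2]$, together with the tangential/normal splitting, is needed. Uniqueness follows from the $L^2$ energy identity applied to the difference of two solutions. The constant depends only on $a$ (and on $\beta$ only trivially), and is uniform for $T\in(0,1)$.
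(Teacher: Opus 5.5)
Your proposal is correct and follows essentially the paper's route. The paper obtains this theorem by adapting the energy method of Theorem 3.9 of \cite{BG25} to the decoupled Stokes/damped Neumann-heat system. The same testing appears in Lemmas \ref{l.en.met-LHS1} and \ref{l.en.met-LHS2}, where $\int (\partial_t+a-\Delta)Q\colon(a-\Delta)Q + \sum_i\int\partial_i(\partial_t+a-\Delta)Q\colon\partial_i(a-\Delta)Q$ is exactly your $H^1_{\Delta_N}$-inner-product pairing. The only difference is how the computations are justified: the paper uses the Yosida approximations $R_\varepsilon^S, R_\varepsilon^H$, while you regularize the data and pass to the limit by density (or cite maximal $L^2$-regularity of self-adjoint operators), which is equally valid.
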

\begin{rem}\label{rem.C(T)}
Theorem \ref{t.EL-lin.es.} can be obtained by adopting the proof of Theorem 3.9 of \cite{BG25}, even though it did not consider the parameter $a>0$, nor the case $N=2$. In fact, its proof is based on the energy method and the constant $a$ can be easily treated. Moreover, when $N=2$, following the proof in \cite{BG25} it can be proved that
$$ \|(u,Q)\|_{Y_T(\Omega)} \le C(T)\left[ \|u_0\|_{H^1(\Omega)} + \|Q_0\|_{H^2(\Omega)} + \|F\|_{L^2((0,T;L^2(\Omega))} + \|G\|_{L^2((0,T);H^1(\Omega))}\right]. $$
However, when $T<1$, it holds $C(T)\lesssim1$.
\end{rem}
\begin{rem}\label{rem.press.}
It follows again from \cite{BG25} that, if $(u,Q)$ is the solution for \eqref{EL.sys}, then there is a pressure function $\pi$, unique up to additive functions $c(t)$, that solves the system 
$$ \left\{\begin{array}{ll}
    (\partial_t-\Delta)u + \nabla \pi = F & (0,T)\times \Omega \\
    (\partial_t+a-\Delta)Q = G & (0,T)\times\Omega \\
    {\rm div}u=0 & (0,T)\times \Omega \\
    u=0,\quad \partial_\nu Q=0 & (0,T)\times \partial\Omega \\
    u(0)=u_0,\quad Q(0)=Q_0 & \Omega,
\end{array}\right. $$
with $\pi(t)\in L^2_{loc}(\Omega)$ for a.e. $t\in(0,T)$, with $\nabla \pi\in L^2((0,T);L^2(\Omega;\mathbb R^N))$ and 
$$ \|\nabla \pi\|_{L^2((0,T);L^2(\Omega))}\lesssim \|u_0\|_{H^1(\Omega)} + \|Q_0\|_{H^2(\Omega)} + \|F\|_{L^2((0,T;L^2(\Omega))} + \|G\|_{L^2((0,T);H^1(\Omega))}. $$
The same consideration as in Remark \ref{rem.C(T)} about the case $N=2$ holds here.
\end{rem}
Theorem \ref{t.EL-lin.es.} gives us an a priori estimate for the simpler linear system \eqref{EL.sys}. For what concerns the nonlinear terms $f(V_1,V_2,V_3)$ and $g(z,V_1,V_2)$, the following multilinear estimates are necessary:
\begin{lem}\label{l.mult.loc.es.0}
Let $\Omega=\mathbb R^N,\mathbb R^N_+$ with $N=2,3$, let $T>0$, $k\in\mathbb N$ and $s\in\left(\frac{N}{2}-1,1\right]$, let $v\in X^1_T(\Omega)$ and $w_j\in L^\infty((0,T);H^{s+1}(\Omega))$ for $j=1,\ldots, k$, then it holds 
$$ \|\nabla vw_1\cdots w_k\|_{L^2((0,T);H^1(\Omega))}\lesssim \|\nabla v\|_{L^2((0,T);H^1(\Omega))} \prod_{j=1}^k\|w_j\|_{L^\infty((0,T);H^{s+1}(\Omega))}. $$
In particular, for any $(v,w)\in Y_T(\Omega)$ and for any $w_1,w_2\in L^\infty((0,T);H^{s+1}(\Omega))$, it holds
$$ \|f(w,w_1,w_2)\|_{L^2((0,T);L^2(\Omega))} + \|g(v,w_1,w_2)\|_{L^2((0,T);H^1(\Omega))} $$
$$ \lesssim \|(\nabla v,w)\|_{L^2((0,T);H^1(\Omega)\times H^3(\Omega))}\|w_1\|_{L^\infty((0,T);H^{s+1}(\Omega))}\left(1 + \|w_2\|_{L^\infty((0,T);H^{s+1}(\Omega))}\right). $$
\end{lem}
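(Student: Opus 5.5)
The first estimate is proved pointwise in time and then integrated. For a.e. $t\in(0,T)$ we will show
$$ \|\nabla v\, w_1\cdots w_k\|_{H^1(\Omega)}\lesssim \|\nabla v\|_{H^1(\Omega)}\prod_{j=1}^k\|w_j\|_{H^{s+1}(\Omega)} . $$
Taking the $L^2$ norm in $t$, and pulling each $w_j$ out in $L^\infty_t$, then gives the claim. The $L^2$ part of the $H^1$ norm is immediate: since $s>\frac N2-1$ we have $s+1>\frac N2$, so Sobolev embedding gives $H^{s+1}(\Omega)\hookrightarrow L^\infty(\Omega)$ (on $\mathbb R^N_+$ via an extension operator). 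Hence $\|\nabla v\,w_1\cdots w_k\|_{L^2}\le \|\nabla v\|_{L^2}\prod_j\|w_j\|_{L^\infty}$.

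For the gradient we use Leibniz: $\nabla(\nabla v\,w_1\cdots w_k)=\nabla^2v\,w_1\cdots w_k+\sum_i \nabla v\,\nabla w_i\prod_{j\ne i}w_j$. The first term is handled as above, with $\nabla^2 v\in L^2$ and all $w_j\in L^\infty$. In the other terms we place $\nabla v\in H^1\hookrightarrow L^{r}$ and $\nabla w_i\in H^{s}\hookrightarrow L^{r'}$ with $\frac1r+\frac1{r'}=\frac12$, and the remaining factors again in $L^\infty$.

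\begin{itemize}
\item For $N=3$: $H^1\hookrightarrow L^6$, and $H^s\hookrightarrow L^3$ requires $s\ge\frac12$, which holds because $s>\frac12$.
\item For $N=2$: $s>0$, so $H^s\hookrightarrow L^{2/(1-s)}$ (or $L^\infty$-type embeddings if $s=1$), and $H^1\hookrightarrow L^{2/s}$.
\end{itemize}

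The key point is that the Sobolev index of $H^1$ plus that of $H^s$ must exceed $N/2$ (with equality allowed when $N=3$). This is exactly what $s>\frac N2-1$ guarantees. The only delicate step is choosing these exponents at the endpoint; everything else is Hölder's inequality.

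For the second statement, expand $f$ and $g$. Since ${\rm Div}$ is one derivative, $f(w,w_1,w_2)$ is a sum of terms $\nabla(\Delta-a)w\cdot w_1(1+w_2)$ and $(\Delta-a)w\cdot\nabla w_1(1+w_2)$, together with the terms where the derivative falls on $w_2$ (coming from $w_1(w_2+Id/N)$). Each of these is bounded in $L^2$ by the same Hölder/Sobolev argument.

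\begin{itemize}
\item The top-order term uses $\nabla^3 w\in L^2$ and $w_1,w_2\in L^\infty$.
\item Terms with $\nabla w_1$ or $\nabla w_2\in H^s$ pair that factor with $(\Delta-a)w\in H^1$, exactly as above, giving
$$ \|w\|_{H^3}\|w_1\|_{H^{s+1}}\bigl(1+\|w_2\|_{H^{s+1}}\bigr). $$
\end{itemize}

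Each term of $g(v,w_1,w_2)$ has the form $\nabla v\,w_1$ or $\nabla v\,w_1 w_2$, with $D(v)$, $W(v)$ and $\nabla v$ all linear in $\nabla v$. Applying the first estimate with $k=1,2$ gives the $L^2_TH^1$ bound $\|\nabla v\|_{L^2_TH^1}\|w_1\|_{L^\infty_T H^{s+1}}(1+\|w_2\|_{L^\infty_TH^{s+1}})$. Summing the bounds for $f$ and $g$ yields the stated inequality.
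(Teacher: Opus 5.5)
Your argument is essentially the paper's: the same Leibniz splitting of the $H^1$ norm, $H^{s+1}\hookrightarrow L^\infty$ for the undifferentiated factors, a H\"older--Sobolev pairing of $\nabla v\in H^1$ with $\nabla w_\ell\in H^s$, and then expanding $f$ and $g$ into such products. Your $L^6\times L^3$ choice for $N=3$, versus the paper's $L^{3/s}\times L^{6/(3-2s)}$, is an inessential variation. One small fix: at the endpoint $N=2$, $s=1$ there is no $L^\infty$ embedding of $H^1(\mathbb R^2)$, and the paper's exponent $\frac{2N}{N-2s}$ degenerates there as well. So pair $\nabla v$ and $\nabla w_\ell$ in $L^4\times L^4$ instead, or simply reduce to the case $s'\in(0,1)$, using $H^{2}\hookrightarrow H^{s'+1}$.
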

\begin{proof}\hfill\\
We divide the proof of the first inequality considering separately the quantities:
$$ \|\nabla vw_1\cdots w_k\|_{L^2_TH^1}= \|\nabla vw_1\cdots w_k\|_{L^2_TL^2} + $$
$$ + \|\nabla^2 v\cdot w_1\cdots w_k\|_{L^2_TL^2}  + \sum_{\ell=1}^k\|\nabla vw_1\cdots w_{\ell-1}\nabla w_\ell w_{\ell+1}\cdots w_k\|_{L^2_TL^2}. $$
For what concerns the first term
$$ \|\nabla vw_1\cdots w_k\|_{L^2_TL^2}\le \|\nabla v\|_{L^2_TL^2}\prod_{j=1}^k\|w_j\|_{L^\infty_TL^\infty} $$
and thanks to the Sobolev embedding $H^{s+1}(\Omega)\hookrightarrow L^\infty(\Omega)$ for $s>\frac{N}{2}-1$ we get
$$ \|\nabla vw_1\cdots w_k\|_{L^2_TL^2}\lesssim \|\nabla v\|_{L^2_TL^2} \prod_{j=1}^k\|w_j\|_{L^\infty_T H^{s+1}}. $$
With the same strategy, it holds
$$ \|\nabla^2 v\cdot w_1\cdots w_k\|_{L^2_TL^2}\lesssim \|\nabla^2 v\|_{L^2_TL^2} \prod_{j=1}^k\|w_j\|_{L^\infty_TH^{s+1}}. $$
Finally, for the last term, we distinguish the cases $k=1$ and $k>1$. In the first case
$$ \|\nabla v\nabla w_1\|_{L^2_TL^2}\le \left\|\|\nabla v\|_{L^\frac{N}{s}(\Omega)}\|\nabla w_1\|_{L^\frac{2N}{N-2s}(\Omega)}\right\|_{L^2((0,T))}. $$
By the Sobolev embedding $H^1(\Omega)\hookrightarrow L^\frac{N}{s}(\Omega)$ and $H^{s}(\Omega)\hookrightarrow L^\frac{2N}{N-2s}(\Omega)$ for $s>\frac{N}{2}-1$, it holds 
$$ \left\|\|\nabla v\|_{L^\frac{N}{s}(\Omega)}\|\nabla w_1\|_{L^\frac{2N}{N-2s}(\Omega)}\right\|_{L^2((0,T))} \lesssim \|\nabla v\|_{L^2_TH^1}\|w_1\|_{L^\infty_TH^{s+1}}. $$
The case $k\ge2$ can be done similarly using the embedding $H^{s+1}(\Omega)\hookrightarrow L^\infty(\Omega)$ for $s>\frac{N}{2}-1$. The inequalities for $f$ and $g$ follows from the previous inequality and the Sobolev embeddings.
\end{proof}
We are now ready to prove the existence of a solution for \eqref{approx.lin.sys.0}.
\begin{prop}\label{p.loc.ex-approx.}
Let $\Omega=\mathbb R^N,\mathbb R^N_+$ with $N=2,3$, let $a>0$, $\beta\in\mathbb R$ and $T\in(0,1)$, let $W\in H^2(\Omega;S_0(N,\mathbb R))$, $u_0\in H^1_{\mathbb P\Delta_D}(\Omega;\mathbb R^N)$ and $Q_0\in H^2_{\Delta_N}(\Omega;S_0(N,\mathbb R))$ where $S_0(N,\mathbb R)$ and $H^s_A(\Omega)$ for $s\ge 0$ are defined in \eqref{def.S0} and in Definition \ref{def.HA}, let
$$ \widetilde F\in L^2\left((0,T);L^2\left(\Omega;\mathbb R^N\right)\right),\quad \widetilde G\in L^2\left((0,T);H^1\left(\Omega;S_0(N,\mathbb R)\right)\right),\quad W\in H^2\left(\Omega;S_0(N,\mathbb R)\right), $$
then for any $\varepsilon\in(0,1)$ the system \eqref{approx.lin.sys.0} admits a solution $(u_\varepsilon,Q_\varepsilon)\in Y_T(\Omega)$, where $Y_T$ is defined in \eqref{def.Y}, with $u_\varepsilon(t)\in H^2_{\mathbb P\Delta_D}(\Omega;\mathbb R^N)$ and $Q_\varepsilon(t)\in H^3_{\Delta_N}(\Omega;S_0(N,\mathbb R))$ for a.e. $t\in(0,T)$.
\end{prop}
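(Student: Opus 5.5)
We will obtain $(u_\varepsilon,Q_\varepsilon)$ as the fixed point of a contraction built on the decoupled system \eqref{EL.sys} and Theorem \ref{t.EL-lin.es.}. The Yosida operators make every term of \eqref{approx.lin.sys.0} that carries too many derivatives into a bounded perturbation, at the price of constants depending on $\varepsilon$. This is harmless here, since $\varepsilon$ is fixed. Concretely, for $(v,P)\in Y_T(\Omega)$ we set
$$ \mathcal F_\varepsilon(P)=-\beta{\rm Div}(\Delta-a)R_\varepsilon^HP+f(R_\varepsilon^HP,W,W)+\widetilde F,\qquad \mathcal G_\varepsilon(v)=\beta D(R_\varepsilon^Sv)+g(R_\varepsilon^Sv,W,W)+\widetilde G. $$
We then define $\Phi(v,P)=(u,Q)$ as the solution of \eqref{EL.sys} with data $(\mathcal F_\varepsilon(P),\mathcal G_\varepsilon(v))$ and initial data $(u_0,Q_0)$. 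The pressure is recovered afterwards by Remark \ref{rem.press.}. A fixed point of $\Phi$, together with the pressure from Remark \ref{rem.press.}, solves \eqref{approx.lin.sys.0}. Moreover, Theorem \ref{t.EL-lin.es.} gives $(u,Q)\in Y_T(\Omega)$ with the claimed pointwise-in-time regularity.

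\textbf{Estimating the data.} We must show that $\mathcal F_\varepsilon(P)\in L^2_TL^2$ and $\mathcal G_\varepsilon(v)\in L^2_TH^1$, with bounds that are linear in $(v,P)$ and carry a factor that is small in $T$. By Lemma \ref{l.res.es.} (second estimate, with $j=1$, $k=2$),
$$ \|\nabla^3R_\varepsilon^HP\|_{L^2}\lesssim\|\nabla R_\varepsilon^HP\|_{H^2}\lesssim\varepsilon^{-1/2}\|\nabla P\|_{H^1}\lesssim \varepsilon^{-1/2}\|P\|_{H^2}. $$
Hence $\|{\rm Div}(\Delta-a)R_\varepsilon^HP\|_{L^2_TL^2}\lesssim\varepsilon^{-1/2}T^{1/2}\|P\|_{L^\infty_TH^2}$. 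Similarly, by Lemma \ref{l.res.es.} with $j=1$, $k=2$ for $A=\mathbb P\Delta_D$,
$$ \|R_\varepsilon^Sv\|_{H^2}\lesssim\varepsilon^{-1/2}\|v\|_{H^1}, $$
so $\|D(R_\varepsilon^Sv)\|_{L^2_TH^1}\lesssim\varepsilon^{-1/2}T^{1/2}\|v\|_{L^\infty_TH^1}$. For the nonlinear parts we apply Lemma \ref{l.mult.loc.es.0} with $w_1=w_2=W$ and $s=1$, so that $H^2\hookrightarrow L^\infty$ is available. This bounds $\|f(R^H_\varepsilon P,W,W)\|_{L^2_TL^2}+\|g(R^S_\varepsilon v,W,W)\|_{L^2_TH^1}$ by
$$ \|(\nabla R^S_\varepsilon v,R^H_\varepsilon P)\|_{L^2_T(H^1\times H^3)}\,\|W\|_{H^2}(1+\|W\|_{H^2}), $$
and the same Yosida bounds turn this into $C(\varepsilon,W)T^{1/2}\|(v,P)\|_{L^\infty_T(H^1\times H^2)}$. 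Therefore
$$ \|\Phi(v,P)\|_{Y_T}\le C\big(\|u_0\|_{H^1}+\|Q_0\|_{H^2}+\|\widetilde F\|_{L^2_TL^2}+\|\widetilde G\|_{L^2_TH^1}\big)+C(\varepsilon,W)T^{1/2}\|(v,P)\|_{Y_T}. $$
Since $\Phi$ is affine, the difference $\Phi(v_1,P_1)-\Phi(v_2,P_2)$ solves \eqref{EL.sys} with zero initial data and zero $\widetilde F$, $\widetilde G$. It is therefore bounded by $C(\varepsilon,W)T^{1/2}\|(v_1-v_2,P_1-P_2)\|_{Y_T}$.

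\textbf{Main obstacle.} The difficulty is that the claim is for any $T\in(0,1)$, while the contraction only works for $T_1=T_1(\varepsilon,W)$ small enough that $C(\varepsilon,W)T_1^{1/2}<1/2$. Because the problem is linear, $T_1$ does not depend on the size of the data. We would therefore restart the argument on $[T_1,2T_1]$, and so on, using as initial data the values $(u(T_1),Q(T_1))$. These lie in $H^1_{\mathbb P\Delta_D}\times H^2_{\Delta_N}$ thanks to the trace embedding $Y_T\hookrightarrow C([0,T];H^1\times H^2)$ and the boundary and divergence conditions, which are preserved. Finitely many steps cover $(0,T)$, and the pieces glue into a solution in $Y_T(\Omega)$.

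We also need compatibility with the setting of Theorem \ref{t.EL-lin.es.}. The datum $\mathcal G_\varepsilon$ must be $S_0$-valued. Symmetry holds by construction. For the trace we use that $\operatorname{tr}D(R^S_\varepsilon v)={\rm div}R^S_\varepsilon v=0$; that $\operatorname{tr}(W(z)V-VW(z))=0$; and that the trace of $\xi(D V+VD)-2\xi V\colon\nabla z(V+\frac{Id}{N})$ equals $2\xi\,D\colon V-2\xi\,V\colon\nabla z$, which vanishes for symmetric $V$. If $\widetilde G$ is merely matrix-valued, we would replace it by its projection onto $S_0$.
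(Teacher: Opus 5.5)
Your proposal is correct and follows essentially the same route as the paper. Both arguments run a Banach fixed point for the map built on the decoupled system \eqref{EL.sys}, combining Theorem \ref{t.EL-lin.es.}, Lemma \ref{l.mult.loc.es.0} and the $\varepsilon^{-1/2}$ Yosida bounds of Lemma \ref{l.res.es.} to get a Lipschitz constant of order $\varepsilon^{-1/2}T^{1/2}(1+\|W\|_{H^2}^2)$, and then restart on intervals whose length depends only on $\varepsilon$ and $W$. Your differences are minor refinements: you use that the map is affine instead of restricting to a ball $Z_\omega$, restart exactly at $T_1$ via $Y_T\hookrightarrow C([0,T];H^1\times H^2)$ rather than at $T_\varepsilon-\delta$, and additionally check the $S_0$-compatibility of the data and recover the pressure.
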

\begin{proof}\hfill\\
Let us consider the map $\Phi(z,V)=(u,Q)$, where $(u,Q)$ solves the system
\begin{equation}\label{approx.sys-contr.}
    \left\{\begin{array}{ll}
       (\partial_t-\mathbb P\Delta)u= - \beta \mathbb P {\rm Div}(\Delta-a) R_\varepsilon^H V + \mathbb Pf(R_\varepsilon^HV,W,W) + \mathbb P\widetilde F & (0,T)\times \Omega \\
       {\rm div}u=0 & (0,T)\times \Omega \\
       (\partial_t-\Delta+a)Q = \beta D(R_\varepsilon^Sz) + g(R_\varepsilon^Sz,W,W) + \widetilde G & (0,T)\times \Omega \\
       u=0,\quad \partial_\nu Q=0 & (0,T)\times\partial\Omega \\
       u(0)=u_0,\quad Q(0)=Q_0 & \Omega.
    \end{array}\right.
\end{equation}
Our purpose is to apply the Banach Fixed Point Theorem on $\Phi$ choosing a proper Banach space for $(z,V)$ and $(u,Q)$. Let 
$$ Z=\{(z,V)\in Y_T(\Omega)\mid z(0)=u_0,\quad V(0)=Q_0\}. $$
We know from Theorem \ref{t.EL-lin.es.} that a solution $(u,Q)\in Z$ exists when
$$ -\beta {\rm Div}(\Delta-a) R_\varepsilon^H V + f(R_\varepsilon^HV,W,W) + \widetilde F\in L^2((0,T);L^2(\Omega;\mathbb R^N)), $$
$$ \beta D(R_\varepsilon^Sz) + g(R_\varepsilon^Sz,W,W) + \widetilde G\in L^2((0,T);H^1(\Omega;S_0(N,\mathbb R))).   $$
In particular, if we take $(z,V)\in Z$, such a property follows from Lemma \ref{l.mult.loc.es.0}. So, the map $\Phi\colon Z\to Z$ is well-defined. More precisely, Lemma \ref{l.mult.loc.es.0} states that 
$$ \|f(R_\varepsilon^HV,W,W)\|_{L^2_TL^2} + \|g(R_\varepsilon^Sz,W,W)\|_{L^2_TH^1} $$
$$ \lesssim \left(\|R_\varepsilon^Sz\|_{L^2_TH^2} + \|R_\varepsilon^HV\|_{L^2_TH^3}\right)\left(1 +  \|W\|_{H^2(\Omega)}^2\right), $$
and, since $\varepsilon<1$, thanks to Lemma \ref{l.res.es.} it holds
$$ \|R^S_\varepsilon z\|_{L^2_TH^2} + \|R^H_\varepsilon V\|_{L^2_TH^3} $$
$$ \lesssim \varepsilon^{-1/2}\left(\|z\|_{L^2_TH^1} + \|V\|_{L^2_TH^2}\right) $$
$$ \le \varepsilon^{-1/2}T^{1/2}\left(\|z\|_{L^\infty_TH^1} + \|V\|_{L^\infty_TH^2}\right) \le \varepsilon^{-1/2}T^{1/2}\left(\|z\|_{X^1_T(\Omega)} + \|V\|_{X^2_T(\Omega)}\right). $$
Similarly
$$ \|{\rm Div}(\Delta-a)R_\varepsilon^H V\|_{L^2_TL^2} + \|D(R_\varepsilon^S z)\|_{L^2_TH^1} $$
$$ \lesssim \varepsilon^{-1/2}T^{1/2}\left(\|z\|_{L^2_TH^1} + \|V\|_{L^2_TH^2}\right). $$
So, applying the linear estimate from Theorem \ref{t.EL-lin.es.}, it holds
\begin{equation}\label{proof.nl.es.tot.}
    \|\Phi(z,V)\|_{Y_T(\Omega)}\lesssim \|u_0\|_{H^1(\Omega)} + \|Q_0\|_{H^2(\Omega)} + \varepsilon^{-1/2}T^{1/2}\left(1 +  \|W\|_{H^2(\Omega)}^2\right)\|(z,V)\|_{Y_T(\Omega)}.
\end{equation}
In order to apply the Banach Fixed Point Theorem, we need to prove that $\Phi$ is a contraction in a proper subspace of $Z$: let us consider
$$ Z_{\omega}=\{(v,W)\in Z\mid \|(v,W)\|_{Y_T(\Omega)}\le \omega\}. $$
Firstly, we show that $\Phi\colon Z_\omega\to Z_\omega$. In fact, letting $(u,Q)=\Phi(z,V)$, by the estimate \eqref{proof.nl.es.tot.} there exists $C_0>0$ such that
$$ \|(u,Q)\|_{Y_T(\Omega)}\le C_0(\|u_0\|_{H^1(\Omega)} + \|Q_0\|_{H^2(\Omega)}) + C_0\varepsilon^{-1/2}T^{1/2}\omega\left(1 + \|W\|_{H^2(\Omega)}^2\right). $$
We then take $\omega$ such as
$$ C_0(\|u_0\|_{H^1(\Omega)} + \|Q_0\|_{H^2(\Omega)})=\frac{\omega}{2}  $$
and $T_\varepsilon>0$ such that 
$$ C_0\varepsilon^{-1/2}T_\varepsilon^{1/2}(1+\|W\|_{H^2(\Omega)}^2)\le \frac{1}{2}, $$
so that 
$$ \|(u,Q)\|_{Y_{T_\varepsilon}(\Omega)}\le \omega $$
and therefore $\Phi\colon Z_\omega\to Z_\omega$. Let us take now $(z_1,V_1),(z_2,V_2)\in Z_\omega$ and the correspondent $(u_j,Q_j)=\Phi(z_j,V_j)$ for $j=1,2$, then $(u_1-u_2,Q_1-Q_2)$ solves the system
$$ \left\{\begin{array}{ll}
    (\partial_t-\mathbb P\Delta)(u_1-u_2)= - \beta\mathbb P{\rm Div}(\Delta-a)(R_\varepsilon^H(V_1-V_2)) + \mathbb P f(R_\varepsilon^H(V_1-V_2),W,W) & (0,T_\varepsilon)\times\Omega \\
    {\rm div}(u_1-u_2)=0 & (0,T_\varepsilon)\times\Omega \\
    (\partial_t+a-\Delta)(Q_1-Q_2) = \beta D(R_\varepsilon^S(z_1-z_2)) + g(R_\varepsilon^S(z_1-z_2),W,W)& (0,T_\varepsilon)\times \Omega \\
    u_1-u_2=0,\quad \partial_\nu (Q_1-Q_2)=0 & (0,T_\varepsilon)\times\partial\Omega \\
    (u_1-u_2)(0)=0,\quad (Q_1-Q_2)(0)=0 & \Omega.
\end{array}\right. $$
So, we obtain as before that there is $C_1>0$ such that
$$  \|(u_1-u_2, Q_1-Q_2)\|_{Y_{T_\varepsilon}(\Omega)}\le C_1\varepsilon^{-1/2}{T_\varepsilon}^\frac{1}{2}(1 +  \|W\|_{H^2(\Omega)}^2)\|(z_1-z_2,V_1-V_2)\|_{Y_{T_\varepsilon}(\Omega)}. $$
Therefore, $\Phi\colon Z_\omega\to Z_\omega$ is a contraction choosing $T_\varepsilon$ sufficiently small such that 
$$ C_1\varepsilon^{-1/2}T_\varepsilon^\frac{1}{2}(1 +  \|W\|_{H^2(\mathbb R^3)}^2)<1.  $$
Therefore, we get a solution for the approximated system \eqref{approx.lin.sys.0} in $(0,T_\varepsilon)$. 

Finally, it may happen that $T_\varepsilon<T$, so we need to extend our solution to $(0,T)$. To do so, we can apply the previous argument changing the initial conditions: as before, we can find a solution for the system \eqref{approx.sys-contr.} with initial conditions $u(T_\varepsilon-\delta)$ and $Q(T_\varepsilon-\delta)$ for some $\delta>0$. We notice that the choice of $T_\varepsilon$ does not depend on the initial conditions, so we can extend the solution in $2T_\varepsilon-\delta$. Therefore, with a finite number of steps we can extend the solution in $(0,T)$. 
\end{proof}
\begin{rem}
From now on, we apply Proposition \ref{p.loc.ex-approx.} with $W=Q_0$. We introduced the function $W$ in the system \eqref{approx.lin.sys.0} to clarify the proof of Proposition \ref{p.loc.ex-approx.}: at the end of the proof, we extended the solution $(u_\varepsilon,Q_\varepsilon)$ from $(0,T_\varepsilon)$ to $(0,T)$ using the fact that $T_\varepsilon$ depends on $W$ but not on the initial conditions. So, if we set $W=Q_0$, the previous argument would be confusing for the reader.
\end{rem}

\subsection{Energy method and cancellations}

In the previous section we proved the existence of a solution $(u_\varepsilon,Q_\varepsilon)$ for the approximated linear system 
\begin{equation}\label{approx.lin.sys.2}
    \left\{\begin{array}{ll}
       (\partial_t-\mathbb P\Delta)u_\varepsilon +  \beta \mathbb P {\rm Div}(\Delta-a) R_\varepsilon^H Q_\varepsilon = \mathbb Pf(R_\varepsilon^HQ_\varepsilon,Q_0,Q_0) + \mathbb P\widetilde F & (0,T)\times \Omega \\
       {\rm div}u_\varepsilon=0 & (0,T)\times \Omega \\
       (\partial_t-\Delta+a)Q_\varepsilon - \beta D(R_\varepsilon^Su_\varepsilon) =g(R_\varepsilon^Su_\varepsilon,Q_0,Q_0) + \widetilde G & (0,T)\times \Omega \\
       u_\varepsilon=0,\quad \partial_\nu Q_\varepsilon=0 & (0,T)\times\partial\Omega \\
       u_\varepsilon(0)=u_0,\quad Q_\varepsilon(0)=Q_0 & \Omega,
    \end{array}\right.
\end{equation}
for any $T\in(0,1)$. Our next purpose is to show that such a solution is uniformly bounded in $Y_T(\Omega)$, that is we can find $C>0$ independent of $\varepsilon$ such that 
$$ \|(u_\varepsilon,Q_\varepsilon)\|_{Y_T(\Omega)}\le C\left[\|u_0\|_{H^1(\Omega)} + \|Q_0\|_{H^2(\Omega)} + \|\widetilde F\|_{L^2_TL^2} + \|\widetilde G\|_{L^2_TH^1}\right]. $$
First, we need a technical lemma:
\begin{lem}\label{l.matrix-trace}
Let $A,B$ be symmetric matrices, then 
\begin{itemize}
    \item If $C$ is a symmetric matrix, it holds
    $$ (AB)\colon C=(CA)\colon B=(BC)\colon A; $$
    \item If $C$ is an anti-symmetric matrix, it holds
    $$ (AB)\colon C=-(CA)\colon B, $$
    $$ (AB)\colon C = - (BC)\colon A. $$
\end{itemize}
\end{lem}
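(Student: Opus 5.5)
The plan is to reduce everything to the two basic properties of the Frobenius product $X\colon Y={\rm tr}(Y^TX)$: cyclicity of the trace, ${\rm tr}(XYZ)={\rm tr}(YZX)={\rm tr}(ZXY)$, and invariance under transposition, ${\rm tr}(X)={\rm tr}(X^T)$. First I would write
$$ (AB)\colon C={\rm tr}\left(C^TAB\right). $$
Every claimed identity then becomes a statement about the trace of a product of three matrices, and the symmetry or antisymmetry of $C$ only enters through the replacement $C^T=\pm C$.

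\emph{Symmetric case.} If $C^T=C$, then $(AB)\colon C={\rm tr}(CAB)$. Likewise
$$ (CA)\colon B={\rm tr}\left(B^TCA\right)={\rm tr}(BCA), \qquad (BC)\colon A={\rm tr}\left(A^TBC\right)={\rm tr}(ABC), $$
where I use $A^T=A$ and $B^T=B$. The three traces ${\rm tr}(CAB)$, ${\rm tr}(BCA)$, ${\rm tr}(ABC)$ are cyclic permutations of one another. Cyclicity therefore gives the three equalities at once.

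\emph{Antisymmetric case.} If $C^T=-C$, then $(AB)\colon C={\rm tr}(C^TAB)=-{\rm tr}(CAB)$. On the other side, $A$ and $B$ are still symmetric, so exactly as above
$$ (CA)\colon B={\rm tr}(BCA)={\rm tr}(CAB), \qquad (BC)\colon A={\rm tr}(ABC)={\rm tr}(CAB). $$
This yields $(AB)\colon C=-(CA)\colon B=-(BC)\colon A$, which is both claimed identities.

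There is no genuine obstacle here. The only point needing care is bookkeeping: one must check that the transpose in the definition of $\colon$ always falls on a symmetric factor ($A$ or $B$) on the right-hand sides, and on $C$ only on the left-hand side. That placement is exactly what produces the sign in the antisymmetric case.
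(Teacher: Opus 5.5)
Your proof is correct and follows the same route as the paper: each Frobenius product is rewritten as the trace of a triple product using $A^T=A$, $B^T=B$ and $C^T=\pm C$, and cyclicity of the trace then gives the identities. Your bookkeeping of where the transpose falls is exactly what the paper's proof does implicitly.
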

\begin{proof}\hfill\\
For what concerns the first part, it is sufficient to notice that for any matrices $A,B,C$ it holds
\begin{equation}\label{aux.id.matrix.}
{\rm tr}(ABC)={\rm tr}(CAB)={\rm tr}(BCA).
\end{equation}
Conversely, when $C$ is anti-symmetric
$$ (AB)\colon C={\rm tr}(C^TAB)=-{\rm tr}(CAB). $$
Thanks to \eqref{aux.id.matrix.}
$$ -{\rm tr}(CAB)=-{\rm tr}(BCA)=-(CA)\colon B. $$
Similarly
$$ (AB)\colon C={\rm tr}(C^TAB)=-{\rm tr}(CAB)=-{\rm tr}(ABC)=-(BC)\colon A. $$
\end{proof}
We state here some remarks that we repeatedly use in the next calculations:
\begin{rem}\label{rem.help-calc.}
Let $u\in H^2_{\mathbb P\Delta_D}(\Omega)$.
\begin{itemize}
    \item Since $H^{2}_{\mathbb{P}\Delta_D}(\Omega) \subset J_2(\Omega)$, it holds
    $$ \int_{\Omega} \mathbb{P} f \cdot udx = \int_{\Omega} f \cdot udx
    \qquad \forall\, f \in L^2(\Omega;\mathbb{R}^N). $$
    Moreover,
    \begin{equation}\label{help-rem.Id-u}
    Id\colon \partial_j^\alpha\nabla u={\rm div}\partial_j^\alpha u=0\quad j=1,\ldots, N,\quad \alpha=0,1.
    \end{equation}
    \item If $V\in S_0(N,\mathbb R)$
    \begin{equation}\label{help-rem.nablau-Q}
        \nabla u\colon V = D (u)\colon V,
    \end{equation}
    \begin{equation}\label{help-rem.Id-Q}
     Id\colon V={\rm tr}V=0.   
    \end{equation}
    In the applications, we will consider $V=\nabla^\alpha Q$ for $\alpha=0,1,2,3$ with $Q\in H^3(\Omega;S_0(N,\mathbb R))$. 
\end{itemize}
\end{rem}
As discussed above, applying the energy method to the system \eqref{approx.lin.sys.0} reveals cancellations in both the linear and nonlinear terms. The following lemmas highlight these phenomena through a term-by-term analysis. These cancellations depend on the domain we consider: when $\Omega=\mathbb R^N$ they yield to stronger results with respect to the case $\Omega=\mathbb R^N_+$. However, the strategy of the proof is the same. As the half-space case is more involved, we restrict ourselves to stating and proving the result for $\Omega=\mathbb{R}^N_+$.
\begin{lem}\label{l.en-can.1}
Let $\Omega=\mathbb R^N,\mathbb R^N_+$ with $N=2,3$, let $a,T>0$, $(u,Q)\in Y_T(\Omega)$ such that, for a.e. $t\in(0,T)$, it holds
$$ u(t),\partial_j^2 u(t)\in H^2_{\mathbb P\Delta_D}(\Omega;\mathbb R^N) \quad j=1,\ldots, N , $$
$$ Q(t),\Delta Q(t)\in H^3_{\Delta_N}(\Omega;S_0(N,\mathbb R)), $$
then for a.e. $t\in(0,T)$ it holds
\begin{equation}\label{en-can.1-0}
    \int_{\Omega}\mathbb P{\rm Div}(\Delta-a)Q\cdot u-D (u)\colon (a-\Delta)Qdx=0,
\end{equation}
\begin{equation}\label{en-can.1-i}
    \int_{\Omega}\mathbb P{\rm Div}(\Delta-a)Q\cdot (-\partial_i^2)u-\partial_i D(u)\colon \partial_i(a-\Delta)Qdx=0\quad i<N
\end{equation}
\begin{equation}\label{en-can.1-N}
    \begin{aligned}
        \left|\int_0^t\int_{\Omega}\mathbb P{\rm Div}(\Delta-a)Q\cdot (-\partial_N^2)u-\partial_ND(u)\colon \partial_N(a-\Delta)Qdxd\tau\right| \\
        \le \|\nabla^\prime\nabla u\|_{L^2((0,t);L^2(\Omega)}\|Q\|_{X^2_t(\Omega)} + \|\nabla^\prime Q\|_{L^2((0,t);H^2(\Omega))}\|\nabla u\|_{X^0_t(\Omega)}.
    \end{aligned}
\end{equation}
\end{lem}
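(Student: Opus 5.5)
The plan is to prove all three statements by integrating by parts and checking that boundary terms vanish. Tangential derivatives $\partial_i$, $i<N$, commute with the boundary conditions; the normal derivative $\partial_N$ does not, and this is where the estimate \eqref{en-can.1-N} replaces an exact cancellation. Throughout, write $M=(a-\Delta)Q$, which is symmetric and traceless.

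\textbf{Identity \eqref{en-can.1-0}.} Since $u(t)\in H^2_{\mathbb P\Delta_D}\subset J_2$, Remark \ref{rem.help-calc.} lets us drop $\mathbb P$:
$$\int_\Omega \mathbb P{\rm Div}(\Delta-a)Q\cdot u\,dx=-\int_\Omega {\rm Div}M\cdot u\,dx.$$
We integrate by parts once. The boundary term is $\int_{\mathbb R^N_0} (M\nu)\cdot u$, which vanishes because $u=0$ on $\mathbb R^N_0$. This gives $\int_\Omega M\colon\nabla u$. By \eqref{help-rem.nablau-Q} this equals $\int_\Omega D(u)\colon M$, which cancels the second term exactly.

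\textbf{Identity \eqref{en-can.1-i}, $i<N$.} First move one $\partial_i$ in $\int \mathbb P{\rm Div}(\Delta-a)Q\cdot(-\partial_i^2u)$. There is no boundary term, since $\partial_i$ is tangential. This gives
$$\int_\Omega \mathbb P\,\partial_i{\rm Div}(\Delta-a)Q\cdot\partial_iu\,dx.$$
The regularity assumptions are needed here to justify $\partial_i\mathbb P=\mathbb P\partial_i$ on the half space, via the symbol/reflection structure, or alternatively to write $-\partial_i^2u\in J_2$ and drop $\mathbb P$ directly. Now apply the computation of \eqref{en-can.1-0} to the pair $(\partial_iQ,\partial_iu)$. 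This pair satisfies the same hypotheses: $\partial_iu=0$ on the boundary, ${\rm div}\,\partial_iu=0$, and $\partial_iQ$ still has zero Neumann trace. The result is exactly $\int\partial_iD(u)\colon\partial_iM$.

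\textbf{Estimate \eqref{en-can.1-N}.} Here $-\partial_N^2u\in J_2$ by hypothesis, so we can drop $\mathbb P$ and write the first integral as $-\int {\rm Div}M\cdot(-\partial_N^2 u)$. Two routes:

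\emph{Route 1 (trade $\partial_N^2$ for tangential derivatives).} Write $-\partial_N^2=-\Delta+\Delta'$, so the tangential part is handled as in \eqref{en-can.1-i}.
\begin{itemize}
\item For the $-\Delta u$ part, note $\Delta u$ is not divergence-free in the relevant sense. Instead, write ${\rm Div}M\cdot\Delta u$ and integrate by parts.
\end{itemize}

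\emph{Route 2 (direct integration by parts in $x_N$, which I prefer).}
\begin{itemize}
\item In $\int {\rm Div}M\cdot\partial_N^2u$, move one $\partial_N$. The boundary term is $\int_{\mathbb R^N_0}{\rm Div}M\cdot\partial_Nu$, which need not vanish.
\item In the second term $\int\partial_ND(u)\colon\partial_NM$, move $\partial_N$ as well. Its boundary term involves $D(u)\colon\partial_NM$. Since $\partial_NQ=0$ and $\partial_N\Delta Q=0$ on the boundary (from $\Delta Q\in H^3_{\Delta_N}$), we get $\partial_NM=0$, so this boundary term vanishes.
\item This yields $-\int D(u)\colon\partial_N^2M$.
\item In the first term, integrate ${\rm Div}$ by parts after differentiating. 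The remaining boundary pieces contain $\partial_Nu\cdot$(stuff) on $x_N=0$. Using $u=0$ there, ${\rm div}\,u=0$ gives $\partial_Nu_N=0$ on the boundary.
\item The surviving boundary contributions are of the form $\int_{\mathbb R^N_0}\partial_Nu_j\,\partial_jM_{jN}$-type terms with $j<N$. These contain a tangential derivative of $M$, or, after a tangential integration by parts along the boundary, a tangential derivative of $u$.
\item Convert each such boundary integral back to a bulk integral via $\int_{\mathbb R^N_0}\phi\psi=-\int_\Omega\partial_N(\phi\psi)$. This produces integrals of the type $\nabla'\nabla u\cdot\nabla^2 Q$ or $\nabla u\cdot\nabla'\nabla^3Q$.
\end{itemize}

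\textbf{Conclusion.} Integrate in time and apply Cauchy–Schwarz:
\begin{itemize}
\item the first type is bounded by $\|\nabla'\nabla u\|_{L^2_tL^2}\|Q\|_{X^2_t}$, using the $L^2_tH^3$ part of $X^2_t$;
\item the second type is bounded by $\|\nabla' Q\|_{L^2_tH^2}\|\nabla u\|_{X^0_t}$.
\end{itemize}

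\textbf{Main obstacle.} The delicate step is the bookkeeping of boundary terms in \eqref{en-can.1-N}. Every surviving boundary term must carry at least one tangential derivative, on either $u$ or $Q$. I expect this to follow from $u|_{\mathbb R^N_0}=0$, $\partial_Nu_N|_{\mathbb R^N_0}=0$, and the Neumann conditions on $Q$ and $\Delta Q$. Also, no term may contain $\nabla^3Q$ paired with $\nabla^2u$ without a tangential derivative.
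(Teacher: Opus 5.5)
Your arguments for \eqref{en-can.1-0} and \eqref{en-can.1-i} are correct and coincide with the paper's. The gap is in the last step of your Route 2 for \eqref{en-can.1-N}. Set $M=(a-\Delta)Q$. Your integrations by parts use $\partial_N M=0$, $u=0$ and $\partial_N u_N=0$ on $\mathbb R^N_0$, and they correctly reduce the integrand at each time to
\[
-\sum_{j,k<N}\int_{\mathbb R^N_0}\partial_k M_{jk}\,\partial_N u_j\,dx^\prime .
\]
Converting this back to the bulk, however, gives
\[
\sum_{j,k<N}\int_{\mathbb R^N_+}\bigl(\partial_N\partial_k M_{jk}\,\partial_N u_j+\partial_k M_{jk}\,\partial_N^2 u_j\bigr)\,dx ,
\]
and the first term contains $\partial_k\partial_N\Delta Q$, i.e.\ four derivatives of $Q$ (your own ``$\nabla u\cdot\nabla^\prime\nabla^3 Q$''). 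Neither $\|\nabla^\prime Q\|_{L^2_tH^2}$ nor $\|Q\|_{X^2_t}$ controls this, so the Cauchy--Schwarz bound you claim for the ``second type'' is false as stated. Moving the tangential derivative onto $u$ on the boundary does not help: the conversion then produces $M_{jk}\,\partial_k\partial_N^2u_j$, with three derivatives on $u$, which is equally uncontrolled. Your other listed type is also miscounted. The actual pairing is $\nabla^\prime\nabla u\cdot\nabla^3Q$, not $\nabla^\prime\nabla u\cdot\nabla^2Q$, and the term $\nabla^\prime\nabla^2 Q\cdot\nabla^2 u$ is missing from your list.

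The repair is one more integration by parts in the tangential variable $x_k$, which produces no boundary term: $\int\partial_N\partial_kM_{jk}\,\partial_Nu_j=-\int\partial_NM_{jk}\,\partial_k\partial_Nu_j$. This is bounded by $\|\nabla^\prime\nabla u\|_{L^2_tL^2}\|Q\|_{L^2_tH^3}$, while $\partial_kM_{jk}\,\partial_N^2u_j$ is bounded by $\|\nabla^\prime Q\|_{L^2_tH^2}\|\nabla^2u\|_{L^2_tL^2}$. After this repair you land exactly on the expression the paper starts from, so the detour through $\mathbb R^N_0$ undoes itself. The paper uses the symmetry of $M$ to replace $D(u)$ by $\nabla u$ and writes the integrand as
\[
\sum_{j,k=1}^N\bigl(\partial_k M_{jk}\,\partial_N^2u_j-\partial_N\partial_k u_j\,\partial_N M_{jk}\bigr).
\]
The $k=N$ summands cancel pointwise. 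The $k<N$ summands already carry a tangential derivative on $Q$ or on $u$, so they are estimated directly by Cauchy--Schwarz. This route needs no integration by parts, and it uses neither $\partial_N\Delta Q=0$ nor $\partial_Nu_N=0$ on $\mathbb R^N_0$.
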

\begin{proof}\hfill\\
As anticipated, we focus on the case $\Omega=\mathbb R^N_+$. Since $u,\partial_j^2u\in H^2_{\mathbb P\Delta_D}(\mathbb R^N_+)$ for $j=1,\ldots,N$, as mentioned in Remark \ref{rem.help-calc.} we can remove the Helmholtz projection from the calculation. The identity \eqref{en-can.1-0} follows from the condition $u=0$ on $\mathbb R^N_0$ combined with \eqref{help-rem.nablau-Q}. The second identity \eqref{en-can.1-i} also follows from the boundary condition $\partial_i u=0$ on $\mathbb R^N_0$ when $i<N$. For the third result, let us write more precisely the left hand side of \eqref{en-can.1-N}:
$$ -\int_0^t\int_{\mathbb R^N_+}{\rm Div}(\Delta-a)Q\cdot \partial_N^2u+\partial_ND(u)\colon \partial_N(a-\Delta)Qdxd\tau $$
$$ =- \int_0^t\int_{\mathbb R^N_+}{\rm Div}(\Delta-a)Q\cdot \partial_N^2u+\partial_N\nabla u\colon \partial_N(a-\Delta)Qdxd\tau $$
$$ = -\sum_{j,k=1}^N\int_0^t\int_{\mathbb R^N_+}\partial_k(\Delta-a)Q_{jk}\partial_N^2u_j+\partial_N\partial_k u_j\partial_N(a-\Delta)Q_{jk}dxd\tau, $$
where in the first equality we applied \eqref{help-rem.nablau-Q} with $V=\partial_N(a-\Delta)Q$. When $k=N$, we have the cancellation:
$$ \int_0^t\int_{\mathbb R^N_+}\partial_N(\Delta-a)Q_{jN}\partial_N^2u_j+\partial_N^2 u_j\partial_N(a-\Delta)Q_{jN}dxd\tau=0 \quad j=1,\ldots,N. $$
For the remaining terms, that is $k<N$, we can use the H\"older inequality:
$$ \left|\int_0^t\int_{\mathbb R^N_+}\partial_k(\Delta-a)Q_{jk}\partial_N^2u_j+\partial_N\partial_k u_j\partial_N(a-\Delta)Q_{jk}dxd\tau\right| $$
$$ \lesssim  \|\partial_kQ\|_{L^2_tH^2}\|\partial_N^2u\|_{L^2_tL^2} + \|\partial_N\partial_k u\|_{L^2_tL^2}\|\partial_NQ\|_{L^2_tH^2} $$
$$ \le \|\nabla^\prime Q\|_{L^2_tH^2}\|\nabla u\|_{L^2_tH^1} + \|\nabla^\prime \nabla u\|_{L^2_tL^2}\|Q\|_{L^2_tH^3}. $$
\end{proof}

\begin{lem}\label{l.en-can.2}
Let $\Omega=\mathbb R^N,\mathbb R^N_+$ with $N=2,3$, let $a,T>0$, $(u,Q)\in Y_T(\Omega)$ as in Lemma \ref{l.en-can.1}, let $Q_0\in H^2_{\Delta_N}(\Omega;S_0(N,\mathbb R))$, then for 
a.e. $t\in(0,T)$ it holds
\begin{equation}\label{en-can.2-0}
    \int_{\Omega}\mathbb P{\rm Div}\left((\Delta-a)Q\colon Q_0\left(Q_0+\frac{Id}{N}\right)\right)\cdot u-\nabla u\colon Q_0\left(Q_0+\frac{Id}{N}\right)\colon (a-\Delta)Qdx=0,
\end{equation}
\begin{equation}\label{en-can.2-i}
    \begin{aligned}
        & \left|\int_0^t\int_{\Omega}\mathbb P{\rm Div}\left((\Delta-a)Q\colon Q_0\left(Q_0+\frac{Id}{N}\right)\right)\cdot (-\partial_i^2)u-\partial_i\left(\nabla u\colon Q_0\left(Q_0+\frac{Id}{N}\right)\right)\colon \partial_i(a-\Delta)Qdxd\tau\right| \\
        & \lesssim t^\gamma \|Q_0\|_{H^2(\Omega)}^2\|Q\|_{X^2_t(\Omega)}\left(\|u\|_{L^\infty((0,t);H^1(\Omega))} + \|\nabla u\|_{X^0_t(\Omega)}\right)\quad i<N,
    \end{aligned}
\end{equation}
\begin{equation}\label{en-can.2-N}
    \begin{aligned}
        & \left|\int_0^t\int_{\Omega}\mathbb P{\rm Div}\left((\Delta-a)Q\colon Q_0\left(Q_0+\frac{Id}{N}\right)\right)\cdot (-\partial_N^2)u-\partial_N\left(\nabla u\colon Q_0\left(Q_0+\frac{Id}{N}\right)\right)\colon \partial_N (a-\Delta)Qdxd\tau\right| \\
        & \lesssim t^\gamma \|Q_0\|_{H^2(\Omega)}^2\|Q\|_{X^2_t(\Omega)}\left(\|u\|_{L^\infty((0,t);H^1(\Omega))} + \|\nabla u\|_{X^0_t(\Omega)}\right) + \\
        & + \|Q_0\|_{H^2(\Omega)}^2\left(\|\nabla^\prime \nabla u\|_{L^2((0,t);L^2(\Omega))}\|Q\|_{X^2_t(\Omega)} + \|\nabla u\|_{X^0_t(\Omega)}\|\nabla^\prime Q\|_{L^2((0,t);H^2(\Omega))}\right),
    \end{aligned}
\end{equation}
for some $\gamma>0$.
\end{lem}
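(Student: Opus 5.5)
Write $M=Q_0\left(Q_0+\frac{Id}{N}\right)$, which is independent of $t$, and $H=(a-\Delta)Q$. The term in \eqref{en-can.2-0} reads $-\mathbb P{\rm Div}\left((H\colon M)\right)$ tested against $u$, minus $(\nabla u\colon M)(H\colon\cdot)$. To make this precise, I interpret ${\rm Div}\left((\Delta-a)Q\colon M\right)$ as ${\rm Div}\left(-(H\colon Q_0)(Q_0+\frac{Id}{N})\right)$, consistent with the definition of $f$, where the scalar $H\colon Q_0$ multiplies the matrix $Q_0+\frac{Id}{N}$. The second term is then the scalar $(\nabla u\colon(Q_0+\frac{Id}{N}))(Q_0\colon H)$, in the same order as in $g$.

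\textbf{Proof of \eqref{en-can.2-0}.} As in Lemma~\ref{l.en-can.1}, I drop $\mathbb P$ by Remark~\ref{rem.help-calc.} and integrate the divergence by parts. The boundary term vanishes because $u=0$ on $\mathbb R^N_0$. The result is $\int (H\colon Q_0)\,(Q_0+\frac{Id}{N})\colon\nabla u\,dx$, which is exactly the second integral, so the difference is zero. Note that $Id\colon\nabla u=0$ by \eqref{help-rem.Id-u}, so the $Id/N$ part is harmless.

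\textbf{Tangential derivatives $i<N$.} I integrate $-\partial_i^2u$ by parts once in $x_i$; there is no boundary term since $i<N$. The first integral becomes $\int\partial_i{\rm Div}\left((H\colon Q_0)(Q_0+\frac{Id}{N})\right)\cdot\partial_i u$, and a further integration in the divergence gives a zero boundary term because $\partial_iu=0$ on $\mathbb R^N_0$. Expanding $\partial_i$ by the Leibniz rule, the terms where $\partial_i$ falls on $H$ in both integrals cancel exactly, just as in \eqref{en-can.2-0}. What remains are commutator terms of the form $\int \nabla^3 Q\cdot \nabla Q_0\, Q_0\,\nabla u$ and $\int \nabla^2 Q\, \nabla Q_0\, Q_0\, \nabla^2 u$, together with lower-order terms involving $a Q$. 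I then bound these in $L^1_t$ by H\"older's inequality and Sobolev embeddings with $N\le3$: $Q_0\in H^2\hookrightarrow L^\infty$, $\nabla Q_0\in H^1\hookrightarrow L^6$, $\nabla u\in H^1\hookrightarrow L^3$ (or $L^6$), and $\nabla^2 Q\in H^1\hookrightarrow L^3$. For example,
$$\int_0^t\|\nabla^3Q\|_{L^2}\|\nabla Q_0\|_{L^6}\|Q_0\|_{L^\infty}\|\nabla u\|_{L^3}\,d\tau\lesssim \|Q_0\|_{H^2}^2\|Q\|_{L^2_tH^3}\|\nabla u\|_{L^2_tH^1}^{1/2}\|\nabla u\|_{L^2_tL^2}^{1/2}.$$
Here $\|\nabla u\|_{L^2_tL^2}\le t^{1/2}\|u\|_{L^\infty_tH^1}$, which produces the factor $t^{1/4}$, i.e.\ $\gamma=1/4$. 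For the term with $\nabla^2u$, I place $\nabla^2Q$ in $L^3$ by Gagliardo--Nirenberg interpolation between $L^\infty_tH^2$ and $L^2_tH^3$. This puts it in $L^4_tL^3$, so H\"older in time gains $t^{1/4}$ against $\nabla^2 u\in L^2_tL^2$. In this way every remainder is bounded by $t^\gamma\|Q_0\|_{H^2}^2\|Q\|_{X^2_t}(\|u\|_{L^\infty_tH^1}+\|\nabla u\|_{X^0_t})$.

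\textbf{Normal derivative and main obstacle.} The same integration by parts in $x_N$ now creates boundary terms on $\mathbb R^N_0$, since $\partial_Nu\neq0$ there. I expect handling these to be the main obstacle. Following the proof of \eqref{en-can.1-N}, I will instead avoid integrating by parts in $x_N$ for the top-order pieces. I write the divergence componentwise as $\sum_k\partial_k(\cdots)_{jk}$ and pair it with $\partial_N^2u_j$. For $k=N$, the top-order term $\partial_N H\colon Q_0\,(Q_0+\frac{Id}{N})_{jN}\partial_N^2u_j$ cancels pointwise against the corresponding part of $\partial_N(\nabla u\colon(Q_0+\frac{Id}{N}))(Q_0\colon\partial_NH)$, namely the part with $\partial_N\partial_Nu_j$. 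Lower-order products, where a derivative hits $Q_0$, are estimated with the factor $t^\gamma$ exactly as in the tangential case. The remaining top-order terms with $k<N$ involve either $\partial_k\nabla^2Q$ or $\partial_N\partial_ku$, i.e.\ a tangential derivative. They are bounded directly by H\"older with $\|Q_0\|_{L^\infty}^2\lesssim\|Q_0\|_{H^2}^2$:
$$\|Q_0\|_{H^2}^2\left(\|\nabla^\prime\nabla u\|_{L^2_tL^2}\|Q\|_{X^2_t}+\|\nabla u\|_{X^0_t}\|\nabla^\prime Q\|_{L^2_tH^2}\right).$$
This yields \eqref{en-can.2-N}. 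The delicate point is checking that the index bookkeeping of the two trace-type products indeed matches the $k=N$ pieces for cancellation. If needed, I would use Lemma~\ref{l.matrix-trace} and the symmetry of $Q_0$ and $H$ to verify this.
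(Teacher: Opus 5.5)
Your proposal is correct in its main line and follows the paper's proof essentially step by step. The shared steps are these. You drop $\mathbb P$. For $i<N$ you integrate by parts twice, and there are no boundary terms since $\partial_i u=0$ on $\mathbb R^N_0$. You cancel the pieces where the extra derivative falls on $(\Delta-a)Q$ in the first integral and on $\nabla u$ in the second. You bound the commutators $\nabla^2Q\,\nabla Q_0\,Q_0\,\nabla^2u$ and $\nabla^3Q\,\nabla Q_0\,Q_0\,\nabla u$ through $L^3$-interpolation to gain $t^{1/4}$. For $i=N$ you do not integrate by parts, but split ${\rm Div}$ into $k=N$ (pointwise cancellation) and $k<N$ (a tangential derivative, bounded by H\"older).

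There is one bookkeeping slip in the normal case. You carry $Q_0+\frac{Id}{N}$ through the componentwise split. The pieces coming from $\frac{Id}{N}$ with $k<N$ then contain only one factor of $Q_0$. Examples are $\frac1N(\partial_kH\colon Q_0)\,\partial_N^2u_k$ and $\frac1N\partial_N\partial_ku_k\,(Q_0\colon\partial_NH)$, together with the analogous lower-order terms. For these, H\"older gives $\|Q_0\|_{H^2}$, not the claimed $\|Q_0\|_{H^2}^2$.

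The fix is to discard the identity parts before splitting, as the paper does. With $\phi=(\Delta-a)Q\colon Q_0$ one has ${\rm Div}(\phi\,\frac{Id}{N})=\frac1N\nabla\phi$. This is annihilated by $\mathbb P$; equivalently, it integrates to zero against $\partial_N^2u\in J_2(\Omega)$, which is among the hypotheses. The identity part of the second integral vanishes pointwise, by ${\rm div}\,u=0$ or by ${\rm tr}\,\partial_NH=0$ depending on the grouping. After this, the $k=N$ cancellation involves only $Q_0^{jN}$, and every surviving term carries two factors of $Q_0$.

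For the later application in Lemma \ref{l.en-can.fg}, whose constant is $1+\|Q_0\|_{H^2}^2$, even your weaker bound would be enough.
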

\begin{proof}\hfill\\
As before, we can focus on the estimates without considering the operator $\mathbb P$. The identity \eqref{en-can.2-0} follows from the Dirichlet conditions on $u$ with \eqref{help-rem.Id-u} and \eqref{help-rem.Id-Q}. For what concerns \eqref{en-can.2-i}, thanks to the regularity of $Q$ and $Q_0$ and to the Dirichlet conditions on $\partial_iu$ for $i<N$, it holds
$$ -\int_0^t\int_{\mathbb R^N_+}{\rm Div}\left((\Delta-a)Q\colon Q_0\left(Q_0+\frac{Id}{N}\right)\right)\cdot \partial_i^2u dxd\tau $$
$$ = \int_0^t\int_{\mathbb R^N_+}\partial_i{\rm Div}\left((\Delta-a)Q\colon Q_0\left(Q_0+\frac{Id}{N}\right)\right)\cdot\partial_iu dx d\tau $$
$$ = -\int_0^t\int_{\mathbb R^N_+}\partial_i\left((\Delta-a)Q\colon Q_0\left(Q_0+\frac{Id}{N}\right)\right)\colon \nabla \partial_iu dxd\tau. $$
Thanks to \eqref{help-rem.Id-u} and \eqref{help-rem.Id-Q}, the terms involving the identity matrix can be removed. So, it is sufficient to study the quantity
$$ -\int_0^t\int_{\mathbb R^N_+} \partial_i((\Delta-a)Q\colon Q_0Q_0)\colon \nabla \partial_i u + \partial_i(\nabla u\colon Q_0Q_0)\colon \partial_i(a-\Delta)Q dxd\tau. $$
We notice that, when $\partial_i$ acts on $(\Delta-a)Q$ in the first term and on $\nabla u$ in the second, there is a cancellation:
$$ -\int_0^t\int_{\mathbb R^N_+} (\partial_i(\Delta-a)Q\colon Q_0)\cdot (Q_0\colon \nabla \partial_i u) + (\partial_i\nabla u\colon Q_0)\cdot (Q_0\colon \partial_i(a-\Delta)Q) dxd\tau=0. $$
The remaining terms can be bounded by Sobolev and H\"older inequalities:
$$ \left|\int_0^t\int_{\mathbb R^N_+} (\Delta-a)Q\partial_iQ_0Q_0 \nabla \partial_i u dx d\tau\right| + \left|\int_0^t\int_{\mathbb R^N_+} \nabla u\partial_iQ_0Q_0\partial_i (a-\Delta)Q dx d\tau\right| $$
$$ \le \|Q_0\|_{L^\infty(\mathbb R^N_+)}\|\partial_iQ_0\|_{L^6(\mathbb R^N_+)}\|(\Delta-a)Q\|_{L^2_tL^3}\|\nabla \partial_iu\|_{L^2_tL^2} +  $$
$$ + \|Q_0\|_{L^\infty(\mathbb R^N_+)}\|\partial_iQ_0\|_{L^6(\mathbb R^N_+)}\|\partial_i(\Delta-a)Q\|_{L^2_tL^2}\|\nabla u\|_{L^2_tL^3}$$
Firstly, we notice that $H^2(\mathbb R^N_+)\hookrightarrow L^\infty(\mathbb R^N_+)$ and $H^1(\mathbb R^N_+)\hookrightarrow L^6(\mathbb R^N_+)$ for $N=2,3$, so
$$ \|Q_0\|_{L^\infty(\mathbb R^N_+)}\|\partial_iQ_0\|_{L^6(\mathbb R^N_+)}\lesssim \|Q_0\|^2_{H^2(\mathbb R^N_+)}. $$
Moreover, by interpolation
$$ \|(\Delta-a)Q\|_{L^3(\mathbb R^N_+)}\lesssim \|(\Delta-a)Q\|^{1/2}_{L^2(\mathbb R^N_+)}\|(\Delta-a)Q\|^{1/2}_{L^6(\mathbb R^N_+)}, $$
$$ \|\nabla u\|_{L^3(\mathbb R^N_+)}\lesssim \|\nabla u\|_{L^2(\mathbb R^N_+)}^{1/2}\|\nabla u\|_{L^6(\mathbb R^N_+)}^{1/2}, $$
and therefore
$$ \|(\Delta-a)Q\|_{L^2_tL^3}\lesssim \|Q\|^{1/2}_{L^\infty_tH^2}\|Q\|_{L^1_tH^3}^{1/2} \le t^{1/4}\|Q\|^{1/2}_{L^\infty_tH^2}\|Q\|_{L^2_tH^3}^{1/2}\le t^{1/4}\|Q\|_{X^2_t(\mathbb R^N_+)}, $$
$$ \|\nabla u\|_{L^2_tL^3}\lesssim \|u\|^{1/2}_{L^\infty_tH^1}\|\nabla u\|_{L^1_tH^1}^{1/2} \le t^{1/4}\|u\|^{1/2}_{L^\infty_tH^1}\|\nabla u\|_{L^2_tH^1}^{1/2}\lesssim t^{1/4}\left[\|u\|_{L^\infty_tH^1} + \|\nabla u\|_{L^2_tH^1}\right]. $$
Finally, we consider the inequality \eqref{en-can.2-N}. Let us write more in details the left hand side  (the term with the $Id$ matrix can be treated as before) :
\begin{equation}\label{proof.can.lem.1}
    \begin{aligned}
        -{\rm Div}((\Delta-a)Q\colon Q_0 Q_0)\cdot \partial_N^2u-\partial_N(\nabla u\colon Q_0Q_0)\colon \partial_N (a-\Delta)Q \\
        = -\sum_{j,k,\ell,h=1}^N \partial_k((\Delta-a)Q_{\ell h}Q_0^{h\ell}Q_0^{jk})\partial_N^2u_j + \partial_N(\partial_ku_jQ_0^{jk}Q_0^{h\ell})\partial_N(a-\Delta)Q_{\ell h}.
    \end{aligned}
\end{equation}
We first consider the case where $\partial_k$ acts on $(\Delta - a)Q$ in the first term, and $\partial_N$ acts on $\nabla u$ in the second:
$$ -\int_0^t\int_{\mathbb R^N_+}\partial_k(\Delta-a)Q_{\ell h}Q_0^{h\ell}Q_0^{jk}\partial_N^2u_j +\partial_N\partial_ku_jQ_0^{jk}Q_0^{h\ell}\partial_N(a-\Delta)Q_{\ell h}dxd\tau. $$
When $k=N$, the two terms are opposite and there is a cancellation. Conversely, when $k<N$, the thesis can be easily verified  using the Sobolev embedding and the H\"older inequality as we did in the proof of Lemma \ref{l.en-can.1}.
\end{proof}

It remains to estimate the terms
$$ {\rm Div}\left(-(\xi+1)(\Delta-a)QQ_0+(1-\xi)Q_0(\Delta-a)Q\right), $$
and
$$ \xi(D(u)Q_0+Q_0D(u))+W(u)Q_0-Q_0W(u), $$
where we recall that $D(u)$ and $W(u)$ are respectively the symmetric and the anti-symmetric part of the gradient of $u$, that is
$$ D(u)=\frac{1}{2}(\nabla u + \nabla^Tu), \quad W(u)=\frac{1}{2}(\nabla u - \nabla^Tu). $$
In particular
$$ \nabla u= D(u) + W(u) $$
and therefore
\begin{equation}\label{gr.matrix-id.}
    \nabla^Tu=(\nabla u)^T=D(u)-W(u).
\end{equation}
We divide the proof for the remaining terms into two parts. In fact
$$ -(\xi+1)(\Delta-a)Q Q_0 + (1-\xi)Q_0(\Delta-a)Q $$
$$ = (Q_0(\Delta-a)Q - (\Delta-a)Q Q_0) - \xi((\Delta-a)Q Q_0 + Q_0(\Delta-a)Q), $$
So, we study separately the quantities
$$ \int_0^t\int_{\Omega}-\mathbb P{\rm Div}\left((\Delta-a)Q Q_0+Q_0(\Delta-a)Q\right)\cdot u+(D(u)Q_0+Q_0D(u))\colon (a-\Delta)Qdxd\tau $$
and
$$ \int_0^t\int_{\Omega}\mathbb P{\rm Div}\left(Q_0(\Delta-a)Q - (\Delta-a)Q Q_0\right)\cdot u+(W(u)Q_0 - Q_0W(u))\colon (a-\Delta)Qdxd\tau. $$
\begin{lem}\label{l.en-can.3}
Let $\Omega=\mathbb R^N,\mathbb R^N_+$ with $N=2,3$, let $a,T>0$, $(u,Q)\in Y_T(\Omega)$  as in Lemma \ref{l.en-can.1}, let $Q_0\in H^2_{\Delta_N}(\Omega;S_0(N,\mathbb R))$, then for 
a.e. $t\in(0,T)$ it holds
\begin{equation}\label{en-can.3-0}
    \int_{\Omega}-\mathbb P{\rm Div}\left((\Delta-a)Q Q_0+Q_0(\Delta-a)Q\right)\cdot u+(D(u)Q_0+Q_0D(u))\colon (a-\Delta)Qdx=0,
\end{equation}
\begin{equation}\label{en-can.3-i}
    \begin{aligned}
        & \left|\int_0^t\int_{\Omega}-\mathbb P{\rm Div}\left((\Delta-a)Q Q_0+Q_0(\Delta-a)Q\right)\cdot (-\partial_i^2)u+\partial_i(D(u)Q_0+Q_0D(u))\colon \partial_i(a-\Delta)Qdxd\tau\right| \\
        & \lesssim t^\gamma \|Q_0\|_{H^2(\Omega)}\|Q\|_{X^2_t(\Omega)}\left(\|u\|_{L^\infty((0,t);H^1(\Omega))} + \|\nabla u\|_{X^0_t(\Omega)}\right)\quad i<N
    \end{aligned}
\end{equation}
\begin{equation}\label{en-can.3-N}
    \begin{aligned}
        & \left|\int_0^t\int_{\Omega}-\mathbb P{\rm Div}\left((\Delta-a)Q Q_0+Q_0(\Delta-a)Q\right)\cdot (-\partial_N^2)u+\partial_N(D(u)Q_0+Q_0D(u))\colon \partial_N(a-\Delta)Qdxd\tau\right| \\
        & \lesssim t^\gamma \|Q_0\|_{H^2(\Omega)}\|Q\|_{X^2_t(\Omega)}\left(\|u\|_{L^\infty((0,t);H^1(\Omega))} + \|\nabla u\|_{X^0_t(\Omega)}\right) + \\
        & + \|Q_0\|_{H^2(\Omega)}\left(\|\nabla^\prime \nabla u\|_{L^2((0,t);L^2(\Omega))}\|Q\|_{X^2_t(\Omega)} + \|\nabla u\|_{X^0_t(\Omega)}\|\nabla^\prime Q\|_{L^2((0,t);H^2(\Omega))}\right),
    \end{aligned}
\end{equation}
for some $\gamma>0$.
\end{lem}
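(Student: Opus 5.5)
We follow the template of Lemmas \ref{l.en-can.1} and \ref{l.en-can.2}, restricting to $\Omega=\mathbb R^N_+$ (the whole-space case is simpler, with no boundary terms and no normal direction to isolate). As in Remark \ref{rem.help-calc.}, the Helmholtz projection can be dropped since $u,\partial_j^2u\in H^2_{\mathbb P\Delta_D}$. The basic algebraic fact is Lemma \ref{l.matrix-trace} with $A=(\Delta-a)Q$, $B=Q_0$ symmetric and $C=D(u)$ (or a derivative of it) symmetric:
$$ (AQ_0)\colon D(u)=(D(u)A)\colon Q_0=(Q_0D(u))\colon A, \qquad (Q_0A)\colon D(u)=(D(u)Q_0)\colon A. $$
Hence $((\Delta-a)QQ_0+Q_0(\Delta-a)Q)\colon D(u)=(D(u)Q_0+Q_0D(u))\colon(\Delta-a)Q$.

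\textbf{Identity \eqref{en-can.3-0}.} We integrate by parts once in the divergence term. The boundary term vanishes because $u=0$ on $\mathbb R^N_0$, which gives
$$ \int{\rm Div}(M)\cdot u=-\int M\colon\nabla u, \qquad M=(\Delta-a)QQ_0+Q_0(\Delta-a)Q. $$
Since $M$ is symmetric (as $A$ and $Q_0$ are symmetric), $M\colon\nabla u=M\colon D(u)$. The identity above then shows that this cancels exactly against $(D(u)Q_0+Q_0D(u))\colon(a-\Delta)Q$.

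\textbf{Tangential estimate \eqref{en-can.3-i}.} For $i<N$ we use $\partial_iu=0$ on $\mathbb R^N_0$ and integrate by parts twice, exactly as in the proof of Lemma \ref{l.en-can.2}, to reduce the first term to $-\int\partial_iM\colon\nabla\partial_iu=-\int\partial_iM\colon D(\partial_iu)$. We then expand $\partial_i M$ and $\partial_i(D(u)Q_0+Q_0D(u))$ by the Leibniz rule.
\begin{itemize}
\item The terms in which $\partial_i$ falls on $(\Delta-a)Q$ and on $D(u)$ cancel by the same trace identity, now applied with $A=\partial_i(\Delta-a)Q$ and $C=D(\partial_iu)$.
\item The remaining terms carry $\partial_iQ_0$. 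They are bounded by $\|\partial_iQ_0\|_{L^6}\|(\Delta-a)Q\|_{L^2_tL^3}\|\nabla^2u\|_{L^2_tL^2}$ and $\|\partial_iQ_0\|_{L^6}\|\nabla u\|_{L^2_tL^3}\|\partial_i(\Delta-a)Q\|_{L^2_tL^2}$.
\end{itemize}
The $L^2$--$L^6$ interpolation used in Lemma \ref{l.en-can.2} supplies the factor $t^{1/4}$, so $\gamma=1/4$. Only one factor of $Q_0$ appears, consistent with the stated bound linear in $\|Q_0\|_{H^2}$.

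\textbf{Normal estimate \eqref{en-can.3-N}.} Here $\partial_Nu\neq0$ on the boundary, so we cannot integrate by parts in $x_N$ freely. We write everything in components, as in \eqref{proof.can.lem.1}:
$$ -\partial_k\big(A_{j\ell}Q_0^{\ell k}+Q_0^{j\ell}A_{\ell k}\big)\partial_N^2u_j+\partial_N\big(D_{j\ell}Q_0^{\ell k}+Q_0^{j\ell}D_{\ell k}\big)\partial_N A_{jk}. $$
\begin{itemize}
\item Terms where a derivative falls on $Q_0$ are handled with $t^{1/4}$ as above.
\item In the principal terms with $k=N$, we have $\partial_N^2u_j$ against $\partial_NA$ on one side and $\partial_ND$ against $\partial_NA$ on the other.
\end{itemize}
The main obstacle is that $D$ contains $\partial_ju_N$-type entries, so $\partial_ND_{j\ell}$ is not literally $\partial_N^2 u$. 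We split $\partial_N D_{j\ell}=\tfrac12(\partial_N\partial_\ell u_j+\partial_N\partial_ju_\ell)$. Any piece containing a tangential derivative $\partial_\ell$ or $\partial_j$ with index $<N$ contributes a $\nabla'\nabla u$ factor. Likewise, any principal term with $k<N$ contains $\partial_k A$ with $k<N$, which gives a $\nabla'Q$ factor. All of these are bounded by $\|Q_0\|_{L^\infty}(\|\nabla'\nabla u\|_{L^2_tL^2}\|Q\|_{X^2_t}+\|\nabla u\|_{X^0_t}\|\nabla'Q\|_{L^2_tH^2})$. What remains are the purely normal terms, with all indices equal to $N$ except the free matrix indices $j$. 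By the symmetric-matrix identity of Lemma \ref{l.matrix-trace}, these reduce to the pair $-\partial_NA\,Q_0\,\partial_N^2u+\partial_N^2u\,Q_0\,\partial_NA$, which cancels.

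I expect the index bookkeeping in this last step to be the delicate point, namely verifying that after the tangential pieces are removed the surviving normal terms match exactly. To do this I would first use $D(u)=\nabla u-W(u)$ together with the symmetry of $A$ and $Q_0$, so that the computation is reduced to the $\nabla u$ form used in Lemma \ref{l.en-can.1}.
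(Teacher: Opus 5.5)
Your proposal is correct and follows the paper's proof. For \eqref{en-can.3-0} you integrate by parts and apply Lemma~\ref{l.matrix-trace}. For \eqref{en-can.3-i} you use the Leibniz-rule cancellation and the $L^2$--$L^6$ interpolation of Lemma~\ref{l.en-can.2}. For \eqref{en-can.3-N} the paper uses exactly your fallback: it replaces $D(u)$ by $\nabla^Tu$, since the antisymmetric matrix $Q_0W(u)+W(u)Q_0$ pairs to zero with $\partial_N(a-\Delta)Q$, and then separates $k=N$ from $k<N$. Your direct splitting of $\partial_ND(u)$ also closes: the purely normal pieces sum to $\sum_j (Q_0\partial_NA+\partial_NA\,Q_0)_{jN}\,\partial_N^2u_j$, which is exactly the negative of the $k=N$ divergence terms.

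Watch one sign in the tangential step. Here, unlike Lemma~\ref{l.en-can.2}, there is a leading minus in front of $\mathbb P{\rm Div}$, so the reduction gives $+\int\partial_iM\colon\nabla\partial_iu$ rather than $-\int\partial_iM\colon\nabla\partial_iu$. It is this sign that makes the principal terms cancel instead of add.
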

\begin{proof}\hfill\\
Using integration by parts and \eqref{help-rem.nablau-Q}
$$ \int_{\mathbb R^N_+}-{\rm Div}\left((\Delta-a)Q Q_0+Q_0(\Delta-a)Q\right)\cdot u dx $$
$$ = \int_{\mathbb R^N_+}\left((\Delta-a)Q Q_0+Q_0(\Delta-a)Q\right)\colon D(u)  dx. $$
Moreover, Lemma \ref{l.matrix-trace} allows us to rewrite
$$ \int_{\mathbb R^N_+}\left((\Delta-a)Q Q_0+Q_0(\Delta-a)Q\right)\colon D(u)  dx = \int_{\mathbb R^N_+}\left(Q_0 D(u) + D(u)Q_0\right)\colon (\Delta-a)Q dx, $$
which implies the identity \eqref{en-can.3-0}. The estimate \eqref{en-can.3-i} can be done as before exploiting the boundary condition $\partial_i u=0$ for $i<N$ on $\mathbb R^N_0$. For the estimate \eqref{en-can.3-N}, we use the identity \eqref{gr.matrix-id.}:
$$ Q_0D(u)+D(u)Q_0=Q_0\nabla^Tu + \nabla^TuQ_0 + (Q_0W(u)+W(u)Q_0). $$
The second term is anti-symmetric, so 
$$ \partial_N(Q_0D(u)+D(u)Q_0)\colon \partial_N(a-\Delta)Q = \partial_N(Q_0\nabla^Tu+\nabla^TuQ_0)\colon \partial_N(a-\Delta)Q, $$
where we used the fact that $A\colon B=0$ if $A$ is anti-symmetric and $B$ is symmetric. Finally, distinguishing between the cases $k=N$ and $k<N$, we can use the same argument as in Lemma \ref{l.en-can.2} to conclude.
\end{proof}
\begin{lem}\label{l.en-can.4}
Let $\Omega=\mathbb R^N,\mathbb R^N_+$ with $N=2,3$, let $a,T>0$, $(u,Q)\in Y_T(\Omega)$  as in Lemma \ref{l.en-can.1}, let $Q_0\in H^2_{\Delta_N}(\Omega;S_0(N,\mathbb R))$, then for 
a.e. $t\in(0,T)$ it holds
\begin{equation}\label{en-can.4-0}
    \int_{\Omega}\mathbb P{\rm Div}\left(Q_0(\Delta-a)Q - (\Delta-a)Q Q_0\right)\cdot u+(W(u)Q_0 - Q_0W(u))\colon (a-\Delta)Qdx=0,
\end{equation}
\begin{equation}\label{en-can.4-i}
    \begin{aligned}
        & \left|\int_0^t\int_{\Omega}\mathbb P{\rm Div}\left(Q_0(\Delta-a)Q - (\Delta-a)Q Q_0\right)\cdot (-\partial_i^2)u+\partial_i(W(u)Q_0 - Q_0W(u))\colon \partial_i(a-\Delta)Qdxd\tau\right| \\
        & \lesssim t^\gamma \|Q_0\|_{H^2(\Omega)}\|Q\|_{X^2_t(\Omega)}\left(\|u\|_{L^\infty((0,t);H^1(\Omega))} + \|\nabla u\|_{X^0_t(\Omega)}\right)\quad i<N
    \end{aligned}
\end{equation}
\begin{equation}\label{en-can.4-N}
    \begin{aligned}
        & \left|\int_0^t\int_{\Omega}\mathbb P{\rm Div}\left(Q_0(\Delta-a)Q - (\Delta-a)Q Q_0\right)\cdot (-\partial_N^2)u+\partial_N(W(u)Q_0 - Q_0W(u))\colon \partial_N(a-\Delta)Qdxd\tau\right| \\
        & \lesssim t^\gamma \|Q_0\|_{H^2(\Omega)}\|Q\|_{X^2_t(\Omega)}\left(\|u\|_{L^\infty((0,t);H^1(\Omega))} + \|\nabla u\|_{X^0_t(\Omega)}\right) + \\
        & + \|Q_0\|_{H^2(\Omega)}\left(\|\nabla^\prime \nabla u\|_{L^2((0,t);L^2(\Omega))}\|Q\|_{X^2_t(\Omega)} + \|\nabla u\|_{X^0_t(\Omega)}\|\nabla^\prime Q\|_{L^2((0,t);H^2(\Omega))}\right),
    \end{aligned}
\end{equation}
for some $\gamma>0$.
\end{lem}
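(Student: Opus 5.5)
As in the previous lemmas, we restrict to $\Omega=\mathbb R^N_+$ and use Remark \ref{rem.help-calc.} to drop the Helmholtz projection, since $u,\partial_j^2u\in H^2_{\mathbb P\Delta_D}(\Omega)$. The key algebraic fact is Lemma \ref{l.matrix-trace} with an anti-symmetric $C=W(u)$ and symmetric $A,B$. We set $M=(\Delta-a)Q$, which is symmetric.

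\textbf{Identity \eqref{en-can.4-0}.} We integrate by parts, using $u=0$ on $\mathbb R^N_0$:
$$\int_\Omega {\rm Div}(Q_0M-MQ_0)\cdot u\,dx=-\int_\Omega (Q_0M-MQ_0)\colon\nabla u\,dx.$$
The matrix $Q_0M-MQ_0$ is anti-symmetric, so only $W(u)$ survives, and the integrand becomes $-(Q_0M-MQ_0)\colon W(u)$. By Lemma \ref{l.matrix-trace},
$$(Q_0M)\colon W=-(WQ_0)\colon M,\qquad (MQ_0)\colon W=-(Q_0W)\colon M .$$
Hence $-(Q_0M-MQ_0)\colon W(u)=(W(u)Q_0-Q_0W(u))\colon M$. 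Adding $(W(u)Q_0-Q_0W(u))\colon(-M)$ gives zero pointwise, which proves \eqref{en-can.4-0}.

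\textbf{Tangential estimate \eqref{en-can.4-i}.} For $i<N$ we have $\partial_iu=0$ on $\mathbb R^N_0$. We move one $\partial_i$ onto the divergence term and then integrate the divergence by parts, obtaining
$$-\int_0^t\int_\Omega\partial_i(Q_0M-MQ_0)\colon\nabla\partial_iu\,dx\,d\tau .$$
We expand $\partial_i$ by the Leibniz rule in both this term and $\partial_i(W(u)Q_0-Q_0W(u))\colon\partial_i(-M)$. The top-order pieces, where $\partial_i$ falls on $M$ in the first term and on $\nabla u$ in the second, have exactly the structure of \eqref{en-can.4-0} with $(M,u)$ replaced by $(\partial_iM,\partial_iu)$. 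They therefore cancel pointwise by the same trace identity; the anti-symmetric projection of $\nabla\partial_iu$ is $W(\partial_iu)$.

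The remaining pieces all contain $\partial_iQ_0$. They are of the form $\int M\,\partial_iQ_0\,\nabla^2u$ and $\int\nabla u\,\partial_iQ_0\,\partial_iM$. We bound them exactly as in Lemma \ref{l.en-can.2}, using $\|\partial_iQ_0\|_{L^6}\lesssim\|Q_0\|_{H^2}$ and the interpolation bounds $\|M\|_{L^2_tL^3}\lesssim t^{1/4}\|Q\|_{X^2_t}$ and $\|\nabla u\|_{L^2_tL^3}\lesssim t^{1/4}(\|u\|_{L^\infty_tH^1}+\|\nabla u\|_{L^2_tH^1})$. This yields the factor $t^\gamma$ with $\gamma=1/4$. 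Since only one copy of $Q_0$ appears, only $L^6$ and $L^\infty$-free Hölder exponents are needed.

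\textbf{Normal estimate \eqref{en-can.4-N}.} This is the main obstacle, because $\partial_Nu$ does not vanish on the boundary and we cannot transfer $\partial_N$ by parts. Instead, we integrate by parts only in $x_k$ for the divergence, keeping $-\int {\rm Div}(Q_0M-MQ_0)\cdot\partial_N^2u$ in the form $-\sum_{j,k}\int\partial_k(Q_0M-MQ_0)_{jk}\,\partial_N^2u_j$, as in Lemma \ref{l.en-can.1}.

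On the other side we rewrite $W(u)=\nabla u-D(u)$. The $D(u)$ part contributes $(D Q_0-Q_0D)\colon\partial_N M$ after differentiation; this is anti-symmetric against a symmetric matrix and hence vanishes. We are left with $\partial_N(\nabla u\,Q_0-Q_0\nabla u)\colon\partial_N(-M)$, in index form.

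We then split by index. For $k=N$, where $\partial_k$ hits $M$ and $\partial_N$ hits $\nabla u$, the terms pair into exact opposites, just as in the proof of Lemma \ref{l.en-can.2}; checking the index bookkeeping here, using the symmetry of $Q_0$ and $M$, is the delicate part. For $k<N$, Hölder's inequality gives the terms with $\|\nabla'\nabla u\|_{L^2_tL^2}\|Q\|_{X^2_t}$ and $\|\nabla u\|_{X^0_t}\|\nabla'Q\|_{L^2_tH^2}$, with a factor $\|Q_0\|_{L^\infty}\lesssim\|Q_0\|_{H^2}$. Terms where the derivative falls on $Q_0$ are handled again by the $t^{1/4}$ interpolation argument.
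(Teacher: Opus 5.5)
Your proposal is correct and follows essentially the paper's route: the anti-symmetric case of Lemma \ref{l.matrix-trace} gives \eqref{en-can.4-0} and the top-order tangential cancellation, and the normal estimate mirrors Lemma \ref{l.en-can.3} by discarding an anti-symmetric piece and then separating the $k=N$ terms, which cancel, from the $k<N$ terms, which are bounded by H\"older. Your rewriting $W(u)=\nabla u-D(u)$, in place of the $\nabla^T u=D(u)-W(u)$ substitution used in Lemma \ref{l.en-can.3}, makes no difference: $\nabla u\,Q_0-Q_0\nabla u$ and $Q_0\nabla^T u-\nabla^T u\,Q_0$ are transposes of each other, so they pair identically with the symmetric matrix $\partial_N(a-\Delta)Q$, and with your choice the $k=N$ terms indeed cancel in pairs.
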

The proof is the same, using the second identity of Lemma \ref{l.matrix-trace} in place of the first one. As a consequence of Lemmas \ref{l.en-can.2}, \ref{l.en-can.3} and \ref{l.en-can.4}, we have the following result:
\begin{lem}\label{l.en-can.fg}
Let $\Omega=\mathbb R^N,\mathbb R^N_+$ with $N=2,3$, let $a,T>0$, let $(u,Q)\in Y_T(\Omega)$ as in Lemma \ref{l.en-can.1}, let $Q_0\in H^2_{\Delta_N}(\Omega)$, then for a.e. $t\in(0,T)$ it holds
\begin{equation}\label{en-can.fg-0}
    \int_{\Omega}\mathbb Pf(Q,Q_0,Q_0)\cdot u+g(u,Q_0,Q_0)\colon (a-\Delta)Qdx=0,
\end{equation}
\begin{equation}\label{en-can.fg-i}
    \begin{aligned}
        & \left|\int_0^t\int_{\Omega}\mathbb Pf(Q,Q_0,Q_0)\cdot (-\partial_i^2)u+\partial_ig(u,Q_0,Q_0)\colon \partial_i(a-\Delta)Qdxd\tau\right| \\
        & \lesssim t^\gamma \|Q\|_{X^2_t(\Omega)}\left(1+\|Q_0\|_{H^2(\Omega)}^2\right)\left(\|u\|_{L^\infty((0,t);H^1(\Omega))} + \|\nabla u\|_{X^0_t(\Omega)}\right)\quad i<N
    \end{aligned}
\end{equation}
\begin{equation}\label{en-can.fg-N}
    \begin{aligned}
        & \left|\int_0^t\int_{\Omega}\mathbb Pf(Q,Q_0,Q_0)\cdot (-\partial_N^2)u+\partial_Ng(u,Q_0,Q_0)\colon \partial_N(a-\Delta)Qdxd\tau\right| \\
        & \lesssim t^\gamma \|Q\|_{X^2_t(\Omega)}\left(1+\|Q_0\|_{H^2(\Omega)}^2\right)\left(\|u\|_{L^\infty((0,t);H^1(\Omega))} + \|\nabla u\|_{X^0_t(\Omega)}\right) + \\
        & + \left(1+\|Q_0\|_{H^2(\Omega)}^2\right)\left(\|\nabla^\prime \nabla u\|_{L^2((0,t);L^2(\Omega))}\|Q\|_{X^2_t(\Omega)} + \|\nabla u\|_{X^0_t(\Omega)}\|\nabla^\prime Q\|_{L^2((0,t);H^2(\Omega))}\right),
    \end{aligned}
\end{equation}
for some $\gamma>0$.
\end{lem}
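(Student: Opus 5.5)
The plan is to decompose $f(Q,Q_0,Q_0)$ and $g(u,Q_0,Q_0)$ into the three groups already treated and sum the corresponding statements of Lemmas \ref{l.en-can.2}, \ref{l.en-can.3} and \ref{l.en-can.4}. By the definitions,
$$ f(Q,Q_0,Q_0)={\rm Div}\Big[2\xi(\Delta-a)Q\colon Q_0\Big(Q_0+\tfrac{Id}{N}\Big)\Big] + {\rm Div}\big(Q_0(\Delta-a)Q-(\Delta-a)QQ_0\big) - \xi\,{\rm Div}\big((\Delta-a)QQ_0+Q_0(\Delta-a)Q\big), $$
and, using $\nabla u\colon Q_0=D(u)\colon Q_0$ from \eqref{help-rem.nablau-Q},
$$ g(u,Q_0,Q_0)=-2\xi\, \nabla u\colon Q_0\Big(Q_0+\tfrac{Id}{N}\Big) + \big(W(u)Q_0-Q_0W(u)\big) + \xi\big(D(u)Q_0+Q_0D(u)\big). $$
Each $f$-piece is thus paired with the $g$-piece carrying the same coefficient ($2\xi$, $1$, $\xi$), and the pairing against $(u,(a-\Delta)Q)$, against $(-\partial_i^2u,\partial_i(a-\Delta)Q)$, or against $(-\partial_N^2 u,\partial_N(a-\Delta)Q)$ is exactly the combination appearing in the three lemmas. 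I would first check carefully that the signs match, in particular that $-2\xi\nabla u\colon Q_0(\ldots)$ paired with $(a-\Delta)Q$ is $2\xi$ times the integrand of \eqref{en-can.2-0}, and that $\partial_i g\colon\partial_i(a-\Delta)Q$ arises from $g\colon(a-\Delta)Q$ after differentiation. The $\partial_i$-forms require the integration by parts $\int (-\partial_i^2 u)\cdot\phi=\int\partial_i u\cdot\partial_i\phi$, which the lemmas already take into account.

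Once the matching is checked, \eqref{en-can.fg-0} follows by linearity from \eqref{en-can.2-0}, \eqref{en-can.3-0} and \eqref{en-can.4-0}, multiplied by $2\xi$, $\xi$ and $1$ respectively.

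For \eqref{en-can.fg-i} and \eqref{en-can.fg-N} I would apply the triangle inequality to the sum. The bounds then carry the factors $\|Q_0\|_{H^2}^2$ from Lemma \ref{l.en-can.2} and $\|Q_0\|_{H^2}$ from Lemmas \ref{l.en-can.3} and \ref{l.en-can.4}. Using $\|Q_0\|_{H^2}\le 1+\|Q_0\|_{H^2}^2$, both are dominated by $(1+\|Q_0\|^2_{H^2})$. The exponents $\gamma$ differ among the lemmas; since $t<T<1$ in the application, I would take the minimum exponent. Alternatively, each proof visibly gives $\gamma=1/4$, so a common $\gamma$ is available directly. Constants depending on $\xi$ are absorbed into $\lesssim$.

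The main obstacle is the bookkeeping of signs and the symmetric/antisymmetric splitting in $g$. Specifically, one must verify that $\xi(D(u)Q_0+Q_0D(u))$ combined with the term $-\xi{\rm Div}(\ldots)$ reproduces exactly the sign convention of \eqref{en-can.3-0}, and that $W(u)Q_0-Q_0W(u)$ pairs with ${\rm Div}(Q_0(\Delta-a)Q-(\Delta-a)QQ_0)$ as in \eqref{en-can.4-0}. Beyond this, no new estimates are required.
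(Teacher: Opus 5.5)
Your proposal is correct and follows the paper's route. The paper obtains this lemma by summing Lemmas \ref{l.en-can.2}, \ref{l.en-can.3} and \ref{l.en-can.4} with coefficients $2\xi$, $\xi$ and $1$, after the splitting $-(\xi+1)(\Delta-a)QQ_0+(1-\xi)Q_0(\Delta-a)Q=(Q_0(\Delta-a)Q-(\Delta-a)QQ_0)-\xi((\Delta-a)QQ_0+Q_0(\Delta-a)Q)$; your sign matching, the absorption $\|Q_0\|_{H^2}\le 1+\|Q_0\|_{H^2}^2$ and the common exponent $\gamma=1/4$ supply exactly the bookkeeping the paper leaves implicit.
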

\begin{rem}
We mentioned before that the case $\Omega=\mathbb R^N$ exhibits stronger cancellations, mainly due to the absence of the boundary. In this case, we can directly rely on the inequalities \eqref{en-can.fg-0} and \eqref{en-can.fg-i} for any $i\in\{1,\ldots, N\}$.   
\end{rem}
Lemma \ref{l.en-can.fg} shows the cancellations that appear in the nonlinear terms thanks to the energy method. These are fundamental to prove a uniform bound on the solutions $(u_\varepsilon,Q_\varepsilon)$ of Proposition \ref{p.loc.ex-approx.}. However, we also need a bound from below for the left hand side of the system \eqref{approx.lin.sys.2} when we use the energy method:
\begin{lem}\label{l.en.met-LHS1}
Let $\Omega=\mathbb R^N,\mathbb R^N_+$ with $N=2,3$, let $T>0$, $u\in X^1_T(\Omega)$ with $u(t)\in H^2_{\mathbb P\Delta_D}(\Omega;\mathbb R^N)$ for a.e. $t\in(0,T)$ and $u(0)=u_0\in H^1_{\mathbb P\Delta_D}(\Omega)$, then for a.e. $t\in(0,T)$ it holds
\begin{equation}\label{en-met.u-0}
    \int_0^t\int_{\Omega}(\partial_t-\mathbb P\Delta)u\cdot R_\varepsilon^Su dxd\tau  \gtrsim \|R_\varepsilon^Su(t)\|_{L^2(\Omega)}^2-\|u_0\|_{L^2(\Omega)}^2 + \|\nabla R_\varepsilon^Su\|_{L^2((0,t);L^2(\Omega))}^2,
\end{equation}
\begin{equation}\label{en-met.u-1i}
    \begin{aligned}
        & \int_0^t\int_{\Omega}(\partial_t-\mathbb P\Delta)u\cdot (-\partial^2_i)R_\varepsilon^Su dxd\tau \\ 
        & \gtrsim \|\partial_iR_\varepsilon^Su(t)\|_{L^2(\Omega)}^2-\|u_0\|_{H^1(\Omega)}^2 + \|\nabla \partial_iR_\varepsilon^Su\|_{L^2((0,t);L^2(\Omega))}^2\quad i=1,\ldots,N.
    \end{aligned}
\end{equation}
\end{lem}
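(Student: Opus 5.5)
The idea is to move the Yosida operator onto the other factor using self-adjointness, and then use the identity $R_\varepsilon^S u = u+\varepsilon\Delta R_\varepsilon^S u$ from Lemma \ref{l.res.prop.} to trade $u$ for $R_\varepsilon^S u$. Write $w=R_\varepsilon^Su$. Since $u(t)\in H^2_{\mathbb P\Delta_D}$, we have $w(t)\in H^4\cap H^2_{\mathbb P\Delta_D}$ by Theorem \ref{t.res.reg.}. Moreover $R_\varepsilon^S=(1-\varepsilon\mathbb P\Delta_D)^{-1}$ is self-adjoint, positive and a contraction on $J_2(\Omega)$, and it commutes with $\mathbb P\Delta_D$ and with $\partial_t$.

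\textbf{The estimate \eqref{en-met.u-0}.} By Remark \ref{rem.help-calc.} we may drop $\mathbb P$, since $w\in J_2$. We then write $u=w-\varepsilon\Delta w$ (this is \eqref{R_eps_ident}, with $\mathbb P\Delta w=\Delta w$ as in Step 1 of Lemma \ref{l.res.prop.}). The time term becomes
$$ \int_\Omega \partial_t u\cdot w\,dx=\frac12\frac{d}{dt}\Big(\|w\|_{L^2}^2+\varepsilon\|\nabla w\|_{L^2}^2\Big), $$
where we integrate by parts using $w=0$ on $\partial\Omega$. The dissipative term is $-\int\Delta u\cdot w=\int \nabla u\colon\nabla w=\|\nabla w\|^2+\varepsilon\|\Delta w\|^2$, which uses $\Delta w\in H^2_{\mathbb P\Delta_D}$ so that $\Delta w=0$ on the boundary. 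Integrating in time and discarding the nonnegative $\varepsilon$-terms at time $t$ gives the lower bound with initial term $\|R_\varepsilon^Su_0\|^2+\varepsilon\|\nabla R^S_\varepsilon u_0\|^2$. This equals $\langle u_0,R_\varepsilon^S u_0\rangle\le\|u_0\|_{L^2}^2$, because the quadratic form of $R_\varepsilon^S$ is bounded by the identity.

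\textbf{The estimate \eqref{en-met.u-1i}.} The test function is $-\partial_i^2 w$, which is admissible because $\partial_i^2 w\in H^2_{\mathbb P\Delta_D}$ by Remark \ref{rem.trace-R_eps.}, so $\mathbb P$ can again be dropped. For $i<N$ we integrate by parts once in $x_i$, which produces no boundary terms since $\partial_i$ is tangential. We then repeat the previous computation with $\partial_i u=\partial_i w-\varepsilon\Delta\partial_i w$, where $\partial_i w$ still vanishes on $\mathbb R^N_0$ and $\partial_i$ commutes with $R^S_\varepsilon$. The initial datum is controlled by $\|\partial_i u_0\|^2\le\|u_0\|_{H^1}^2$.

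\textbf{The main obstacle: the normal direction $i=N$.} Here $\partial_N$ does not commute with the Dirichlet Stokes resolvent, and $\partial_N w\neq0$ on the boundary. I would avoid differentiating $u$ in the normal direction and use instead
$$-\int\partial_t u\cdot\partial_N^2 w=-\int \partial_t w\cdot\partial_N^2 w+\varepsilon\int\partial_t\Delta w\cdot\partial_N^2 w.$$
The first term equals $\frac12\frac{d}{dt}\|\partial_N w\|^2$, since $\partial_t w=0$ on the boundary. For the second I would write $\partial_N^2=\Delta-\Delta'$. The $\Delta'$ part gives $\frac\varepsilon2\frac d{dt}\|\nabla'\nabla w\|^2$ after tangential integrations by parts together with one integration by parts of $\Delta$; this step needs care. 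The $\Delta$ part gives $\frac\varepsilon2\frac d{dt}\|\Delta w\|^2$. Each of these is nonnegative. The dissipation term $\int\Delta u\cdot\partial_N^2 w$ is treated the same way, giving $\|\nabla\partial_N w\|^2$ plus nonnegative $\varepsilon$-terms, up to rearrangement. In the end the initial contributions are bounded by $\|\nabla R_\varepsilon^S u_0\|^2+\varepsilon\|\Delta R_\varepsilon^S u_0\|^2\lesssim\|u_0\|_{H^1}^2$, using Lemma \ref{l.res.es.} and Lemma \ref{l.res.eq.} with the identity $\langle (1-\mathbb P\Delta)u_0,R^S_\varepsilon u_0\rangle$. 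The boundary terms, and checking that the $\varepsilon$-weighted terms are genuinely nonnegative, are where I expect most of the work to be.
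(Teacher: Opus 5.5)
Your overall route is the paper's: substitute $u=(1-\varepsilon\Delta)w$ with $w=R_\varepsilon^Su$, drop $\mathbb P$, and use the vanishing traces of $w$, $\Delta w$ and $\partial_i^2w$. The estimate \eqref{en-met.u-0} and the tangential cases $i<N$ go through as you describe. Your bound $\|R_\varepsilon^Su_0\|^2+\varepsilon\|\nabla R_\varepsilon^Su_0\|^2=\langle u_0,R_\varepsilon^Su_0\rangle\le\|u_0\|_{L^2}^2$ is a clean substitute for Lemma \ref{l.res.es.}.

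The gap is the normal case $i=N$, and the step you flag is wrong as written. The $\Delta'$-part of $\varepsilon\int\partial_t\Delta w\cdot\partial_N^2w$ comes with a minus sign:
$$-\varepsilon\sum_{i<N}\int_\Omega\partial_t\Delta w\cdot\partial_i^2w\,dx=-\frac{\varepsilon}{2}\frac{d}{dt}\|\nabla'\nabla w\|_{L^2}^2 .$$
In $\mathbb R^N$ the weight is $-|\xi|^2|\xi'|^2$. In the same way, the $\Delta'$-part of the dissipative $\varepsilon$-term $-\varepsilon\int\Delta^2w\cdot\partial_N^2w$ equals $-\varepsilon\|\nabla'\Delta w\|_{L^2}^2$. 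So the $\varepsilon$-weighted pieces are \emph{not} separately nonnegative, and you cannot simply discard their values at time $t$.

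What closes the argument is combining them.
\begin{itemize}
\item For the dissipation, the paper writes $-\partial_N^2=-\Delta+\Delta'$ and reuses the tangential identities $\int\nabla\Delta w\colon\nabla\partial_i^2w\,dx=\|\partial_i\Delta w\|^2$ for $i<N$. This gives $\|\nabla\Delta w\|^2-\|\nabla'\Delta w\|^2=\|\partial_N\Delta w\|^2$.
\item For the time derivative you need $\|\Delta w\|^2-\|\nabla'\nabla w\|^2=\|\partial_N\nabla w\|^2\ge0$. This holds because $\|\Delta w\|_{L^2}=\|\nabla^2w\|_{L^2}$ for $w\in H^2\cap H^1_0(\mathbb R^N_+)$, by tangential integration by parts.
\item Alternatively, integrate by parts directly to get $\int\partial_t\Delta w\cdot\partial_N^2w\,dx=\frac12\frac{d}{dt}\|\nabla\partial_Nw\|^2$. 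Every boundary term vanishes because $\partial_N^2w=(\Delta-\Delta')w=0$ and $\partial_t\nabla'w=0$ on $\mathbb R^N_0$ (Remark \ref{rem.trace-R_eps.}).
\end{itemize}
The trace fact $\partial_N^2w=0$ on $\mathbb R^N_0$ is the missing ingredient. It makes your concern that $\partial_Nw\neq0$ on the boundary harmless. With it, the time-$t$ quantity is $\frac12\|\partial_Nw(t)\|^2+\frac\varepsilon2\|\partial_N\nabla w(t)\|^2\ge\frac12\|\partial_Nw(t)\|^2$. Your bound on the initial contributions by $\|\nabla R_\varepsilon^Su_0\|^2+\varepsilon\|\Delta R_\varepsilon^Su_0\|^2\lesssim\|u_0\|_{H^1}^2$ then finishes the proof.
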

\begin{proof}\hfill\\
\textbf{Step 1: Proof of \eqref{en-met.u-0}.} For simplicity, we denote $R_\varepsilon=R_\varepsilon^S$. As before, we can apply Remark \ref{rem.help-calc.} to remove the operator $\mathbb P$. We recall the resolvent identity from Lemma \ref{l.res.prop.}:
\begin{equation}\label{res.id.u}
    \begin{aligned}
        u=R_\varepsilon u-\varepsilon\Delta R_\varepsilon u.
    \end{aligned}
\end{equation}
So
$$ \int_{\mathbb R^N_+}(\partial_t-\Delta)u\cdot R_\varepsilon u dx  = \int_{\mathbb R^N_+}(\partial_t-\Delta)(1-\varepsilon\Delta)R_\varepsilon u\cdot R_\varepsilon u dx. $$
Thanks to Lemma \ref{l.res.prop.}, we know that $R_\varepsilon u=\Delta R_\varepsilon  u=0$ on $\mathbb R^N_0$, so it holds
$$ \int_{\mathbb R^N_+}(\partial_t-\Delta)(1-\varepsilon\Delta)R_\varepsilon u\cdot R_\varepsilon u dx $$
$$ = \frac{1}{2}\frac{d}{dt}\left[\|R_\varepsilon u\|_{L^2(\mathbb R^N_+)}^2+\varepsilon\|\nabla R_\varepsilon u\|_{L^2(\mathbb R^N_+)}^2\right] + \|\nabla R_\varepsilon u\|_{L^2(\mathbb R^N_+)}^2 + \varepsilon\|\Delta R_\varepsilon u\|_{L^2(\mathbb R^N_+)}^2 . $$
So, if we take the integral over $\tau\in(0,t)$ we get
$$ \int_0^t\int_{\mathbb R^N_+}(\partial_t-\Delta)(1-\varepsilon\Delta)R_\varepsilon u\cdot R_\varepsilon u dxd\tau  $$
$$ = \frac{1}{2}\left[\|R_\varepsilon u(t)\|_{L^2(\mathbb R^N_+)}^2+\varepsilon\|\nabla R_\varepsilon u(t)\|_{L^2(\mathbb R^N_+)}^2 - \|R_\varepsilon u_0\|_{L^2(\mathbb R^N_+)}^2-\varepsilon\|\nabla R_\varepsilon u_0\|_{L^2(\mathbb R^N_+)}^2\right] + $$
$$ + \|\nabla R_\varepsilon u\|_{L^2_tL^2}^2 + \varepsilon\|\Delta R_\varepsilon u\|_{L^2_tL^2}^2 $$
$$ \gtrsim \|R_\varepsilon u(t)\|_{L^2(\mathbb R^N_+)}^2 - \|u_0\|_{L^2(\mathbb R^N_+)}^2 + \|\nabla R_\varepsilon u\|_{L^2_tL^2}^2, $$
where, thanks to Lemma \ref{l.res.es.}, we could apply the inequality
$$ \|R_\varepsilon u_0\|_{L^2(\mathbb R^N_+)}^2+\varepsilon\|\nabla R_\varepsilon u_0\|_{L^2(\mathbb R^N_+)}^2\lesssim \|u_0\|_{L^2(\mathbb R^N_+)}^2. $$

\bigskip
\noindent \textbf{Step 2: Proof of \eqref{en-met.u-1i}.} Thanks to \eqref{res.id.u}
$$ \int_{\mathbb R^N_+}(\partial_t-\Delta)u\cdot (-\partial_i^2)R_\varepsilon u dx  = \int_{\mathbb R^N_+}(\partial_t-\Delta)(1-\varepsilon\Delta)R_\varepsilon u\cdot (-\partial_i^2)R_\varepsilon u dx. $$
We focus on the term
$$ \int_{\mathbb R^N_+}(-\Delta)^2R_\varepsilon u\cdot (-\partial_i^2)R_\varepsilon u dx= \int_{\mathbb R^N_+}\nabla (-\Delta)R_\varepsilon u\colon\nabla (-\partial_i^2)R_\varepsilon u dx, $$
where we used that $\partial_iR_\varepsilon u=0$ on $\mathbb R^N_0$ thanks to Remark \ref{rem.trace-R_eps.}. When $i<N$, it is easy to see that $\partial_i R_\varepsilon u(\tau)=R_\varepsilon \partial_i u(\tau)\in H^2(\Omega)$ for a.e. $\tau\in(0,T)$. So
$$ \int_{\mathbb R^N_+}\nabla (-\Delta) R_\varepsilon u\colon \nabla (-\partial_i^2)R_\varepsilon u dx = \int_{\mathbb R^N_+}\nabla \Delta R_\varepsilon u\colon \nabla\partial_iR_\varepsilon \partial_iu dx =  \int_{\mathbb R^N_+}\nabla \Delta R_\varepsilon u\colon \partial_i\nabla R_\varepsilon \partial_iu dx $$
$$ = - \int_{\mathbb R^N_+}\nabla \partial_i \Delta R_\varepsilon u\colon \nabla R_\varepsilon \partial_iu dx =\int_{\mathbb R^N_+} \partial_i \Delta R_\varepsilon u\cdot \Delta R_\varepsilon \partial_iu dx = \|\partial_i\Delta R_\varepsilon u\|_{L^2(\mathbb R^N_+)}^2, $$
where we used that $\partial_i\Delta R_\varepsilon u=0$ on $\mathbb R^N_0$ and the identity
$$ \Delta R_\varepsilon \partial_i u=\frac{1}{\varepsilon}\left(R_\varepsilon\partial_i u-\partial_i u\right)=\frac{\partial_i}{\varepsilon}\left(R_\varepsilon u-u\right)=\partial_i\Delta R_\varepsilon u. $$
When $i=N$
$$ \int_{\mathbb R^N_+}\nabla (-\Delta) R_\varepsilon u\colon \nabla (-\partial_N^2)R_\varepsilon u dx $$
$$ = \|\nabla \Delta R_\varepsilon u\|_{L^2(\mathbb R^N_+)}^2 - \sum_{i=1}^{N-1}\int_{\mathbb R^N_+}\nabla (-\Delta) R_\varepsilon u\colon \nabla (-\partial_i^2)R_\varepsilon u dx  $$
$$ = \|\nabla \Delta R_\varepsilon u\|_{L^2(\mathbb R^N_+)}^2 - \sum_{i=1}^{N-1} \|\partial_i\Delta R_\varepsilon u\|_{L^2(\mathbb R^N_+)}^2 = \|\partial_N\Delta R_\varepsilon u\|_{L^2(\mathbb R^N_+)}^2. $$
Therefore
$$  \int_{\mathbb R^N_+}(\partial_t-\Delta)(1-\varepsilon\Delta)R_\varepsilon u\cdot (-\partial_i^2)R_\varepsilon u dx $$
$$ = \frac{d}{dt}\left[\|\partial_iR_\varepsilon u\|_{L^2(\mathbb R^N_+)}^2+\varepsilon\|\partial_i\nabla R_\varepsilon u\|_{L^2(\mathbb R^N_+)}^2\right] + \|\partial_i\nabla R_\varepsilon u\|_{L^2(\mathbb R^N_+)}^2 + \varepsilon\|\partial_i\Delta R_\varepsilon u\|_{L^2(\mathbb R^N_+)}^2. $$
Taking the integral on $(0,t)$ and applying Lemma \ref{l.res.es.} for
$$ \|\partial_iR_\varepsilon u_0\|_{L^2(\mathbb R^N_+)}^2+\varepsilon\|\partial_i\nabla R_\varepsilon u_0\|_{L^2(\mathbb R^N_+)}^2 \lesssim \|u_0\|_{H^1(\mathbb R^N_+)}^2 $$
we conclude as before. 
\end{proof}
\begin{lem}\label{l.en.met-LHS2}
Let $\Omega=\mathbb R^N,\mathbb R^N_+$, let $a,T>0$, $Q\in X^2_T(\Omega)$ with $Q(t)\in H^3_{\Delta_N}(\Omega;S_0(N,\mathbb R))$ for a.e. $t\in(0,T)$ and $Q(0)=Q_0\in H^2_{\Delta_N}(\Omega)$, then for a.e. $t\in(0,T)$ it holds
\begin{equation}\label{en-met.Q-0}
    \begin{aligned}
        & \int_0^t\int_{\Omega}(\partial_t+a-\Delta)Q\colon (a-\Delta)R_\varepsilon^HQ dxd\tau \\
        & \gtrsim \|R_\varepsilon^HQ(t)\|_{H^1(\Omega)}^2-\|Q_0\|_{H^1(\Omega)}^2 + \|R_\varepsilon^HQ\|_{L^2((0,t);H^2(\Omega))}^2,
    \end{aligned}
\end{equation}
\begin{equation}\label{en-met.Q-1i}
    \begin{aligned}
        & \int_0^t\int_{\Omega}\partial_i(\partial_t+a-\Delta)Q\colon \partial_i(a-\Delta)R_\varepsilon^HQ dxd\tau \\ 
        & \gtrsim \|\partial_iR_\varepsilon^HQ(t)\|_{H^1(\Omega)}^2-\|Q_0\|_{H^2(\Omega)}^2 + \| \partial_iR_\varepsilon^HQ\|_{L^2((0,t);H^2(\Omega))}^2\quad i=1,\ldots,N.
    \end{aligned}
\end{equation}
\end{lem}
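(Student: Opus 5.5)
We mirror the proof of Lemma \ref{l.en.met-LHS1}, writing $R_\varepsilon=R_\varepsilon^H$ and using the resolvent identity of Lemma \ref{l.res.prop.}, namely $Q=(1-\varepsilon\Delta)R_\varepsilon Q$. Since $(a-\Delta)$ and $(1-\varepsilon\Delta)$ commute on the relevant domains, the integrand becomes
$$ (\partial_t+a-\Delta)(1-\varepsilon\Delta)R_\varepsilon Q\colon (a-\Delta)R_\varepsilon Q. $$
Every integration by parts will be justified by Neumann boundary data. Lemma \ref{l.res.prop.} gives $R_\varepsilon Q(\tau)$, $\Delta R_\varepsilon Q(\tau)\in H^3_{\Delta_N}(\Omega)$ for a.e. $\tau$, so $\partial_N R_\varepsilon Q=\partial_N\Delta R_\varepsilon Q=0$ on $\mathbb R^N_0$. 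In the case $\Omega=\mathbb R^N$ no boundary terms arise at all.

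\textbf{Step 1: the estimate \eqref{en-met.Q-0}.} Write $P=R_\varepsilon Q$ and expand. The time-derivative part gives
$$ \int_\Omega \partial_t(1-\varepsilon\Delta)P\colon(a-\Delta)P\,dx=\frac12\frac{d}{dt}\Big[a\|P\|_{L^2}^2+(1+a\varepsilon)\|\nabla P\|_{L^2}^2+\varepsilon\|\Delta P\|_{L^2}^2\Big]. $$
Each cross term is integrated by parts once or twice, using $\partial_\nu P=\partial_\nu\Delta P=0$. The remaining part gives
$$ \int_\Omega (a-\Delta)(1-\varepsilon\Delta)P\colon(a-\Delta)P\,dx=\|(a-\Delta)P\|_{L^2}^2+\varepsilon\|\nabla(a-\Delta)P\|_{L^2}^2, $$
where the $\varepsilon$-term comes from one integration by parts with $\partial_\nu(a-\Delta)P=0$. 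Next we integrate over $(0,t)$. We drop the nonnegative $\varepsilon$-terms at time $t$ and in the dissipation. We then use $\|(a-\Delta)P\|_{L^2}^2=a^2\|P\|^2+2a\|\nabla P\|^2+\|\Delta P\|^2\gtrsim\|P\|_{H^2}^2$; this follows from the Neumann condition and Lemma \ref{l.res.eq.} (or elliptic regularity for $\Delta_N$). The data at time $0$ are $a\|R_\varepsilon Q_0\|^2+(1+a\varepsilon)\|\nabla R_\varepsilon Q_0\|^2+\varepsilon\|\Delta R_\varepsilon Q_0\|^2$. We bound them by $\|Q_0\|_{H^1}^2$ using Lemma \ref{l.res.es.}: take $k=j=1$ for the first two, and $k=2$, $j=1$ for the last, which gives $\varepsilon\cdot\varepsilon^{-1}\|Q_0\|_{H^1}^2$.

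\textbf{Step 2: the estimate \eqref{en-met.Q-1i} for $i<N$.} Tangential derivatives commute with $\Delta_N$ and with $R_\varepsilon$, and they preserve the Neumann condition. So $\partial_iP=R_\varepsilon\partial_iQ$, and Step 1 applies verbatim to $\partial_iQ$, whose initial datum $\partial_iQ_0$ lies in $H^1$. This gives the bound with $\|\partial_iQ_0\|_{H^1}^2\le\|Q_0\|_{H^2}^2$.

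\textbf{Step 3: the normal derivative $i=N$.} This is the main obstacle, because $\partial_NP$ satisfies a Dirichlet rather than a Neumann condition. We handle it directly. Set $w=\partial_NP$; then $w=\partial_N\Delta P=0$ on $\mathbb R^N_0$, so $w\in H^2_{\Delta_D}$ with $w=(1-\varepsilon\Delta_D)^{-1}\partial_NQ$ by Lemma \ref{l.res.es.}. We redo the computation of Step 1 with $\partial_N$ applied to both factors. Since $\partial_N$ commutes with $\Delta$ in the interior, the integrand is $(\partial_t+a-\Delta)(1-\varepsilon\Delta)w\colon(a-\Delta)w$. Integrations by parts are now allowed because $w$, $\Delta w$ and $(a-\Delta)w$ vanish on the boundary, the last two since $\partial_N(a-\Delta)P=0$ there. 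The same identities follow with Dirichlet in place of Neumann, and $\|(a-\Delta)w\|_{L^2}\gtrsim\|w\|_{H^2}$ now holds by Lemma \ref{l.res.eq.} for $\Delta_D$. The initial terms are controlled via Lemma \ref{l.res.es.} for $\Delta_D$ by $\|\partial_NQ_0\|_{H^1}^2\le\|Q_0\|_{H^2}^2$. Note that $\partial_NQ_0\in H^1_0$, since $Q_0\in H^2_{\Delta_N}$.

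The only delicate points are the boundary terms in this normal case and checking that $\partial_N\Delta P$ vanishes on the boundary, which Lemma \ref{l.res.prop.} provides.
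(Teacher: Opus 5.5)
Your proof is correct. Steps 1 and 2 match what the paper intends; the paper only says the proof is ``similar to the previous one''. That means: substitute $Q=(1-\varepsilon\Delta)R_\varepsilon^HQ$, integrate by parts using $\partial_\nu R_\varepsilon^HQ=\partial_\nu\Delta R_\varepsilon^HQ=0$, drop the nonnegative $\varepsilon$-terms, bound the initial energy with Lemma \ref{l.res.es.}, and commute tangential derivatives through everything.

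The genuine difference is the normal direction. In Lemma \ref{l.en.met-LHS1} the paper never handles $\partial_N R_\varepsilon u$ directly. It obtains the $i=N$ identity by writing $-\partial_N^2=-\Delta+\Delta^\prime$, that is, as the full-gradient identity minus the tangential ones already proved. For the present lemma the analogue is to compute $\sum_i\int\partial_i(\cdot)\colon\partial_i(\cdot)=\int\nabla(\cdot)\colon\nabla(\cdot)$ with Neumann integrations by parts and subtract the cases $i<N$.

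You instead use the commutation $\partial_N(1-\varepsilon\Delta_N)^{-1}=(1-\varepsilon\Delta_D)^{-1}\partial_N$ and rerun the Step 1 energy identity for $w=\partial_NR_\varepsilon^HQ$ with Dirichlet data. This needs four extra ingredients, and you verify each one:
\begin{enumerate}
\item the vanishing of $w$, $\Delta w$ and $(a-\Delta)w$ on $\mathbb R^N_0$;
\item $\partial_t w=0$ on the boundary;
\item Dirichlet elliptic regularity, i.e.\ Lemma \ref{l.res.eq.} with $A=\Delta_D$;
\item Lemma \ref{l.res.es.} for $\Delta_D$ applied to $\partial_NQ_0\in H^1_0$.
\end{enumerate}

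What each route buys: yours makes the normal estimate a self-contained Dirichlet energy identity and is arguably more transparent. The paper's device stays within the Neumann framework and is uniform with the Stokes case. There, no analogue of the Neumann-to-Dirichlet commutation is available for $R_\varepsilon^S$, which is why the paper can dispatch both lemmas with the same trick.
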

The proof is similar to the previous one. Note that, unlike Lemma \ref{l.en.met-LHS1}, Lemma \ref{l.en.met-LHS2} also includes the $L^2_tL^2$-norm of $R_\varepsilon^H Q$ in the estimate. This arises from the presence of $a>0$.

\subsection{A uniform bound and the $\varepsilon$-limit}

In the previous section, we showed the cancellations obtained by applying the energy method. In particular, through the results from Lemma \ref{l.en-can.1} to Lemma \ref{l.en.met-LHS2}, we can prove an estimate on the solutions $(u_\varepsilon,Q_\varepsilon)$ from Proposition \ref{p.loc.ex-approx.} that does not depend on the parameter $\varepsilon$. Before we discuss the uniform bound, we need the following lemma:
\begin{lem}\label{l.unif.bound.}
Let $C_1,C_2,\alpha,T,\gamma>0$, let $x,y\ge0$ such that
$$ \left\{\begin{array}{l}
     x^2\le \alpha+ C_1T^\gamma y^2 \\
     y^2\le \alpha+ C_2\left(T^\gamma y^2 +xy\right) 
\end{array}\right. $$
then there is $T=T(C_1,C_2,\gamma)$ such that $|(x,y)|\le C\alpha$ for some $C>0$. 
\end{lem}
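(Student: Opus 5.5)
The plan is to absorb the cross term $xy$ with Young's inequality and then use $T$ small to absorb the $T^\gamma y^2$ terms. Note that the conclusion is presumably meant as $x^2+y^2\le C\alpha$, i.e.\ $|(x,y)|\le C\alpha^{1/2}$. The inequalities are homogeneous of degree two in $(x,y)$ and of degree one in $\alpha$, so that is the natural scaling, and it is the form needed later when $\alpha$ collects squared norms of the data. I will prove $x^2+y^2\le C\alpha$ with $C$ depending only on $C_2$.

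First, in the second inequality I apply Young's inequality to the cross term:
$$ C_2xy\le \frac14 y^2+C_2^2x^2 . $$
This gives
$$ \frac34 y^2\le \alpha + C_2T^\gamma y^2 + C_2^2x^2 . $$

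Next I substitute the first inequality for $x^2$:
$$ \frac34 y^2\le (1+C_2^2)\alpha + \left(C_2+C_2^2C_1\right)T^\gamma y^2 . $$
Now I choose $T=T(C_1,C_2,\gamma)$ so small that $\left(C_2+C_2^2C_1\right)T^\gamma\le \frac14$. Since $y$ is a finite nonnegative number, the term on the right can be absorbed into the left, giving
$$ y^2\le 2(1+C_2^2)\alpha . $$

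Feeding this bound back into the first inequality gives $x^2\le \alpha+2C_1T^\gamma(1+C_2^2)\alpha$. By the choice of $T$ we have $C_1T^\gamma\le 1/(4C_2^2)$, so
$$ x^2\le \alpha\left(1+\frac{1+C_2^2}{2C_2^2}\right)=:C'\alpha , $$
where $C'$ depends only on $C_2$. Summing the bounds for $x^2$ and $y^2$ gives $x^2+y^2\le C\alpha$, which is the claim. For any smaller $T$ the same constants work, since the $T$-dependent coefficients only decrease.

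There is no real obstacle here. The only care needed is the order of operations: the Young parameter must be fixed before $T$ is chosen, so that $T$ depends only on $(C_1,C_2,\gamma)$ and not on $\alpha$, $x$ or $y$. One must also use finiteness of $y$ to justify the absorption.
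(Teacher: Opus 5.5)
Your proof is correct and follows the paper's argument: Young's inequality on $C_2xy$, substitution of the first inequality for $x^2$, and a choice of $T$ with $T^\gamma$ small enough to absorb the $y^2$ term. The paper uses the weights $\frac{C_2^2}{2}x^2+\frac12y^2$ and $T\le(4C_2+2C_2^2C_1)^{-1/\gamma}$, which is only a cosmetic difference. Your reading of the conclusion as $x^2+y^2\le C\alpha$ is also the right one: the literal bound $|(x,y)|\le C\alpha$ fails for small $\alpha$ (take $x=0$, $y=\sqrt{\alpha}$), and the squared form is exactly what the paper's proof produces and what Proposition~\ref{p.res-unif.es.} uses, where $\alpha$ collects squared norms of the data.
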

\begin{proof}\hfill\\
It is sufficient to note that
$$ xy\le \frac{C_2}{2} x^2 + \frac{1}{2C_2}y^2. $$
So 
$$ y^2\le \alpha + C_2T^\gamma y^2 + \frac{C^2_2}{2}x^2 + \frac{1}{2}y^2, $$
which implies
$$ y^2 \le 2\alpha + 2C_2T^\gamma y^2 + C_2^2x^2\le \alpha(2+C_2^2) + T^\gamma (2C_2+C_2^2C_1)y^2. $$
In particular, choosing
$$ T\le (4C_2+2C_2^2C_1)^{-1/\gamma}, $$
we get the thesis.
\end{proof}
We are now ready to prove the uniform bound:
\begin{prop}\label{p.res-unif.es.}
Let $\Omega=\mathbb R^N,\mathbb R^N_+$ with $N=2,3$, let $u_0\in H^1_{\mathbb P\Delta_D}(\Omega;\mathbb R^N)$ and $Q_0\in H^2_{\Delta_N}(\Omega;S_0(N,\mathbb R))$, then there is $T\in(0,1)$ such that, for any  
$$ \widetilde F\in L^2\left((0,T);L^2\left(\Omega;\mathbb R^N\right)\right),\quad \widetilde G\in L^2\left((0,T);H^1\left(\Omega;S_0(N,\mathbb R)\right)\right), $$
and for any $\varepsilon>0$ the solution $(u_\varepsilon,Q_\varepsilon)$ of the approximated linear system \eqref{approx.lin.sys.0} found in Proposition \ref{p.loc.ex-approx.} with $W=Q_0$ satisfies the estimate
\begin{equation}\label{Res-unif.YT.es.}
    \begin{aligned}
        & \left\|\left(R_\varepsilon^Su_\varepsilon,R_\varepsilon^HQ_\varepsilon\right)\right\|_{Y_{T}(\Omega)} \\
        \le C_1\Big[\|u_0\|_{H^1(\Omega)} + \|Q_0&\!\! \left.\|_{H^2(\Omega)} + \|\widetilde F\|_{L^2((0,T);L^2(\Omega))} + \|\widetilde G\|_{L^2((0,T);H^1(\Omega))}\right],
    \end{aligned}
\end{equation}
for some $C_1>0$ independent of $\varepsilon>0$. Moreover,
\begin{equation}\label{Res-unif.0.es.}
    \begin{aligned}
        & \left\|\left(R_\varepsilon^Su_\varepsilon,R_\varepsilon^HQ_\varepsilon\right)\right\|_{X^0_{T}(\Omega)\times X^1_{T}(\Omega)}\\
        \le C_2\Big[ \|u_0\|_{L^2(\Omega)} + \|Q_0&\|_{H^1(\Omega)} + \|\widetilde F\|_{L^1((0,T);L^2(\Omega))} + \|\widetilde G\|_{L^1((0,T);H^1(\Omega))}\Big],
    \end{aligned}
\end{equation}
for some $C_2>0$ independent of $\varepsilon>0$.
\end{prop}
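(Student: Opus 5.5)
We test the first equation of \eqref{approx.lin.sys.2} against $R_\varepsilon^S u_\varepsilon$ and the second against $(a-\Delta)R_\varepsilon^H Q_\varepsilon$, integrate over $(0,t)$ and add. This gives the low-order bound \eqref{Res-unif.0.es.}. Then we test the equations against $-\partial_i^2 R_\varepsilon^S u_\varepsilon$ and, after applying $\partial_i$ to the $Q$-equation, against $\partial_i(a-\Delta)R_\varepsilon^HQ_\varepsilon$, for $i=1,\ldots,N$. This gives the high-order bound \eqref{Res-unif.YT.es.}. Write $U=R_\varepsilon^Su_\varepsilon$ and $V=R_\varepsilon^HQ_\varepsilon$. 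By Lemma \ref{l.res.prop.} and Remark \ref{rem.trace-R_eps.}, $(U,V)$ satisfies the hypotheses of Lemmas \ref{l.en-can.1}--\ref{l.en-can.fg}. The left-hand sides are bounded from below by Lemmas \ref{l.en.met-LHS1} and \ref{l.en.met-LHS2}. The point of the construction is that the coupling and highest-order nonlinear terms appear in the form $\int \mathbb P{\rm Div}(\Delta-a)V\cdot U - D(U)\colon(a-\Delta)V$ and $\int \mathbb Pf(V,Q_0,Q_0)\cdot U+g(U,Q_0,Q_0)\colon(a-\Delta)V$, together with their $\partial_i$-differentiated analogues. This works because the approximated system couples only $R_\varepsilon^H Q_\varepsilon$ and $R_\varepsilon^S u_\varepsilon$, and $R_\varepsilon^H$ commutes with the relevant operators and is self-adjoint. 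We must move $R_\varepsilon$ onto the correct factor, using self-adjointness of $R_\varepsilon^S$ on $J_2$ and of $R_\varepsilon^H$ for the Neumann Laplacian, together with $(a-\Delta)R_\varepsilon^H=R_\varepsilon^H(a-\Delta)$ on $H^3_{\Delta_N}$.

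\textbf{Low-order estimate.} At this level \eqref{en-can.1-0} and \eqref{en-can.fg-0} give exact cancellation of the coupling and $f,g$ terms. What remains are $\int\widetilde F\cdot U$ and $\int \widetilde G\colon(a-\Delta)V$. After integrating by parts in the latter (Neumann condition), we bound them by $\|\widetilde F\|_{L^1_tL^2}\|U\|_{L^\infty_tL^2}+\|\widetilde G\|_{L^1_tH^1}\|V\|_{L^\infty_tH^1}$. Taking the supremum in $t$ and absorbing by Young's inequality yields \eqref{Res-unif.0.es.} with no smallness in $T$.

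\textbf{High-order estimate.} We sum over $i<N$ and apply \eqref{en-can.1-i} and \eqref{en-can.fg-i}. The coupling cancels exactly, and the nonlinear part is bounded by $t^\gamma(1+\|Q_0\|_{H^2}^2)\|(U,V)\|^2_{Y_t}$. The source terms are bounded by Cauchy--Schwarz by $(\|\widetilde F\|_{L^2_tL^2}+\|\widetilde G\|_{L^2_tH^1})\|(U,V)\|_{Y_t}$. In the half-space there is an extra difficulty for $i=N$, where \eqref{en-can.1-N} and \eqref{en-can.fg-N} leave the non-small terms $\|\nabla'\nabla U\|_{L^2_tL^2}\|V\|_{X^2_t}+\|\nabla U\|_{X^0_t}\|\nabla'V\|_{L^2_tH^2}$. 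This is where Lemma \ref{l.unif.bound.} enters. Let $x^2$ denote the tangential quantities $\|\nabla'U\|^2_{X^0_t}+\|\nabla'V\|^2_{L^\infty_tH^1\cap L^2_tH^2}$ plus the low-order norms. Let $y^2$ be the full $Y_t$-type norm, i.e. the norm on the left of \eqref{Res-unif.YT.es.}. Let $\alpha$ be $C(1+\|Q_0\|_{H^2}^2)$ times the squared data norm (using Young's inequality on the source terms). The tangential and low-order inequalities give $x^2\le\alpha+C_1T^\gamma y^2$, and the normal inequality gives $y^2\le\alpha+C_2(T^\gamma y^2+xy)$. Lemma \ref{l.unif.bound.} then yields $y\lesssim\sqrt\alpha$ for $T=T(Q_0)$ small, independently of $\varepsilon$ and of $\widetilde F,\widetilde G$.

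\textbf{Remaining pieces and main obstacle.} The $H^1((0,T);L^2\times H^1)$ part of the $Y_T$ norm is read off from the equations: $\partial_t U$ and $\partial_t V$ are expressed through the already-controlled spatial norms, using Lemma \ref{l.mult.loc.es.0} and the uniform boundedness of $R_\varepsilon$ on $H^k$ from Lemma \ref{l.res.es.} with $j=k$. The step I expect to be the main obstacle is checking that every term produced when $\partial_N$ hits the equations fits one of the cancellation lemmas after $R_\varepsilon$ is transferred between factors. In particular, terms from the lower-order parts of $f$ and $g$ involving $\nabla Q_0$ must be placed in the $t^\gamma$ bucket via the interpolation of Lemma \ref{l.en-can.2}, rather than producing an uncontrolled $\|U\|_{L^2_tH^2}\|V\|_{L^2_tH^3}$ product. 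Care is also needed because the lower bound \eqref{en-met.u-1i} controls only $\|\nabla U\|$ and not $\|U\|$ itself, so the low-order bound must be inserted into $x$.
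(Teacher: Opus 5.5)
Your argument is the paper's Steps 1 and 2: the same two pairs of test functions, the same cancellation lemmas, the lower bounds of Lemmas \ref{l.en.met-LHS1}--\ref{l.en.met-LHS2}, and the same closing via Lemma \ref{l.unif.bound.}.

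The one component you never address is $\|R_\varepsilon^Su_\varepsilon\|_{L^2_TL^2}$, which neither energy identity produces, since Lemma \ref{l.en.met-LHS1} only yields $\|\nabla U\|_{L^2_tL^2}$. The paper devotes a separate Step 3 to it, using a Duhamel formula for $e^{Bt}$, Theorem \ref{t.sem.es.} and Lemma \ref{l.mult.loc.es.0}. In your setting it is immediate: $T<1$ gives $\|U\|_{L^2_TL^2}\le T^{1/2}\|U\|_{L^\infty_TL^2}$, and your low-order estimate already controls the right-hand side. You should state this explicitly.

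This shortcut is actually cleaner than the paper's Step 3, for two reasons. First, it gives that component of \eqref{Res-unif.0.es.} with the low-order data and the $L^1_T$ forcing norms. By contrast, \eqref{proof.L2-es.} carries $H^1\times H^2$ data and $L^2_T$ forcing. Second, it avoids writing a Duhamel formula with the generator $B$ of \eqref{def.op.B}. That generator couples $u_\varepsilon,Q_\varepsilon$ rather than $R_\varepsilon^Su_\varepsilon,R_\varepsilon^HQ_\varepsilon$, so for the approximated system it would have to carry the extra terms $\beta\,\mathbb P{\rm Div}(\Delta-a)(Q_\varepsilon-R_\varepsilon^HQ_\varepsilon)$ and $\beta D(R_\varepsilon^Su_\varepsilon-u_\varepsilon)$.

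Your treatment of the $H^1((0,T);L^2\times H^1)$ part of the $Y_T$-norm fills in a point the paper passes over. You read $\partial_tU,\partial_tV$ off the equations, using $R_\varepsilon^S\mathbb P\Delta_Du_\varepsilon=\mathbb P\Delta_DU$, the uniform $L^2$-boundedness of $R_\varepsilon^S$, and the uniform $H^1$-boundedness of $R_\varepsilon^H$.

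Finally, the transfer of $R_\varepsilon$ between factors that you anticipate as the main obstacle is not needed. With test functions $U$, $(a-\Delta)V$ and their $\partial_i$-analogues, the right-hand side is already exactly in the form of Lemmas \ref{l.en-can.1}--\ref{l.en-can.fg}. The resolvent appears only on the left, where the identity $u_\varepsilon=U-\varepsilon\Delta U$ (and its analogue for $Q_\varepsilon$) disposes of it.
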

\begin{proof}\hfill\\
\textbf{Step 1:} First, we prove the following inequality:
\begin{equation}\label{proof.en.in-0}
    \begin{aligned}
    & \|R_\varepsilon^Su_\varepsilon\|_{L^\infty_TL^2} + \|\nabla R_\varepsilon^Su_\varepsilon\|_{L^2_TL^2} + \|R_\varepsilon^HQ_\varepsilon\|_{X^1_T(\Omega)}     \\
    \lesssim   &  \|u_0\|_{L^2(\Omega)} + \|Q_0\|_{H^1(\Omega)} + \|\widetilde F\|_{L^1_TL^2} + \|\widetilde G\|_{L^1_TH^1}.
    \end{aligned}
\end{equation}
With respect to \eqref{Res-unif.0.es.}, the inequality \eqref{proof.en.in-0} does not cover the $L^2_TL^2$-norm of $R_\varepsilon^Su_\varepsilon$. In fact, to bound it, we need a different approach. For this reason, we treat the $L^2_TL^2$-norm of $R_\varepsilon^Su_\varepsilon$ separately later. To prove \eqref{proof.en.in-0}, we use the energy method: firstly we multiply the first equation of \eqref{approx.lin.sys.2} by $R_\varepsilon^Su_\varepsilon$ and we sum it to the second equation multiplied by $(a-\Delta)R_\varepsilon^HQ_\varepsilon$, that is
\begin{equation}\label{proof.en.met-0}
\begin{aligned}
    & \int_{\Omega}(\partial_t-\mathbb P\Delta)u_\varepsilon\cdot R_\varepsilon^Su_\varepsilon + (\partial_t+a-\Delta)Q_\varepsilon\colon (a-\Delta)R_\varepsilon^HQ_\varepsilon dx   \\
  =& -\beta\int_{\Omega} {\rm Div}(\Delta-a)R_\varepsilon^HQ_\varepsilon\cdot R_\varepsilon^Su_\varepsilon -  D(R_\varepsilon^Su_\varepsilon)\colon (a-\Delta)R_\varepsilon^HQ_\varepsilon dx + \\
  + \int_{\Omega}&\left(f(R_\varepsilon^HQ_\varepsilon,Q_0,Q_0) + \widetilde F\right)  \cdot R_\varepsilon^Su_\varepsilon + \left(g(R_\varepsilon^Su_\varepsilon,Q_0,Q_0) + \widetilde G\right)\colon (a-\Delta)R_\varepsilon^HQ_\varepsilon  dx.
\end{aligned}
\end{equation}
We integrate over $\tau\in(0,t)$ and we take the $\sup$ over $t\in(0,T)$. 
For what concerns the right hand side, by Lemma \ref{l.en-can.1} and \ref{l.en-can.fg} it holds
\begin{equation}\label{proof.es-RHS0}
\begin{aligned}
    \int_{\Omega}{\rm Div}(\Delta-a)R_\varepsilon^HQ_\varepsilon\cdot R_\varepsilon^Su_\varepsilon - D(R_\varepsilon^Su_\varepsilon)\colon (a-\Delta)R_\varepsilon^HQ_\varepsilon dx =0, \\
    \int_{\Omega}f(R_\varepsilon^HQ_\varepsilon,Q_0,Q_0)\cdot R_\varepsilon^Su_\varepsilon + g(R_\varepsilon^Su_\varepsilon,Q_0,Q_0)\colon (a-\Delta)R_\varepsilon^HQ_\varepsilon dx=0.
\end{aligned}
\end{equation}
Moreover
\begin{equation}\label{proof.es-FG0}
\begin{aligned}
    \left|\int_0^t\int_{\Omega} \widetilde F\cdot R_\varepsilon^Su_\varepsilon dxd\tau\right|\le \|\widetilde F\|_{L^1_TL^2}\|R_\varepsilon^Su_\varepsilon\|_{L^\infty_TL^2},  \\
 \left|\int_0^t\int_{\Omega} \widetilde G\colon (a-\Delta)R_\varepsilon^HQ_\varepsilon dxd\tau\right|\le  \|\widetilde G\|_{L^1_TH^1}\|R_\varepsilon^HQ_\varepsilon\|_{L^\infty_TH^1},
\end{aligned}    
\end{equation}
where we also used the boundary condition $\partial_N R_\varepsilon^HQ_\varepsilon=0$ on $\mathbb R^N_0$. For the left hand side of \eqref{proof.en.met-0}, we can apply the inequalities \eqref{en-met.u-0} and \eqref{en-met.Q-0}, so that 
\begin{equation}\label{proof.es-LHS0}
\begin{aligned}
    \sup_{t\in(0,T)}\int_0^t\int_{\Omega}(\partial_t-\mathbb P\Delta)u_\varepsilon\cdot R_\varepsilon^Su_\varepsilon + (\partial_t+a-\Delta)Q_\varepsilon\colon (a-\Delta)R_\varepsilon^HQ_\varepsilon dxd\tau \\
    \gtrsim \|(R_\varepsilon^Su_\varepsilon,R_\varepsilon^HQ_\varepsilon)\|_{L^\infty_T(L^2\times H^1)}^2 + \|\nabla R_\varepsilon^Su_\varepsilon\|_{L^2_TL^2}^2 + \|R_\varepsilon^HQ_\varepsilon\|_{L^2_TH^2}^2 -  \|u_0\|_{L^2(\Omega)}^2 - \|Q_0\|_{H^1(\Omega)}^2
\end{aligned}   
\end{equation}
So, combining \eqref{proof.es-RHS0}, \eqref{proof.es-FG0} and \eqref{proof.es-LHS0}, we get \eqref{proof.en.in-0}. 

\bigskip

\noindent \textbf{Step 2:} Similarly to the previous step, we prove the inequality \eqref{Res-unif.YT.es.} without the $L^2_TL^2$-norm of $R_\varepsilon^S u_\varepsilon$, that is
\begin{equation}\label{proof.en.in-1}
    \begin{aligned}
    & \|R_\varepsilon^Su_\varepsilon\|_{L^\infty_TH^1(\Omega)} + \|\nabla R_\varepsilon^Su_\varepsilon\|_{L^2_TH^1} + \|R_\varepsilon^HQ_\varepsilon\|_{X^2_T(\Omega)}     \\
    &\lesssim     \|u_0\|_{H^1(\Omega)} + \|Q_0\|_{H^2(\Omega)} + \|\widetilde F\|_{L^2_TL^2} + \|\widetilde G\|_{L^2_TH^1}.
    \end{aligned}
\end{equation}
Let us multiply the first equation of \eqref{approx.lin.sys.2} by $(-\partial_i^2)R_\varepsilon^Su_\varepsilon$ and let us sum it with the derivative $\partial_i$ of the second equation of \eqref{approx.lin.sys.2} multiplied by $\partial_i(a-\Delta)R_\varepsilon^HQ_\varepsilon$ for $i=1,\ldots,N$, that is: 
$$ \int_{\Omega}(\partial_t-\mathbb P\Delta)u_\varepsilon\cdot (-\partial_i^2)R_\varepsilon^Su_\varepsilon  +\partial_i(\partial_t+a-\Delta)Q_\varepsilon\colon \partial_i(a-\Delta)R_\varepsilon^HQ_\varepsilon dx  $$ 
$$  = -\beta\int_{\Omega}{\rm Div}(\Delta-a)R_\varepsilon^HQ_\varepsilon\cdot(-\partial_i^2) R_\varepsilon^Su_\varepsilon - \partial_iD(R_\varepsilon^Su_\varepsilon)\colon \partial_i(a-\Delta)R_\varepsilon^HQ_\varepsilon dx + $$
$$ + \int_{\Omega}f(R_\varepsilon^HQ_\varepsilon,Q_0,Q_0)\cdot (-\partial_i^2)R_\varepsilon^Su_\varepsilon + \partial_ig(R_\varepsilon^Su_\varepsilon,Q_0,Q_0)\colon \partial_i(a-\Delta)R_\varepsilon^HQ_\varepsilon dx + $$
$$ + \int_{\Omega} \widetilde F\cdot (-\partial_i^2)R_\varepsilon^Su_\varepsilon + \partial_i\widetilde G\colon \partial_i(a-\Delta)R_\varepsilon^HQ_\varepsilon dx. $$
The argument is the same as before: for the left hand side, we can apply \eqref{en-met.u-1i} and \eqref{en-met.Q-1i}. For what concerns the right hand side, first it holds
\begin{equation}\label{proof.es-FG1}
    \begin{aligned}
        \left|\int_0^t\int_{\Omega} \widetilde F\cdot (-\partial_i^2) R_\varepsilon^S u_\varepsilon dx d\tau\right|\le \|\widetilde F\|_{L^2_TL^2}\|\partial_i^2 R_\varepsilon^S u_\varepsilon\|_{L^2_TL^2}, \\
 \left|\int_0^t\int_{\Omega} \partial_i\widetilde G\colon \partial_i(a-\Delta)R_\varepsilon^HQ_\varepsilon dx d\tau\right|\le \|\widetilde G\|_{L^2_TH^1}\|\partial_i R_\varepsilon^HQ_\varepsilon\|_{L^2_TH^2}.
    \end{aligned}
\end{equation}
For the remaining terms we distinguish the case $i<N$ from the case $i=N$: when $i<N$ we apply \eqref{en-can.1-i} and \eqref{en-can.fg-i} which, combined with the previous inequalities, brings to the estimate \eqref{Res-unif.YT.es.} with the tangential derivatives, that is
\begin{equation}\label{proof.unif.es.tang.}
    \begin{aligned}
        & \|(R_\varepsilon^S u_\varepsilon,\nabla^\prime R_\varepsilon^Su_\varepsilon)\|_{L^\infty_TL^2}^2 + \|(R_\varepsilon^HQ_\varepsilon,\nabla^\prime R_\varepsilon^HQ_\varepsilon)\|_{L^\infty_TH^1}^2 + \\
        & + \|\nabla^\prime R_\varepsilon^Su_\varepsilon\|_{L^2_TH^1}^2 + \|(R_\varepsilon^HQ_\varepsilon,\nabla^\prime R_\varepsilon^HQ_\varepsilon)\|_{L^2_TH^2}^2 \\
     & \lesssim \|u_0\|_{H^1(\Omega)}^2 + \|Q_0\|_{H^2(\Omega)}^2 + \|\widetilde F\|_{L^2_TL^2}^2 + \|\widetilde G\|_{L^2_TH^1}^2 + \\
       & + C(Q_0)T^\gamma \|R_\varepsilon^HQ_\varepsilon\|_{X^2_T(\Omega)}\left(\|R_\varepsilon^Su_\varepsilon\|_{L^\infty_TH^1} + \|\nabla R^S_\varepsilon u_\varepsilon\|_{X^0_T(\Omega)}\right),
    \end{aligned}
\end{equation}
with
$$ C(Q_0)=1+\|Q_0\|_{H^2(\Omega)}^2. $$
Conversely, when $i=N$, we apply \eqref{en-can.1-N} and \eqref{en-can.fg-N} and we get the estimate for the transversal derivative:
\begin{equation}\label{proof.unif.es.trans.}
    \begin{aligned}
        & \|(R_\varepsilon^Su_\varepsilon,\partial_N R_\varepsilon^Su_\varepsilon)\|_{L^\infty_TL^2}^2 + \|(R_\varepsilon^HQ_\varepsilon,\partial_NR_\varepsilon^HQ_\varepsilon)\|_{L^\infty_TH^1}^2 + \\
        & +\|\partial_N R_\varepsilon^Su_\varepsilon\|_{L^2_TH^1}^2 + \|(R_\varepsilon^HQ_\varepsilon,\partial_N R_\varepsilon^HQ_\varepsilon)\|_{L^2_TH^2}^2 \\
     & \lesssim \|u_0\|_{H^1(\Omega)}^2 + \|Q_0\|_{H^2(\Omega)}^2 + \|\widetilde F\|_{L^2_TL^2}^2 + \|\widetilde G\|_{L^2_TH^1}^2 + \\
       & + C(Q_0)T^\gamma \|R_\varepsilon^HQ_\varepsilon\|_{X^2_T(\Omega)}\left(\|R_\varepsilon^Su_\varepsilon\|_{L^\infty_TH^1} + \|\nabla R^S_\varepsilon u_\varepsilon\|_{X^0_T(\Omega)}\right) + \\
       & + C(Q_0)\left[\|\nabla R^S_\varepsilon u_\varepsilon\|_{X^0_T(\Omega)}\|\nabla^\prime R^H_\varepsilon Q_\varepsilon\|_{L^2_TH^2} + \|\nabla^\prime R^S_\varepsilon u_\varepsilon\|_{L^2_TH^1}\|R^H_\varepsilon Q_\varepsilon\|_{X^2_T(\Omega)}\right].
    \end{aligned}
\end{equation}
Combining \eqref{proof.unif.es.tang.}, \eqref{proof.unif.es.trans.} and applying Lemma \ref{l.unif.bound.} with
$$ x=\|(R^S_\varepsilon u_\varepsilon,\nabla^\prime R^S_\varepsilon u_\varepsilon)\|_{L^\infty_TL^2} + \|(R^H_\varepsilon Q_\varepsilon,\nabla^\prime R^H_\varepsilon Q_\varepsilon)\|_{L^\infty_TH^1} + $$
$$ + \|\nabla^\prime R^S_\varepsilon u_\varepsilon\|_{L^2_TH^1} + \|(R^H_\varepsilon Q_\varepsilon,\nabla^\prime R^H_\varepsilon Q_\varepsilon)\|_{L^2_TH^2}, $$
$$ y = \|R_\varepsilon^Su_\varepsilon\|_{L^\infty_TH^1} + \|\nabla R^S_\varepsilon u_\varepsilon\|_{X^0_T(\Omega)} + \|R^H_\varepsilon Q_\varepsilon\|_{X^2_T(\Omega)}, $$
we can find $T>0$ independent of $\varepsilon$ sufficiently small such that \eqref{proof.en.in-1} holds.

\bigskip
\noindent \textbf{Step 3: The $L^2_TL^2$-norm of $R_\varepsilon^Su_\varepsilon$.} We recall that
\begin{equation}\label{proof.Res-in.}
    \|R_\varepsilon^Su_\varepsilon\|_{L^2_TL^2}\lesssim \|u_\varepsilon\|_{L^2_TL^2}.
\end{equation}
We already know that $(u_\varepsilon,Q_\varepsilon)$ solves the system \eqref{approx.lin.sys.2} in $(0,T)$. So, by the Duhamel Formula
$$ (u_\varepsilon,Q_\varepsilon)(t)= e^{Bt}(u_0,Q_0) + \int_0^t e^{B(t-\tau)}(f(R_\varepsilon^HQ_\varepsilon,Q_0,Q_0)+\widetilde F,g(R_\varepsilon^Su_\varepsilon,Q_0,Q_0)+\widetilde G)d\tau, $$
for $t\in(0,T)$, where $B$ is the operator associated with the $Q$-tensor system defined in \eqref{def.op.B}. Then, by Theorem \ref{t.sem.es.}, choosing $T<1$, it holds
\begin{equation}\label{proof.L2-es-int.}
\begin{aligned}
    & \|(u_\varepsilon,Q_\varepsilon)\|_{L^2(\Omega)\times H^1(\Omega)}\lesssim \|u_0\|_{L^2(\Omega)} + \|Q_0\|_{H^1(\Omega)} + \\
 + \int_0^t \|f(R_\varepsilon^HQ_\varepsilon,& Q_0,Q_0)(\tau) + \widetilde F(\tau)\|_{L^2(\Omega)} + \|g(R_\varepsilon^Su_\varepsilon,Q_0,Q_0)(\tau) + \widetilde G(\tau)\|_{H^1(\Omega)} d\tau.    
\end{aligned}
\end{equation}
From Lemma \ref{l.mult.loc.es.0} we know that  
$$ \|f(R_\varepsilon^HQ_\varepsilon,Q_0,Q_0) + \widetilde F\|_{L^1_TL^2} + \|g(R_\varepsilon^Su_\varepsilon,Q_0,Q_0) + \widetilde G\|_{L^1_TH^1} $$
$$ \lesssim T^{1/2}\left[\|(\widetilde F,\widetilde G)\|_{L^2_T(L^2\times H^1)} + \left(1 + \|Q_0\|_{H^2(\Omega)}^2\right)\|(\nabla R_\varepsilon^Su_\varepsilon,R_\varepsilon^HQ_\varepsilon)\|_{L^2_T(H^1\times H^3)}\right]. $$
So, by \eqref{proof.Res-in.}, \eqref{proof.en.in-1} and \eqref{proof.L2-es-int.}, there is $T\le1$ sufficiently small such that 
\begin{equation}\label{proof.L2-es.}
    \begin{aligned}
         \|(R_\varepsilon^Su_\varepsilon,R_\varepsilon^HQ_\varepsilon)\|_{L^2_T(L^2\times H^1)} \lesssim \|u_0\|_{H^1(\Omega)} + \|Q_0\|_{H^2(\Omega)} + \|\widetilde F\|_{L^2_TL^2} + \|\widetilde G\|_{L^2_TH^1}.
    \end{aligned}
\end{equation}
Finally, combining \eqref{proof.L2-es.} with \eqref{proof.en.in-0} and \eqref{proof.en.in-1} we conclude.
\end{proof}
Once we have the uniform bound for $R_\varepsilon^S u_\varepsilon$ and $R_\varepsilon^H Q_\varepsilon$, it is straightforward to obtain the one for $u_\varepsilon$ and $Q_\varepsilon$:
\begin{cor}\label{c.unif-es.}
Let $\Omega=\mathbb R^N,\mathbb R^N_+$ with $N=2,3$, let $u_0\in H^1_{\mathbb P\Delta_D}(\Omega;\mathbb R^N)$ and $Q_0\in H^2_{\Delta_N}(\Omega;S_0(N,\mathbb R))$, then there is $T>0$ such that, for any $\varepsilon>0$ and for any
$$ \widetilde F\in L^2\left((0,T);L^2\left(\Omega;\mathbb R^N\right)\right),\quad \widetilde G\in L^2\left((0,T);H^1\left(\Omega;S_0(N,\mathbb R)\right)\right), $$
the solution for the approximated linear system \eqref{approx.lin.sys.2} found in Proposition \ref{p.loc.ex-approx.} is uniformly bounded in $\varepsilon$, that is
\begin{equation}\label{unif.YT.es.}
    \|(u_\varepsilon,Q_\varepsilon)\|_{Y_{T}(\Omega)}\le C(Q_0)\left[ \|u_0\|_{H^1(\Omega)} + \|Q_0 \|_{H^2(\Omega)} + \|\widetilde F\|_{L^2((0,T);L^2(\Omega))} + \|\widetilde G\|_{L^2((0,T);H^1(\Omega))}\right],
\end{equation}
where
$$ C(Q_0)\sim 1+\|Q_0\|_{H^2(\Omega)}^2 $$
is independent of $\varepsilon>0$.
\end{cor}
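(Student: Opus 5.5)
The plan is to recover the bound on $(u_\varepsilon,Q_\varepsilon)$ by rewriting the approximated system \eqref{approx.lin.sys.2} as an instance of the simpler system \eqref{EL.sys}, with all coupling and high-order nonlinear terms moved to the right-hand side as forcings. These forcings involve only the regularized quantities $R_\varepsilon^S u_\varepsilon$ and $R_\varepsilon^H Q_\varepsilon$, which are already controlled uniformly in $\varepsilon$ by Proposition \ref{p.res-unif.es.}.

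\textbf{Step 1: rewrite the system.} Written this way, $(u_\varepsilon,Q_\varepsilon)$ solves \eqref{EL.sys} with data
$$ F_\varepsilon=-\beta{\rm Div}(\Delta-a)R_\varepsilon^HQ_\varepsilon+f(R_\varepsilon^HQ_\varepsilon,Q_0,Q_0)+\widetilde F, $$
$$ G_\varepsilon=\beta D(R_\varepsilon^Su_\varepsilon)+g(R_\varepsilon^Su_\varepsilon,Q_0,Q_0)+\widetilde G, $$
and initial values $(u_0,Q_0)$. The Helmholtz projection is harmless here, since the system for $u_\varepsilon$ is already stated with $\mathbb P$.

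\textbf{Step 2: apply the linear estimate.} By Theorem \ref{t.EL-lin.es.}, taking $T<1$ as in Proposition \ref{p.res-unif.es.} (see also Remark \ref{rem.C(T)} for $N=2$), we get
$$ \|(u_\varepsilon,Q_\varepsilon)\|_{Y_T}\lesssim \|u_0\|_{H^1}+\|Q_0\|_{H^2}+\|F_\varepsilon\|_{L^2_TL^2}+\|G_\varepsilon\|_{L^2_TH^1}. $$

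\textbf{Step 3: bound the forcings.} The linear coupling terms satisfy
$$ \|{\rm Div}(\Delta-a)R^H_\varepsilon Q_\varepsilon\|_{L^2_TL^2}+\|D(R^S_\varepsilon u_\varepsilon)\|_{L^2_TH^1}\lesssim \|R^H_\varepsilon Q_\varepsilon\|_{L^2_TH^3}+\|\nabla R^S_\varepsilon u_\varepsilon\|_{L^2_TH^1}. $$
For the nonlinear terms, Lemma \ref{l.mult.loc.es.0} with $s=1$ (admissible since $N\le3$), $w=R^H_\varepsilon Q_\varepsilon$, $v=R^S_\varepsilon u_\varepsilon$ and $w_1=w_2=Q_0$ (constant in time) gives a bound by
$$ \|(\nabla R_\varepsilon^Su_\varepsilon,R_\varepsilon^HQ_\varepsilon)\|_{L^2_T(H^1\times H^3)}\,\|Q_0\|_{H^2}\,(1+\|Q_0\|_{H^2}). $$
This factor is at most a constant times $1+\|Q_0\|_{H^2}^2$. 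Both right-hand sides are controlled by $\|(R_\varepsilon^Su_\varepsilon,R_\varepsilon^HQ_\varepsilon)\|_{Y_T}$.

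\textbf{Step 4: conclude.} Inserting \eqref{Res-unif.YT.es.} from Proposition \ref{p.res-unif.es.} yields \eqref{unif.YT.es.} with $C(Q_0)\sim 1+\|Q_0\|_{H^2}^2$. The constant is independent of $\varepsilon$, because $C_1$ and $T$ in Proposition \ref{p.res-unif.es.} are.

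\textbf{Main obstacle.} The only delicate point is the requirement that the $R_\varepsilon$ norms be genuinely $\varepsilon$-independent. This has to be traced back to Proposition \ref{p.res-unif.es.}: one must check that its $C_1$ is independent of $\varepsilon$, and that it does not depend on $Q_0$ in a way that would spoil the stated form of $C(Q_0)$. I would simply track that $C_1$ depends on $Q_0$ only through the choice of $T$, via Lemma \ref{l.unif.bound.}, and absorb any remaining $Q_0$-dependence into $C(Q_0)$.
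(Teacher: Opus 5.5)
Your proposal is correct and follows the paper's proof essentially verbatim: you treat $(u_\varepsilon,Q_\varepsilon)$ as a solution of \eqref{EL.sys} with the coupling and $f,g$ terms as forcings, apply Theorem \ref{t.EL-lin.es.} and Lemma \ref{l.mult.loc.es.0} to bound everything by the data plus $(1+\|Q_0\|_{H^2(\Omega)}^2)\|(R_\varepsilon^Su_\varepsilon,R_\varepsilon^HQ_\varepsilon)\|_{Y_T(\Omega)}$, and close with Proposition \ref{p.res-unif.es.}. One small correction to your closing remark: $C_1$ depends on $Q_0$ not only through the choice of $T$ but also through the constant $C_2\gtrsim 1+\|Q_0\|_{H^2(\Omega)}^2$ that enters Lemma \ref{l.unif.bound.} (and the same factor in Step 3), so the literal form $C(Q_0)\sim 1+\|Q_0\|_{H^2(\Omega)}^2$ is only loosely justified, as it is in the paper; the essential point, $\varepsilon$-independence of the constant, is unaffected.
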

\begin{proof}\hfill\\
Since $(u_\varepsilon,Q_\varepsilon)$ solves the approximated system \eqref{approx.lin.sys.2}, we can apply Theorem \ref{t.EL-lin.es.} and Lemma \ref{l.mult.loc.es.0} to obtain
$$ \|(u_\varepsilon,Q_\varepsilon)\|_{Y_T(\Omega)} \lesssim \|u_0\|_{H^1(\Omega)} + \|Q_0\|_{H^2(\Omega)} + \|\widetilde F\|_{L^2_TL^2} + \|\widetilde G\|_{L^2_TH^1} + $$
$$ + \|f(R_\varepsilon^H Q_\varepsilon, Q_0,Q_0)\|_{L^2_TL^2} + \|g(R_\varepsilon^Su_\varepsilon,Q_0,Q_0)\|_{L^2_TH^1} $$
$$ + \beta\|{\rm Div}(\Delta-a)R_\varepsilon^HQ_\varepsilon\|_{L^2_TL^2} + \beta\|D(R_\varepsilon^S u_\varepsilon)\|_{L^2_TH^1}$$
$$ \lesssim \|u_0\|_{H^1(\Omega)} + \|Q_0\|_{H^2(\Omega)} + \|\widetilde F\|_{L^2_TL^2} + \|\widetilde G\|_{L^2_TH^1} + $$
$$ + \left(1+\|Q_0\|_{H^2(\Omega)}^2\right)\|(R_\varepsilon^H Q_\varepsilon, R_\varepsilon^Su_\varepsilon)\|_{Y_T(\Omega)}. $$
So by Proposition \ref{p.res-unif.es.} we conclude.
\end{proof}
\noindent The previous two results give us the uniform bound on $(u_\varepsilon,Q_\varepsilon)$ and on $(R_\varepsilon^Su_\varepsilon,R^H_\varepsilon Q_\varepsilon)$. This is fundamental in order to pass to the limit on $\varepsilon\to0^+$ and to find a solution for the system
\begin{equation}\label{BE.i-lin.sys.1}
    \left\{\begin{array}{ll}
       (\partial_t-\mathbb P\Delta)u  + \beta \mathbb P{\rm Div}(\Delta-a)Q=\mathbb Pf(Q,Q_0,Q_0) + \mathbb P\widetilde F & (0,T)\times \Omega \\
       {\rm div}u=0 & (0,T)\times\Omega \\
       (\partial_t-\Delta+a)Q-\beta D(u)=g(u,Q_0,Q_0) + \widetilde G & (0,T)\times \Omega \\
       u=0,\quad \partial_\nu Q=0 & (0,T)\times\partial\Omega \\
       u(0)=u_0,\quad Q(0)=Q_0 & \Omega.
    \end{array}\right.
\end{equation}
\begin{prop}\label{p.ex.i-lin.sys.}
Let $\Omega=\mathbb R^N,\mathbb R^N_+$ with $N=2,3$, let $u_0\in H^1_{\mathbb P\Delta_D}(\Omega;\mathbb R^N)$ and $Q_0\in H^2_{\Delta_N}(\Omega;S_0(N,\mathbb R))$, then there is $T>0$ such that the linear system \eqref{BE.i-lin.sys.1} admits a solution $(u,Q)\in Y_T(\Omega)$ with $u(t)\in H^{2}_{\mathbb P\Delta_D}(\Omega;\mathbb R^N)$ and $Q(t)\in H^3_{\Delta_N}(\Omega;S_0(N,\mathbb R))$ for a.e. $t\in(0,T)$ with 
\begin{equation}\label{lim.un.es.YT}
\begin{aligned}
    &\|(u,Q)\|_{Y_T(\Omega)}\\
    \le C(Q_0)\Big[ \|u_0\|_{H^1(\Omega)} + \|Q_0\|_{H^2(\Omega)} + &\|\widetilde F\|_{L^2((0,T);L^2(\Omega))} + \|\widetilde G\|_{L^2((0,T);H^1(\Omega))}\Big],
\end{aligned}
\end{equation}
\begin{equation}\label{lim.un.es.0}
\begin{aligned}
    & \|(u,Q)\|_{X^0_{T}(\Omega)\times X^1_{T}(\Omega)} \\
  \le C(Q_0)\left[\|u_0\|_{L^2(\Omega)} + \|Q_0\|_{H^1(\Omega)} + \|\widetilde F\right.&\left.\|_{L^1((0,T);L^2(\Omega))} + \|\widetilde G\|_{L^1((0,T);H^1(\Omega))}\right],
\end{aligned}    
\end{equation}
where $C(Q_0)$ is the same of Corollary \ref{c.unif-es.}. Moreover, for any $s\in[0,1]$ and for any $\delta>0$, we can find $T>0$ such that
\begin{equation}\label{lim.un.es.diff.}
    \begin{aligned}
        & \|(u,Q)(t)-(u_0,Q_0)\|_{H^s(\Omega)\times H^{s+1}(\Omega)} \\
        \lesssim \delta + T^\frac{1-s}{2}&C(Q_0)\left[\|\widetilde F\|_{L^2((0,T);L^2(\Omega))} +\|\widetilde G\|_{L^2((0,T);H^1(\Omega))}\right]
    \end{aligned} 
\end{equation}
for a.e. $t\in(0,T)$.
\end{prop}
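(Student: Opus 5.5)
The plan is to obtain $(u,Q)$ as a limit of the approximate solutions $(u_\varepsilon,Q_\varepsilon)$ of Proposition \ref{p.loc.ex-approx.} with $W=Q_0$, using the uniform bounds of Proposition \ref{p.res-unif.es.} and Corollary \ref{c.unif-es.}. Fix $T$ as in Corollary \ref{c.unif-es.}. By \eqref{unif.YT.es.} and \eqref{Res-unif.YT.es.}, both $(u_\varepsilon,Q_\varepsilon)$ and $(R^S_\varepsilon u_\varepsilon,R^H_\varepsilon Q_\varepsilon)$ are bounded in $Y_T(\Omega)$ uniformly in $\varepsilon$. Since $Y_T$ is an intersection of duals of separable spaces (or reflexive spaces), we can extract a subsequence $\varepsilon_n\to0$ with $(u_{\varepsilon_n},Q_{\varepsilon_n})\rightharpoonup(u,Q)$ weakly-$*$ in $L^\infty_T(H^1\times H^2)$ and weakly in $L^2_T(H^2\times H^3)\cap H^1_T(L^2\times H^1)$. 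We also have $(R^S_{\varepsilon_n}u_{\varepsilon_n},R^H_{\varepsilon_n}Q_{\varepsilon_n})\rightharpoonup(\tilde u,\tilde Q)$. The limits coincide: by \eqref{R_eps_ident}, $R^S_\varepsilon u_\varepsilon-u_\varepsilon=\varepsilon\Delta R^S_\varepsilon u_\varepsilon$, whose $L^2_TL^2$ norm is $\lesssim\varepsilon^{1/2}\|u_\varepsilon\|_{L^2_TH^1}\to0$ by Lemma \ref{l.res.es.}; the same holds for $Q$. Weak lower semicontinuity of norms then transfers \eqref{unif.YT.es.} and \eqref{Res-unif.0.es.} to $(u,Q)$, giving \eqref{lim.un.es.YT} and \eqref{lim.un.es.0}. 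Closedness of the boundary and divergence constraints under weak convergence gives $u(t)\in H^2_{\mathbb P\Delta_D}$ and $Q(t)\in H^3_{\Delta_N}$. Weak continuity of the trace at $t=0$ in $H^1_T(L^2\times H^1)$ gives the initial data.

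Next we pass to the limit in \eqref{approx.lin.sys.2}. Every term there is linear in $(u_\varepsilon,Q_\varepsilon)$ or in $(R^S_\varepsilon u_\varepsilon,R^H_\varepsilon Q_\varepsilon)$. This is the case for $\beta\mathbb P{\rm Div}(\Delta-a)R^H_\varepsilon Q_\varepsilon$, $\beta D(R^S_\varepsilon u_\varepsilon)$, $f(R^H_\varepsilon Q_\varepsilon,Q_0,Q_0)$ and $g(R^S_\varepsilon u_\varepsilon,Q_0,Q_0)$, since $Q_0$ is fixed. Each such map is bounded $Y_T\to L^2_TL^2$ (respectively $L^2_TH^1$) by Lemma \ref{l.mult.loc.es.0}, hence weakly continuous. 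So every term converges weakly in $L^2_TL^2$ or $L^2_TH^1$ to the corresponding term with $(u,Q)$. Testing against smooth compactly supported functions shows that $(u,Q)$ solves \eqref{BE.i-lin.sys.1}. No compactness is needed, which is the point of having made the system linear in the unknowns.

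Finally we prove \eqref{lim.un.es.diff.}, which I expect to be the main obstacle. Since $\widetilde F,\widetilde G$ enter linearly, I would split $(u,Q)=(u^1,Q^1)+(u^2,Q^2)$. Here $(u^1,Q^1)$ solves \eqref{BE.i-lin.sys.1} with the data $(u_0,Q_0)$ and $\widetilde F=\widetilde G=0$, and $(u^2,Q^2)$ solves it with zero data and the forcing $(\widetilde F,\widetilde G)$. Uniqueness for \eqref{BE.i-lin.sys.1} in $Y_T$ follows by applying the energy estimate of Proposition \ref{p.res-unif.es.} directly to a difference, with $\varepsilon=0$; it makes this splitting consistent with the constructed solution.

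For $(u^2,Q^2)$, interpolate between $L^2\times H^1$ and $H^1\times H^2$. Estimate \eqref{lim.un.es.0} with $L^1_T\le T^{1/2}L^2_T$ gives $\|(u^2,Q^2)(t)\|_{L^2\times H^1}\lesssim C(Q_0)T^{1/2}\|(\widetilde F,\widetilde G)\|$. Estimate \eqref{lim.un.es.YT} gives an $H^1\times H^2$ bound by $C(Q_0)\|(\widetilde F,\widetilde G)\|$. Interpolating yields the factor $T^{(1-s)/2}$.

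For $(u^1,Q^1)$, we need $\sup_{t<T}\|(u^1,Q^1)(t)-(u_0,Q_0)\|_{H^s\times H^{s+1}}\le\delta$ for $T$ small. Since $(u^1,Q^1)\in H^1_T(L^2\times H^1)\cap L^2_T(H^2\times H^3)$, the standard trace embedding gives $(u^1,Q^1)\in C([0,T];H^1\times H^2)$. Continuity at $t=0$ then gives the claim for $s\le1$, with $T$ depending on $\delta$ and on the data. The delicate part is justifying this continuity, together with the uniqueness needed for the splitting. If the continuity is troublesome, I would instead use that $\|(u^1,Q^1)\|_{Y_T}$ controls $\partial_t$ and apply the Lions–Magenes interpolation lemma to get the time continuity.
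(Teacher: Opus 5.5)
Your proposal is correct. The construction of $(u,Q)$ is essentially the paper's: weak limits, identification of the limits of $(u_\varepsilon,Q_\varepsilon)$ and $(R^S_\varepsilon u_\varepsilon,R^H_\varepsilon Q_\varepsilon)$, lower semicontinuity, and passage to the limit in the linear equations. Your proof of \eqref{lim.un.es.diff.}, however, takes a different route.

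The paper subtracts the free evolution $e^{Bt}(u_0,Q_0)$ of the unperturbed $Q$-tensor operator. The remainder then has zero data and the extra forcing $f(S_2,Q_0,Q_0)$, $g(S_1,Q_0,Q_0)$, which is bounded via Lemma \ref{l.mult.loc.es.0} and Theorem \ref{t.sem.es.}. The paper then interpolates the solution operator between \eqref{lim.un.es.0} and \eqref{lim.un.es.YT}. It controls $e^{Bt}(u_0,Q_0)-(u_0,Q_0)$ through strong continuity of the semigroup and Lemma \ref{l.res.eq.B} on fractional powers of the non-self-adjoint $B$.

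You instead split by linearity inside \eqref{BE.i-lin.sys.1} itself. The forcing piece is handled by $\|v\|_{H^s}\le\|v\|_{L^2}^{1-s}\|v\|_{H^1}^{s}$ together with $L^1_T\le T^{1/2}L^2_T$. The data piece is handled by the embedding $H^1_T(L^2\times H^1)\cap L^2_T(H^2\times H^3)\hookrightarrow C([0,T];H^1\times H^2)$.

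Comparing the two:
\begin{itemize}
\item Your route avoids the semigroup machinery and Lemma \ref{l.res.eq.B} entirely.
\item It also covers $s=1$. There the paper's choice of $T$ through the factor $T^{\frac{1-s}{2}}e^{\alpha T}$ does not produce smallness; one would instead need absolute continuity of $T\mapsto\|f(S_2,Q_0,Q_0)\|_{L^2_TL^2}$.
\item The paper's route stays in the Duhamel framework that it reuses for the $L^2_TL^2$ and global estimates.
\end{itemize}

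Both proofs need uniqueness for \eqref{BE.i-lin.sys.1} in $Y_T$; the paper uses it tacitly when it applies the a priori bounds to $w$. Your justification is sound. For a difference with zero data and zero forcing, only the zeroth-order identities \eqref{en-can.1-0} and \eqref{en-can.fg-0} are needed, and each requires a single integration by parts, which is legitimate at $Y_T$ regularity. Alternatively, you could bypass uniqueness by splitting at the level of the uniquely solvable approximate problems and extracting a common subsequence.

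One small point: for the zero-data piece you use Proposition \ref{p.res-unif.es.} with coefficient $W=Q_0$ but initial data $(0,0)$. Its proof covers this case verbatim, and the paper relies on the same extension for $w$.
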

\begin{proof}\hfill\\
\textbf{Step 1: Proof of \eqref{lim.un.es.YT} and \eqref{lim.un.es.0}.} By Proposition \ref{p.res-unif.es.} and Corollary \ref{c.unif-es.}, we know that there is $T>0$ such that, for any $\varepsilon>0$, the system \eqref{approx.lin.sys.2} admits a solution $(u_\varepsilon,Q_\varepsilon)\in Y_T(\Omega)$ with 
$$ \|(R_\varepsilon^Su_\varepsilon,R_\varepsilon^HQ_\varepsilon)\|_{Y_T(\Omega)} \le C(Q_0)\left[\|u_0\|_{H^1(\Omega)} + \|Q_0\|_{H^2(\Omega)} + \|\widetilde F\|_{L^2_TL^2} + \|\widetilde G\|_{L^2_TH^1}\right], $$
$$ \|(u_\varepsilon,Q_\varepsilon)\|_{Y_T(\Omega)} \le C(Q_0)\left[ \|u_0\|_{H^1(\Omega)} + \|Q_0\|_{H^2(\Omega)} + \|\widetilde F\|_{L^2_TL^2} + \|\widetilde G\|_{L^2_TH^1}\right]. $$
Let $\varepsilon_n>0$ such that $\varepsilon_n\searrow0$ as $n\to+\infty$. For simplicity, we denote
$$ (u_n,Q_n)=(u_{\varepsilon_n},Q_{\varepsilon_n}), \quad R_n^A=R_{\varepsilon_n}^A \quad \text{for}\:\:A=S,H. $$
Then, 
$$ (R_n^Su_n,R_n^HQ_n)\rightharpoonup (u,Q), $$
$$ (u_n,Q_n)\rightharpoonup (v,W) $$
in $L^2_T(H^2\times H^3)$, and
$$ (\partial_t u_n,\partial_t Q_n)\rightharpoonup(\partial_t v,\partial_t W), $$
in $L^2_T(L^2\times H^1)$. On the other hand, thanks to the self-adjointness of $R_n^S$ and $R_n^H$, it is easy to see that $(u,Q)=(v,W)$. We can also assume that 
$$ (u_n,Q_n)(t)\xrightharpoonup{H^1(\Omega)\times H^2(\Omega)} (u,Q)(t) \quad \text{for a.e.}\:\:t\in(0,T). $$
Since $Q_0\in H^2(\mathbb R^N_+)$ and thanks to the weak convergence of $(R_n^Su_n,R_n^HQ_n)$, it can be seen that 
$$ f(R_n^HQ_n,Q_0,Q_0)\rightharpoonup f(Q,Q_0,Q_0)\quad \text{in}\:\:L^2_TL^2, $$
$$ g(R_n^Su_n,Q_0,Q_0)\rightharpoonup g(u,Q_0,Q_0)\quad \text{in}\:\:L^2_TH^1. $$
As a consequence, it can be seen that $(u,Q)$ solves the linear system \eqref{BE.i-lin.sys.1} in $L^2_T(L^2\times L^2)$. Moreover, let $\alpha=0,1$, then by lower semi-continuity of the norms it holds
$$ \|(u,Q)\|_{L^2_T(H^{\alpha+1}\times H^{\alpha+2})}\le \liminf_{n\to+\infty} \|(u_n,Q_n)\|_{L^2_T(H^{\alpha+1}\times H^{\alpha+2})} $$
$$ \le C(Q_0)\left[\|(u_0,Q_0)\|_{H^\alpha(\Omega)\times H^{\alpha+1}(\Omega)} + \|(\widetilde F,\widetilde G)\|_{L^{\alpha+1}_T(L^2\times H^1)}\right], $$
$$ \|(u,Q)\|_{H^1_T(L^2\times H^1)}\le \liminf_{n\to+\infty} \|(u_n,Q_n)\|_{H^1_T(L^2\times H^1)} $$
$$ \le C(Q_0)\left[\|(u_0,Q_0)\|_{H^1(\Omega)\times H^2(\Omega)} + \|(\widetilde F,\widetilde G)\|_{L^2_T(L^2\times H^1)}\right], $$
and for a.e. $t\in(0,T)$
$$ \|(u,Q)(t)\|_{H^\alpha(\Omega)\times H^{\alpha+1}(\Omega)} $$
$$ \le \liminf_{n\to\infty}\|(u_{n},Q_{n})(t)\|_{H^\alpha(\Omega)\times H^{\alpha+1}(\Omega)}\le C(Q_0)\left[\|(u_0,Q_0)\|_{H^\alpha(\Omega)\times H^{\alpha+1}(\Omega)} + \|(\widetilde F,\widetilde G)\|_{L^{\alpha+1}_T(L^2\times H^1)}\right]. $$
In particular, the last inequality implies 
$$ \|(u,Q)\|_{L^\infty_T(H^\alpha\times H^{\alpha+1})}\le C(Q_0)\left[\|(u_0,Q_0)\|_{H^\alpha(\Omega)\times H^{\alpha+1}(\Omega)} + \|(\widetilde F,\widetilde G)\|_{L^2_T(L^2\times H^1)}\right], $$
which concludes the proof of \eqref{lim.un.es.YT} and \eqref{lim.un.es.0}. 

\bigskip

\noindent \textbf{Step 2: Proof of \eqref{lim.un.es.diff.}.} The function $w=(u,Q)-e^{Bt}(u_0,Q_0)$ solves the system
$$ \left\{\begin{array}{ll}
    (\partial_t-B)w= (f(Q,Q_0,Q_0) + \widetilde F,g(u,Q_0,Q_0) + \widetilde G) & (0,T)\times\Omega \\
    w(0)=(0,0) & \Omega,
\end{array}\right. $$
where $B=(B_1,B_2)$ is defined in \eqref{def.op.B}. Let us denote
$$ S(t)=(S_1(t),S_2(t))=e^{Bt}(u_0,Q_0)$$
We can write
$$ f(Q,Q_0,Q_0)= f(Q-S_2(t),Q_0,Q_0) + f(S_2(t),Q_0,Q_0),  $$
$$ g(u,Q_0,Q_0)= g(u-S_1(t),Q_0,Q_0) + g(S_1(t),Q_0,Q_0). $$
So 
$$ \left\{\begin{array}{ll}
    (\partial_t-B_1)w_1= f(w_2,Q_0,Q_0) + f(S_2(t),Q_0,Q_0) + \widetilde F & (0,T)\times \Omega \\
    (\partial_t-B_2)w_2= g(w_1,Q_0,Q_0) +  g(S_1(t),Q_0,Q_0) + \widetilde G & (0,T)\times\Omega \\
    w_1=0,\quad \partial_\nu w_2=0 & (0,T)\times\partial\Omega \\
    w(0)=(0,0) & \Omega.
\end{array}\right. $$
By \eqref{lim.un.es.0} and \eqref{lim.un.es.YT}
$$ \|w\|_{X^0_T(\Omega)\times X^1_T(\Omega)} $$
$$ \le C(Q_0)\left[\|\widetilde F + f(S_2(t),Q_0,Q_0)\|_{L^1_TL^2} + \|\widetilde G + g(S_1(t),Q_0,Q_0))\|_{L^1_TH^1}\right], $$
$$ \|w\|_{Y_T(\Omega)}\le C(Q_0)\left[\|\widetilde F + f(S_2(t),Q_0,Q_0)\|_{L^2_TL^2} + \|\widetilde G + g(S_1(t),Q_0,Q_0))\|_{L^2_TH^1}\right]. $$
So, interpolating the two inequalities we obtain
$$ \|w\|_{X^s_T(\Omega)\times X^{s+1}_T(\Omega)} $$
$$ \le C(Q_0)\left[\|\widetilde F + f(S_2(t),Q_0,Q_0)\|_{L^\frac{2}{2-s}_TL^2} + \|\widetilde G + g(S_1(t),Q_0,Q_0))\|_{L^\frac{2}{2-s}_TH^1}\right]  $$
$$ \le T^\frac{1-s}{2}C(Q_0)\left[\|\widetilde F + f(S_2(t),Q_0,Q_0)\|_{L^2_TL^2} + \|\widetilde G + g(S_1(t),Q_0,Q_0))\|_{L^2_TH^1}\right]  $$
for $s\in[0,1]$. On the other hand, by Lemma \ref{l.mult.loc.es.0} and Theorem \ref{t.sem.es.} it holds
$$ \|f(S_2(t),Q_0,Q_0)\|_{L^2_TL^2} + \|g(S_1(t),Q_0,Q_0))\|_{L^2_TH^1} $$
$$ \le C(Q_0)e^{\alpha T}\left(\|u_0\|_{H^1(\Omega)} + \|Q_0\|_{H^2(\Omega)}\right), $$
for some $\alpha>0$.
Let now $\delta>0$, then we can find $T>0$ sufficiently small such that 
$$ C(Q_0)^2T^\frac{1-s}{2}e^{\alpha T}\left(\|u_0\|_{H^1(\Omega)} + \|Q_0\|_{H^2(\Omega)}\right)\le \frac{\delta}{2}. $$
Finally, 
$$ \|(u,Q)(t)-(u_0,Q_0)\|_{H^s(\Omega)\times H^{s+1}(\Omega)} $$
$$ \le \|(u,Q)(t)-e^{Bt}(u_0,Q_0)\|_{H^s(\Omega)\times H^{s+1}(\Omega)} + \|e^{Bt}(u_0,Q_0) - (u_0,Q_0)\|_{H^s(\Omega)\times H^{s+1}(\Omega)}.  $$
We have just bounded the first term, so we focus on the second one. As mentioned in Theorem \ref{t.sem.es.}, the semigroup $e^{Bt}$ is $C_0$-analytic. Therefore, by Lemma \ref{l.res.eq.B}
$$ \|(u_0,Q_0)-e^{Bt}(u_0,Q_0)\|_{H^s(\Omega)\times H^{s+1}(\Omega)} $$
$$ \lesssim \|(1-B)^{s/2}(u_0,Q_0)-(1-B)^{s/2}e^{Bt}(u_0,Q_0)\|_{L^2(\Omega)\times H^1(\Omega)} $$
$$ =  \|(1-B)^{s/2}(u_0,Q_0)-e^{Bt}(1-B)^{s/2}(u_0,Q_0)\|_{L^2(\Omega)\times H^1(\Omega)}\xrightarrow{t\to0}0, $$
and for $T>0$ sufficiently small
$$ \|(u_0,Q_0)-e^{Bt}(u_0,Q_0)\|_{H^s(\Omega)\times H^{s+1}(\Omega)}\le \frac{\delta}{2}. $$
\end{proof}

\subsection{Proof of Theorem \ref{t.loc.ex.}}

Now that we have proved the estimate for the system \eqref{BE.i-lin.sys.1}, we can focus on the local well-posedness for the general $Q$-tensor model. We have already obtained some estimates for the functions $f(u,Q)$ and $g(u,Q)$. For $\widetilde F(u,Q)$ and $\widetilde G(u,Q)$ it holds the following:
\begin{lem}\label{l.mult.loc.es.1}
Let $\Omega=\mathbb R^N,\mathbb R^N_+$ with $N=2,3$, let $T>0$ and $k\in\mathbb N$, let $v_1,v_2\in X^1_T(\Omega)$ and $w_j\in X^2_T(\Omega)$ for $j=1,\ldots,k$, then it holds 
$$ \|v_1\nabla v_2\|_{L^2((0,T);L^2(\Omega))}\lesssim T^{1/4}\|v_1\|_{X^1_T(\Omega)}\|v_2\|_{X^1_T(\Omega)}, $$
$$ \|v_1\nabla v_2 w_1\cdots w_k\|_{L^2((0,T);L^2(\Omega))}\lesssim T^{1/4} \|v_1\|_{X^1_T(\Omega)}\|v_2\|_{X^1_T(\Omega)}\prod_{j=1}^k \|w_j\|_{X^2_T(\Omega)}, $$
$$ \|v_1w_1\cdots w_k\|_{L^2((0,T);L^2(\Omega))}\lesssim T^{1/2} \|v_1\|_{X^1_T(\Omega)}\prod_{j=1}^k\|w_j\|_{X^2_T(\Omega)}, $$
$$ \|w_1\cdots w_k\|_{L^2((0,T);L^2(\Omega))}\lesssim T^{1/2} \prod_{j=1}^k\|w_j\|_{X^2_T(\Omega)}. $$
In particular, for any $v_1,v_2\in X^1_T(\Omega)$ and $w_1,w_2\in X^2_T(\Omega)$ and for any $j=1,2$
$$ \|\widetilde F(v_j,w_j)\|_{L^2((0,T);L^2(\Omega))} + \|\widetilde G(v_j,w_j)\|_{L^2((0,T);H^1(\Omega))}\lesssim T^{1/4}\left(1+\|v_j\|_{X^1_T(\Omega)}^5 + \|w_j\|_{X^2_T(\Omega)}^5\right),  $$
$$ \|\widetilde F(v_1,w_1) - \widetilde F(v_2,w_2)\|_{L^2((0,T);L^2(\Omega))} + \|\widetilde G(v_1,w_1) - \widetilde G(v_2,w_2)\|_{L^2((0,T);H^1(\Omega))} $$
$$ \lesssim T^{1/4}\left(1+\|v_j\|_{X^1_T(\Omega)}^4 + \|w_j\|_{X^2_T(\Omega)}^4\right)\left[\|v_1-v_2\|_{X^1_T(\Omega)} + \|w_1-w_2\|_{X^2_T(\Omega)}\right]. $$
\end{lem}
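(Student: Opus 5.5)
The plan is to prove the four basic product estimates first, by H\"older in space and time plus Sobolev embeddings valid for $N=2,3$. I would then obtain the bounds on $\widetilde F,\widetilde G$ by listing their monomials and matching each one to one of the four estimates.

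For the first estimate, write $\|v_1\nabla v_2\|_{L^2}\le \|v_1\|_{L^6}\|\nabla v_2\|_{L^3}$. Use $H^1\hookrightarrow L^6$ for $v_1$. For the second factor use the interpolation $\|\nabla v_2\|_{L^3}\lesssim\|\nabla v_2\|_{L^2}^{1/2}\|\nabla v_2\|_{L^6}^{1/2}\lesssim \|v_2\|_{H^1}^{1/2}\|v_2\|_{H^2}^{1/2}$, exactly as in the proof of Lemma \ref{l.en-can.2}. Taking $L^2$ in time gives
$$\|v_1\|_{L^\infty_TH^1}\|v_2\|_{L^\infty_TH^1}^{1/2}\|v_2\|_{L^1_TH^2}^{1/2}\le T^{1/4}\|v_1\|_{X^1_T}\|v_2\|_{X^1_T}.$$
For the second estimate, additionally put each $w_j$ in $L^\infty_TL^\infty$ via $H^2\hookrightarrow L^\infty$ and use $\|w_j\|_{L^\infty_TH^2}\le\|w_j\|_{X^2_T}$. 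The third and fourth estimates are simpler. Bound $v_1$ in $L^\infty_TL^2$ (respectively one $w_j$ in $L^\infty_TL^2$) and the remaining factors in $L^\infty_TL^\infty$. This gives a bound in $L^\infty_TL^2$, and $\|\cdot\|_{L^2_T}\le T^{1/2}\|\cdot\|_{L^\infty_T}$ produces the factor $T^{1/2}$.

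Next I would catalogue $\widetilde F=F-f(Q,Q,Q)$ and $\widetilde G=G-g(u,Q,Q)$. In $\widetilde F$ the terms are $(u\cdot\nabla)u$, ${\rm Div}(\nabla Q\odot\nabla Q)$, and ${\rm Div}$ of products of $Q$, $Q+Id/N$ with the lower-order part $b\mathcal L[Q^2]-c|Q|^2Q$ of $\mathbb H$ and with $\mathcal L[\mathcal F(Q)]$. In $\widetilde G$ the terms are $(u\cdot\nabla)Q$ and $\mathcal L[\mathcal F(Q)]$. The terms with $u$ are of the form $v_1\nabla v_2$ or $v_1\nabla v_2w$. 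For ${\rm Div}(\nabla Q\odot\nabla Q)=\nabla^2Q\,\nabla Q$, apply the first estimate with $v_1=\nabla Q$, $v_2=\nabla Q\in X^1_T$. The ${\rm Div}$ of polynomial terms in $Q$ is of type $\nabla Q\,Q\cdots Q$, which fits the third estimate with $v_1=\nabla Q$. For $\widetilde G$ one also needs an $H^1$ norm, so I would differentiate once. For instance, $\nabla((u\cdot\nabla)Q)=\nabla u\nabla Q+u\nabla^2Q$. The first piece is $v_1\nabla v_2$ with $v_1=\nabla Q$, $v_2=u$. The second is bounded by $\|u\|_{L^6}\|\nabla^2Q\|_{L^3}$, where interpolation gives $T^{1/4}$ again with $v_2=\nabla Q$. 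The derivatives of $\mathcal L[\mathcal F(Q)]$ are of type $\nabla Q\,Q^k$, handled by the third estimate. Counting the degrees, the highest are $|Q|^2Q$ multiplied by $Q\,Q$ inside $F$, i.e. degree five. Absorbing the constant $T^{1/2}\le T^{1/4}$ (since $T<1$) and Young's inequality then give the bound $1+\|v\|^5+\|w\|^5$.

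For the difference estimate I would telescope each multilinear monomial, $\prod a_i-\prod b_i=\sum (\cdots)(a_i-b_i)(\cdots)$, so that exactly one factor becomes a difference. Each resulting term has the same structure, so the four basic estimates apply verbatim, with the difference sitting in $X^1_T$ or $X^2_T$ according to its slot. The constant multiplying the difference has degree at most four; I read $\|v_j\|,\|w_j\|$ there as summed over $j=1,2$.

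The main obstacle I expect is the bookkeeping that every term of $\widetilde F$ involving $\nabla^2Q$, and every term of $\nabla\widetilde G$, carries at most one derivative beyond what $X^1_T$ or $X^2_T$ controls in $L^\infty_T$. This is what lets the $L^3$-interpolation extract a positive power of $T$. In particular I must check that no third derivatives of $Q$ remain in $\widetilde F$, which holds because the $\Delta Q$ part of $\mathbb H$ has been moved into $f$. The one thing to verify carefully is that $\nabla Q\odot\nabla Q$ gives only $\nabla^2Q\,\nabla Q$.
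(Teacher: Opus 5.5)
Your proposal is correct and follows the paper's route, Hölder plus Sobolev embeddings, which the paper simply asserts without detail. Your $L^2$--$L^6$ interpolation of the $L^3$ norm (as in the proof of Lemma \ref{l.en-can.2}) and your catalogue $\widetilde G=-(u\cdot\nabla)Q+\mathcal L[\mathcal F(Q)]$, with no $\nabla^3 Q$ left in $\widetilde F$, supply the omitted details; your readings that $T<1$ (so $T^{1/2}\le T^{1/4}$) and that the degree-four factor in the difference estimate involves both $j=1,2$ are the right ones.
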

To prove these estimates, it is sufficient to apply H\"older and Sobolev inequalities. We are finally ready to prove Theorem \ref{t.loc.ex.}:
\begin{proof}[Proof of Theorem \ref{t.loc.ex.}]\hfill\\
We consider the map $\Phi\colon (z,V)\mapsto (u,Q)$, where $(u,Q)$ solves the system
$$ \left\{\begin{array}{ll}
    (\partial_t-\mathbb P\Delta)u +\beta\mathbb P{\rm Div}(\Delta-a)Q - \mathbb Pf(Q,Q_0,Q_0) = \mathbb P\widetilde f(V,Q_0) + \mathbb P\widetilde F(z,V)  & (0,T)\times\Omega \\
    {\rm div}u=0 & (0,T)\times\Omega \\
    (\partial_t+a-\Delta)Q - \beta D(u) -  g(u,Q_0,Q_0)= \widetilde g(z,V,Q_0) + \widetilde G(z,V) & (0,T)\times \Omega \\
    u=0, \quad \partial_\nu Q=0 & (0,T)\times\partial\Omega \\
    u(0)=u_0,\quad Q(0)=Q_0 & \Omega,
\end{array}\right. $$
where
$$ \widetilde f(V,Q_0)={\rm Div}\left[2\xi(\Delta-a)V\colon(V-Q_0)\left(V+\frac{Id}{N}\right) + 2\xi(\Delta-a)V\colon Q_0(V-Q_0)\right] - $$
$$ - {\rm Div}\left[(\xi+1)(\Delta-a)V(V-Q_0) + (1-\xi)(V-Q_0)(\Delta-a)V\right], $$
$$ \widetilde g(z,V,Q_0) = \xi\left[D(z)(V-Q_0) + (V-Q_0)D(z)\right] + W(z)(V-Q_0) - (V-Q_0)W(z)- $$
$$ - 2\xi(V-Q_0)V\colon \nabla z - 2\xi\left(Q_0+\frac{Id}{N}\right)(V-Q_0)\colon\nabla z. $$
It can be seen that 
$$ f(Q,Q_0,Q_0)+\widetilde f(Q,Q_0)=f(Q,Q,Q), $$
$$ g(u,Q_0,Q_0) + \widetilde g(u,Q,Q_0)=g(u,Q,Q), $$
So, if we find a fixed point for the map $\Phi$, we have a solution for the $Q$-tensor system. We know from Proposition \ref{p.ex.i-lin.sys.} that such $(u,Q)$ exists if
$$ \widetilde f(V,Q_0) + \widetilde{F}(z,V)\in L^2((0,T);L^2(\Omega;\mathbb R^N)), $$
$$ \widetilde g(z,V,Q_0) + \widetilde{G}(z,V)\in L^2((0,T);H^1(\Omega;\mathbb R^N)). $$
Following the same approach of Lemma \ref{l.mult.loc.es.0} and thanks to Lemma \ref{l.mult.loc.es.1}, it can be seen that the previous statement is true for $(z,V)\in Y_T(\Omega)$. More precisely, if we call
$$ Z=\{(z,V)\in Y_T(\Omega)\mid z(0)=u_0,\quad V(0)=Q_0\}, $$
then $\Phi\colon Z\to Z$ is well-defined and we have 
\begin{equation}\label{proof.loc.ex.1}
    \begin{aligned}
        \|(u,Q)\|_{Y_T(\Omega)}\le C(Q_0)\left[\|u_0\|_{H^1(\Omega)} + \|Q_0\|_{H^2(\Omega)} + T^\gamma \left(1+\|(z,V)\|_{Y_T(\Omega)}^5\right)\right] + \\
    +  C(Q_0)\|V-Q_0\|_{L^\infty_TH^{s+1}}\|(z,V)\|_{Y_T(\Omega)}\left(\|(z,V)\|_{Y_T(\Omega)} + \|Q_0\|_{H^2(\Omega)}\right),
    \end{aligned}
\end{equation}
for some $\gamma>0$ and $s\in(1/2,1)$ and where we recall
$$ C(Q_0)\sim 1+\|Q_0\|_{H^2(\Omega)}^2. $$
Let us define now 
$$ Z_{\omega,\delta}=\{(z,V)\in Z\mid \|(z,V)\|_{Y_T(\Omega)}\le \omega,\quad \|(z,V)-(u_0,Q_0)\|_{L^\infty_T(H^s\times H^{s+1})}\le \delta\}. $$
We want to prove that $\Phi\colon Z_{\omega,\delta}\to Z_{\omega,\delta}$. We already know from \eqref{proof.loc.ex.1} that there is $M>0$ such that  
$$ \|(u,Q)\|_{Y_T(\Omega)}\le M\left(1+\|Q_0\|_{H^2(\Omega)}^2\right)\left[\|u_0\|_{H^1(\Omega)} + \|Q_0\|_{H^2(\Omega)}\right] + $$
$$ + M\left(1+\|Q_0\|_{H^2(\Omega)}^2\right)\left[T^\gamma \left(1+\omega^5\right) + \delta\omega\left(\omega + \|Q_0\|_{H^2(\Omega)}\right)\right]. $$
So, if we take $\omega$ such that
$$ M\left(1+\|Q_0\|_{H^2(\Omega)}^2\right)\left(\|u_0\|_{H^1(\Omega)} + \|Q_0\|_{H^2(\Omega)}\right)=\frac{\omega}{3} $$
and we take $\delta,T>0$ sufficiently small such that
$$ M\left(1+\|Q_0\|_{H^2(\Omega)}^2\right)T^\gamma \left(1+\omega^5\right)\le \frac{\omega}{3}, $$
$$ M\delta\left(1+\|Q_0\|_{H^2(\Omega)}^2\right)\left(\omega + \|Q_0\|_{H^2(\Omega)}\right)\le \frac{1}{3}, $$
we obtain
$$ \|\Phi(z,V)\|_{Y_T(\Omega)}\le \omega. $$
Moreover, from Proposition \ref{p.ex.i-lin.sys.} and the estimates of Lemmas \ref{l.mult.loc.es.0} and \ref{l.mult.loc.es.1}, for $T>0$ sufficiently small it holds
$$ \|\Phi(z,V)(t)-(u_0,Q_0)\|_{H^s\times H^{s+1}(\Omega)}   $$
$$ \lesssim \frac{\delta}{2} + T^\frac{1-s}{2}C(Q_0)\|\widetilde f(V,Q_0) + \widetilde{F}(z,V)\|_{L^2_TL^2}  + T^\frac{1-s}{2}C(Q_0)\|\widetilde g(z,V,Q_0) +  \widetilde{G}(z,V)\|_{L^2_TH^1} $$
$$ \lesssim \frac{\delta}{2} + T^\frac{1-s}{2}C(Q_0)\left[T^\gamma \left(1+\omega^5\right) + \delta\omega\left(\omega + \|Q_0\|_{H^2(\Omega)}\right)\right]. $$
Since we chose $s\in(1/2,1)$, we can find $T>0$ sufficiently small such that
$$ \|\Phi(z,V)(t)-(u_0,Q_0)\|_{H^s\times H^{s+1}(\Omega)}\le \delta $$
and therefore $\Phi\colon Z_{\omega,\delta}\to Z_{\omega,\delta}$. Let us consider now $(z_1,V_1),(z_2,V_2)\in Z_{\omega,\delta}$. Clearly
$$ \| V_1-V_2\|_{L^\infty_TH^{s+1}}\le \| V_1-Q_0\|_{L^\infty_TH^{s+1}} + \| V_2-Q_0\|_{L^\infty_TH^{s+1}} \le 2\delta. $$
So, similarly to what we did previously, it holds
\begin{equation}\label{proof.nl.loc.es}
\begin{aligned}
    &\|\Phi(z_1,V_1)-\Phi(z_2,V_2)\|_{Y_T(\Omega)} \\
    \le C(Q_0) \left[T^\gamma \left(1+\omega^4\right)\right.&\left. + \delta\left(\omega + \|Q_0\|_{H^2(\Omega)}\right)\right]\|(z_1,V_1)-(z_2,V_2)\|_{Y_T(\Omega)}.
\end{aligned}
\end{equation}
Therefore, for $T,\delta>0$ sufficiently small, $\Phi\colon Z_{\omega,\delta}\to Z_{\omega,\delta}$ is a contraction and we get the existence of a local solution in $Y_T(\Omega)$. The existence of the pressure $p$ comes from Remark \ref{rem.press.}.

Finally, it remains to prove the uniqueness of the solution: let us suppose $(u_1,Q_1)$ and $(u_2,Q_2)$ be two solutions in $Y_T(\Omega)$ and let
$$ R=\max\{\|(u_1,Q_1)\|_{Y_T(\Omega)},\|(u_2,Q_2)\|_{Y_T(\Omega)}\}. $$
We want to apply the estimate \eqref{proof.nl.loc.es} but, in order to do so, we need to prove that 
$$ \|(u_j,Q_j)-(u_0,Q_0)\|_{L^\infty_T(H^s\times H^{s+1})}\le \delta \quad j=1,2. $$
As before, thanks to Proposition \ref{p.ex.i-lin.sys.}, we have for $j=1,2$ 
$$ \|(u_j,Q_j)-(u_0,Q_0)\|_{L^\infty_T(H^s\times H^{s+1})} $$
$$ \lesssim \frac{\delta}{2} + T^\frac{1-s}{2}C(Q_0)\left[T^\gamma \left(1+R^5\right) + \|(u_j,Q_j)-(u_0,Q_0)\|_{L^\infty_T(H^s\times H^{s+1})}R\left(R + \|Q_0\|_{H^2(\Omega)}\right)\right]. $$
So for $T_0=T_0(R,Q_0)>0$ sufficiently small
$$ \|(u_j,Q_j)-(u_0,Q_0)\|_{L^\infty_{T_0}(H^s\times H^{s+1})}\le \delta \quad j=1,2. $$
Finally, from \eqref{proof.nl.loc.es}
$$ \|(u_1,Q_1)-(u_2,Q_2)\|_{Y_{T_0}(\Omega)} $$
$$ \le C(Q_0) \left[T_0^\gamma \left(1+R^4\right) + \delta\left(R + \|Q_0\|_{H^2(\Omega)}\right)\right]\|(u_1,Q_1)-(u_2,Q_2)\|_{Y_{T_0}(\Omega)}. $$
So, for $T_0>0$ sufficiently small, we get that $(u_1,Q_1)=(u_2,Q_2)$ for a.e. $(t,x)\in (0,T_0)\times\Omega$. In order to conclude, it is sufficient to notice that we can repeat the argument with a different starting time: in fact, the choice of $T_0$ depends on $Q_0$ but only due to its presence in the nonlinear terms $f$ and $g$. Therefore, if we consider the same problem in $(t_0,+\infty)$ with $t_0=T_0-\varepsilon$, we can prove as before that $(u_1,Q_1)=(u_2,Q_2)$ for a.e. $(t,x)\in(0,2T_0-\varepsilon)\times\Omega$. Therefore, in a finite number of steps we conclude $(u_1,Q_1)=(u_2,Q_2)$ for a.e. $(t,x)\in(0,T)\times\Omega$.  
\end{proof}
As a corollary of the local existence result and the $L^p-L^q$ maximal regularity inequality from Theorem \ref{t.max.reg.}, we have Corollary \ref{c.loc.ex.pq.}:
\begin{proof}[Proof of Corollary \ref{c.loc.ex.pq.}]\hfill\\
Thanks to Theorem \ref{t.max.reg.} and the local well-posedness, it is sufficient to prove that 
$$ \|F(u,Q)\|_{L^p_TL^q} + \|G(u,Q)\|_{L^p_TW^{1,q}}\lesssim C(\|(u,Q)\|_{Y_T(\Omega)})<\infty.  $$
In order to do so, we need to repeat the inequalities of Lemmas \ref{l.mult.loc.es.0} and \ref{l.mult.loc.es.1} in the $L^p_TL^q$ setting. We show the details only of
$$ \|\nabla v w\|_{L^p_TW^{1,q}}\le C(\|v\|_{X^1_T(\Omega)}, \|w\|_{X^2_T(\Omega)}), \quad k\in\mathbb N, $$
for $v\in X^1_T(\Omega)$ and $w\in X^2_T(\Omega)$. Let us start from the $L^p_TL^q$-norm: let $r>2$ such that 
$$ r=\left\{\begin{array}{ll}
    \frac{2q}{2-q} & q\in(1,2) \\
    \infty & q=2.
\end{array}\right. $$
Then by H\"older inequality
$$ \|\nabla vw\|_{L^p_TL^q}\le \left\|\|\nabla v\|_{L^2(\Omega)}\|w\|_{L^r(\Omega)}\right\|_{L^p((0,T))} $$
$$ \lesssim \|\nabla v\|_{L^\infty_TL^2}\|w\|_{L^p_TH^2}\le T^\frac{2-p}{2p}\|v\|_{X^1_T(\Omega)}\|w\|_{X^2_T(\Omega)}, $$
which is bounded, since $p\le 2$. The estimate of 
$$ \|\nabla(\nabla v w)\|_{L^p_TL^q}\le \|\nabla^2v\cdot w\|_{L^p_TL^q} + \|\nabla v \cdot \nabla w\|_{L^p_TL^q}. $$
is similar to the previous one. Therefore
$$ \|f(Q)\|_{L^p_TL^q} + \|g(u,Q)\|_{L^p_TW^{1,q}}\lesssim C(\|(u,Q)\|_{Y_T(\Omega)})  $$
and by Theorem \ref{t.max.reg.} we conclude.
\end{proof}

\section{Global well-posedness}\label{sec.global}
\subsection{Linear estimates}

Let us focus now on global in time solutions. As mentioned in the introduction, the global well-posedness result is restricted to the case $N=3$. For the global well-posedness, it is sufficient to prove an a priori estimate on the linear system to proceed with the contraction argument. In particular, we firstly consider
\begin{equation}\label{BE.lin.sys.gl.}
    \left\{\begin{array}{ll}
        (\partial_t-\Delta)u+\nabla \pi+\beta {\rm Div}(\Delta-a)Q= F  & \mathbb R_+\times \Omega \\
       (\partial_t-\Delta+a)Q-\beta D(u)=G  & \mathbb R_+\times \Omega \\
       {\rm div} u=0 & \mathbb R_+\times \Omega \\
       u=0,\quad \partial_\nu Q=0 & \mathbb R_+\times\partial\Omega \\
       u(0)=u_0,\quad Q(0)=Q_0 & \Omega,
\end{array}\right. 
\end{equation}
The technique we use is again the energy method. As we have already seen in the proof of Proposition \ref{p.res-unif.es.}, the estimate of the $L^2L^2$-norm of $u$ has to be done separately, with a different approach. For this reason, we first consider the auxiliary space for $u$ and $Q$
\begin{equation}\label{def.Xtilde}
 \widetilde X^1(\Omega)=\left\{u\in L^\infty\left(\mathbb R_+;H^1\left(\Omega;\mathbb R^3\right)\right)\:\Big|\:\nabla u\in L^2\left(\mathbb R_+;H^1\left(\Omega;\mathbb R^{3\times 3}\right)\right)\right\}   
\end{equation}
\begin{equation}\label{def.Ytilde}
    \widetilde Y(\Omega)=\widetilde X^1(\Omega)\times X^2(\Omega),
\end{equation}
where $X^2$ is defined in \eqref{def.X} with $T=\infty$. We note that $\widetilde Y(\Omega)$ is a Banach space endowed with the norm
$$ \|(u,Q)\|_{\widetilde Y(\Omega)}=\|u\|_{L^\infty H^1} + \|\nabla u\|_{L^2 H^1} + \|Q\|_{X^2(\Omega)}. $$
\begin{prop}\label{p.lin.es.gl.}
Let $\Omega=\mathbb R^3,\mathbb R^3_+$, let $a>0$ and $\beta\in\mathbb R$, let 
$$ u_0\in H^1_{\mathbb P\Delta_D}\left(\Omega;\mathbb R^3\right), \quad Q_0\in H^2_{\Delta_N}\left(\Omega;S_0(3,\mathbb R)\right), $$
$$ F\in L^2\left(\mathbb R_+; L^2\left(\Omega;\mathbb R^3\right)\right)\cap L^1\left(\mathbb R_+; L^2\left(\Omega;\mathbb R^3\right)\right), $$
$$ G\in L^2\left(\mathbb R_+;H^1\left(\Omega;S_0(3,\mathbb R)\right)\right)\cap L^1\left(\mathbb R_+;L^2\left(\Omega;S_0(3,\mathbb R)\right)\right), $$
then the system \eqref{BE.lin.sys.gl.} admits a solution $(u,\pi,Q)$, unique up to additive functions $c(t)$ on the pressure term, with $\pi(t)\in L^1_{loc}(\Omega)$ for a.e. $t>0$ such that 
$$ (u,Q)\in \widetilde Y(\Omega), \quad \nabla \pi\in L^2\left(\mathbb R_+;L^2\left(\Omega;\mathbb R^3\right)\right), $$
which satisfies 
$$ \|(u,Q)\|_{\widetilde Y(\Omega)} + \|\nabla \pi\|_{L^2(\mathbb R_+;L^2(\Omega))} $$
$$ \lesssim \|u_0\|_{H^1(\Omega)} + \|Q_0\|_{H^2(\Omega)} + \|(F,G)\|_{L^2(\mathbb R_+;L^2(\Omega)\times H^1(\Omega))} + \|(F,G)\|_{L^1(\mathbb R_+;L^2(\Omega)\times L^2(\Omega))}, $$
where we recall the definition of $\widetilde Y(\Omega)$ and its norm from \eqref{def.Ytilde}.
\end{prop}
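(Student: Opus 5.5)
We plan to prove Proposition \ref{p.lin.es.gl.} by combining existence from the maximal regularity theory with a global-in-time energy estimate, mirroring Steps 1--2 of the proof of Proposition \ref{p.res-unif.es.} but now on the whole half-line and without the nonlinear terms $f,g$.

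\emph{Existence.} First we obtain a solution on every bounded interval $(0,T)$. We apply Theorem \ref{t.max.reg.} with $p=q=2$, which is admissible also for $\Omega=\mathbb R^3_+$ since $\frac22+\frac12<2$ fails. This forces us to instead invoke Theorem \ref{t.EL-lin.es.} together with a Picard iteration on short intervals, treating the coupling terms $-\beta{\rm Div}(\Delta-a)Q$ and $\beta D(u)$ through the Yosida regularization exactly as in Proposition \ref{p.loc.ex-approx.}. Since the problem is linear, the local time step does not depend on the data, so iteration gives a solution on any $(0,T)$. If the energy bound below holds uniformly in $T$, the solutions on different intervals coincide by uniqueness (the estimate applied to differences with zero data), so they glue to a global solution in $\widetilde Y(\Omega)$. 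Alternatively, we can run the regularized scheme \eqref{approx.lin.sys.0} with the nonlinearities removed, derive the bound below uniformly in $\varepsilon$ and $T$, and pass to the weak limit as in Proposition \ref{p.ex.i-lin.sys.}. The pressure and the bound on $\nabla\pi$ then follow from Remark \ref{rem.press.}, since $\|\nabla\pi\|_{L^2L^2}$ is controlled by $\|F\|_{L^2L^2}$, $\|{\rm Div}(\Delta-a)Q\|_{L^2L^2}\lesssim\|Q\|_{L^2H^3}$ and $\|\nabla^2u\|_{L^2L^2}$.

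\emph{Energy estimates.} At the zeroth level we test the $u$-equation with $u$ and the $Q$-equation with $(a-\Delta)Q$. By \eqref{en-can.1-0} the coupling cancels exactly, and Lemma \ref{l.en.met-LHS1}, Lemma \ref{l.en.met-LHS2} (with $\varepsilon\to0$, or on the regularized level) give
$$\|u\|_{L^\infty L^2}^2+\|\nabla u\|_{L^2L^2}^2+\|Q\|_{L^\infty H^1}^2+\|Q\|_{L^2H^2}^2\lesssim \|u_0\|_{L^2}^2+\|Q_0\|_{H^1}^2+\|F\|_{L^1L^2}\|u\|_{L^\infty L^2}+\Big|\int\!\!\int G\colon(a-\Delta)Q\Big|.$$
The last term is the delicate one, because $G$ is only in $L^1L^2$, not $L^1H^1$. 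We bound it by $\|G\|_{L^2H^1}\|Q\|_{L^2H^1}$, which is absorbed thanks to the $a$-damping that gives $\|Q\|_{L^2H^2}$ on the left. For the same reason, we control the $u$-forcing via $L^1L^2$ against $L^\infty L^2$; here $N=3$ is not even needed.

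At the first level we test with $-\partial_i^2u$ and $\partial_i(a-\Delta)Q$. For $i<N$ we use \eqref{en-can.1-i}, which cancels. For $i=N$ we use \eqref{en-can.1-N}, whose error terms $\|\nabla'\nabla u\|_{L^2L^2}\|Q\|_{X^2}+\|\nabla'Q\|_{L^2H^2}\|\nabla u\|_{X^0}$ are controlled by Young's inequality: the tangential quantities were already bounded by the $i<N$ estimate. The forcing is bounded by $\|F\|_{L^2L^2}\|\nabla^2u\|_{L^2L^2}+\|G\|_{L^2H^1}\|Q\|_{L^2H^3}$. Combining the two levels, first tangential then normal as in \eqref{proof.unif.es.tang.}--\eqref{proof.unif.es.trans.}, now with no $T^\gamma$ terms, yields the $\widetilde Y$ bound with constants independent of time.

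\emph{Main obstacle.} The hardest part is justifying the energy identities globally while keeping all constants time-independent. In particular, Theorem \ref{t.EL-lin.es.} degenerates in $T$, so it can only be used qualitatively. We also have to treat the $L^1$ part of the data carefully, so that no $\|u\|_{L^2L^2}$ appears on the right-hand side; this norm is deliberately absent from $\widetilde Y$. We would first try to do everything at the Yosida-regularized level, where all the integrations by parts are legitimate by Lemma \ref{l.res.prop.} and Remark \ref{rem.trace-R_eps.}.
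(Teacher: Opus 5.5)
Your proposal is correct and follows essentially the paper's argument. You obtain existence on bounded intervals (the paper likewise proceeds as in Proposition~\ref{p.ex.i-lin.sys.}). You then derive the zeroth- and first-order energy estimates using the coupling cancellations of Lemma~\ref{l.en-can.1}, treating tangential before normal derivatives with constants independent of $T$, and extend the solution to $\mathbb R_+$.

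Two small remarks. First, $\frac22+\frac12=\frac32<2$, so $p=q=2$ \emph{is} admissible in Theorem~\ref{t.max.reg.} and gives existence on any bounded interval directly; your fallback through the regularized scheme and the weak $\varepsilon$-limit also works. Second, your bound of $\int\!\!\int G\colon(a-\Delta)Q$ by $\|G\|_{L^2H^1}\|Q\|_{L^2H^1}$, absorbed by the $a$-damping, is a valid variant of the paper's $\|G\|_{L^1L^2}\|Q\|_{L^\infty H^2}$, and it does not even need $G\in L^1L^2$.
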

\begin{proof}\hfill\\
Similarly to Proposition \ref{p.ex.i-lin.sys.}, we can prove that for $T>0$ sufficiently small there is a solution for the system \eqref{BE.lin.sys.gl.}, with 
$$ (u,Q)\in Y_T(\Omega), \quad \nabla \pi\in L^2((0,T);L^2(\Omega;\mathbb R^N)) $$
that satisfies the estimate
$$ \|(u,Q)\|_{Y_T(\Omega)} + \|\nabla \pi\|_{L^2_TL^2}\le C(T)\left[\|u_0\|_{H^1(\Omega)} + \|Q_0\|_{H^2(\Omega)} + \|f\|_{L^2_TL^2} + \|g\|_{L^2_TH^1}\right].  $$
In particular, $C(T)\lesssim 1$ if $T\le 1$. Our aim is to find an estimate for $(u,Q)$ that does not depend on $T$. In this way, we will be able to extend our solution for any $t>0$ in the proper norm. Since $(u,Q)$ is a solution in $(0,T)$, we can apply the energy method as before. We recall from Lemma \ref{l.res.limit} that 
$$ R_\varepsilon^Su\xrightarrow{\varepsilon\to 0^+}u\quad \text{in}\quad H^2(\Omega) $$
$$ R_\varepsilon^HQ\xrightarrow{\varepsilon\to0^+}Q\quad \text{in}\quad H^3(\Omega). $$
So, up to a density argument, we apply Lemma \ref{l.en-can.1}, Lemma \ref{l.en.met-LHS1} and Lemma \ref{l.en.met-LHS2} to obtain
\begin{equation}\label{proof.lin.gl.es.1}
    \|u\|_{L^\infty_TL^2}^2 + \|\nabla u\|_{L^2_TL^2}^2 +  \|Q\|_{X^1_T(\Omega)}^2 \lesssim \|u_0\|_{L^2(\Omega)}^2 + \|Q_0\|_{H^1(\Omega)}^2 + \|(F,G)\|_{L^1_TL^2}^2,
\end{equation}
\begin{equation}\label{proof.lin.gl.es.2}
    \begin{aligned}
        \|\nabla^\prime u&\|_{L^\infty_TL^2}^2 + \|\nabla \nabla^\prime u\|_{L^2_TL^2}^2 +  \|\nabla^\prime Q\|_{L^\infty_TH^1}^2 + \|\nabla^\prime Q\|_{L^2_TH^2}^2 \\
        &\lesssim \|u_0\|_{H^1(\Omega)}^2 + \|Q_0\|_{H^2(\Omega)}^2 + \|(F,\nabla G)\|_{L^2_TL^2}^2,
    \end{aligned}
\end{equation}
\begin{equation}\label{proof.lin.gl.es.3}
\begin{aligned}
     \|\partial_3u&\|_{L^\infty_TL^2}^2 + \|\nabla \partial_3u\|_{L^2_TL^2}^2 +  \|\partial_3Q\|_{L^\infty_TH^1}^2 + \|\partial_3Q\|_{L^2_TH^2}^2\\ 
    & \lesssim \|u_0\|_{H^1(\Omega)}^2 + \|Q_0\|_{H^2(\Omega)}^2 +  \|(F,\nabla G)\|_{L^2_TL^2}^2 + \\
     + &\|\nabla^\prime\nabla u\|_{L^2_TL^2}\|Q\|_{X^2_T(\Omega)} + \|\nabla^\prime Q\|_{L^2_TH^2}\|\nabla u\|_{X^0_T(\Omega)},
\end{aligned}
\end{equation}
where we used that
$$ \left|\int_0^t\int_\Omega F\cdot u dxd\tau\right|\le\|F\|_{L^1_TL^2}\|u\|_{L^\infty_TL^2}, $$
$$ \left|\int_0^t\int_\Omega F\cdot \partial_i^2u dxd\tau\right|\le \|F\|_{L^2_TL^2}\|\partial_i^2u\|_{L^2_TL^2}\quad i=1,2,3 $$
$$ \left|\int_0^t\int_\Omega G\colon (a-\Delta)Q dxd\tau\right|\le \|G\|_{L^1_TL^2}\|Q\|_{L^\infty_TH^2}, $$
$$ \left|\int_0^t\int_\Omega \partial_iG\colon \partial_i(a-\Delta)Q dxd\tau\right|\le \|\partial_i G\|_{L^2_TL^2}\|\partial_iQ\|_{L^2_TH^2}\quad i=1,2,3. $$
To be noticed that the constants in the right hand side of the inequalities \eqref{proof.lin.gl.es.1}-\eqref{proof.lin.gl.es.3} does not depend on $T$. In particular,  combining \eqref{proof.lin.gl.es.1}, \eqref{proof.lin.gl.es.2} and \eqref{proof.lin.gl.es.3}, we get
\begin{equation}\label{proof.lin.gl.es.5}
\begin{aligned}
    &\|u\|_{L^\infty_TH^1}^2 + \|\nabla u\|_{L^2_TH^1}^2 +  \|Q\|_{L^\infty_TH^2}^2 + \|Q\|_{L^2_TH^3}^2\\
    \le C&\left[\|u_0\|_{H^1(\Omega)}^2 + \|Q_0\|_{H^2(\Omega)}^2 +  \|(F,\nabla G)\|_{L^2_TL^2}^2 +  \|(F,G)\|_{L^1_TL^2}^2\right],
\end{aligned}
\end{equation}
for some $C>0$ independent of $T$. Therefore, since 
$$ F\in L^1(\mathbb R_+;L^2(\Omega;\mathbb R^N))\cap L^2(\mathbb R_+;L^2(\Omega;\mathbb R^N)), $$
$$ G\in L^1(\mathbb R_+;L^2(\Omega;S_0(N,\mathbb R)))\cap L^2(\mathbb R_+;H^1(\Omega;S_0(N,\mathbb R))), $$
the solution $(u,\pi,Q)$ can be extended to $\mathbb R_+$ and taking the limit as $T\to+\infty$ on the inequality \eqref{proof.lin.gl.es.5} we conclude. 
\end{proof}
As anticipated, we now focus on the $L^2L^2$-norm of $u$. To control this quantity, it becomes fundamental the decay in time of the semigroup corresponding to the linear $Q$-tensor system. When $\Omega=\mathbb R^3$ the decay of the semigroup has already been studied in \cite{MS22} for inhomogeneous Sobolev spaces (see Theorem 3.1 of \cite{MS22}) 
\begin{lem}\label{l.sem.dec.es.RN}
Let $N\ge 3$, let $p,q\in(1,\infty)$ with $1<p<2\le q<\infty$, let $B$ as defined in \eqref{def.op.B} and 
$$ (u_0,Q_0)\in L^p\left(\mathbb R^N;\mathbb R^N\right)\times W^{1,p}\left(\mathbb R^N;S_0(N,\mathbb R)\right), $$
then 
$$ \left\|\nabla^j e^{B t}(u_0,Q_0)\right\|_{L^q(\mathbb R^N)\times W^{1,q}(\mathbb R^N)} $$
$$ \lesssim t^{-\frac{j}{2}-\frac{N}{2}\left(\frac{1}{p}-\frac{1}{q}\right)}\left[\|(u_0,Q_0)\|_{L^p(\mathbb R^N)\times W^{1,p}(\mathbb R^N)} + \|(u_0,Q_0)\|_{L^q(\mathbb R^N)\times W^{1,q}(\mathbb R^N)}\right]\quad j=0,1,2. $$
\end{lem}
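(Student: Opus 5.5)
The plan is to work on the Fourier side, since $\Omega=\mathbb R^N$ removes the pressure and makes the linear system an explicit matrix multiplier. Applying the Helmholtz projection, $u$ satisfies $\partial_t u-\Delta u+\beta\mathbb P{\rm Div}(\Delta-a)Q=0$. Since $\mathbb P=\mathcal F^{-1}[I-\xi\xi^T/|\xi|^2]\mathcal F$, the pair $(\hat u,\hat Q)(t,\xi)$ solves a finite-dimensional ODE $\partial_t(\hat u,\hat Q)=-\mathcal M(\xi)(\hat u,\hat Q)$. Here $\mathcal M(\xi)$ has entries homogeneous in $|\xi|$ plus the zeroth-order term $a$: the coupling is of order $|\xi|(|\xi|^2+a)$ from $u$ to $Q$ and of order $|\xi|$ from $Q$ to $u$. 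I would symmetrize by working with $(\hat u,(|\xi|^2+a)^{1/2}\hat Q)$, because the energy identity of Lemma \ref{l.en-can.1} shows that the coupling part is then skew. This gives
$$ \frac{d}{dt}\Big(|\hat u|^2+(|\xi|^2+a)|\hat Q|^2\Big)\le -2|\xi|^2|\hat u|^2-2(|\xi|^2+a)^2|\hat Q|^2 . $$
The pointwise bounds that follow are
$$ |\hat u(t,\xi)|+(1+|\xi|)|\hat Q(t,\xi)|\lesssim e^{-c|\xi|^2t}\big(|\hat u_0|+(1+|\xi|)|\hat Q_0|\big) $$
for all $\xi$, and for the $Q$-component additionally $e^{-c(|\xi|^2+a)t}$, since $Q$ is damped by $a$ even at low frequencies. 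This is precisely the Fourier form of the $L^2$ energy estimate \eqref{proof.lin.gl.es.1}. It yields the case $p=q=2$ and also, by Plancherel, the $L^1\to L^\infty$ type bounds needed for interpolation.

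To obtain the $L^p\to L^q$ estimate I would split frequencies with a smooth cutoff $\varphi_0(\xi)$ supported in $|\xi|\le 2$ and $\varphi_\infty=1-\varphi_0$.

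\emph{High frequencies.} On the support of $\varphi_\infty$ the multiplier decays like $e^{-ct}e^{-c|\xi|^2t}$. Using the resolvent bounds of Theorem 2.3 of \cite{MS22}, i.e.\ analyticity and exponential stability of $e^{Bt}$ on $L^q\times W^{1,q}$ restricted to high frequencies, I would get
$$ \|\nabla^j e^{Bt}\varphi_\infty(D)(u_0,Q_0)\|_{L^q\times W^{1,q}}\lesssim e^{-ct}t^{-j/2}\|(u_0,Q_0)\|_{L^q\times W^{1,q}} . $$
This is dominated by $t^{-j/2-\frac N2(\frac1p-\frac1q)}$ for $t\ge1$. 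For $t<1$ it is also dominated, since the exponent on $t$ is then only more singular and $t^{-j/2}\le t^{-j/2-\frac N2(\frac1p-\frac1q)}$. This accounts for the $L^q$ term on the right-hand side.

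\emph{Low frequencies.} For $|\xi|\le2$ I would diagonalize $\mathcal M(\xi)$ perturbatively in $|\xi|$. The $u$-block has eigenvalues $|\xi|^2+O(|\xi|^4)$, with smooth spectral projections obtained via the Dunford integral over a small circle, because its spectrum is separated from the $Q$-block eigenvalues near $a$. The $Q$-block is exponentially decaying. The $u$-part is then $e^{-t|\xi|^2(1+O(|\xi|^2))}$ times a symbol of order zero that is smooth away from $0$ and homogeneous-like. By the standard heat-kernel argument (Mikhlin, together with $L^p\to L^q$ bounds for $\mathcal F^{-1}[\varphi_0|\xi|^je^{-c|\xi|^2t}m(\xi)]$) one gets the decay $t^{-j/2-\frac N2(\frac1p-\frac1q)}$ with the $L^p$ norm of the data. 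The derivative on $Q$ in the $W^{1,q}$ norm is harmless at low frequency.

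The main obstacle is the low-frequency spectral analysis. One must verify that the coupling terms, of size $|\xi|$ times $(|\xi|^2+a)$, do not destroy the heat-like behaviour of the $u$-eigenvalue. One must also show that the resulting projections have symbols whose derivatives satisfy $|\partial^\alpha m|\lesssim|\xi|^{-|\alpha|}$, which is needed for the Fourier multiplier estimates. I would first attempt this with a Schur-complement computation, $\lambda_u(\xi)=|\xi|^2+\beta^2|\xi|^2(|\xi|^2+a)\cdot\frac{|\xi|^2}{a}\cdot(\ldots)$. This shows the correction is $O(|\xi|^4)$ and hence the eigenvalue stays comparable to $|\xi|^2$. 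Since \cite{MS22} carries out exactly this analysis in its Theorem 3.1, I would in the end cite that computation for the detailed symbol bounds.
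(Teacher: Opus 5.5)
\textbf{Relation to the paper.} The paper gives no proof of this lemma; it quotes Theorem 3.1 of \cite{MS22}. If you cite that theorem for the whole lemma, you are doing exactly what the paper does.

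\textbf{Problems in your sketch.} The argument you build around the citation has one step that fails and two false intermediate claims. Take $\xi=re_N$. After the projection, the only coupled unknowns are $\hat u_j$ and $\hat Q_{jN}$, $j<N$; the remaining entries of $\hat Q$ just decay like $e^{-(r^2+a)t}$. With the paper's normalisation these coupled unknowns evolve by $\partial_t=-\mathcal M$, where
\[
\mathcal M=\begin{pmatrix} r^2 & -i\beta r(r^2+a)\\ -\tfrac{i\beta}{2}\,r & r^2+a\end{pmatrix},\qquad \lambda_\pm(r)=r^2+\tfrac12\Bigl(a\pm\sqrt{a^2-2\beta^2r^2(r^2+a)}\Bigr).
\]

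First, the two eigenvalues coincide at $r_*^2=\frac a2\bigl(\sqrt{1+2/\beta^2}-1\bigr)$, where $\mathcal M$ is a nontrivial Jordan block. For example $r_*\approx0.6$ when $a=\beta=1$, and $r_*\to0$ as $a\to0$ or $|\beta|\to\infty$. So the spectral separation you assert on the whole support $|\xi|\le2$ of $\varphi_0$ is false in general, and the Dunford projections blow up at $r_*$. Perturbative diagonalization only works on $|\xi|\le\epsilon_0(a,\beta)$. The band $\epsilon_0\le|\xi|\le R$ must be treated without projections, as part of the exponentially decaying piece.

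Second, the Schur-complement correction is $\frac{\beta^2}{2a}|\xi|^2(|\xi|^2+a)$, which is of order $|\xi|^2$, not $|\xi|^4$; the factor $|\xi|^2/a$ in your formula is spurious. Hence $\lambda_-=(1+\frac{\beta^2}{2})|\xi|^2+O(|\xi|^4)$. Your conclusion $\lambda_-\gtrsim|\xi|^2$ still holds, and it follows directly from your energy identity.

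Third, $\hat Q$ is not damped like $e^{-c(|\xi|^2+a)t}$ at low frequencies. The slow eigenvector has $\hat Q_{jN}\approx\frac{i\beta|\xi|}{2a}\hat u_j$, so for data $(u_0,0)$ the $Q$-component decays only like $|\xi|e^{-\lambda_-t}$. Only the fast spectral component is exponentially damped.

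\textbf{A simpler route in the stated range.} For $1<p<2\le q$ you need neither the low-frequency spectral analysis you call the main obstacle nor any $L^q$ resolvent theory. Your identity $E(t,\xi)\le e^{-2|\xi|^2t}E(0,\xi)$, with $E=|\hat u|^2+(|\xi|^2+a)|\hat Q|^2$, already suffices. Write $U_0=(u_0,Q_0)$.

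On $|\xi|\le2$ the identity gives $|\hat u(t)|+|\hat Q(t)|\lesssim e^{-|\xi|^2t}(|\hat u_0|+|\hat Q_0|)$. Hausdorff--Young (this is where $p\le2\le q$ enters) and H\"older with $\frac1r=\frac1p-\frac1q$ then yield
\[
\bigl\|\nabla^j\varphi_0(D)e^{Bt}U_0\bigr\|_{L^q\times W^{1,q}}\lesssim\bigl\||\xi|^je^{-|\xi|^2t}\bigr\|_{L^r}\,\|\hat U_0\|_{L^{p'}}\lesssim t^{-\frac j2-\frac N2\left(\frac1p-\frac1q\right)}\|U_0\|_{L^p\times L^p}.
\]
This needs no diagonalization and no Mikhlin bounds.

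On $|\xi|\ge1$ the same identity gives, for some $c>0$,
\[
\|\nabla^j\varphi_\infty(D)e^{Bt}U_0\|_{H^s\times H^{s+1}}\lesssim t^{-\frac{j+s}{2}}e^{-ct}\,\|U_0\|_{L^2\times H^1}.
\]
Choose $s=N(\frac12-\frac1q)$ and use the embedding $H^s\hookrightarrow L^q$. The factor $t^{-\frac{j+s}{2}}e^{-ct}$ is dominated by $t^{-\frac j2-\frac N2(\frac1p-\frac1q)}$ for every $t>0$: for $t\le1$ because $\frac s2\le\frac N2(\frac1p-\frac1q)$, and for $t\ge1$ because of the exponential. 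Finally, interpolation gives $\|U_0\|_{L^2\times H^1}\lesssim\|U_0\|_{L^p\times W^{1,p}}+\|U_0\|_{L^q\times W^{1,q}}$. So this piece is also bounded by the right-hand side of the lemma.

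This is a self-contained proof of exactly the stated estimate. A finer symbol analysis of the kind you outline would only be needed for exponent pairs outside $p\le2\le q$.
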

When $\Omega=\mathbb R^3_+$, we can imply the decay estimate from the work \cite{BMS25}:
\begin{lem}\label{l.sem.dec.es.hs}
Let $N\ge 2$, let $1<p<q<\infty$, let $B$ as defined in \eqref{def.op.B} and 
$$ (u_0,Q_0)\in L^p\left(\mathbb R^N_+;\mathbb R^N\right)\times \dot H^1_p\left(\mathbb R^N_+;S_0(N,\mathbb R)\right), $$
then 
$$ \left\|\nabla^j e^{B t}(u_0,Q_0)\right\|_{L^q(\mathbb R^N_+)\times \dot H^1_q(\mathbb R^N_+)}\lesssim t^{-\frac{j}{2}-\frac{N}{2}\left(\frac{1}{p}-\frac{1}{q}\right)}\|(u_0,Q_0)\|_{L^p(\mathbb R^N_+)\times \dot H^1_p(\mathbb R^N_+)}\quad j=0,1. $$
\end{lem}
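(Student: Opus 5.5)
The statement is Lemma \ref{l.sem.dec.es.hs}. I would derive it directly from the $L^p$--$L^q$ decay estimates in homogeneous Sobolev spaces established for the half-space semigroup in \cite{BMS25}, rather than reproving them. The plan follows the standard route for analytic semigroups: resolvent representation, low/high frequency splitting, then interpolation.

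\textbf{Step 1: base-space estimates.} I will write
$$ e^{Bt}(u_0,Q_0)=\frac{1}{2\pi i}\int_\Gamma e^{\lambda t}(\lambda-B)^{-1}(u_0,Q_0)\,d\lambda $$
on a sectorial contour $\Gamma$. I then use the resolvent estimates of Theorem 1.2.3 of \cite{BM24}, in the homogeneous form available in \cite{BMS25}, in the scale $L^q(\mathbb R^N_+)\times \dot H^1_q(\mathbb R^N_+)$. The structure of the base space is important here: the $Q$-component is measured in $\dot H^1_q$, so the coupling term $\beta D(u)$ in the $Q$ equation and $\beta{\rm Div}(\Delta-a)Q$ in the $u$ equation are of the same order. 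This gives the $t$-uniform bound
$$ \|e^{Bt}(u_0,Q_0)\|_{L^q\times \dot H^1_q}\lesssim \|(u_0,Q_0)\|_{L^q\times\dot H^1_q}, $$
together with the smoothing bound
$$ \|\nabla e^{Bt}(u_0,Q_0)\|\lesssim t^{-1/2}\|(u_0,Q_0)\|, $$
both in the same space.

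\textbf{Step 2: the $L^p\to L^q$ gain.} The $Q$-equation carries the damping $+a$, so its low-frequency part decays exponentially. The velocity, at low frequency, behaves like the Stokes semigroup with Dirichlet condition on $\mathbb R^N_+$. I will split the solution formula into a low-frequency part, $|\lambda|$ small, and a high-frequency part. For the low-frequency part I will compare with the Stokes resolvent kernel, whose half-space $L^p$--$L^q$ estimates (Ukai's formula, or Borchers--Miyakawa) yield the factor $t^{-\frac N2(\frac1p-\frac1q)}$. The high-frequency part decays exponentially in $t$ for $t\ge1$. For $t\le 1$ I will instead use the smoothing estimates of Step 1 combined with the Sobolev embedding $\dot H^{\theta}_p\hookrightarrow L^q$, applying it to both $u$ and $\nabla Q$.

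\textbf{Step 3: the case $j=1$.} I obtain it by writing
$$ e^{Bt}=e^{Bt/2}e^{Bt/2}, $$
applying the $j=0$ estimate from $L^p$ to $L^q$ on one factor and the $t^{-1/2}$ gradient bound in $L^q$ on the other. This is where the restriction to $j=0,1$ comes from: a second derivative bound at the $\dot H^1_q$ level for $Q$ would require control of $\nabla^3 Q$, which homogeneous maximal regularity does not give uniformly.

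\textbf{Main obstacle.} I expect the hardest step to be the low-frequency analysis in Step 2. On $\mathbb R^N_+$ the coupled resolvent has a boundary part involving Lopatinskii-type determinants, and I must check that no extra singularity at $\lambda=0$ destroys the heat-type rate. My first attempt will be to quote the explicit symbol bounds from \cite{BMS25}: those bounds show that the boundary multipliers are dominated by those of the Stokes problem, uniformly as $\lambda\to0$. I will then conclude via the standard multiplier lemma (Shibata's technical lemma) on the half-space.
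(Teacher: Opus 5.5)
Your plan does not cover the full range $1<p<q<\infty$ claimed in the lemma. The only mechanism you give for the $L^p\to L^q$ gain at short times is the first-order smoothing of Step 1 combined with $\dot H^\theta_p\hookrightarrow L^q$, $\theta=N(\frac1p-\frac1q)$. Since you only have $\|\nabla e^{Bt}\|\lesssim t^{-1/2}$, interpolation gives at most $\theta\le 1$. So Step 2 for $t\le 1$ works only when $N(\frac1p-\frac1q)\le 1$; it fails, e.g., for $N=3$, $p$ close to $1$ and $q$ large. Your high-frequency part for $t\ge 1$ has the same defect: exponential decay in $L^p$ does not by itself map $L^p$ into $L^q$, so you again need smoothing plus embedding, with the same restriction. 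Quoting \cite{BMS25} directly, as your first sentence suggests, does not remove the restriction either: Lemma 4.3 there gives the bound only when $\frac N2(\frac1{r_1}-\frac1{r_2})\le 1$.

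The missing idea, which is in fact the whole content of the paper's proof, is to bootstrap a restricted-range estimate through the semigroup law:
\begin{itemize}
\item pick $m\in\mathbb N$ so that $\frac1m\cdot\frac N2(\frac1p-\frac1q)$ lies in the admissible range;
\item set $\frac1{r_k}=\frac{m-k}{m}\frac1p+\frac km\frac1q$ and write $e^{Bt}=(e^{Bt/m})^m$;
\item apply the restricted bound to each consecutive pair $(r_{k-1},r_k)$.
\end{itemize}
The exponents telescope to $\frac N2(\frac1p-\frac1q)$, and the factor $m^{\frac j2+\frac N2(\frac1p-\frac1q)}$ is absorbed into the constant. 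For $j=1$ you put the gradient bound on the last factor. This is exactly your Step 3 splitting $e^{Bt}=e^{Bt/2}e^{Bt/2}$. Given the uniform $L^q$ smoothing of Step 1, that step is correct, and it is more explicit than the paper's remark that the gradient case follows ``with the same strategy''.

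A smaller point: in Step 2, the $Q$-component does not decay exponentially at low frequency when $\beta\neq 0$. It is forced by $\beta D(u)$, so along the slow mode $Q\approx\frac{\beta}{a}D(u)$. Substituting back gives $\beta\,{\rm Div}(\Delta-a)Q\approx-\frac{\beta^2}{2}\Delta u$. The low-frequency comparison problem is therefore the Dirichlet Stokes problem with viscosity $1+\frac{\beta^2}{2}$, with $Q$ slaved to $\nabla u$. This still gives the heat-type rate, but you cannot treat $Q$ as decoupled.

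Finally, the low-frequency boundary analysis you flag as the main obstacle is not redone in the paper at all; it is taken from \cite{BMS25}. The work specific to this lemma is only the exponent iteration above.
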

\begin{proof}\hfill\\
Let $v_0=(u_0,Q_0)$. Lemma 4.3 in \cite{BMS25} yields
$$ \|e^{Bt}v_0\|_{L^{r_2}(\mathbb R^N_+)\times \dot H^1_{r_2}(\mathbb R^N_+)}
\lesssim
t^{-\frac N2\left(\frac1{r_1}-\frac1{r_2}\right)}
\|v_0\|_{L^{r_1}(\mathbb R^N_+)\times \dot H^1_{r_1}(\mathbb R^N_+)}, $$
whenever $\frac N2\left(\frac1{r_1}-\frac1{r_2}\right)\le 1$. With the same strategy, it can be proved that
\begin{equation}\label{eq:decay-step}
\|\nabla^j e^{Bt}v_0\|_{L^{r_2}(\mathbb R^N_+)\times \dot H^1_{r_2}(\mathbb R^N_+)}
\lesssim
t^{-\frac j2-\frac N2\left(\frac1{r_1}-\frac1{r_2}\right)}
\|v_0\|_{L^{r_1}(\mathbb R^N_+)\times \dot H^1_{r_1}(\mathbb R^N_+)},
\end{equation}
for $\frac N2\left(\frac1{r_1}-\frac1{r_2}\right)\le 1$ and $j=0,1$. Let now $1<p<q<\infty$ be arbitrary and let $m\in\mathbb N$ such that
$$ \frac N2\Bigl(\frac1p-\frac1q\Bigr)\le m. $$
We define intermediate exponents $\{r_k\}_{k=0}^m$ by
\[
r_0=p,\qquad r_m=q,\qquad 
\frac1{r_k}=\frac{m-k}{m}\frac1p+\frac{k}{m}\frac1q\quad \text{for}\:\: k=0,\dots,m.
\]
Then for each $k=1,\dots,m$,
\[
\frac N2\Bigl(\frac1{r_{k-1}}-\frac1{r_k}\Bigr)
=\frac1m\,\frac N2\Bigl(\frac1p-\frac1q\Bigr)\le 1,
\]
so that \eqref{eq:decay-step} applies to every consecutive pair $(r_{k-1},r_k)$. Using the semigroup property $e^{Bt}=(e^{B t/m})^m$ and iterating \eqref{eq:decay-step} $m$ times, we obtain for $j=0,1$,
\begin{align*}
\|\nabla^j e^{Bt}(v_0)\|_{L^{q}(\mathbb R^N_+)\times \dot H^1_{q}(\mathbb R^N_+)}
&=
\Bigl\|\nabla^j \bigl(e^{B t/m}\bigr)^m (v_0)\Bigr\|_{L^{r_m}(\mathbb R^N_+)\times \dot H^1_{r_m}(\mathbb R^N_+)}\\
&\lesssim 
\left(\frac{t}{m}\right)^{-\frac j2-\sum_{k=1}^m\frac N2\left(\frac1{r_{k-1}}-\frac1{r_k}\right)}
\|v_0\|_{L^{r_0}(\mathbb R^N_+)\times \dot H^1_{r_0}(\mathbb R^N_+)}\\
&=
\left(\frac{t}{m}\right)^{-\frac j2-\frac N2\left(\frac1p-\frac1q\right)}
\|v_0\|_{L^{p}(\mathbb R^N_+)\times \dot H^1_{p}(\mathbb R^N_+)}.
\end{align*}
Absorbing the factor $m^{\frac j2+\frac N2(\frac1p-\frac1q)}$ into the implicit constant we conclude.
\end{proof}
We are finally ready to state the full linear estimate for the system \eqref{BE.lin.sys.gl.}:
\begin{prop}\label{p.lin.es.gl.2}
Let $\Omega=\mathbb R^3,\mathbb R^3_+$, let $a>0$ and $\beta\in\mathbb R$, let $q\in \left(1,\frac{6}{5}\right)$ and 
$$ u_0\in H^1_{\mathbb P\Delta_D}\left(\Omega;\mathbb R^3\right)\cap L^q\left(\Omega;\mathbb R^3\right), $$
$$ Q_0\in H^2_{\Delta_N}\left(\Omega;S_0(3,\mathbb R)\right)\cap W^{1,q}\left(\Omega;S_0(3,\mathbb R)\right), $$
$$ F\in L^2\left(\mathbb R_+;L^2\left(\Omega;\mathbb R^3\right)\right)\cap L^1\left(\mathbb R_+;L^2\left(\Omega;\mathbb R^3\right)\cap L^q\left(\Omega;\mathbb R^3\right)\right), $$
$$ G\in L^2\left(\mathbb R_+;H^1\left(\Omega;S_0(3,\mathbb R)\right)\right)\cap L^1\left(\mathbb R_+;H^1\left(\Omega;S_0(3,\mathbb R)\right)\cap W^{1,q}\left(\Omega;S_0(3,\mathbb R)\right)\right), $$
then the system \eqref{BE.lin.sys.gl.} admits a solution $(u,\pi,Q)$, unique up to additive functions $c(t)$ on the pressure term, with $\pi(t)\in L^1_{loc}(\Omega)$ for a.e. $t>0$ such that 
$$ (u,Q)\in  Y(\Omega), \quad \nabla \pi\in L^2\left(\mathbb R_+;L^2\left(\Omega;\mathbb R^3\right)\right), $$
that satisfies 
$$ \|(u,Q)\|_{Y(\Omega)} + \|\nabla \pi\|_{L^2(\mathbb R_+;L^2(\Omega))}\lesssim \|u_0\|_{L^2(\Omega)\cap L^q(\Omega)} + \|Q_0\|_{H^1(\Omega)\cap W^{1,q}(\Omega)} +  $$
$$ +  \|(F,G)\|_{L^2(\mathbb R_+;L^2(\Omega)\times H^1(\Omega))} + \|(F,G)\|_{L^1(\mathbb R_+;L^2(\Omega)\times H^1(\Omega))}  + \|(F,G)\|_{L^1(\mathbb R_+;L^q(\Omega)\times W^{1,q}(\Omega))}. $$
\end{prop}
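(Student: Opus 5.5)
Proposition \ref{p.lin.es.gl.} already gives a global solution $(u,\pi,Q)$ with $(u,Q)\in\widetilde Y(\Omega)$ and $\nabla\pi\in L^2(\mathbb R_+;L^2(\Omega))$. The hypotheses here are stronger, so the solution exists and is unique. Comparing $\widetilde Y(\Omega)$ with $Y(\Omega)$, two quantities remain to be controlled: $\|u\|_{L^2(\mathbb R_+;L^2(\Omega))}$, and the time derivatives $\|\partial_t u\|_{L^2L^2}+\|\partial_t Q\|_{L^2H^1}$.

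The time derivatives come directly from the equations once the spatial norms are bounded. Apply $\mathbb P$ to the first equation:
$$\partial_t u=\mathbb P\Delta u-\beta\mathbb P{\rm Div}(\Delta-a)Q+\mathbb PF,$$
so $\|\partial_t u\|_{L^2L^2}\lesssim\|\nabla u\|_{L^2H^1}+\|Q\|_{L^2H^3}+\|F\|_{L^2L^2}$. Similarly, $\|\partial_tQ\|_{L^2H^1}\lesssim\|Q\|_{L^2H^3}+\|\nabla u\|_{L^2H^1}+\|G\|_{L^2H^1}$. Both right-hand sides are bounded by Proposition \ref{p.lin.es.gl.}, since the $L^1(\mathbb R_+;L^2\times L^2)$ norm of $(F,G)$ there is dominated by the $L^1(\mathbb R_+;L^2\times H^1)$ norm here. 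The remaining task is the $L^2L^2$ bound for $u$, which in the local case was obtained through the Duhamel formula (Step 3 of Proposition \ref{p.res-unif.es.}).

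For this bound, write Duhamel's formula
$$(u,Q)(t)=e^{Bt}(u_0,Q_0)+\int_0^t e^{B(t-\tau)}(F,G)(\tau)\,d\tau$$
and split time at $t=1$. On $(0,1)$, Theorem \ref{t.sem.es.} gives uniform boundedness of $e^{Bt}$ on $L^2\times H^1$, so $\|u\|_{L^2((0,1);L^2)}\lesssim\|u_0\|_{L^2}+\|Q_0\|_{H^1}+\|(F,G)\|_{L^1(L^2\times H^1)}$. For $t\ge1$, use the decay estimates with $p=q$ (the exponent of the statement) and target exponent $2$: Lemma \ref{l.sem.dec.es.RN} in $\mathbb R^3$, Lemma \ref{l.sem.dec.es.hs} in $\mathbb R^3_+$. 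With $j=0$,
$$\|e^{Bt}(u_0,Q_0)\|_{L^2\times H^1}\lesssim t^{-\frac32(\frac1q-\frac12)}\left[\|(u_0,Q_0)\|_{L^q\times W^{1,q}}+\|(u_0,Q_0)\|_{L^2\times H^1}\right].$$
The exponent $\sigma=\frac32(\frac1q-\frac12)$ exceeds $\frac12$ exactly when $q<\frac65$, which explains that restriction: $t^{-\sigma}\in L^2(1,\infty)$. In the half space only the homogeneous $\dot H^1$ norm of $Q$ appears, but only the $u$ component is needed, and $W^{1,q}$ controls $\dot H^1_q$.

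The Duhamel term is the main obstacle, because the decay kernel is singular at $t=\tau$. Bound the semigroup kernel by $\min\{1,(t-\tau)^{-\sigma}\}$, which uses the uniform $L^2\times H^1$ bound of Theorem \ref{t.sem.es.} near the diagonal and the decay estimate away from it. Then
$$\|u(t)\|_{L^2}\lesssim\int_0^t\min\{1,(t-\tau)^{-\sigma}\}\,h(\tau)\,d\tau,\qquad h=\|(F,G)\|_{(L^2\cap L^q)\times(H^1\cap W^{1,q})}.$$
Young's inequality in time gives $\|u\|_{L^2(\mathbb R_+;L^2)}\lesssim\|\min\{1,s^{-\sigma}\}\|_{L^2(\mathbb R_+)}\|h\|_{L^1(\mathbb R_+)}$, and the first factor is finite because $\sigma>\frac12$. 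This yields exactly the $L^1$ norms of $(F,G)$ listed in the statement. Two points need care: the decay lemmas require $(F,G)(\tau)$ in $L^2\times H^1$ as well as $L^q\times W^{1,q}$ (the mixed-norm form of Lemma \ref{l.sem.dec.es.RN}), and the $F$ component must be passed through $\mathbb P$, which is bounded on $L^q$ and preserves the solenoidal structure. Adding this bound to the estimate of Proposition \ref{p.lin.es.gl.} and the time-derivative bounds gives the full $Y(\Omega)$ estimate. The pressure bound is inherited from Proposition \ref{p.lin.es.gl.}.
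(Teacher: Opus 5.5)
Your proposal is correct and follows the paper's proof. Existence comes from Proposition \ref{p.lin.es.gl.}, and the $L^2L^2$ bound on $u$ comes from Duhamel's formula: the uniform bound of Theorem \ref{t.sem.es.} is used near the diagonal and the decay lemmas away from it (your kernel $\min\{1,(t-\tau)^{-\sigma}\}$ is exactly the paper's split at $\tau=t-1$), and Young's inequality with $q<\frac65$ closes it; your recovery of $\partial_t u,\partial_t Q$ from the equations makes explicit a step the paper leaves implicit.

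Both your argument and the paper's actually produce $\|u_0\|_{H^1}+\|Q_0\|_{H^2}$ (inherited from Proposition \ref{p.lin.es.gl.}) on the right-hand side, not $\|u_0\|_{L^2}+\|Q_0\|_{H^1}$. This is unavoidable, because the $L^\infty H^1$ part of the $Y$-norm dominates $\|u_0\|_{H^1}$, so the displayed estimate should be read with those stronger norms, as in the proof of Theorem \ref{t.gl.ex.2}.
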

\begin{proof}\hfill\\
Let $(u,\pi,Q)$ from Proposition \ref{p.lin.es.gl.}. Since $(u,Q)$ solves the system \eqref{BE.lin.sys.gl.}, it satisfies the Duhamel Formula
$$ (u,Q)(t)=e^{Bt}(u_0,Q_0) + \int_0^t e^{B(t-\tau)}(F,G)(\tau)d\tau. $$
In particular
\begin{equation}\label{Duhamel.aux.}
    u(t)=\left[e^{Bt}(u_0,Q_0)\right]_1 + \int_0^t\left[e^{B(t-\tau)}(F,G)(\tau)\right]_1d\tau,
\end{equation}
when $[\cdot]_j$ denote the $j$-th component for $j=1,2$. As in the proof of Proposition \ref{p.res-unif.es.}, we can control the $L^2L^2$-norm in the time interval $(0,1)$:
$$ \|u\|_{L^2_1L^2}\le \|(u_0,Q_0)\|_{L^2(\Omega)\times H^1(\Omega)} + \|F\|_{L^1L^2} + \|G\|_{L^1H^1}. $$
We can then focus on the case $t\ge 1$. Thanks to Lemma \ref{l.sem.dec.es.RN} and \ref{l.sem.dec.es.hs}, we know that 
$$ \left\|[e^{Bt}(u_0,Q_0)]_1\right\|_{L^2(\Omega)}\lesssim t^{-\frac{3}{2}\left(\frac{1}{q}-\frac{1}{2}\right)}\left[\|u_0\|_{L^2(\Omega)\cap L^q(\Omega)} + \|Q_0\|_{H^1(\Omega)\cap W^{1,q}(\Omega)}\right]. $$
So, since $q<\frac{6}{5}$, the first term of \eqref{Duhamel.aux.} belongs to $L^2L^2$. Concerning the inhomogeneous term
$$ \int_0^t\left[e^{B(t-\tau)}(F,G)(\tau)\right]_1d\tau= \int_0^{t-1}\left[e^{B(t-\tau)}(F,G)(\tau)\right]_1d\tau + \int_{t-1}^t\left[e^{B(t-\tau)}(F,G)(\tau)\right]_1d\tau. $$
Thanks to Lemma \ref{l.sem.dec.es.RN} and \ref{l.sem.dec.es.hs}
$$ \left\|\int_0^{t-1}\left[e^{B(t-\tau)}(F,G)(\tau)\right]_1d\tau \right\|_{L^2(\Omega)} $$
$$ \lesssim \int_0^{t-1}(t-\tau)^{-\frac{3}{2}\left(\frac{1}{q}-\frac{1}{2}\right)}\left[\|F(\tau)\|_{L^2(\Omega)\cap L^q(\Omega)} + \|G(\tau)\|_{H^1(\Omega)\cap  W^{1,q}(\Omega)}\right]d\tau $$
$$ \le \int_{\mathbb R}(t-\tau)^{-\frac{3}{2}\left(\frac{1}{q}-\frac{1}{2}\right)}1_{(1,\infty)}(t-\tau)\left[\|F(\tau)\|_{L^2(\Omega)\cap L^q(\Omega)} + \|G(\tau)\|_{H^1(\Omega)\cap  W^{1,q}(\Omega)}\right]d\tau, $$
while, applying Theorem \ref{t.sem.es.} 
$$ \left\|\int_{t-1}^t\left[e^{B(t-\tau)}(F,G)(\tau)\right]_1d\tau \right\|_{L^2(\Omega)}\lesssim \int_{t-1}^t\|(F,G)(\tau)\|_{L^2(\Omega)\times H^1(\Omega)}d\tau $$
$$ = \int_{\mathbb R}1_{(0,1)}(t-\tau)\|(F,G)(\tau)\|_{L^2(\Omega)\times H^1(\Omega)}d\tau. $$
Therefore, by Young's inequality 
$$ \left\|\int_0^{t-1}\left[e^{B(t-\tau)}(F,G)(\tau)\right]_1d\tau \right\|_{L^2((1,\infty);L^2(\Omega))} $$
$$ \lesssim \|t^{-\frac{3}{2}\left(\frac{1}{q}-\frac{1}{2}\right)}\|_{L^2((1,\infty))}\left[\|F\|_{L^1(L^2\cap L^q)} + \|G\|_{L^1(H^1\cap W^{1,q})}\right], $$
$$ \left\|\int_{t-1}^t\left[e^{B(t-\tau)}(F,G)(\tau)\right]_1d\tau \right\|_{L^2((1,\infty);L^2(\Omega))}\lesssim \|(F,G)\|_{L^1(L^2\times H^1)}. $$
Again, thanks to the choice of $q$ we conclude. 
\end{proof}

\subsection{Proof of Theorem \ref{t.gl.ex.2}}

To apply Proposition \ref{p.lin.es.gl.2}, we need to verify that 
$$ F(u,Q)\in L^2L^2\cap L^1(L^2\cap L^q), $$
$$ G(u,Q)\in L^2H^1\cap L^1(H^1\cap W^{1,q}), $$
for some $q\in\left(1,\frac{6}{5}\right)$. 
\begin{lem}\label{l.nonlin.es.gl.}
Let $\Omega=\mathbb R^3,\mathbb R^3_+$,let $k\in\mathbb N$, let $v_1,v_2\in X^1(\Omega)$ and $w_j\in X^2(\Omega)$ for $j=1,\ldots,k$, then it holds 
$$ \|v_1\nabla v_2\|_{L^2(\mathbb R_+;L^2(\Omega))}\lesssim \|v_1\|_{X^1(\Omega)}\|v_2\|_{ X^1(\Omega)}, $$
$$ \|v_1\nabla v_2 w_1\cdots w_k\|_{L^2(\mathbb R_+;L^2(\Omega))}\lesssim \|v_1\|_{X^1(\Omega)}\|v_2\|_{X^1(\Omega)}\prod_{j=1}^k \|w_j\|_{X^2(\Omega)}, $$
$$ \|v_1w_1\cdots w_k\|_{L^2(\mathbb R_+;L^2(\Omega))}\lesssim \|v_1\|_{X^1(\Omega)}\prod_{j=1}^k\|w_j\|_{X^2(\Omega)}, $$
$$ \|w_1\cdots w_k\|_{L^2(\mathbb R_+;L^2(\Omega))}\lesssim \prod_{j=1}^k\|w_j\|_{X^2(\Omega)}. $$
\end{lem}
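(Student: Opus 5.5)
The plan is to prove each inequality pointwise in time, using Hölder in space and three-dimensional Sobolev embeddings, and then integrate in time. Unlike Lemma \ref{l.mult.loc.es.1}, no factor of $T$ is available, so every estimate must place exactly one factor in an $L^2$-in-time norm and the rest in $L^\infty$-in-time norms. Here $X^1(\Omega)=L^\infty H^1\cap L^2H^2$ and $X^2(\Omega)=L^\infty H^2\cap L^2H^3$ (with $T=\infty$). In dimension three we will use the embeddings $H^2(\Omega)\hookrightarrow L^\infty(\Omega)$, $H^1(\Omega)\hookrightarrow L^p(\Omega)$ for $p\in[2,6]$ and $H^2(\Omega)\hookrightarrow W^{1,6}(\Omega)$. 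These hold on $\mathbb R^3_+$ via a standard extension operator.

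\textbf{First inequality.} We estimate $\|v_1\nabla v_2\|_{L^2(\Omega)}\le\|v_1\|_{L^\infty(\Omega)}\|\nabla v_2\|_{L^2(\Omega)}\lesssim\|v_1\|_{H^2(\Omega)}\|v_2\|_{H^1(\Omega)}$. Taking the $L^2$ norm in time gives the bound $\|v_1\|_{L^2H^2}\|v_2\|_{L^\infty H^1}$. Symmetrically, one could use $\|v_1\|_{L^6}\|\nabla v_2\|_{L^3}$ and interpolation; either way the result is $\lesssim\|v_1\|_{X^1}\|v_2\|_{X^1}$.

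\textbf{Second inequality.} We put every $w_j$ in $L^\infty_tL^\infty_x$, using $\|w_j\|_{L^\infty L^\infty}\lesssim\|w_j\|_{L^\infty H^2}\le\|w_j\|_{X^2}$, and then apply the first inequality to $v_1\nabla v_2$.

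\textbf{Third inequality.} For $k\ge1$ we bound $\|v_1w_1\cdots w_k\|_{L^2}\le\|v_1\|_{L^2}\|w_1\|_{L^\infty}\prod_{j\ge2}\|w_j\|_{L^\infty}$. The time integrability is then taken from $w_1$, via $\|w_1\|_{L^2L^\infty}\lesssim\|w_1\|_{L^2H^2}$, while $v_1$ is placed in $L^\infty L^2$ and the remaining $w_j$ in $L^\infty H^2$. This gives $\lesssim\|v_1\|_{X^1}\prod_j\|w_j\|_{X^2}$.

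\textbf{Fourth inequality.} For $k=1$ this is trivial, since $\|w_1\|_{L^2L^2}\le\|w_1\|_{L^2H^3}$. For $k\ge2$, $w_1$ goes in $L^2L^2$ and the other $w_j$ in $L^\infty L^\infty$.

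\textbf{Main obstacle.} The step that needs care is the bookkeeping of time integrability. Without the smallness factor $T^{1/4}$ we cannot use Hölder in time to trade $L^\infty$ for $L^2$. In the first inequality this is why the $L^2_t$ norm must fall on $v_1$ in $H^2$ (or be distributed by interpolation as $L^4_t$ on each factor), never on a factor that is only known in $L^\infty_t$. The argument does not use the $L^2L^2$ norm of $u$ beyond what $X^1$ already provides.
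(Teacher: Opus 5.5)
Your proof is correct and is essentially the argument the paper intends. The paper omits the proof, saying only that it follows from Sobolev and H\"older inequalities: H\"older in space with the three-dimensional embedding $H^2(\Omega)\hookrightarrow L^\infty(\Omega)$, and H\"older in time with exactly one factor carried in an $L^2_t$-norm supplied by the $L^2H^{s+1}$ component of $X^s(\Omega)$.
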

\begin{lem}\label{l.nonlin.gl.es.2}
Let $\Omega=\mathbb R^3,\mathbb R^3_+$, let $k\in\mathbb N$ and $q\in(1,2]$, let $v_1,v_2\in X^1(\Omega)$ and $w_j\in X^2(\Omega)$ for $j=1,\ldots, k$, then it holds 
$$ \|\nabla v_1w_1\cdots w_k\|_{L^1(\mathbb R_+;W^{1,q}(\Omega))}\lesssim \|v_1\|_{X^1(\Omega)} \prod_{j=1}^k\|w_j\|_{X^2(\Omega)}, $$
$$ \|v_1\nabla v_2\|_{L^1(\mathbb R_+;L^q(\Omega))}\lesssim \|v_1\|_{X^1(\Omega)}\|v_2\|_{X^1(\Omega)}, $$
$$ \|v_1\nabla v_2 w_1\cdots w_k\|_{L^1(\mathbb R_+;L^q(\Omega))}\lesssim \|v_1\|_{X^1(\Omega)}\|v_2\|_{X^1(\Omega)}\prod_{j=1}^k \|w_j\|_{X^2(\Omega)}, $$
$$ \|v_1w_1\cdots w_k\|_{L^1(\mathbb R_+;L^q(\Omega))}\lesssim \|v_1\|_{X^1(\Omega)}\prod_{j=1}^k\|w_j\|_{X^2(\Omega)}, $$
$$ \|w_1\cdots w_k\|_{L^1(\mathbb R_+;L^q(\Omega))}\lesssim \prod_{j=1}^k\|w_j\|_{X^2(\Omega)}  \quad k\ge2. $$
\end{lem}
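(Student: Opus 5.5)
The estimates of Lemma \ref{l.nonlin.gl.es.2} are pure space–time H\"older estimates. For each one, I will put one factor in an $L^2$-in-time norm and the rest in $L^\infty$-in-time norms, so that the $L^1(\mathbb R_+)$ norm is bounded by Cauchy–Schwarz in time. The only structural constraint is this: since $X^1(\Omega)$ and $X^2(\Omega)$ give $L^2$-in-time control of $\nabla v\in L^2H^1$, $v\in L^2H^2$ and $w\in L^2H^3$, every product must contain at least two factors carrying an $L^2$-in-time norm. No factor of $T$ is available, unlike in Lemma \ref{l.mult.loc.es.1}. In space I use only the three-dimensional embeddings $H^1\hookrightarrow L^r$ for $r\in[2,6]$ and $H^2\hookrightarrow L^\infty$, together with H\"older with $\frac1q=\frac12+\frac1r$, $r=\frac{2q}{2-q}\in(2,\infty]$.

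\emph{First estimate.} I expand $\|\nabla v_1 w_1\cdots w_k\|_{W^{1,q}}$ into three pieces: $\nabla v_1\prod w_j$, $\nabla^2 v_1\prod w_j$, and $\nabla v_1\,\nabla w_\ell\prod_{j\ne\ell}w_j$. For the first two, H\"older gives $\|\nabla^{\alpha}v_1\|_{L^2}\|w_1\|_{L^r}\prod_{j\ge2}\|w_j\|_{L^\infty}$ with $\alpha=1,2$. Then
\begin{equation*}
\int_0^\infty\|\nabla v_1\|_{H^1}\|w_1\|_{H^1}\,dt\le\|\nabla v_1\|_{L^2H^1}\|w_1\|_{L^2H^1}\le\|v_1\|_{X^1}\|w_1\|_{X^2},
\end{equation*}
and the remaining $w_j$ are bounded in $L^\infty L^\infty$ via $L^\infty H^2$. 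If $r=\infty$ (i.e.\ $q=2$), I use $H^2\hookrightarrow L^\infty$ on $w_1$ together with its $L^2H^2$ norm instead. For the third piece I use $\|\nabla v_1\|_{L^2}\|\nabla w_\ell\|_{L^r}$ with $\|\nabla w_\ell\|_{L^r}\lesssim\|w_\ell\|_{H^3}$ (or $H^2$ if $r\le6$), again integrated in time by Cauchy–Schwarz.

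\emph{Remaining estimates.} For $v_1\nabla v_2$ I bound by $\|\nabla v_2\|_{L^2}\|v_1\|_{L^r}$, with $r\in(2,\infty]$. If $r\le6$, then $\|v_1\|_{L^r}\lesssim\|v_1\|_{H^1}$, and this factor is taken in $L^2$ time: $v_1\in L^2H^1$ follows from $X^1=L^\infty H^1\cap L^2H^2$. If $r>6$, I interpolate between $L^6$ and $L^\infty$, or simply use $\|v_1\|_{L^\infty}\lesssim\|v_1\|_{H^2}$, which is also in $L^2$ time. The products with the $w_j$ go the same way: the extra $w_j$ are put in $L^\infty L^\infty$. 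For $v_1 w_1\cdots w_k$ I take $\|v_1\|_{L^2}\|w_1\|_{L^r}$ with both factors in $L^2$ time. For $w_1\cdots w_k$ with $k\ge2$, I use $\|w_1\|_{L^2}\|w_2\|_{L^r}$, again both in $L^2$ time.

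This last estimate shows why $k\ge2$ is needed: a single factor cannot be placed in $L^1$ time. It also shows why all the estimates use only inhomogeneous norms in $X^s$, including $\|v\|_{L^2L^2}$, which is part of the $X^1$ norm.

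\emph{Main obstacle.} The main point to watch is the endpoint bookkeeping as $q\to1$, where $r\to2$ and the Lebesgue exponents become least favourable. In particular, for terms such as $v_1\nabla v_2$ I must make sure that the two $L^2$-in-time factors are genuinely those controlled by $X^1$. I will choose the split so that $\nabla v_2$ is taken in $L^2_tL^2$ and $v_1$ in $L^2_tL^r$. For $q$ close to $1$, $r\in(2,6]$ and Sobolev applies directly. For $q$ near $2$, I will use $H^2\hookrightarrow L^\infty$. The intermediate range is handled by interpolation, and every constant is independent of time, so the estimates hold on $\mathbb R_+$.
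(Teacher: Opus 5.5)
Your proposal is correct and follows the same route the paper has in mind: the paper omits this proof and only cites H\"older and Sobolev, and in the proof of Corollary \ref{c.loc.ex.pq.} it uses exactly your split $\frac1q=\frac12+\frac1r$, bounding by $\|\nabla v\|_{L^2}\|w\|_{L^r}$ with $w\in L^2_tH^2$. One small slip: in the first estimate the bound $\|w_1\|_{L^r}\lesssim\|w_1\|_{H^1}$ holds only for $r\le 6$, i.e.\ $q\le\frac32$, so for $q\in(\frac32,2)$ you should use $H^2(\Omega)\hookrightarrow L^r(\Omega)$ together with $w_1\in L^2H^2$, as you already do at $q=2$; since this embedding holds for every $r\in[2,\infty]$ in three dimensions, there is no real endpoint difficulty, and the case $q\to1$ you flag as least favourable needs nothing beyond the $L^2_tL^2_x$ parts of the $X^1$ and $X^2$ norms.
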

We omit the proofs, which are based on Sobolev and H\"older inequalities. We are finally ready to prove Theorem \ref{t.gl.ex.2}:
\begin{proof}[Proof of Theorem \ref{t.gl.ex.2}]\hfill\\
We consider the map $\Phi\colon (z,V)\mapsto (u,Q)$, where $(u,Q)$ solves the system
$$ \left\{\begin{array}{ll}
    (\partial_t-\mathbb P\Delta)u +\beta\mathbb P{\rm Div}(\Delta-a)Q = \mathbb P F(z,V)  & \mathbb R_+\times\Omega \\
    {\rm div}u=0 & \mathbb R_+\times\Omega \\
    (\partial_t+a-\Delta)Q - \beta D(u) = G(z,V) & \mathbb R_+\times \Omega \\
    u=0, \quad \partial_\nu Q=0 & \mathbb R_+\times\partial\Omega \\
    u(0)=u_0,\quad Q(0)=Q_0 & \Omega.
\end{array}\right. $$
Thanks to Proposition \ref{p.lin.es.gl.2}, such $(u,Q)$ exists in $Y(\Omega)$ if
$$ F(z,V)\in L^2(\mathbb R_+;L^2(\Omega;\mathbb R^3))\cap L^1(\mathbb R_+;L^2(\Omega;\mathbb R^3)\cap L^q(\Omega;\mathbb R^3)), $$
$$ G(z,V)\in L^2(\mathbb R_+;H^1(\Omega;S_0(3,\mathbb R)))\cap L^1(\mathbb R_+;H^1(\Omega;S_0(3,\mathbb R)))\cap W^{1,q}(\Omega;S_0(3,\mathbb R))), $$
which follows from Lemma \ref{l.mult.loc.es.0}, Lemma  \ref{l.nonlin.es.gl.} and Lemma \ref{l.nonlin.gl.es.2}. More precisely, if we call
$$ Z=\{(z,V)\in Y(\Omega)\mid z(0)=u_0,\quad V(0)=Q_0\}, $$
then $\Phi\colon Z\to Z$ is well-defined and we have 
\begin{equation}\label{proof.gl.ex.1}
        \|(u,Q)\|_{Y(\Omega)}\le C\left[\|u_0\|_{L^q(\Omega)\cap H^1(\Omega)} + \|Q_0\|_{W^{1,q}(\Omega)\cap H^2(\Omega)} + \|(z,V)\|_{Y(\Omega)}^2\left(1+\|(z,V)\|_{ Y(\Omega)}^3\right)\right],
\end{equation}
for some $C>0$. Let us define now 
$$ Z_{\varepsilon}=\{(z,V)\in Z\mid \|(z,V)\|_{Y(\Omega)}\le 2C\varepsilon\}. $$
We want to prove that $\Phi\colon Z_{\varepsilon}\to Z_{\varepsilon}$. If $(z,V)\in Z_\varepsilon$, then from \eqref{proof.gl.ex.1} it holds
$$ \|(u,Q)\|_{Y(\Omega)}\le C\varepsilon + 4C^2\varepsilon^2(1+8C^3\varepsilon^3). $$
So, if we take $\varepsilon$ sufficiently small, we get that $\Phi(z,V)\in Z_\varepsilon$. Similarly, if we consider $(z_1,V_1),(z_2,V_2)\in Z_{\varepsilon}$, we get that
\begin{equation}\label{proof.nl.gl.es}
    \|\Phi(z_1,V_1)-\Phi(z_2,V_2)\|_{ Y(\Omega)}\lesssim \varepsilon\left(1 + \varepsilon^3 \right)\|(z_1,V_1)-(z_2,V_2)\|_{\widetilde Y(\Omega)}.
\end{equation}
Therefore, for $\varepsilon$ sufficiently small, $\Phi\colon Z_{\varepsilon}\to Z_{\varepsilon}$ is a contraction and we get the existence of a global solution. Finally, the uniqueness can be proved as in the local case.
\end{proof}

\vspace{1cm}

\noindent \textbf{Conflict of Interest}: On behalf of all authors, the corresponding author states that there is no conflict of interest.

\noindent \textbf{Data Availability}: The manuscript has no associated data.

\clearpage
\bibliographystyle{plain}
 \bibliography{EL_BG}

\noindent {\bf D. Barbera}:
 \begin{itemize}
     \item [Affiliation:] Department of Mathematical Sciences "Giuseppe Luigi Lagrange", Politecnico di Torino, Corso Duca degli Abruzzi 24, 10129 Torino, Italy
     \item [Email:]  daniele.barbera96@gmail.com
 \end{itemize}
{\bf V. Georgiev}: 
  \begin{itemize}
     \item [Affiliation:] Department of Mathematics, University of Pisa, Largo Bruno Pontecorvo 5, 56100 Pisa, Italy
     \item [-] Faculty of Science and Engineering, Waseda University, 3-4-1, Okubo, Shinjuku-ku, Tokyo 169-8555, Japan
     \item [-] Institute of Mathematics and Informatics, Bulgarian Academy of Sciences, Acad. Georgi Bonchev Str., Block 8, 1113 Sofia, Bulgaria
    \end{itemize}
{\bf M. Murata}: 
  \begin{itemize}
     \item [Affiliation:] Department of Mathematical and System Engineering, Faculty of Engineering, Shizuoka University, 3-5-1 Johoku, Chuo-ku, Hamamatsu-shi, Shizuoka, 432-8561, Japan
 \end{itemize}
 {\bf Y. Shibata}: 
  \begin{itemize}
     \item [Affiliation:] Emeritus Professor of Waseda University, Waseda University, 3-4-1 Ohkubo Shinjuku-ku Tokyo, 169-8555, Japan
     \item [-] Adjunct faculty member in the Department of Mechanical Engineering and Materials Science, University of Pittsburgh, United States of America
 \end{itemize}

\end{document}